\theoremstyle{definition}
\newtheorem{mydef}{Definition}[section]
\newtheorem{myque}[mydef]{Question}
\newtheorem{mysetup}[mydef]{Setup}
\newtheorem{myconv}[mydef]{Convention}
\theoremstyle{remark}
\newtheorem{myrem}[mydef]{Remark}
\theoremstyle{plain}
\newtheorem{mycol}[mydef]{Corollary}
\newtheorem{mysen}[mydef]{Theorem}
\newtheorem{mylem}[mydef]{Lemma}
\newtheorem{myfact}[mydef]{Fact}
\newtheorem{myclaim}{Claim}
\newtheorem{mysenx}{Theorem}
\numberwithin{mydef}{section}
\DeclareMathOperator{\cof}{cof}
\DeclareMathOperator{\supp}{supp}
\DeclareMathOperator{\im}{range}
\DeclareMathOperator{\otp}{otp}
\DeclareMathOperator{\cf}{cf}
\DeclareMathOperator{\SCH}{SCH}
\DeclareMathOperator{\Add}{Add}
\DeclareMathOperator{\Coll}{Coll}
\DeclareMathOperator{\TP}{TP}
\DeclareMathOperator{\On}{On}
\DeclareMathOperator{\SLIP}{SLIP}
\newcommand{\dA}{\mathbb{A}}
\newcommand{\dC}{\mathbb{C}}
\newcommand{\dL}{\mathbb{L}}
\newcommand{\dP}{\mathbb{P}}
\newcommand{\dQ}{\mathbb{Q}}
\newcommand{\dR}{\mathbb{R}}
\newcommand{\dS}{\mathbb{S}}
\newcommand{\dT}{\mathbb{T}}
\newcommand{\uhr}{\upharpoonright}
\newcommand{\ZFC}{\mathsf{ZFC}}
\newcommand{\GCH}{\mathsf{GCH}}
\newcommand{\AP}{\mathsf{AP}}
\newcommand{\LIP}{\mathsf{LIP}}
\newcommand{\CH}{\mathsf{CH}}
\newcommand{\s}{\subseteq}
\newcommand\cat[1]{{}^\curvearrowright #1}
\newcommand\forces{\Vdash}
\title[On Shelah's Approachability Ideal]{On Shelah's Approachability Ideal}
\author[Jakob]{Hannes Jakob}
\address[Jakob]{Department of Mathematics, College of Science, University of North Texas, Denton TX, 76201, USA}
\email{hannes.jakob@unt.edu}
\urladdr{https://hannesjakob.github.io}
\author[Poveda]{Alejandro Poveda}
\address[Poveda]{Department of Mathematics and Center of Mathematical Sciences and Applications, Harvard University, Cambridge MA, 02138, USA}
\email{alejandro@cmsa.fas.harvard.edu}
\urladdr{www.alejandropovedaruzafa.com}
\subjclass[2020]{03E35, 03E55}
\keywords{Shelah's approachability ideal, Large cardinals, Prikry-type forcings.}
\begin{document}

\begin{abstract}
We solve a long-standing open problem of Shelah regarding the \emph{Approachability Ideal} $I[\kappa^+]$. Given a singular cardinal $\aleph_\gamma$, a regular cardinal $\mu\in (\cf(\gamma),\aleph_\gamma)$ and assuming appropriate large cardinal hypotheses, we cons\-truct a model of $\mathsf{ZFC}$ in which $\aleph_{\gamma+1} \cap \cof(\mu) \notin I[\aleph_{\gamma+1}]$. 
This provides a definitive answer to a question of  Shelah from the 80's. 
In addition, assuming large cardinals, we  construct a model of $\mathsf{ZFC}$ in which 
the approachability property fails, simultaneously, at every singular cardinal. This is a major milestone in the solution of a  question of Foreman and Magidor from the 80's.
\end{abstract}

\maketitle

\section{Introduction}
The study of singular cardinals stands out as one of the most fascinating \-areas of inquiry in modern set theory. It was Georg Cantor who first formalized the concept of the infinite and initiated the investigation of the continuum. As part of his pioneering analysis, Cantor formulated the \emph{Continuum Hypothesis} ($\CH$), which posits that the cardinality of $\mathbb{R}$ (i.e., $2^{\aleph_0}$) is as small as possible—namely, $\aleph_1$, the first uncountable cardinal. The status of $\CH$ within the broader mathematical paradigm remained elusive to many leading figures until Kurt G\"odel \cite{GodelL} showed that $\CH$ cannot be refuted from the standard axioms of mathematics ($\mathsf{ZFC}$), and later Paul Cohen \cite{cohen1,cohen2}, inventing the set-theoretic method of \emph{forcing}, showed that $\CH$ cannot be proved from $\mathsf{ZFC}$ either. These two results established the independence of $\CH$ from the standard foundation of mathematics, setting the tone for sixty years of groundbreaking discoveries in which the phenomenon of independence played a central role. With the subsequent development of forcing, Easton demonstrated that there are virtually no $\mathsf{ZFC}$-provable restrictions on the behavior of the power-set function $\kappa \mapsto 2^\kappa$ \cite{MR269497}. This inaugurated a series of results showing that, at least for (so-called) \emph{regular cardinals}—such as $\aleph_0$ (the cardinality of the integers) or $\aleph_1$—$\mathsf{ZFC}$ alone settles little about cardinal arithmetic. Confounding all expectations, a series of groundbreaking results pioneered by Galvin–Hajnal \cite{MR376359}, Silver \cite{MR0429564}, Magidor \cite{MagidorSingularsII} and Shelah \cite{ShelahBook} revealed that \emph{singular cardinals} (such as $\aleph_\omega$ and $\aleph_{\omega_1}$) obey far deeper constraints, and that $\mathsf{ZFC}$ itself proves remarkable structural theorems. For instance, a celebrated result of Shelah shows that if $2^{\aleph_n}<\aleph_\omega$ holds for all integers $n$ then there is a provable upper bound on $2^{\aleph_\omega}$. This is in stark contrast with power sets of regular cardinals, where no such upper bound can be established in $\mathsf{ZFC}$ alone as per the work of Easton \cite{MR269497}. 

Signs of the striking nature of singular cardinals have been found well beyond the realm of cardinal arithmetic. A paradigmatic example is Shelah’s Singular Compactness Theorem~\cite{ShelahCompactness} in infinite abelian group theory, which shows that every abelian group $G$ that is $\aleph_\omega$-generated and almost free must be free. Magidor--Shelah extensively studied such phenomena in their classical paper~\cite{MagShegroups}, and they are now considered part of the standard landscape of the field. Singular cardinals have also surfaced in functional analysis. Improving a  classical, influential, work by Gowers and Maurey~\cite{GowersMaurey}, Dodos, López-Abad, and Todorcevic~\cite{AbadDodosTodorcevic} showed that, consistently, every Banach space of density character at most $\aleph_\omega$ contains an unconditional basic sequence---a result that is arguably optimal.

\smallskip

The present manuscript continues this  major line of research in modern set theory by analyzing a related core construction: Shelah's approachability ideal, $I[\kappa]$.

\smallskip

 Let $\kappa$ be a regular uncountable cardinal. A set $S\s \kappa$ is called \emph{stationary} if it intersects every \emph{club} set $C\s \kappa$ – that is, it intersects every subset of $\kappa$ that is closed and unbounded in  its ordinal topology. Intuitively speaking, club sets correspond to \emph{measure one} sets whereas stationary sets parallel sets having \emph{positive measure}. Stationarity is a cornerstone concept in infinitary combinatorics which has been critically utilized in Module Theory \cite{EklofMekler,CortesPoveda}, Cohomology \cite{Bergfalk} or Analysis \cite{Farah, FarahBook, MagidorPlebanek} among many other areas in mathematics. In set theory,  the preservation of these objects after passing to generic extensions becomes pivotal in the proof of some central results in the field \cite{MagidorReflectionStat, CumForeMagSquareScalesStatRefl, PartIII, BenNeriaHayutUnger}.

 \smallskip

In his classical paper \cite{ShelahSuccSingCard}, Shelah introduced (if implicitly) the \emph{approachability ideal} $I[\kappa]$ and the corresponding notion of the \emph{approachability property} $\AP_{\kappa}$, asserting that $\kappa^+\in I[\kappa^+]$ (i.e., $I[\kappa^+]$ is improper). Through an incisive analysis,  Shelah bridges  $I[\kappa]$ with the preservation of stationary sets under set-theoretic forcing. Specifically, he shows that  if $S \subseteq \kappa \cap \cof(\lambda)$\footnote{$\kappa\cap \cof(\lambda)$ is the standard notation for the collection of ordinals $\alpha<\kappa$ with cofinality $\lambda$.} is stationary and $S \in I[\kappa]$ then the stationarity of $S$ is preserved by so-called ${<}\lambda^+$-closed posets (see Section~\ref{sec: prelimminaries}). Thereby, under $\AP_\kappa$ any stationary set $S \subseteq \kappa^+ \cap \cof(\lambda)$ is preserved by ${<}\lambda^+$-closed forcing. This is somewhat sharp as if $S \notin I[\kappa]$, then even garden variety ${<}\lambda^+$-closed forcings, such as the \emph{Levy collapse} $\Coll(\lambda, \kappa)$, destroy the stationarity of $S$.

Shelah showed that $I[\kappa]$ is a normal ideal and demonstrated that it is rich enough to contain \emph{many} stationary sets. Namely, if $\lambda<\kappa$ are both regular and $\lambda^+<\kappa$ then $I[\kappa]$ contains a stationary subset of $\kappa\cap \cof(\lambda)$. Moreover, if $\kappa$ itself is a regular cardinal then $\kappa^+\cap \cof({<}\kappa)\in I[\kappa^+]$, and thus $I[\kappa^+]$ is completely determined by membership of stationary subsets of $\kappa^+\cap \cof(\kappa)$. As usual, the situation with successors of singular cardinals is way more subtle – in this case $\ZFC$ can only establish  $\kappa^+\cap \cof({\leq} \cf(\kappa))\in I[\kappa^+]$. Whether or not this result is sharp constitutes a major problem and is one of the issues that is under investigation in this manuscript.

\smallskip

Ever since its conception, investigations of $I[\kappa]$ have constituted a major area of research in set theory. Today's understanding of this subject has been substantially enriched by the work of Shelah~\cite{ShelahApproachability,MR1261217}, Foreman--Magidor~\cite{ForemanMagidorLCCounterexamples,ForemanMagidorVeryWeak}, Mitchell~\cite{MitchellNonstationary}, Gitik–Krueger~\cite{KruegerGitikApproach}, Sharon–Viale~\cite{VialeSharon}, Gitik–Rinot~\cite{RinotGitik}, Unger~\cite{UngerSuccessiveApproach}, Cummings et al.~\cite{CummingsFriedmanMagidorRinotSinapovaEightfold}, Krueger~\cite{KruegerApproachMaximal}, and Mohammadpour--Velickovic~\cite{MohammadpourVelickovic}, among many other authors. Current interest in $I[\kappa]$ extends beyond its original connection with the preservation of stationary sets. This is due to the interplay between $I[\kappa]$ and other central principles in combinatorial set theory, such as Jensen’s $\square_\kappa$-principle and its relatives, and the Tree Property ($\TP$). 

In turn, this connects $I[\kappa]$ with two major questions in the field, which have inspired the present work. Namely,

\begin{myque}[Shelah, 80's]\label{que: Shelah}
Let $\kappa$ be a singular cardinal. Must  the approachability ideal $I[\kappa^+]$ contain a club set relative to cofinality $\cf(\kappa)^{++}$?\footnote{See \cite[Question~3.5]{ForemanSurvey} for a particular instance of this question. The problem is also posed on page 1280 of \cite{EisworthHandbook} where it is deemed a ``major open problem in this area".}
\end{myque}

\begin{myque}[Foreman, Magidor, 80's]\label{que: Magidor}
Must there be a regular cardinal  $\kappa\geq \aleph_2$ for which $ \mathrm{TP}_\kappa$ fails?
\end{myque}

In the first part of the manuscript we provide a definitive, negative, answer to Shelah's Question~\ref{que: Shelah} by proving the following consistency result:

\begin{mysenx}\label{ThmA}
Assume the $\GCH$ holds and that there is a supercompact cardinal. Given a singular cardinal  $\aleph_\gamma$ and a regular cardinal $\mu\in (\cf(\gamma),\aleph_\gamma)$  there is a model of $\ZFC$ where $\aleph_{\gamma+1}\cap \cof(\mu)\notin I[\aleph_{\gamma+1}]$ holds.
\end{mysenx}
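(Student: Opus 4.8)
The plan is to start from a model $V$ of $\GCH$ with a supercompact cardinal $\kappa$ and to produce, by a two-step iteration $\mathbb{P}*\dot{\mathbb{Q}}$, a model $V^*$ in which $\kappa=\aleph_\gamma$ is singular with $\cf(\kappa)=\delta:=\cf(\gamma)$, $\mu$ is still a regular cardinal with $\delta<\mu<\kappa$, and $\kappa^+\cap\cof(\mu)\notin I[\kappa^+]$; as $\kappa=\aleph_\gamma$ entails $\kappa^+=\aleph_{\gamma+1}$, this is the theorem. First I would take $\mathbb{P}$ to be a Laver-style iteration making the supercompactness of $\kappa$ indestructible under $\kappa$-directed-closed forcing, interleaved with an Easton-support collapse below $\kappa$ chosen in advance to be coherent with the collapses that $\dot{\mathbb{Q}}$ will later perform. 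The role of $\mathbb{P}$ is to fix, ahead of time, the cardinal structure below $\kappa$ so that after the main forcing the cardinals below $\kappa$ have order type $\gamma$, with $\delta$ and $\mu$ regular, sitting at the prescribed places, and with $\delta<\mu$. All of this can be made $\kappa$-directed-closed above the relevant threshold, so in $V_1:=V[\mathbb{P}]$ the cardinal $\kappa$ is still (indestructibly) supercompact and $\GCH$ holds where needed.

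\textbf{The main forcing.} Over $V_1$ I would force with a Prikry-type poset $\mathbb{Q}$ at $\kappa$ carrying interleaved Levy collapses, of the kind used to obtain the failure of (weak) square and of $\AP$ at a successor of a singular: built from a single normal measure when $\delta=\omega$, and from a coherent sequence of measures of increasing Mitchell order, or a Radin-type sequence, together with a supercompact/extender backbone for the collapses, when $\delta$ is uncountable. The poset $\mathbb{Q}$ adds a continuous increasing cofinal sequence $\langle\kappa_i:i<\delta\rangle$ in $\kappa$; it collapses cardinals between consecutive blocks so that the surviving structure below $\kappa$ is exactly the one prepared by $\mathbb{P}$ (in particular $\mu$ survives as a regular cardinal, lying strictly between two consecutive Prikry points or below $\kappa_0$); and it has the Prikry property together with a chain condition ensuring that $\kappa^+$ and all larger cardinals are preserved, $\cf^{V^*}(\kappa)=\delta$, and $V^*:=V_1[\mathbb{Q}]$ satisfies $\kappa=\aleph_\gamma$.

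\textbf{Non-approachability --- the heart.} It remains to show $S:=\kappa^+\cap\cof(\mu)\notin I[\kappa^+]$ in $V^*$. Suppose not, and fix a sequence $\bar a=\langle a_\xi:\xi<\kappa^+\rangle$ and a club $C\s\kappa^+$ witnessing that every $\alpha\in C\cap\cof(\mu)$ is approachable with respect to $\bar a$. Above a suitable condition, $\mathbb{Q}$ factors as a ``tail'' part that is sufficiently closed --- adding no new cofinal sequence of order type $\mu$ to the ordinals below $\kappa^+$ that will be relevant --- times a ``Prikry part''; using the Prikry property one reflects $\bar a$ and $C$ to names depending on only a small fragment of the generic, picks $M\prec H(\theta)$ of size $\mu$ containing the relevant data with $\alpha^*:=\sup(M\cap\kappa^+)\in C\cap\cof(\mu)$, and arranges that the cofinality of $\alpha^*$ is \emph{generically witnessed}: every cofinal $A\s\alpha^*$ of order type $\mu$ must decode, cofinally in $\alpha^*$, a genuinely new piece of the interleaved collapse/Prikry generic. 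Then a long enough proper initial segment of such an $A$ cannot belong to $\{a_\xi:\xi<\alpha^*\}$, since that family is captured by the closed part of the forcing before the corresponding piece of the generic is added --- contradicting approachability of $\alpha^*$. An equivalent and perhaps cleaner packaging: one exhibits in $V^*$ a scale of length $\kappa^+$ in a product $\prod_{i<\delta}\lambda_i$ with $\sup_i\lambda_i=\kappa$, shows that the interleaved collapses ensure every such scale carries stationarily many bad points inside $\cof(\mu)$, and invokes Shelah's theorem that a stationary $S\s\kappa^+\cap\cof(\mu)$ with $\cf(\kappa)<\mu$ and $S\in I[\kappa^+]$ must, modulo a club, consist of good points of some scale.

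\textbf{The main obstacle.} The genuinely new difficulty is two-layered. First, $\mathbb{Q}$ must be engineered so that the cardinals below $\kappa$ acquire order type \emph{exactly} $\gamma$ (with $\cf(\kappa)=\delta=\cf(\gamma)$) and $\mu$ lands precisely where required, all while keeping the Prikry property and the preservation of $\kappa^+$ --- a delicate orchestration of interleaved collapses that the generating measures or extenders must uniformly reflect. Second, and more serious: previous constructions of this type only reach non-approachable points of cofinality $\cf(\kappa)^+$, whereas here the generic witnesses of non-approachability must sit at the prescribed cofinality $\mu$ anywhere in the interval $(\cf(\gamma),\aleph_\gamma)$; making the genericity argument go through against an \emph{arbitrary} approachability sequence in the full extension, uniformly in $\mu$, is the technical core of the proof.
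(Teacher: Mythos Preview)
Your proposal has a genuine gap, and in fact heads in a direction the paper explicitly identifies as insufficient. You propose to singularize the supercompact $\kappa$ itself via a Prikry/Radin-with-interleaved-collapses forcing and then argue non-approachability either by a fresh-sequence argument or via bad points of a scale. This is essentially the Gitik--Sharon paradigm, and its known limitation is exactly that the cofinality of the resulting non-approachable set is \emph{not} a free parameter: it is determined by quantities like $\cf(\sup(M\cap\kappa^{+\omega+1}))$ for a $\kappa$-Magidor model $M$, not by a $\mu$ you choose in advance. Your scale argument inherits this: Shelah's theorem tells you that points in $I[\kappa^+]$ are good for every scale, but producing stationarily many bad points \emph{of cofinality exactly $\mu$} is precisely what the classical forcings fail to do for arbitrary $\mu\in(\cf(\gamma),\aleph_\gamma)$. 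Your fresh-sequence sketch is likewise underspecified: a Prikry-type forcing at $\kappa$ with a sufficiently closed direct-extension ordering adds no new $\mu$-sequences below the first Prikry point, so there is no reason a cofinal $\mu$-sequence in some $\alpha^*<\kappa^+$ must ``decode a genuinely new piece of the generic''.

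The paper takes a completely different route. The supercompact $\lambda$ is \emph{never} singularized. One first arranges (above $\lambda$, by an easy product of collapses) that a singular $\delta=\lambda^{+\gamma}$ carries witnesses to a property $\SLIP(\mu,\vec\delta)$; supercompactness of $\lambda$ is then used to \emph{reflect}: one finds a small singular $\kappa'<\lambda$ with $\SLIP(\mu,\vec{\kappa'})$ and a stationary $S\subseteq\delta^+\cap\cof((\kappa')^+)$ such that $d$-approachability of each $\alpha\in S$ is equivalent to $e$-approachability of $(\kappa')^+$ for a ground-model coloring $e$. The entire burden is thus transferred to a \emph{small} two-step forcing $\dP(\mu,\vec{\kappa'},\vec{\mathcal I},\vec{\mathcal B})*\dot{\Coll}(\mu,(\kappa')^+)$, an ideal-based Magidor product of one-point Prikry forcings followed by a collapse. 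Its decisive feature (the key Lemma) is that \emph{every} cofinal function $F\colon\mu\to(\kappa')^+$ in the extension has an initial segment whose range is not covered by any ground-model set of size ${<}\kappa'$; this immediately kills $e$-approachability of $(\kappa')^+$ for any normal subadditive $e$, and hence $d$-approachability of every $\alpha\in S$. A final small collapse $\Coll(\mu^+,\lambda)$ then turns $\delta^+$ into $\aleph_{\gamma+1}$. None of this machinery --- the $\SLIP$ setup, the reflection step, or the covering-failure lemma for the ideal-based Magidor product --- appears in your outline, and it is exactly this machinery that delivers the prescribed cofinality $\mu$.
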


Assuming the consistency of certain \emph{large cardinal axiom} called a \emph{supercompact},  Shelah proved in \cite{ShelahSuccSingCard}  the consistency of $\ZFC$ with  
$\aleph_{\omega+1} \cap \cof(\omega_1) \notin I[\aleph_{\omega+1}]$. Shelah's argument was limited to cofinality $\omega_1$ (see page~\pageref{discussion on shelah's theorem} for a discussion)  and so the author asked  whether or not the statement 
$\aleph_{\omega+1} \cap \cof(\omega_2) \notin I[\aleph_{\omega+1}]$
was consistent with $\ZFC$ \cite[Question~8.3]{ForemanSurvey}. This particular instance of Question~\ref{que: Shelah} remained open for quite a long time until it was recently answered in the affirmative by Jakob--Levine in \cite{JakobLevine}. 
Subsequently, the first author extended this result in \cite{JakobTotalFailure}, constructing a model of $\mathsf{ZFC}$ in which, for every singular cardinal $\mu$ of countable cofinality and every regular cardinal $\gamma \in (\aleph_0, \mu)$,
$\mu^+ \cap \cof(\gamma) \notin I[\mu^+]$ holds. In spite of these recent breakthroughs, Shelah's problem  remained unsettled. The reason is that the methods developed in \cite{JakobLevine,JakobTotalFailure} were largely specific to singulars with countable cofinality due to their dependence on \emph{Namba-style  forcings}. As a result, new and more sophisticated technologies must be developed. 
Among the novelties offered by this paper there is the construction of a poset $\dP(\vec{\kappa}, \vec{\mathcal{I}}, \vec{\mathcal{B}})$ which plays an analogue role to the Namba-like forcing from \cite[\S4]{JakobLevine}, yet with the advantage that it can dispose with uncountable cofinalities. This poset is inspired by former constructions stemming from the theory of Prikry-type forcings \cite{GitikHandbook}. In the vernacular language of this field, $\dP(\vec{\kappa}, \vec{\mathcal{I}}, \vec{\mathcal{B}})$ is an \emph{ideal-based}  version of a Magidor-product of \emph{one-point Prikry forcings}. 
This poset has the crucial property that, in a model obtained via the two-step iteration $\dP(\vec{\kappa}, \vec{\mathcal{I}}, \vec{\mathcal{B}}) * \dot{\Coll}(\mu, (\sup \vec{\kappa})^+)$, every cofinal function from $\mu$ into $(\sup\vec\kappa)^+$ has an initial segment that is {not covered by any ground-model set of size ${<} \sup \vec{\kappa}$}. This is the key to show that in the corresponding generic extension the stationary set $(\sup\vec\kappa)^+\cap \cof(\mu)$ is non-approachable.  Another technical novelty of the forcing $\dP(\vec{\kappa}, \vec{\mathcal{I}}, \vec{\mathcal{B}})$ is that its definition  hinges on ideals rather than ultrafilters. This makes the standard \emph{Prikry analysis} of the poset ostensibly more subtle than usual.

\smallskip

One of the key features of Prikry-type forcings is their amenability to a  theory of forcing iterations \cite{MagidorIdentityCrises, GitikHandbook}. After identifying $\dP(\vec{\kappa}, \vec{\mathcal{I}}, \vec{\mathcal{B}})$ as a member of this family of posets
 we proceed to iterate the construction used in Theorem~\ref{ThmA} to obtain the following global result:

\begin{mysenx}\label{ThmD}
    Assume the $\GCH$ and that there is a proper class of supercompact cardinals. Then there is a model of $\ZFC$ where $\AP_{\kappa}$ fails for every singular cardinal.

    Moreover, for each singular cardinal $\kappa$, there are unboundedly many regular cardinals $\delta<\kappa$ for which $\kappa^+\cap\cof(\delta)\notin I[\kappa^+]$.
\end{mysenx}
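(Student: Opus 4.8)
The plan is to globalize the construction of Theorem~\ref{ThmA} by iterating it along the class of supercompact cardinals, inside the framework of iterated Prikry-type forcing developed by Magidor~\cite{MagidorIdentityCrises} and Gitik~\cite{GitikHandbook}. Starting from $V\models\GCH$ with a proper class of supercompacts and a suitable class Laver function $\ell$, we define a reverse-Easton (Easton-support) iteration $\langle\dP_\alpha,\dot\dQ_\alpha:\alpha\in\On\rangle$ in which $\dot\dQ_\alpha$ is trivial unless $\alpha$ is one of the supercompacts of $V$; at such a stage $\kappa$ we let $\rho_\kappa$ be (essentially) the supremum of the earlier supercompacts and let $\dot\dQ_\kappa$ be a product of the two-step forcings $\dP(\vec\kappa,\vec{\mathcal I},\vec{\mathcal B})*\dot\Coll(\mu,(\sup\vec\kappa)^+)$ supplied by the proof of Theorem~\ref{ThmA}, with the blocks $\vec\kappa$ running through a Magidor-style increasing chain of measurable-like cardinals cofinal in $\kappa$ and with all other parameters read off from $\ell(\kappa)$ and a bookkeeping function. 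The point of $\ell$ is the usual one: it lets a master-condition argument show that $\kappa$ remains supercompact — indeed that the relevant blocks still carry the ideals $\vec{\mathcal I}$ — in $V^{\dP_\kappa}$, so that the Theorem~\ref{ThmA} machinery is available there. The parameters are chosen so that in $V^{\dP_\kappa*\dot\dQ_\kappa}$ every cardinal of $(\rho_\kappa,\kappa]$ is singular and, for each such singular $\lambda$, the set $\lambda^+\cap\cof(\mu)$ is non-approachable for a cofinal-in-$\lambda$ set of regular $\mu<\lambda$.

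\textbf{Step 1 (the iteration is Prikry-type).} Using that $\dP(\vec\kappa,\vec{\mathcal I},\vec{\mathcal B})$ was identified in the body of the paper as an iterable Prikry-type forcing, one equips the iteration with a direct-extension order $\le^*$ that is $\kappa$-closed at each stage $\kappa$, and lifts the Prikry property of the factors to $\dP_\alpha$ for all $\alpha$. Because $\dP$ is \emph{ideal-based}, the classical iteration lemmas do not apply verbatim: the Prikry property of $\dP$ must be re-proved in the presence of $\dot\Coll(\mu,\cdot)$ and then shown to be stable under iteration — this is the delicate Prikry analysis anticipated in the introduction and is the technical core. One also checks, stage by stage, that the portion of $\dP_\kappa$ acting below the block used at $\kappa$ is small relative to that block, so that the requisite ideals survive.

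\textbf{Step 2 (cardinal structure, locality, conclusion).} From the Prikry property and the closure of the $\le^*$-orders one obtains, as usual, that $\dP_\alpha$ adds no bounded subsets of $\kappa_\alpha$ and that the tail of the iteration past a stage $\kappa$ adds no new subsets of $\kappa^+$ — in particular no new approachability sequences and no new cofinal maps $\mu\to\kappa^+$. Hence the cardinal structure up to each $\kappa_\alpha$, and each ideal $I[\lambda^+]$ for $\lambda\le\kappa_\alpha$, is decided by an initial segment of the iteration, so the non-approachability forced at the stage handling $\lambda$ is upward absolute to the final model $V^{\dP_\infty}$. A bookkeeping and reflection argument guarantees that the intervals $(\rho_\kappa,\kappa]$ exhaust exactly the singular cardinals of $V^{\dP_\infty}$, each $\kappa_\alpha$ remaining a limit cardinal there; combined with the ``moreover'' clause of Theorem~\ref{ThmA} applied over cofinally many cofinalities at each stage, this yields that $V^{\dP_\infty}\models\ZFC$ and that every singular cardinal $\kappa$ satisfies $\kappa^+\cap\cof(\delta)\notin I[\kappa^+]$ for unboundedly many regular $\delta<\kappa$; a fortiori $\AP_\kappa$ fails.

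\textbf{Main obstacle.} The hard part is Step~1: controlling the ideal-based Prikry analysis through a class-length reverse-Easton iteration — in particular, showing that the forcing at a stage $\kappa$ neither damages the large-cardinal structure (the ideals) needed at later stages nor revives approachability at an earlier $\lambda$ — and, relatedly, designing the products $\dot\dQ_\kappa$ so that the intervals $(\rho_\kappa,\kappa]$ tile the singular cardinals of $V^{\dP_\infty}$ while keeping each $\kappa_\alpha$ a limit cardinal (so that ``unboundedly many $\delta<\kappa$'' is genuinely meaningful). Once the iteration is seen to be a well-behaved Prikry-type iteration with the stated locality, Theorem~\ref{ThmA} does the rest at each stage.
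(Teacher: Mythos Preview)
Your high-level plan --- an Easton-support Magidor iteration of Prikry-type forcings along the supercompacts, with Theorem~\ref{ThmA} supplying the pieces --- is the right shape, but it misses the mechanism the paper actually uses to push the failure of $\AP$ through the iteration, and your locality argument in Step~2 does not work as stated.

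The paper does not iterate the poset from Theorem~\ref{ThmA} directly. Instead, at each odd stage it forces with a new poset $\dA(\mu,\kappa)$ (Definition~\ref{def: The A poset}), which is itself a Laver-style Easton iteration of length $\kappa$: at stages where the Laver function guesses a future ${<}\kappa$-strategically closed forcing $\dot\dL$ together with $\SLIP$-data, one forces with $\dot\dL*\dot\dP(\mu,\vec\delta,\vec{\mathcal I},\vec{\mathcal B})*\dot\Coll(\mu,\delta^+)$. The payoff is Theorem~\ref{Theorem: Indestructibility of neg AP}: after $\dA(\mu,\kappa)$, for \emph{any} ${<}\kappa$-directed-closed $\dP$ that forces $\SLIP(\mu,\vec\delta)$, there are stationarily many $M\prec H(\Theta)$ with $\sup(M\cap\delta^+)$ non-$d$-approachable \emph{and} with master conditions for $\dP$. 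This indestructibility theorem is the engine; without it there is no way to argue that non-approachability at $\kappa_\alpha^+$ (for limit $\alpha$) survives the portion of the iteration between the stage that creates it and stage $\alpha$.

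Your Step~2 asserts that the tail past stage $\kappa$ adds no new subsets of $\kappa^+$, so $I[\lambda^+]$ is decided locally for $\lambda\le\kappa$. But the singular cardinals in the final model are the $\kappa_\alpha$ for \emph{limit} $\alpha$ (the supercompacts become successor cardinals, not limit cardinals as you write), and to analyse $I[\kappa_\alpha^+]$ one must go back to a stage $\beta<\alpha$ where an $\dA$-forcing was applied; the intermediate forcing $\dP_{\beta,\alpha}$ has size $\kappa_\alpha$ and certainly adds new subsets of $\kappa_\alpha^+$. The paper handles this not by locality but by showing that the Easton product of termspace forcings $\dT_{\beta,\alpha}$ projects onto the direct-extension ordering of $\dP_{\beta,\alpha}$ (Lemma~\ref{lemma: Projection Easton support Magidor}), is ${<}\kappa_\beta^{++}$-directed closed, and forces $\SLIP$ (Lemma~\ref{lemma: T forces SLIP}); then Theorem~\ref{Theorem: Indestructibility of neg AP} applies to $\dT_{\beta,\alpha}$, and a master-condition argument pushes $\sup(M\cap\kappa_\alpha^+)$ into any prescribed club. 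This termspace-plus-indestructibility apparatus is the genuine content of the proof, and it is absent from your proposal.
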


Theorem~\ref{ThmD} is tied to Foreman--Magidor's Question~\ref{que: Magidor} via Jensen's \emph{weak square principle} $\square^*_\kappa$. A classical result of Jensen \cite{Jensen} states that $\square^*_\kappa$ is equivalent to the existence of a \emph{special} $\kappa^+$-Aronszajn tree; that is, a $\kappa^+$-tree $(T, <_T)$ admitting a function $f \colon T \rightarrow \kappa$ such that $x <_T y$ implies $f(x) \neq f(y)$. The key feature of such trees is that they remain $\kappa^+$-Aronszajn (i.e., branchless) in any outer model of $\mathsf{ZFC}$ where $\kappa^+$ remains a cardinal. Consequently, special $\kappa^+$-Aronszajn trees provide strong failures of $\mathrm{TP}_{\kappa^+}$. Since $\square^*_\kappa$ implies $\AP_\kappa$, Theorem~\ref{ThmD} yields the first model of $\ZFC$ in which no special $\kappa^+$-Aronszajn trees exist for any singular cardinal $\kappa$.

It is widely recognized among specialists that getting the Tree Property at the successor of every  singular cardinal is one of the principal technical challenges in resolving Foreman–Magidor Question~\ref{que: Magidor} (see e.g., \cite{MagSheTree, NeemanTree, NeemanUpToAleph, CummingsetalTree, CummingsetalTreeII}. The methods developed here thus represent a substantial advance toward the resolution of this second long-standing problem.

\smallskip

The paper is organized as follows. In Section~\ref{sec: prelimminaries} we present the reader with preliminary definitions and results concerning colorings on successors of singular cardinals and basic forcing facts for later use.  In Section~\ref{sec: Higher Namba} we define the poset $\dP(\vec{\kappa}, \vec{\mathcal{I}}, \vec{\mathcal{B}})$ and establish its key properties. After that we resolve Shelah's problem by proving Theorem~\ref{ThmA}. As a bonus result, in this section we use our methods to improve the main result of Gitik–Sharon \cite{GitikSharon}. In Section~\ref{sec: failure everywhere} we construct the iteration that yields Theorem~\ref{ThmD}.  The paper is concluded with a section with open questions and concluding remarks. The manuscript is self-contained, and the notations herein follow the standard vernacular set theory. Only basic acquaintance with  the theory of forcing and  large cardinals is presumed.

\section{Preliminaries}\label{sec: prelimminaries}

\subsection{Notations and conventions}
Our forcing convention is that $p\leq q$ means that $p$ extends $q$. Given cardinals $\kappa<\lambda$, $\Coll(\kappa,\lambda)$ (resp. $\Coll(\kappa,{<}\lambda)$) denotes the usual Levy collapse collapsing $\lambda$ (resp. every cardinal in $(\kappa,\lambda)$) to $\kappa$.  $\Add(\kappa,\lambda)$ denotes Cohen forcing adjoing $\lambda$-many subsets to $\kappa$.  For regular cardinals $\lambda<\kappa$ we denote by either $E^\kappa_\lambda$ or $\kappa\cap \cof(\lambda)$ the set of all $\alpha<\kappa$ with cofinality $\lambda$. For a regular cardinal $\Theta$, $H(\Theta)$ denotes the collection of sets of hereditary cardinality less than $\Theta$. Given a function $f$ and a set $A$, $f[A]$ denotes $\{f(a)\mid a\in A\}.$ Given an ideal $\mathcal{I}$ on a (non-empty) set $X$ we denote by $\mathcal{I}^+$ the collection of \emph{$\mathcal{I}$-positive} sets; namely, $\mathcal{I}^+:=\{A\subseteq \mathcal{P}(X)\mid A\notin \mathcal{I}\}.$ Given an uncountable regular cardinal $\kappa$, we say that $\mathcal{I}$ is ${<}\kappa$-complete if $\mathcal{I}$ is closed under ${<}\kappa$-sized unions of its members. 

Some of the arguments in this paper will require using  \emph{large cardinals}. The standard reference in this matter is Kanamori's text \cite{KanamoriHigherInfinite} where the reader is referred for a more comprehensive exposition. For the sake of completeness, we remind the definition of two key large cardinal notions:  An uncountable cardinal $\kappa$ is \emph{measurable} if there is a non-pricipal $\kappa$-complete ultrafilter on $\kappa$. We say that $\kappa$ is  \emph{supercompact} if for each  $\lambda\geq \kappa$ there is an elementary embedding $j\colon V\rightarrow M$ with \emph{critical point} $\kappa$, $j(\kappa)>\lambda$ and $M$  closed under $\lambda$-sequences of its members.

\subsection{Forcing}
In this section we provide a few preliminaries on the method of set-theoretic forcing. For the general theory of forcing we refer our readers to \cite{Kunen}.

\begin{mydef}
    Let $(\dP,\leq_{\dP})$ and $(\dQ,\leq_{\dQ})$ be posets. A function $\pi\colon\dP\to\dQ$ is a \emph{projection} if the following holds:
    \begin{enumerate}
        \item $\pi(1_{\dP})=1_{\dQ}$.
        \item If $p'\leq_{\dP}p$, then $\pi(p')\leq_{\dQ}\pi(p)$.
        \item Whenever $q\leq_{\dQ}\pi(p)$, there is $p'\leq_{\dP}p$ such that $\pi(p')\leq_{\dQ}q$.
    \end{enumerate}
\end{mydef}

If there exists a projection from $\dP$ to $\dQ$, any forcing extension by $\dQ$ can be forcing extended to an extension by $\dP$. This is made precise via the next definition:

\begin{mydef}
    Let $\dP$ and $\dQ$ be posets and $\pi\colon\dP\to\dQ$ a projection. Let $G$ be $\dQ$-generic. In $V[G]$, let $\dP/G$ consist of all those elements $p\in\dP$ with $\pi(p)\in G$, ordered as a suborder of $\dP$. We let $\dP/\dQ$ be a $\dQ$-name for $\dP/\dot{G}$ and call $\dP/\dQ$ the \emph{quotient forcing} of $\dP$ and $\dQ$.
\end{mydef}

\begin{myfact}
    Let $\dP$ and $\dQ$ be poset and $\pi\colon\dP\to\dQ$ a projection. Let $G$ be $\dQ$-generic over $V$ and let $H$ be $\dP/G$-generic over $V[G]$. Then $H$ is $\dP$-generic over $V$ and $G=\pi[H]$. In particular, $V[G][H]=V[H]$.\qed
\end{myfact}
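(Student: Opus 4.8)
The plan is to establish the three assertions in turn: that $H$ is a filter on $\dP$, that $H$ is $\dP$-generic over $V$, and that $G$ is recoverable from $H$ inside $V[H]$; the equality $V[G][H]=V[H]$ then follows at once. The first point is the easiest. The only subtlety in checking that $H$ is a filter on all of $\dP$ (not merely on $\dP/G$) is upward closure: if $p\in H$ and $p\leq_\dP p'$, then $\pi(p)\leq_\dQ\pi(p')$ by clause (2) of the definition of a projection, and since $\pi(p)\in G$ and $G$ is upward closed we get $\pi(p')\in G$, so $p'\in\dP/G$; upward closure of $H$ within $\dP/G$ then gives $p'\in H$. Downward directedness is inherited verbatim from $\dP/G$, which carries the restriction of $\leq_\dP$.

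The heart of the argument is genericity of $H$ over $V$. Fix a dense set $D\subseteq\dP$ with $D\in V$; since $D\cap(\dP/G)$ lies in $V[G]$ and $H$ is $\dP/G$-generic over $V[G]$, it suffices to show that $D\cap(\dP/G)$ is dense in $\dP/G$. Fix $p_0\in\dP/G$, i.e. $\pi(p_0)\in G$, and work in $V$ with the set
\[
E\ :=\ \{q\in\dQ:\ q\perp_\dQ\pi(p_0)\}\ \cup\ \{\pi(p):\ p\in D\ \text{and}\ p\leq_\dP p_0\}.
\]
Then $E$ is dense in $\dQ$: given $q_0\in\dQ$ compatible with $\pi(p_0)$, choose $r\leq_\dQ q_0,\pi(p_0)$, use clause (3) to obtain $p_1\leq_\dP p_0$ with $\pi(p_1)\leq_\dQ r$, and use density of $D$ to obtain $p\in D$ with $p\leq_\dP p_1$; then $p\leq_\dP p_0$ and $\pi(p)\leq_\dQ\pi(p_1)\leq_\dQ r\leq_\dQ q_0$, so $\pi(p)\in E$ lies below $q_0$. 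Since $E\in V$ and $G$ is $\dQ$-generic over $V$, fix $q\in G\cap E$. As $\pi(p_0)\in G$, the element $q$ is compatible with $\pi(p_0)$, so $q$ lies in the second part of $E$: $q=\pi(p)$ for some $p\in D$ with $p\leq_\dP p_0$. Then $\pi(p)=q\in G$ shows $p\in\dP/G$, so $p\in D\cap(\dP/G)$ and $p\leq_\dP p_0$. Hence $D\cap(\dP/G)$ is dense in $\dP/G$, so $H$ meets it, so $H\cap D\neq\emptyset$; thus $H$ is $\dP$-generic over $V$.

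It remains to recover $G$. One has $\pi[H]\subseteq G$ (each $p\in H$ satisfies $\pi(p)\in G$), and $\pi[H]$ is downward directed (take a common extension in $H$ and apply clause (2)), so the filter $\{q\in\dQ:\exists p\in H\ (\pi(p)\leq_\dQ q)\}$ generated by $\pi[H]$ is contained in $G$. Conversely, for $q\in G$ the set $D_q:=\{p\in\dP:\ \pi(p)\leq_\dQ q\ \text{or}\ \pi(p)\perp_\dQ q\}$ lies in $V$ and is dense in $\dP$ — given $p$, if $\pi(p)\not\perp_\dQ q$ choose $r\leq_\dQ\pi(p),q$ and apply clause (3) to get $p'\leq_\dP p$ with $\pi(p')\leq_\dQ r\leq_\dQ q$ — so by genericity of $H$ over $V$ there is $p\in H\cap D_q$; since $\pi(p)\in G$ is compatible with $q\in G$ we must have $\pi(p)\leq_\dQ q$. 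Hence $G$ equals the filter generated by $\pi[H]$ (so $G=\pi[H]$ in the intended sense), and in particular $G\in V[H]$. Since also $H\in V[H]$ we get $V[G][H]\subseteq V[H]$, and the reverse inclusion is trivial.

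The only genuine obstacle is the genericity of $H$ over $V$ — precisely, ensuring that the condition returned by genericity of $G$ actually lands in $\dP/G$. The device is to place the \emph{exact} projections $\pi(p)$ of conditions $p\in D$ below $p_0$ into the auxiliary dense set $E$, rather than their downward closure: then when $G$-genericity hands back such a value $q=\pi(p)$, the membership $q\in G$ instantly certifies $p\in\dP/G$. The clause ``$q\perp_\dQ\pi(p_0)$'' is present only to keep $E$ dense in $\dQ$, and it is automatically avoided because $\pi(p_0)\in G$.
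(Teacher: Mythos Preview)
The paper does not give a proof of this fact at all; it is stated with a terminal \qed\ and treated as folklore. Your argument is a correct and standard proof of the result. The key step---building the auxiliary set $E\subseteq\dQ$ so that a $\dQ$-generic hit is forced to be the \emph{exact} value $\pi(p)$ for some $p\in D$ below $p_0$, and hence to certify $p\in\dP/G$---is exactly the right idea, and you explain clearly why the naive ``downward closure of $\pi[D]$'' would not suffice.

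Two very minor remarks. First, in verifying that $E$ is dense in $\dQ$ you only treat the case where $q_0$ is compatible with $\pi(p_0)$; the incompatible case is of course trivial (then $q_0\in E$ already), but it would not hurt to say so. Second, you correctly observe that what you actually prove is that $G$ is the filter \emph{generated} by $\pi[H]$ rather than the literal equality $G=\pi[H]$; this is the customary reading of the statement, and in any case it is exactly what is needed for $G\in V[H]$ and hence $V[G][H]=V[H]$.
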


Projections onto iterations are often obtained by considering the so-called \emph{termspace forcing}, an idea due to Laver and subsequently developed by Foreman \cite{ForemanSatIdeal}:

\begin{mydef}[Laver]\label{def: Termspace}
    Let $(\dP,\leq)$ be a poset and let $(\dot{\dQ},\dot{\leq})$ be a $\dP$-name for a poset. The poset $\dT(\dP,\dot{\dQ})$ consists of all $\dP$-names for elements of $\dot{\dQ}$, ordered by $\dot{q}'\prec\dot{q}$ if and only if $1_{\dP}\Vdash\dot{q}'\dot{\leq}\dot{q}$.
\end{mydef}

Using standard arguments on names, one shows:

\begin{mylem}
    Let $\dP$ be a poset and $\dot{\dQ}$ a $\dP$-name for a poset. The identity function is a projection from $\dP\times\dT(\dP,\dot{\dQ})$ onto $\dP*\dot{\dQ}$.\qed
\end{mylem}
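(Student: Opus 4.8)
The plan is to verify, clause by clause, the definition of a projection for the map $\pi=\mathrm{id}\colon\dP\times\dT(\dP,\dot{\dQ})\to\dP*\dot{\dQ}$ sending $(p,\dot{q})$ to $(p,\dot{q})$. This map is well-defined because any $\dot{q}\in\dT(\dP,\dot{\dQ})$ satisfies $1_{\dP}\Vdash\dot{q}\in\dot{\dQ}$, hence $p\Vdash\dot{q}\in\dot{\dQ}$ for every $p\in\dP$, so $(p,\dot{q})$ is a genuine condition of $\dP*\dot{\dQ}$. Fixing once and for all a canonical $\dP$-name $\dot{1}$ for the top element of $\dot{\dQ}$, clause~(1) is immediate since the largest elements of both posets are $(1_{\dP},\dot{1})$. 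For clause~(2), if $(p',\dot{q}')\leq(p,\dot{q})$ in the product then $p'\leq_{\dP}p$ and $1_{\dP}\Vdash\dot{q}'\dot{\leq}\dot{q}$; the latter trivially entails $p'\Vdash\dot{q}'\dot{\leq}\dot{q}$, which is precisely what $(p',\dot{q}')\leq(p,\dot{q})$ in $\dP*\dot{\dQ}$ demands.

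The only clause requiring an idea is~(3). Suppose $(r,\dot{s})\leq_{\dP*\dot{\dQ}}\pi(p,\dot{q})=(p,\dot{q})$, that is, $r\leq_{\dP}p$ and $r\Vdash\dot{s}\dot{\leq}\dot{q}$; I must exhibit $(p',\dot{q}')\leq_{\dP\times\dT}(p,\dot{q})$ with $\pi(p',\dot{q}')\leq_{\dP*\dot{\dQ}}(r,\dot{s})$. I would take $p':=r$ and build $\dot{q}'$ by amalgamating the names $\dot{s}$ and $\dot{q}$: extend $\{r\}$ to a maximal antichain $A\subseteq\dP$ and invoke the Mixing Lemma to obtain a $\dP$-name $\dot{q}'$ with $r\Vdash\dot{q}'=\dot{s}$ and $a\Vdash\dot{q}'=\dot{q}$ for every $a\in A\setminus\{r\}$. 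Since $(r,\dot{s})$ is a condition of $\dP*\dot{\dQ}$ we have $r\Vdash\dot{s}\in\dot{\dQ}$, and since $1_{\dP}\Vdash\dot{q}\in\dot{\dQ}$, maximality of $A$ yields $1_{\dP}\Vdash\dot{q}'\in\dot{\dQ}$; hence $\dot{q}'\in\dT(\dP,\dot{\dQ})$.

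It then remains to verify the two order relations. To see $1_{\dP}\Vdash\dot{q}'\dot{\leq}\dot{q}$ it suffices, again by maximality of $A$, to check $b\Vdash\dot{q}'\dot{\leq}\dot{q}$ for each $b\in A$: for $b=r$ this follows from $r\Vdash\dot{q}'=\dot{s}$ together with the hypothesis $r\Vdash\dot{s}\dot{\leq}\dot{q}$, and for $b\in A\setminus\{r\}$ it follows from $b\Vdash\dot{q}'=\dot{q}$ together with reflexivity of the (forced) partial order $\dot{\leq}$. Thus $\dot{q}'\prec\dot{q}$, and combined with $r\leq_{\dP}p$ this gives $(r,\dot{q}')\leq_{\dP\times\dT}(p,\dot{q})$. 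For the remaining relation, $r\Vdash\dot{q}'\dot{\leq}\dot{s}$ follows from $r\Vdash\dot{q}'=\dot{s}$ and reflexivity, and combined with $r\leq_{\dP}r$ this gives $\pi(r,\dot{q}')=(r,\dot{q}')\leq_{\dP*\dot{\dQ}}(r,\dot{s})$, completing clause~(3). I do not foresee a real obstacle here: the content is entirely the standard name-amalgamation trick, and the single point that needs care is that the product ordering on $\dP\times\dT(\dP,\dot{\dQ})$ asks for $1_{\dP}\Vdash\dot{q}'\dot{\leq}\dot{q}$ — strictly more than the two-step ordering on $\dP*\dot{\dQ}$, which asks only for $p'\Vdash\dot{q}'\dot{\leq}\dot{q}$ — and it is exactly this asymmetry that forces one to mix $\dot{s}$ with $\dot{q}$ in clause~(3), rather than simply take $\dot{q}':=\dot{s}$.
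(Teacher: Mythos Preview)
Your proof is correct and complete; the paper itself omits the proof entirely (the lemma is stated with a trailing \verb|\qed| and no argument), treating it as a standard fact. The mixing-lemma construction you give for clause~(3) is exactly the expected approach, and your closing remark pinpointing the asymmetry between the product ordering (which demands $1_{\dP}\Vdash\dot{q}'\dot{\leq}\dot{q}$) and the iteration ordering (which only demands $p'\Vdash\dot{q}'\dot{\leq}\dot{q}$) correctly identifies why one cannot simply take $\dot{q}':=\dot{s}$.
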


Recall that a poset $\dP$ is \emph{separative} if whenever $p_0\not\leq p$, there is $p_1\leq p$ such that $p_0$ and $p_1$ are incompatible. We also record the following easy lemma for later:

\begin{mylem}\label{lemma: Termspace separative}
    Let $\dP$ be a poset and $\dot{\dQ}$ a $\dP$-name for a poset. If $\dP$ forces that $\dot{\dQ}$ is separative, then $\dT(\dP,\dot{\dQ})$ is separative.
\end{mylem}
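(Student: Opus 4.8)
The plan is to unwind the definition of separativity for $\dT(\dP,\dot{\dQ})$ and reduce it, via a density argument in $\dP$, to the forced separativity of $\dot{\dQ}$. So suppose $\dot{q}_0,\dot{q}\in\dT(\dP,\dot{\dQ})$ and $\dot{q}_0\not\prec\dot{q}$; we must produce $\dot{q}_1\prec\dot{q}$ with $\dot{q}_1$ and $\dot{q}_0$ incompatible in $\dT(\dP,\dot{\dQ})$. By definition of $\prec$, the failure $\dot{q}_0\not\prec\dot{q}$ means it is not the case that $1_\dP\Vdash\dot{q}_0\mathbin{\dot\leq}\dot{q}$, so there is a condition $p\in\dP$ with $p\Vdash\dot{q}_0\mathbin{\dot{\not\leq}}\dot{q}$.

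Now I would work below $p$. Since $\dP$ forces $\dot{\dQ}$ to be separative and $p$ forces $\dot{q}_0\mathbin{\dot{\not\leq}}\dot{q}$, by the separativity of $\dot{\dQ}$ (applied inside the extension, with a mixing/maximal-antichain argument to get a single name) there is a $\dP$-name $\dot{q}_1$ for an element of $\dot{\dQ}$ such that $p\Vdash \dot{q}_1\mathbin{\dot\leq}\dot{q}\ \wedge\ \dot{q}_1\perp_{\dot{\dQ}}\dot{q}_0$. Outside the cone below $p$ there is no constraint, so below any $p'\perp p$ I simply let $\dot{q}_1$ name $1_{\dot{\dQ}}$ (or indeed $\dot{q}$ itself); splicing these choices together across a maximal antichain containing $p$ yields a genuine element $\dot{q}_1\in\dT(\dP,\dot{\dQ})$. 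It remains to check the two requirements: first, $\dot{q}_1\prec\dot{q}$, i.e.\ $1_\dP\Vdash\dot{q}_1\mathbin{\dot\leq}\dot{q}$ — this holds because below $p$ we chose $\dot{q}_1$ to be $\dot\leq\dot{q}$, and below conditions incompatible with $p$ we chose $\dot{q}_1$ to be $\dot{q}$, which is trivially $\dot\leq\dot{q}$ (using that $\dot{\dQ}$ is forced to be a poset, hence reflexive). Second, $\dot{q}_1\perp\dot{q}_0$ in $\dT(\dP,\dot{\dQ})$: if some $\dot{r}\prec\dot{q}_1$ and $\dot{r}\prec\dot{q}_0$, then $1_\dP\Vdash\dot{r}\mathbin{\dot\leq}\dot{q}_1\wedge\dot{r}\mathbin{\dot\leq}\dot{q}_0$; restricting to $p$, this contradicts $p\Vdash\dot{q}_1\perp_{\dot{\dQ}}\dot{q}_0$ (since $p$ forces a common lower bound $\dot r$ of $\dot q_1$ and $\dot q_0$ to exist).

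The only subtle point — and the step I would be most careful about — is the passage from ``$\dP$ forces that $\dot{\dQ}$ is separative'' and ``$p$ forces $\dot q_0\mathbin{\dot{\not\leq}}\dot q$'' to the existence of a \emph{single} name $\dot q_1$ witnessing incompatibility with $\dot q_0$ below $p$. This is the usual ``mixing lemma'' manoeuvre: separativity of $\dot{\dQ}$ gives, for each condition $p'\leq p$, some $p''\leq p'$ and a name deciding a suitable witness; by exhausting a maximal antichain of such $p''$ below $p$ and mixing the corresponding witnesses, one obtains the desired $\dot q_1$ defined on the cone below $p$. Everything else is bookkeeping with the forcing relation. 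I would present this as a short paragraph, citing ``standard arguments on names'' in the same spirit as the preceding lemma in the excerpt, rather than spelling out the antichain mixing in full.

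\begin{proof}[Proof sketch]
Let $\dot{q}_0,\dot{q}$ be $\dP$-names for conditions in $\dot{\dQ}$ with $\dot{q}_0\not\prec\dot{q}$. By definition of $\prec$, there is $p\in\dP$ with $p\Vdash\dot{q}_0\mathbin{\dot{\not\leq}}\dot{q}$. Since $\dP$ forces $\dot{\dQ}$ to be separative, a standard mixing argument below $p$ yields a $\dP$-name $\dot{q}_1$ for an element of $\dot{\dQ}$ such that $p\Vdash(\dot{q}_1\mathbin{\dot\leq}\dot{q}\ \wedge\ \dot{q}_1\perp_{\dot{\dQ}}\dot{q}_0)$ and $p'\Vdash\dot{q}_1=\dot{q}$ for every $p'\in\dP$ incompatible with $p$. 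Then $1_\dP\Vdash\dot{q}_1\mathbin{\dot\leq}\dot{q}$, so $\dot{q}_1\prec\dot{q}$. Finally, if $\dot{r}\prec\dot{q}_1$ and $\dot{r}\prec\dot{q}_0$ then $1_\dP$ forces $\dot{r}$ to be a common $\dot{\dQ}$-lower bound of $\dot{q}_1$ and $\dot{q}_0$; restricting to $p$ contradicts $p\Vdash\dot{q}_1\perp_{\dot{\dQ}}\dot{q}_0$. Hence $\dot{q}_1$ and $\dot{q}_0$ are incompatible in $\dT(\dP,\dot{\dQ})$, as required.
\end{proof}
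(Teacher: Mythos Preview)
Your proof is correct and follows essentially the same approach as the paper: find $p$ forcing $\dot q_0\not\dot\leq\dot q$, use forced separativity (via the maximum principle/mixing) to get a witness below $p$, splice it with $\dot q$ outside the cone of $p$, and verify $\dot q_1\prec\dot q$ together with $\dT$-incompatibility of $\dot q_1$ and $\dot q_0$. Your write-up is in fact slightly cleaner than the paper's, which contains a couple of minor typographical slips in naming the conditions.
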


\begin{proof}
    Let $\dot{q}_0,\dot{q}\in\dT(\dP,\dot{\dQ})$ be such that $\dot{q}_0\not\prec\dot{q}$, i.e. $1_{\dP}\not\Vdash\dot{q}_0\dot{\leq}\dot{q}$. Ergo there is $p\in\dP$ such that $p\Vdash\dot{q}\not\dot{\leq}\dot{q}'$. Since $\dot{\dQ}$ is forced to be separative, by the maximum principle we can find $\dot{q}'$ such that $p\Vdash(\dot{q}'\dot{\leq}\dot{q})\wedge(\dot{q}'\perp\dot{q}_0)$. Now let $\dot{q}_1$ be such that $p\Vdash\dot{q}'=\dot{q}_1$ and conditions incompatible with $p$ force $\dot{q}'=\dot{q}_1$. Then $\dot{q}_1\prec\dot{q}$ and clearly there is no $\dot{q}''$ such that $1_{\dP}\Vdash\dot{q}''\dot{\leq}\dot{q}_0,\dot{q}_1$. In particular, $\dot{q}_0$ and $\dot{q}_1$ are $\prec$-incompatible.
\end{proof}

\begin{mydef}
    Let $\kappa$ be an uncountable regular cardinal and $\mathbb{P}$ be a poset. We say that $\mathbb{P}$ is \emph{${<}\kappa$-closed} if every $\leq$-decreasing sequence $\langle p_\alpha\mid \alpha<\theta\rangle$ of conditions in $\mathbb{P}$ with $\theta<\kappa$ admits a $\leq$-lower bound. We say that $\mathbb{P}$ is ${<}\kappa$-directed closed if every directed subset $D\s \mathbb{P}$ of size ${<}\kappa$ has a lower bound.\footnote{Recall that $D\s \mathbb{P}$ is called directed if any two conditions $p,q$ are compatible via a member of $D$.}
\end{mydef}

We will later use posets which do not have lower bounds for arbitrary sequences of a given length but only those which were constructed according to a nice strategy. 

\begin{mydef}\label{def: INC COM game}
    Let $\dP$ be a poset and $\gamma$ an ordinal. The \emph{completenss game} $\mathcal{G}(\dP,\gamma)$ is defined as follows: The game lasts $\gamma$ many rounds. COM starts by playing $1_{\dP}$. At any stage $\delta$, we have a position $(p_{\alpha},q_{\alpha})_{\alpha<\delta}$. Then COM  has to play a condition $p_{\delta}$ which is a lower bound of $(q_{\alpha})_{\alpha<\delta}$ and INC has to play a condition $q_{\delta}\leq p_{\delta}$. COM wins if they can continue to play for all $\gamma$ many rounds. Otherwise, INC wins.

    For a cardinal $\kappa$, we say that $\dP$ is \emph{$\kappa$-strategically closed} if COM has a winning strategy in $\mathcal{G}(\dP,\kappa)$. We say that $\dP$ is \emph{${<}\,\kappa$-strategically closed} if COM has a winning strategy in $\mathcal{G}(\dP,\gamma)$ for every $\gamma<\kappa$.
\end{mydef}

Foreman was able to relate the distributivity of a poset $\dP$ to the nonexistence of a winning strategy for INC in the completeness game. This characterization often proves very useful because it allows one to use distributivity assumptions to construct sequences $(p_{\alpha})_{\alpha<\delta}$ where $p_{\alpha}$ depends on the history $(p_{\beta})_{\beta<\alpha}$, knowing that this construction will at least succeed once:

\begin{mysen}[Foreman \cite{ForemanBool}]\label{Theorem: Foreman Distributivity}
    Suppose that $\dP$ is a poset and $\kappa$ is a cardinal. Then $\dP$ is ${<}\,\kappa^+$-distributive if and only if $\mathrm{INC}$ does not have a winning strategy in $\mathcal{G}(\dP,\kappa+1)$.
\end{mysen}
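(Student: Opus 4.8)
The plan is to establish each implication in contrapositive form. \emph{Non-distributivity implies a win for INC.} Suppose $\dP$ is not ${<}\kappa^+$-distributive and fix dense open sets $\langle E_i\mid i<\kappa\rangle$ (pad with copies of $\dP$ if fewer than $\kappa$ of them suffice) together with a condition $p^*$ below which $\bigcap_{i<\kappa}E_i$ has no member. I would then exhibit a winning strategy for INC: respond to COM's forced opening move $1_\dP$ with $q_0:=p^*$; at a successor round $i+1$, after COM plays $p_{i+1}$, use density of $E_i$ to choose $q_{i+1}\leq p_{i+1}$ with $q_{i+1}\in E_i$; at a limit round $\delta$ play $q_\delta:=p_\delta$. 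The verification is short: if COM ever reached round $\kappa$ by producing a lower bound $p_\kappa$ of $\langle q_\alpha\mid\alpha<\kappa\rangle$, then $p_\kappa\leq q_{i+1}\in E_i$ for every $i$, so $p_\kappa\in\bigcap_{i<\kappa}E_i$ by openness, while $p_\kappa\leq q_0=p^*$ — contradicting the choice of $p^*$. Hence COM is stuck before round $\kappa$ is completed and INC wins $\mathcal{G}(\dP,\kappa+1)$.

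\emph{Distributivity implies no winning strategy for INC.} Fix an arbitrary strategy $\tau$ for INC; I will produce a run of $\mathcal{G}(\dP,\kappa+1)$ in which INC obeys $\tau$ but COM survives all $\kappa+1$ rounds. Put $q_0:=\tau(\langle 1_\dP\rangle)$ and pass to a generic extension $V[G]$ with $q_0\in G$, so that $\dP\uhr q_0$ is still ${<}\kappa^+$-distributive. In $V[G]$ I build a $\tau$-play $\langle p_\alpha,q_\alpha\mid\alpha\leq\kappa\rangle$ by recursion, maintaining that each $q_\alpha$ lies in $G$. Since every proper initial segment of the play is a ${\leq}\kappa$-sequence of ground-model objects, distributivity places it in $V$, and hence any set defined from it together with the ground-model parameters $\tau,\dP$ is again in $V$. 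At a successor step, with $t$ the play built so far and $q_\epsilon\in G$ its last INC-answer, the set $A:=\{\tau(t^{\frown}\langle p\rangle)\mid p\leq q_\epsilon\}$ is dense below $q_\epsilon$ — immediate from $\tau(t^{\frown}\langle r\rangle)\leq r$ — and lies in $V$, hence meets $G$, and I pick $q_{\epsilon+1}$ (with a witnessing $p_{\epsilon+1}$) from it. At a limit $\lambda\leq\kappa$ I first apply the sublemma that \emph{any descending ${\leq}\kappa$-sequence of conditions lying in $G$ has a lower bound in $G$}: such a sequence is in $V$ by distributivity, and below any $p\in G$ that forces all its terms $s_\alpha$ into $\dot G$ the ${\leq}\kappa$ dense open sets $\{p'\leq p\mid p'\leq s_\alpha\}$ have dense intersection (distributivity once more), which meets $G$; the routine part here is producing $p$ by deciding the statement ``all $\check s_\alpha\in\dot G$''. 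Armed with a lower bound of $\langle q_\xi\mid\xi<\lambda\rangle$ in $G$, the limit analogue of $A$ — the set of $\tau(t^{\frown}\langle p\rangle)$ for $p$ a lower bound of $\langle q_\xi\mid\xi<\lambda\rangle$ — is dense below it and lies in $V$, so I pick $q_\lambda\in G$ from it. The recursion thus runs through all $\lambda<\kappa$, and at $\lambda=\kappa$ the sublemma hands over a lower bound $p_\kappa\in G$ of $\langle q_\alpha\mid\alpha<\kappa\rangle$, a legal final COM-move. So COM completes the game against $\tau$; and since the whole run is a ${\leq}\kappa$-sequence of ground-model elements it already belongs to $V$, so this genuinely refutes that $\tau$ is winning.

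The substantive difficulty is the second direction. The naive attempt — reading $\kappa$ dense open sets directly off $\tau$ so that a condition in their intersection traces out a winning COM-play — breaks down because distinct COM-plays generate $\tau$-answers that fail to cohere, so no single condition lies below the $\tau$-answers of one fixed play. The remedy is to run the construction \emph{inside} a generic extension, where ${<}\kappa^+$-distributivity serves two purposes at once: it reflects the constructed run back into $V$, and (via the sublemma) it supplies lower bounds for the descending chains of $\tau$-answers that the game produces at limit stages. Everything else — density of $A$ and its limit analogue, the absoluteness bookkeeping, and the density argument behind the sublemma — is routine.
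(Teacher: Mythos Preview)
The paper does not prove this statement; it is merely quoted with a citation to Foreman's original paper \cite{ForemanBool}, so there is nothing to compare your argument against on the paper's side.

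Your proof is correct and is essentially the standard argument. The first direction is routine. For the harder direction, your idea of building a $\tau$-play inside a generic filter $G$ is exactly the right one: at successor and limit stages the set of possible $\tau$-responses is dense below the current condition and lies in $V$ (by distributivity applied to the partial play), so it meets $G$; and your sublemma---that a descending ${\leq}\kappa$-sequence of members of $G$ has a lower bound in $G$---is correctly derived by finding $p\in G$ forcing every term into $\dot G$, observing that each $\{p'\leq p\mid p'\leq s_\alpha\}$ is then dense open below $p$ (compatibility of $r\leq p$ with $s_\alpha$ follows from $r\Vdash\check s_\alpha\in\dot G$), and intersecting these $\kappa$-many sets via distributivity. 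The final appeal to distributivity to place the entire run back in $V$ is also correct. One small stylistic remark: the clause ``so that $\dP\uhr q_0$ is still ${<}\kappa^+$-distributive'' is slightly misplaced---this is a ground-model fact about the restricted poset, not something that needs the generic---but it does no harm.
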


A critical fact connecting all the notions defined so far is due to Easton:

\begin{myfact}[Easton's Lemma]\label{lemma: Eastons lemma}
Let $\kappa$ be an uncountable regular cardinal. Assume that  $\mathbb{P}$ is $\kappa$-cc and $\mathbb{Q}$ is ${<}\kappa$-closed. Then: 
\begin{enumerate}
    \item $\forces_{\mathbb{P}\times \mathbb{Q}}``\kappa$ is a regular uncountable cardinal".
    \item $\forces_{\mathbb{P}}``\check{\mathbb{Q}}$ is ${<}\kappa$-distributive".
     \item $\forces_{\mathbb{Q}}``\check{\mathbb{P}}$ is $\kappa$-cc".\qed
\end{enumerate}
\end{myfact}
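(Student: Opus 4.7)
The plan is to handle (3) first, deduce (1) from it, and then address the more delicate (2).

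For (3): let $H\subseteq \mathbb{Q}$ be generic and suppose for contradiction that some $q_0 \in H$ forces a $\mathbb{Q}$-name $\dot{A}$ to enumerate a $\kappa$-sized antichain in $\mathbb{P}$. Using the ${<}\kappa$-closure of $\mathbb{Q}$, build in $V$ a decreasing sequence $(q_\alpha)_{\alpha<\kappa}$ below $q_0$ such that each $q_\alpha$ decides $\dot{A}(\alpha)=\check{p}_\alpha$ for some $p_\alpha\in \mathbb{P}$. Then $\{p_\alpha : \alpha<\kappa\}$ is already an antichain in $V$, for any common lower bound of $p_\alpha$ and $p_\beta$ (with $\alpha<\beta$) would be forced by $q_\beta$ to extend both $\dot{A}(\alpha)$ and $\dot{A}(\beta)$, contradicting that $\dot{A}$ is an antichain. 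This contradicts the $\kappa$-cc of $\mathbb{P}$. For (1), decompose $\mathbb{P}\times \mathbb{Q}$ as the two-step iteration $\mathbb{Q}*\check{\mathbb{P}}$: the first factor preserves the regularity of $\kappa$ by ${<}\kappa$-closure, and by (3) $\mathbb{P}$ remains $\kappa$-cc over $V[H]$, so the second factor preserves it as well.

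For (2), fix $\gamma<\kappa$, a $\mathbb{P}\times \mathbb{Q}$-name $\dot{f}$ for a function $\gamma\to \On$, and a condition $(p_0,q_0)$. I aim to exhibit $q\leq q_0$ in $V$ and a pure $\mathbb{P}$-name $\dot{h}$ for a function $\gamma\to \On$ with $(p_0,q)\Vdash \dot{f}=\dot{h}$; this suffices. Call $q^*\leq q_0$ \emph{good at $\alpha$} if the set $A_{q^*,\alpha}:=\{p\leq p_0 : (p,q^*)\text{ decides }\dot{f}(\alpha)\}$ is dense in $\mathbb{P}$ below $p_0$; in that case $\dot{f}(\alpha)$ is forced, below $(p_0,q^*)$, to coincide with a pure $\mathbb{P}$-name read off from any maximal antichain of $\mathbb{P}$ below $p_0$ contained in $A_{q^*,\alpha}$. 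Goodness at $\alpha$ is preserved by further extensions, so it is enough to construct $q_0\geq q^*_1\geq\cdots\geq q^*_\gamma$ in $V$ with $q^*_{\alpha+1}$ good at $\alpha$, using ${<}\kappa$-closure of $\mathbb{Q}$ at limits; then $q:=q^*_\gamma$ is good at every $\alpha<\gamma$.

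The core step is producing, below any $q^*\leq q_0$, some $q^{**}\leq q^*$ good at a given $\alpha$. Maintain a descending sequence $(q^{(i)})$ in $\mathbb{Q}$ starting at $q^{(0)}=q^*$ together with $M_i\subseteq A_{q^{(i)},\alpha}$, a maximal antichain of $A_{q^{(i)},\alpha}$ below $p_0$. If $M_i$ happens to be maximal in $\mathbb{P}$ below $p_0$ as well, stop: then $A_{q^{(i)},\alpha}$ is dense below $p_0$ and $q^{**}:=q^{(i)}$ works. Otherwise pick $p\leq p_0$ incompatible with $M_i$; maximality of $M_i$ in $A_{q^{(i)},\alpha}$ forces no extension of $p$ to lie in $A_{q^{(i)},\alpha}$, so density in $\mathbb{P}\times \mathbb{Q}$ supplies $(p',q^{(i+1)})\leq(p,q^{(i)})$ deciding $\dot{f}(\alpha)$ with necessarily $q^{(i+1)}<q^{(i)}$. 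Enlarge $M_i\cup\{p'\}$ to an $M_{i+1}$ that is maximal in $A_{q^{(i+1)},\alpha}$. Limits are handled by ${<}\kappa$-closure of $\mathbb{Q}$. Because $|M_i|$ strictly increases, the $\kappa$-cc of $\mathbb{P}$ forces termination in fewer than $\kappa$ steps. The main obstacle of the whole argument is precisely this inner recursion: $\mathbb{P}\times \mathbb{Q}$ is in general \emph{not} ${<}\kappa$-closed (since $\mathbb{P}$ need not be), so one cannot simply descend in the product while deciding $\dot{f}(\alpha)$ for each $\alpha$. The workaround exploits the asymmetry of the two factors: the ${<}\kappa$-closure of $\mathbb{Q}$ drives the descending $\mathbb{Q}$-chain, while the $\kappa$-cc of $\mathbb{P}$ bounds the number of $\mathbb{P}$-side refinements required per coordinate.
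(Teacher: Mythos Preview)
The paper does not supply a proof of Easton's Lemma: the statement is recorded as a classical fact and closed with a \qed. There is therefore nothing in the paper to compare your argument against. Your proof is correct and follows the standard textbook approach (see e.g.\ Jech or Cummings' handbook chapter): part~(3) via a descending $\mathbb{Q}$-chain of length $\kappa$ reading off a ground-model antichain, part~(1) as an immediate corollary, and part~(2) via the inner recursion that alternates extending in $\mathbb{Q}$ with growing a maximal antichain in $\mathbb{P}$, terminating by the $\kappa$-cc. One cosmetic point: in the inner recursion for~(2), at limit stages you should say explicitly that $M_\lambda$ is taken to be any maximal antichain in $A_{q^{(\lambda)},\alpha}$ extending $\bigcup_{i<\lambda} M_i$ (not merely the union), so that the maximality hypothesis used at the next successor step is available; this does not affect the size bound $|M_i|\geq|i|$ that drives termination.
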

Our posets will derive their regularity properties from being of \emph{Prikry-type}.

\begin{mydef}[Gitik]
    Let $(\dP,\leq,\leq_0)$ be such that $(\dP,\leq)$ and $(\dP,\leq_0)$ are posets and $\leq$ refines $\leq_0$. We say that \emph{$(\dP,\leq,\leq_0)$ is a Prikry-type forcing} if whenever $p\in\dP$ and $\sigma$ is a sentence in the forcing language, there exists $q\leq_0p$ which decides $\sigma$.
\end{mydef}

Equivalently, $(\dP,\leq,\leq_0)$ is a Prikry-type forcing if and only if names for elements of the ordinal $2$ can be decided using $\leq_0$- (i.e. \emph{pure} or \emph{direct}) extensions. This idea leads to the following definition:

\begin{mydef}\label{def: pure decidability}
    Let $(\dP,\leq,\leq_0)$ be a Prikry-type forcing and $\mu$ a cardinal.
    \begin{enumerate}
        \item We say that $(\dP,\leq,\leq_0)$ has \emph{pure $\mu$-decidability} if whenever $\tau$ is a $\dP$-name and $p\in\dP$ forces $\tau<\check{\mu}$, there is $\alpha<\mu$ and $q\leq_0p$ such that $q\Vdash\tau=\check{\alpha}$.
        \item We say that $(\dP,\leq,\leq_0)$ has \emph{almost pure $\mu$-decidability} if whenever $\tau$ is a $\dP$-name and $p\in\dP$ forces $\tau<\check{\mu}$, there is $\alpha<\mu$ and $q\leq_0p$ such that $q\Vdash\tau<\check{\alpha}$.
    \end{enumerate}
\end{mydef}

We now turn to the concept of iterating Prikry-type forcings. The most important point here is that we want retain that any such iteration still has the Prikry property. A notion of iteration which satisfies this property was invented by Magidor in  \cite{MagidorIdentityCrises}. These iterations can be defined with Easton support, non-stationary and full support, but we will only use Easton-support iterations.

\begin{mydef}\label{def: Easton support Magidor}
	Let $((\dP_{\alpha},\leq_{\alpha},\leq_{\alpha,0}),(\dot{\dQ}_{\alpha},\dot{\leq}_{\alpha},\dot{\leq}_{\alpha,0}))_{\alpha<\rho}$ be a sequence such that each $(\dP_{\alpha},\leq_{\alpha},\leq_{\alpha,0})$ is a poset and each $(\dot{\dQ}_{\alpha},\dot{\leq}_{\alpha},\dot{\leq}_{\alpha,0})$ is a $\dP_{\alpha}$-name for a Prikry-type poset. We define the statement ``$((\dP_{\alpha},\leq_{\alpha},\leq_{\alpha,0}),(\dot{\dQ}_{\alpha},\dot{\leq}_{\alpha},\dot{\leq}_{\alpha,0}))_{\alpha<\rho}$ is an Easton-support Magidor iteration of Prikry-type forcings of length $\rho$'' by induction on $\rho$.
		
	$((\dP_{\alpha},\leq_{\alpha},\leq_{\alpha,0}),(\dot{\dQ}_{\alpha},\dot{\leq}_{\alpha},\dot{\leq}_{\alpha,0}))_{\alpha<\rho}$ is an Easton-support Magidor iteration of Prikry-type forcings of length $\rho$ if $((\dP_{\alpha},\leq_{\alpha},\leq_{\alpha,0}),(\dot{\dQ}_{\alpha},\dot{\leq}_{\alpha},\dot{\leq}_{\alpha,0}))_{\alpha<\rho'}$ is an Easton-support Magidor iteration of Prikry-type forcings of length $\rho'$ for every $\rho'<\rho$ and moreover:
	\begin{enumerate}
		\item If $\rho=\rho'+1$, then $(\dP_{\rho},\leq_{\rho}):=(\dP_{\rho'},\leq_{\rho'})*(\dot{\dQ}_{\rho'},\dot{\leq}_{\rho'})$ and $(p',\dot{q}')\leq_{\rho,0}(p,\dot{q})$ if and only if $p'\leq_{\rho',0}p$ and $p'\Vdash\dot{q}'\dot{\leq}_{\rho',0}\dot{q}$.
		\item If $\rho$ is a limit, then $\dP_{\rho}$ consists of all functions $p$ on $\rho$ such that
		\begin{enumerate}
			\item For all $\alpha<\rho$, $p\uhr\alpha\in\dP_{\alpha}$,
			\item If $\rho$ is inaccessible and $|\dP_{\alpha}|<\rho$ for every $\alpha<\rho$, then there is some $\beta<\rho$ such that for all $\gamma\in(\beta,\rho)$, $p\uhr\gamma\Vdash p(\gamma)=1_{\dot{\dQ}_{\gamma}}$.
		\end{enumerate}
		and the following holds:
		\begin{enumerate}
			\item[(i)] $p'\leq_{\rho}p$ if and only if $p'\uhr\rho'\leq_{\rho'}p\uhr\rho'$ for every $\rho'<\rho$ and there exists a finite subset $b$ such that whenever $\rho'\notin b$ and $p\uhr\rho'\not\Vdash p(\rho')=1_{\dot{\dQ}_{\rho'}}$, 
            then $p'\uhr\rho'\Vdash p'(\rho')\dot{\leq}_{\rho',0} p(\rho')$.
			\item[(ii)] $p'\leq_{\rho,0}p$ if and only if $p'\leq_{\rho}p$ and the set $b$ is empty.
		\end{enumerate} 
	\end{enumerate}
\end{mydef}
	
We have the following (see \cite[Section 6.3]{GitikHandbook} or \cite[Lemma 2.4]{JakobLevine}):
	
\begin{mylem}\label{lemma: Easton support Magidor}
	Let $((\dP_{\alpha},\leq_{\alpha},\leq_{\alpha,0}),(\dot{\dQ}_{\alpha},\dot{\leq}_{\alpha},\dot{\leq}_{\alpha,0}))_{\alpha<\rho}$ be an Easton support Magidor iteration of Prikry-type forcings of length $\rho$. Let $\mu$ be a cardinal.
    
    Assume that for each $\alpha<\rho$, $\dP_{\alpha}$ forces that $\dot{\dQ}_{\alpha}$ has (almost) pure $\check{\mu}$-decidability. Then for each $\alpha<\rho$, $\dP_{\alpha}$ has (almost) pure $\check{\mu}$-decidability. \qed
\end{mylem}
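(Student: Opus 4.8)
The statement is Lemma about preservation of (almost) pure $\mu$-decidability under Easton-support Magidor iterations of Prikry-type forcings. I would argue by induction on the length $\rho$, tracking both the trivial cases and the two nontrivial ones simultaneously. Throughout, fix a $\dP_\alpha$-name $\tau$ and a condition $p$ forcing $\tau<\check\mu$; the goal is to find $q\le_{\alpha,0}p$ (a pure extension) and an ordinal $\beta<\mu$ with $q\Vdash\tau=\check\beta$ (resp. $q\Vdash\tau<\check\beta$ in the ``almost'' variant). The base case $\alpha=0$ is vacuous, and the successor case $\alpha=\rho'+1$, where $\dP_\alpha=\dP_{\rho'}*\dot\dQ_{\rho'}$, follows by combining the inductive hypothesis for $\dP_{\rho'}$ with the hypothesis that $\dP_{\rho'}$ forces $\dot\dQ_{\rho'}$ to have (almost) pure $\check\mu$-decidability: first work below the $\dP_{\rho'}$-coordinate to purely decide, using a $\dP_{\rho'}$-name, the value handed up by the $\dot\dQ_{\rho'}$-component, then purely decide that name in $\dP_{\rho'}$. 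Since $\le_{\alpha,0}$ on a two-step iteration is exactly the coordinatewise direct extension, this composition lands in $\le_{\alpha,0}$.

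\textbf{The limit case.} The heart of the matter is $\rho$ a limit ordinal, and here one wants the standard ``Prikry property propagates through Magidor limits'' machinery from \cite[Section 6.3]{GitikHandbook}. The key point is that $\le_{\rho,0}$ (direct extension at a limit) allows extending on \emph{all} coordinates simultaneously, as long as no coordinate leaves the finite ``exception set'' $b$ — in fact for $\le_{\rho,0}$ one requires $b=\emptyset$. The argument interleaves: one runs a fusion-style bookkeeping down the support of $p$, at each coordinate $\gamma$ in the (Easton-bounded, hence small) support invoking the inductive hypothesis / the single-step decidability for $\dot\dQ_\gamma$ to purely shrink that coordinate so as to decide more and more of $\tau$, and the support being of size ${<}\rho$ (or bounded below an inaccessible $\rho$) is what lets the simultaneous direct extension exist. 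For genuine pure decidability one gets a single ordinal $\beta<\mu$; for \emph{almost} pure decidability one instead gets, at each stage, a bound, and since $\mu$ is a cardinal and the number of stages is controlled, the supremum of these bounds is still below $\mu$ — this is precisely why the ``almost'' version is the one that survives limits as cleanly as the strict version.

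\textbf{Main obstacle.} I expect the real work to be the limit step, specifically verifying that the simultaneous pure extension produced by the bookkeeping is legitimately a condition in $\dP_\rho$ (the Easton-support requirement (2b) at inaccessible $\rho$) and that it is a $\le_{\rho,0}$-extension of $p$, i.e.\ that the exception set $b$ genuinely stays empty. This is the usual delicate part of Magidor-iteration arguments: one must ensure the shrinking on coordinate $\gamma$ only depends on (names for) the part of the generic below $\gamma$, so that the final object is coherent, and one must confirm that only finitely many — in fact zero — coordinates are modified ``non-purely''. Once that is in place, the decision of $\tau$ comes by a density/maximality argument using that each tail beyond the relevant support is directed-closed enough to not add the relevant information, so the finitely many active coordinates plus the trunk already pin down $\tau$. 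I would lean on \cite[Lemma 2.4]{JakobLevine} and \cite[Section 6.3]{GitikHandbook} for the precise form of the fusion lemma rather than redeveloping it, and then the decidability statement drops out by feeding $\tau$ through that apparatus.
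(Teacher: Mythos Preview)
The paper gives no proof of this lemma at all; it simply cites \cite[Section 6.3]{GitikHandbook} and \cite[Lemma 2.4]{JakobLevine} and places a \qed. Since you ultimately defer to precisely the same two references for the limit step, your approach matches the paper's.

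Two remarks on the informal sketch you add on top. First, the support of a condition in $\dP_\rho$ need not have size ${<}\rho$ (at $\rho=\omega$, say, conditions have full support), so the picture of ``bookkeeping down the support'' is not quite right. Second, and more substantively, your heuristic for the \emph{almost} case --- ``at each stage one gets a bound, and since $\mu$ is a cardinal and the number of stages is controlled, the supremum stays below $\mu$'' --- is not how the argument in the references proceeds and would fail outright when $\rho\ge\cf(\mu)$. The actual limit-stage mechanism in Gitik's chapter reduces any putative deciding extension $q\le p$ to its finite exception set $b$, passes to $\dP_{\max(b)+1}$, and invokes the inductive hypothesis there; a single bound below $\mu$ is produced, never a supremum of many. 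Since you explicitly hand the limit case off to the cited lemmas rather than relying on this heuristic, the imprecision does no damage to the proposal as written.
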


Prikry-type forcings mostly derive their regularity properties from the Prikry property together with the closure of their pure extension ordering. In iterations, notice that the pure extension ordering on the iteration is not the same as the iteration of the pure extension orderings. Despite this fact, since the regular ordering refines the pure extension ordering, one checks easily that the usual proof of the iterability of closure properties goes through:

\begin{mylem}\label{lemma: Iteration pure closure}
    Let $\mu$ be a cardinal. Let $((\dP_{\alpha},\leq_{\alpha},\leq_{\alpha,0}),(\dot{\dQ}_{\alpha},\dot{\leq}_{\alpha},\dot{\leq}_{\alpha,0}))_{\alpha<\rho}$ be an Easton support Magidor iteration of Prikry-type forcings of length $\rho$. If $\mu$ is below the first inaccessible cardinal and for each $\alpha<\rho$, $\dP_{\alpha}$ forces that $\dot{\dQ}_{\alpha}$ is ${<}\,\check{\mu}$-directed closed, then each $\dP_{\alpha}$ is ${<}\,\mu$-directed closed.\qed
\end{mylem}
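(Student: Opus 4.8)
The plan is to establish, by a single transfinite recursion, that $(\dP_\rho,\leq_{\rho,0})$ is ${<}\mu$-directed closed; once this is proved, the assertion for each $\dP_\alpha$ with $\alpha<\rho$ follows by applying it to the restricted iteration of length $\alpha$. (As is customary for Prikry-type forcings, ``${<}\mu$-directed closed'' is read with respect to the pure ordering.) So I would fix $\theta<\mu$ and a family $\langle p_i\mid i<\theta\rangle$ that is directed with respect to $\leq_{\rho,0}$, and build conditions $q\uhr\alpha\in\dP_\alpha$ by recursion on $\alpha\leq\rho$ so that $q\uhr\alpha\leq_{\alpha,0}p_i\uhr\alpha$ for all $i<\theta$, with the crucial bookkeeping constraint $\supp(q\uhr\alpha)\s\bigcup_{i<\theta}\supp(p_i)$.

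At a successor step $\alpha=\beta+1$: if $p_i\uhr\beta\Vdash p_i(\beta)=1_{\dot\dQ_\beta}$ for every $i$, set $q(\beta):=1_{\dot\dQ_\beta}$. Otherwise, note first that $q\uhr\beta$ forces $\{p_i(\beta)\mid i<\theta\}$ to be a $\dot\leq_{\beta,0}$-directed subset of $\dot\dQ_\beta$ of size ${<}\mu$: given $i,j<\theta$, choose $k$ with $p_k\leq_{\rho,0}p_i,p_j$; unwinding Definition~\ref{def: Easton support Magidor} at coordinate $\beta$ (with empty exceptional set) yields $p_k\uhr\beta\Vdash p_k(\beta)\dot\leq_{\beta,0}p_i(\beta),p_j(\beta)$, and since $p_k\uhr\beta$ lies in the family of which $q\uhr\beta$ is a pure—hence regular—lower bound, this forcing statement is inherited by $q\uhr\beta$. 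Because $\dP_\beta$ forces $\dot\dQ_\beta$ to be ${<}\mu$-directed closed and $\theta<\mu$, the maximum principle supplies a name $q(\beta)$ with $q\uhr\beta\Vdash q(\beta)\dot\leq_{\beta,0}p_i(\beta)$ for all $i$; defining $q\uhr(\beta+1)$ to be $q\uhr\beta$ with $q(\beta)$ appended at coordinate $\beta$ then gives $q\uhr(\beta+1)\leq_{\beta+1,0}p_i\uhr(\beta+1)$ for every $i$. At a limit step $\alpha$ I take unions, $q\uhr\alpha:=\bigcup_{\beta<\alpha}q\uhr\beta$; the inductively maintained coordinatewise inequalities give $q\uhr\alpha\leq_{\alpha,0}p_i\uhr\alpha$ with empty exceptional set, and when $\alpha$ is inaccessible with $|\dP_\beta|<\alpha$ for all $\beta<\alpha$, the fact that $\mu$ lies below the first inaccessible forces $\theta<\mu<\alpha$, so regularity of $\alpha$ makes $\bigcup_{i<\theta}\supp(p_i)\cap\alpha$ bounded in $\alpha$; since $\supp(q\uhr\alpha)$ sits inside it, $q\uhr\alpha$ satisfies the Easton support requirement and hence $q\uhr\alpha\in\dP_\alpha$. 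Taking $\alpha=\rho$ gives the desired lower bound.

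The one genuinely delicate point---and the reason one cannot simply invoke a black box---is that the pure ordering of a Magidor iteration is \emph{not} the coordinatewise iteration of the pure orderings: a pure extension may move arbitrarily, within the regular ordering, at coordinates where the weaker condition is trivial. Consequently one must check at each limit stage that the pure inequality is forced \emph{precisely} at the nontrivial coordinates of each $p_i$---which the successor step of the recursion arranges directly---and one must independently bound the support of $q$ at inaccessible limits, which is exactly where the constraint $\supp(q\uhr\alpha)\s\bigcup_i\supp(p_i)$ and the hypothesis ``$\mu$ below the first inaccessible'' are used. I do not anticipate any substantive obstacle past this: the argument is a routine adaptation of the standard proof that an iteration of ${<}\mu$-closed forcings is ${<}\mu$-closed, which is presumably why the authors leave it to the reader.
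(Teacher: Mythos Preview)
Your proposal is correct and is precisely the ``usual proof of the iterability of closure properties'' that the paper invokes before leaving the lemma unproved; you have simply spelled out the standard recursion and identified the two places (pure inequalities only at nontrivial coordinates, Easton support at inaccessible limits) where care is needed. One very small imprecision: in the successor step, unwinding Definition~\ref{def: Easton support Magidor} gives $p_k\uhr\beta\Vdash p_k(\beta)\dot{\leq}_{\beta,0}p_i(\beta)$ only when $\beta\in\supp(p_i)$, so for those $i$ with $p_i\uhr\beta\Vdash p_i(\beta)=1_{\dot\dQ_\beta}$ you should note separately that $q\uhr\beta$ inherits this and hence the required inequality holds trivially---but this is an easy patch and does not affect the argument.
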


Lastly, we show the existence of a nice projection for Prikry-type forcings (similar to an idea of Foreman, see \cite{ForemanSatIdeal}).

\begin{mylem}\label{lemma: Projection Easton support Magidor}
    Let $((\dP_{\alpha},\leq_{\alpha},\leq_{\alpha,0}),(\dot{\dQ}_{\alpha},\dot{\leq}_{\alpha},\dot{\leq}_{\alpha,0}))_{\alpha<\rho}$ be an Easton support Magidor iteration of Prikry-type forcings of length $\rho$. Let $\dT:=\prod_{\alpha<\kappa}\dT((\dP_{\alpha},\leq_{\alpha}),(\dot{\dQ}_{\alpha},\dot{\leq}_{\alpha,0}))$, where the product is taken with Easton support. Then the identity is a projection from $\dT$ onto the direct extension ordering on the Easton limit of the iteration $((\dP_{\alpha},\leq_{\alpha},\leq_{\alpha,0}),(\dot{\dQ}_{\alpha},\dot{\leq}_{\alpha},\dot{\leq}_{\alpha,0}))_{\alpha<\rho}$. \qed
\end{mylem}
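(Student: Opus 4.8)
The plan is to verify, for the identity map, the three requirements in the definition of a projection, after first recording why the statement makes sense. Identifying $t\in\dT$ with the function $\alpha\mapsto t(\alpha)$ it already is, one checks by a routine induction on $\rho$ that $t\in\dP_\rho$, so the identity is a genuine map $\dT\to\dP_\rho$; requirement $(1)$ holds because both trivial conditions are the function $\alpha\mapsto 1_{\dot{\dQ}_\alpha}$. Requirement $(2)$ is almost free: if $t'\leq_{\dT}t$, then for every $\alpha<\rho$ we have $1_{\dP_\alpha}\Vdash t'(\alpha)\dot{\leq}_{\alpha,0}t(\alpha)$, so in particular $t'\uhr\alpha\Vdash t'(\alpha)\dot{\leq}_{\alpha,0}t(\alpha)$; a straightforward induction on $\alpha$ also gives $t'\uhr\alpha\leq_\alpha t\uhr\alpha$, and together these say precisely $t'\leq_{\rho,0}t$ in $\dP_\rho$. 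This is exactly the place where it matters that the termspace factors were formed with the \emph{direct} extension $\dot{\leq}_{\alpha,0}$ rather than $\dot{\leq}_\alpha$: that choice is what aligns the product with the direct extension ordering $\leq_{\rho,0}$ of Definition~\ref{def: Easton support Magidor}, so that no finite exceptional set is ever needed.

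The content is in requirement $(3)$. Let $q\in\dP_\rho$ with $q\leq_{\rho,0}t$ (reading $t$ as a member of $\dP_\rho$); I must find $t'\leq_{\dT}t$ with $t'\leq_{\rho,0}q$. I would build $t'$ coordinate-wise: let $t'(\alpha)$ be a $\dP_\alpha$-name such that $q\uhr\alpha\Vdash t'(\alpha)=q(\alpha)$ while every condition incompatible with $q\uhr\alpha$ forces $t'(\alpha)=t(\alpha)$; these clauses cover a dense subset of $\dP_\alpha$, so $t'(\alpha)$ is well defined. Since $q\leq_{\rho,0}t$ gives $q\uhr\alpha\Vdash q(\alpha)\dot{\leq}_{\alpha,0}t(\alpha)$, a case split together with reflexivity of $\dot{\leq}_{\alpha,0}$ yields $1_{\dP_\alpha}\Vdash t'(\alpha)\dot{\leq}_{\alpha,0}t(\alpha)$, i.e.\ $t'\leq_{\dT}t$. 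Moreover $\supp(t')\subseteq\supp(q)\cup\supp(t)$, so $t'$ is a legitimate condition of $\dT$ (and hence of $\dP_\rho$).

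It remains to verify $t'\leq_{\rho,0}q$ in $\dP_\rho$, which I would prove by induction on $\alpha\leq\rho$ in the form $t'\uhr\alpha\leq_{\alpha,0}q\uhr\alpha$. At a successor $\alpha=\beta+1$: the inductive hypothesis gives $t'\uhr\beta\leq_{\beta,0}q\uhr\beta$, hence $t'\uhr\beta\leq_\beta q\uhr\beta$, hence $t'\uhr\beta$ forces $q\uhr\beta$ into the generic filter on $\dP_\beta$, hence by the definition of $t'(\beta)$ we get $t'\uhr\beta\Vdash t'(\beta)=q(\beta)$, so in particular $t'\uhr\beta\Vdash t'(\beta)\dot{\leq}_{\beta,0}q(\beta)$; this is exactly what the successor clause of Definition~\ref{def: Easton support Magidor} demands for $t'\uhr(\beta+1)\leq_{\beta+1,0}q\uhr(\beta+1)$. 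Limit stages assemble from the successor case, with the finite exceptional set empty throughout, so $t'=t'\uhr\rho\leq_{\rho,0}q$, which completes the proof. The step I expect to be the genuine (if modest) obstacle is precisely this last induction: the name $t'(\alpha)$ agrees with $q(\alpha)$ only below $q\uhr\alpha$, so one cannot conclude that $t'(\alpha)$ directly extends $q(\alpha)$ until one knows, inductively, that $t'\uhr\alpha$ has already forced $q\uhr\alpha$ into the generic. The remaining points --- that each $t'\uhr\alpha$ lies in $\dP_\alpha$, and the Easton-support bookkeeping --- are routine and parallel the standard termspace-projection arguments going back to Foreman.
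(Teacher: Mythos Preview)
Your proof is correct and follows essentially the same approach as the paper's: verify that the identity is order-preserving by induction, then for the projection clause build the required $t'$ coordinatewise by mixing $q(\alpha)$ and $t(\alpha)$, and verify $t'\leq_{\rho,0}q$ by induction. The only cosmetic difference is that the paper defines the mixing name at coordinate $\alpha$ using the already-constructed restriction $t'\uhr\alpha$ (so the construction is recursive), whereas you use the fixed $q\uhr\alpha$; since your induction establishes $t'\uhr\alpha\leq_{\alpha,0}q\uhr\alpha$ anyway, both choices yield the same conclusion.
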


\begin{proof}
		Let $\dP$ be the Easton-support Magidor limit of $((\dP_{\alpha},\leq_{\alpha},\leq_{\alpha,0}),(\dot{\dQ}_{\alpha},\dot{\leq}_{\alpha},\dot{\leq}_{\alpha,0}))$. Recall that $\dP$ consists of all functions $p$ on $\kappa$ such that $p\uhr\alpha\in\dP_{\alpha}$ for all $\alpha<\kappa$ and the support of $p$ is an Easton set (i.e. it is bounded in every regular cardinal), directly ordered by $p\leq_0 q$ if and only if $p\uhr\alpha\leq_{\alpha,0}q\uhr\alpha$ for all $\alpha<\kappa$. Denote by $\leq_{\dT}$ the ordering on $\dT$.
		
		It is clear that the identity is a function from $\dT$ onto $\dP$ and preserves $\leq_0$: Let $t_0,t_1\in\dT$ with $t_0\leq_{\dT}t_1$. Then $t_0$ is a function on $\kappa$ such that for each $\alpha<\kappa$, $t_0(\alpha)$ is a $(\dP_{\alpha},\leq_{\alpha})$-name such that $1_{\dP_{\alpha}}\Vdash t_0(\alpha)\in\dot{\dQ}_{\alpha}$. It follows by induction on $\alpha$ that $t_0\uhr\alpha\in\dP_{\alpha}$ for every $\alpha<\kappa$ since we use the same support in both cases. For any $\alpha<\kappa$, $1_{\dP_{\alpha}}\Vdash t_0(\alpha)\leq_0t_1(\alpha)$. Ergo $t_0\uhr\alpha\Vdash_{(\dP_{\alpha},\leq_{\alpha})} t_0(\alpha)\dot{\leq}_{\alpha,0}t_1(\alpha)$. It follows again by induction that $t_0\uhr\alpha\leq_{\alpha,0}t_1\uhr\alpha$ for every $\alpha<\kappa$.
		
		Now we show that the identity is a projection. Let $p\in\dP$ and $t\leq_0p$. We want to find $p'\leq_{\dT}p$ with $p'\leq_0t$. By induction on $\alpha$, let $p'(\alpha)$ be a $(\dP_{\alpha},\leq_{\alpha})$-name for an element of $\dot{\dQ}_{\alpha}$ forced by $p'\uhr\alpha$ to be equal to $t(\alpha)$ and by conditions incompatible with $p'\uhr\alpha$ to be equal to $p(\alpha)$. Then the support of $p'$ is equal to the support of $t$ and thus an Easton set. It follows by induction that $p'\uhr\alpha\leq_{\dT}p\uhr\alpha$ and $p'\uhr\alpha\leq_{\alpha,0}t\uhr\alpha$: For any $\alpha$, by induction, $p'(\alpha)$ is in any case forced to be below $p(\alpha)$: $p'\uhr\alpha$ is in particular $\leq$-below $t\uhr\alpha$ and thus forces $p'(\alpha)\leq_0p(\alpha)$ by assumption and conditions incompatible with $p'\uhr\alpha$ outright force $p'(\alpha)=p(\alpha)$. It also follows that $p'\uhr\alpha\Vdash p'(\alpha)=t(\alpha)$ and thus $p'\uhr\alpha+1\leq_{\alpha+1,0} t\uhr\alpha+1$.
	\end{proof}

\subsection{Approachability}

In this section we garner  for later use some standard notations and facts around Shelah's approachability ideal. For a more comprehensive account on the matter we refer to Eisworth's excellent handbook chapter \cite{EisworthHandbook}.

\begin{mydef}
    Let $\kappa$ be a regular cardinal and $\bar{a}=\langle a_\alpha\mid \alpha<\kappa\rangle$ a sequence of bounded subsets of $\kappa$. A limit ordinal $\alpha<\kappa$ is called \emph{approachable with respect to $\bar{a}$} if there is an unbounded set $A\s \alpha$ with $\otp(A)=\cf(\alpha)$ such that $$\{A\cap \beta\mid \beta<\alpha\}\s \{a_\beta\mid \beta<\alpha\}.$$ 
\end{mydef}

\begin{mydef}[The approachability ideal]
Let $\kappa$ be a regular cardinal. A set $S\s \kappa$ is in $I[\kappa]$ if there is a sequence $\bar{a}$ of bounded subsets of $\kappa$ and a club $C\s \kappa$ such that every $\alpha\in C\cap S$ is approachable with respect to $\bar{a}$.
\end{mydef}

The terminology \emph{approachability ideal} is justified by the following fact:

\begin{myfact}[Shelah]
     $I[\kappa]$ is a (possibly improper) normal ideal on $\kappa$. \qed
\end{myfact}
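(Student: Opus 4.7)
The statement has two parts: that $I[\kappa]$ is an ideal (closed under subsets and finite unions), and that it is normal (closed under diagonal unions). Closure under subsets is immediate, since the same witnessing pair $(\bar a, C)$ works for any subset. For the remaining two closure properties, the strategy is the same in both cases: take the witnessing sequences for the ingredient sets, amalgamate them into a single sequence of bounded subsets of $\kappa$ using a pairing function, and then intersect the witnessing clubs (or diagonally intersect, in the normality case). I will describe the normality argument in detail; the finite union case is a simpler version of the same idea.

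\smallskip

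\emph{Closure under finite unions.} Given $S_0,S_1\in I[\kappa]$ witnessed by $(\bar a^0,C^0)$ and $(\bar a^1,C^1)$, define the interleaved sequence $\bar a=\langle a_\beta\mid\beta<\kappa\rangle$ by $a_{2\xi+i}=a^i_\xi$ for $i\in\{0,1\}$, and set $C=C^0\cap C^1$. For any limit $\alpha\in C\cap(S_0\cup S_1)$, say $\alpha\in S_i\cap C^i$, a witness $A\subseteq\alpha$ for approachability with respect to $\bar a^i$ also works for $\bar a$: since $\alpha$ is a limit ordinal it is closed under the map $\xi\mapsto 2\xi+i$, so $\{a^i_\xi\mid\xi<\alpha\}\subseteq\{a_\beta\mid\beta<\alpha\}$, and the required inclusion $\{A\cap\beta\mid\beta<\alpha\}\subseteq\{a_\beta\mid\beta<\alpha\}$ transfers immediately.

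\smallskip

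\emph{Normality.} Let $\langle S_\alpha\mid\alpha<\kappa\rangle$ be a sequence of members of $I[\kappa]$, witnessed respectively by $(\bar a^\alpha,C^\alpha)$, where $\bar a^\alpha=\langle a^\alpha_\xi\mid\xi<\kappa\rangle$. Fix a bijection $\phi\colon\kappa\times\kappa\to\kappa$ (e.g.\ the G\"odel pairing) and let $D$ be the club of $\gamma<\kappa$ such that $\phi[\gamma\times\gamma]\subseteq\gamma$ and $\phi^{-1}[\gamma]\subseteq\gamma\times\gamma$. Define a single sequence $\bar a=\langle a_\beta\mid\beta<\kappa\rangle$ by
\[
a_{\phi(\alpha,\xi)}:=a^\alpha_\xi,
\]
and set $C:=D\cap\triangle_{\alpha<\kappa}C^\alpha$, which is club in $\kappa$. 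Suppose $\beta\in C\cap\nabla_{\alpha<\kappa}S_\alpha$; fix $\alpha<\beta$ with $\beta\in S_\alpha$. Because $\beta$ lies in the diagonal intersection of the $C^\alpha$, we have $\beta\in C^\alpha$, so there exists an unbounded $A\subseteq\beta$ with $\otp(A)=\cf(\beta)$ and $\{A\cap\xi\mid\xi<\beta\}\subseteq\{a^\alpha_\delta\mid\delta<\beta\}$. Since $\beta\in D$, we have $\{a^\alpha_\delta\mid\delta<\beta\}=\{a_{\phi(\alpha,\delta)}\mid\delta<\beta\}\subseteq\{a_\gamma\mid\gamma<\beta\}$, and therefore the same $A$ witnesses that $\beta$ is approachable with respect to $\bar a$. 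Hence $\nabla_{\alpha<\kappa}S_\alpha\in I[\kappa]$.

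\smallskip

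\emph{Main point of care.} There is no genuine obstacle; the whole point is simply that approachability is preserved when one enlarges the family of bounded sets that may be used and shrinks the club suitably. The only step that deserves attention is verifying that the coding $\phi$ has the property $\phi[\gamma\times\gamma]\subseteq\gamma$ on a club, which is standard for any reasonable pairing function on a regular uncountable $\kappa$; this guarantees that for $\beta$ in the relevant club the relabelled sets $a^\alpha_\xi$ with $\xi<\beta$ still appear below index $\beta$ in $\bar a$, which is exactly what is needed for the approachability witness to transfer.
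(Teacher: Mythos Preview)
Your argument is correct and is essentially the standard proof: amalgamate the witnessing sequences via a pairing function, pass to the club of closure points of the pairing, and (diagonally) intersect the witnessing clubs. One tiny remark on the finite-union case: your claim that every limit ordinal $\alpha$ is closed under $\xi\mapsto 2\xi+i$ is correct provided $2\xi$ denotes left ordinal multiplication $2\cdot\xi$ (so that $2\cdot\alpha=\alpha$ for limit $\alpha$); with the other convention $\xi\cdot 2=\xi+\xi$ this fails for non-indecomposable limits, so it is worth being explicit. Alternatively, the finite-union case is subsumed by normality once you observe that every nonstationary set lies in $I[\kappa]$ (take any $\bar a$ and the disjoint club $C$), which you might want to record explicitly as part of ``$I[\kappa]$ is a normal ideal''.

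As for comparison with the paper: there is nothing to compare. The paper states this as a \emph{Fact} attributed to Shelah and places a \qed\ immediately after the statement, giving no proof.
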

It is not necessarily the case that $I[\kappa]$ is a proper ideal. In fact, in this paper we will construct a model where $I[\kappa^+]$ is improper for all singular cardinals $\kappa$. The assertion that $I[\kappa]$ is an improper ideal is known as the \emph{Approachability Property}:

\begin{mydef}[Approachability]
Let $\kappa$ be a (possibly singular) cardinal. The \emph{approachability property} holds at $\kappa$ if $\kappa^+\in I[\kappa^+]$. We denote this by $\AP_\kappa.$
\end{mydef}
Other standard facts about the ideal $I[\kappa]$ and the approachability property that we may not use in the paper but collect  for the reader's benefit  are:
\begin{myfact}[Shelah]
Let $\kappa$ be a cardinal. Then, the following hold:
    \begin{enumerate}
        \item For every two regular cardinals $\lambda<\kappa$ with $\lambda^+<\kappa$ there is a stationary set $S\s \kappa\cap \cof(\lambda)$ in $I[\kappa]$. \emph{(\cite[Theorem~9.2]{CumNotes})}
        \item $\kappa^+\cap \cof({<}\kappa)\in I[\kappa^+]$ provided $\kappa$ is regular. \emph{(\cite[Lemma 4.4]{ShelahApproachability})}
        \item $\kappa^+\cap \cof({\leq}\cf(\kappa))\in I[\kappa^+]$ provided $\kappa$ is singular. \emph{(\cite[Corollary~3.29]{EisworthHandbook})}
        \item $\square^*_\kappa$ implies $\AP_\kappa$. \emph{(\cite[p.262]{CumNotes})}
        \item If $S\s \kappa\cap\cof(\mu)$ is a stationary set and $\mathbb{P}$  a ${<}\mu^+$-closed forcing then $\forces_{\mathbb{P}}\text{$``\check{S}$ is stationary"}. $ \emph{(\cite[Theorem~20]{ShelahSuccSingCard})} \qed
    \end{enumerate} 
\end{myfact}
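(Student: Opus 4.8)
The first four items are classical results of Shelah, and I would simply invoke the cited sources: (1)--(3) reduce to choosing an appropriate enumeration of bounded subsets of $\kappa$ and verifying that sufficiently many points of the relevant cofinality become approachable, while (4) is the observation that a $\square^*_\kappa$-sequence unravels into an approachability sequence. The substantive statement, and the only one used later, is (5); for it the plan is the standard elementary-submodel reflection argument.

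Suppose toward a contradiction that some $q\in\mathbb{P}$ forces a $\mathbb{P}$-name $\dot{C}$ to be a club in $\kappa$ with $\dot{C}\cap\check{S}=\emptyset$. Fix a large regular $\theta$ and a well-order of $H(\theta)$; ``least'' below refers to this well-order. The heart of the proof is to build a continuous $\in$-increasing \emph{internally approachable} chain $\langle N_i\mid i<\mu\rangle$ of elementary submodels of $H(\theta)$ expanded by the well-order, each of size $\mu$, with $(\mu+1)\cup\{\mathbb{P},\dot{C},q,S,\kappa\}\subseteq N_0$ and $\langle N_j\mid j\leq i\rangle\in N_{i+1}$ for all $i$, such that $\delta:=\sup_{i<\mu}(N_i\cap\kappa)$ lies in $S$. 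This is the point at which stationarity of $S$ is used: the operations ``pass from $N_i$, $\langle N_j\mid j\leq i\rangle$ and one further ordinal to the least elementary submodel of size $\mu$ containing them'' have arity ${<}\mu$, so their common closure points of cofinality $\mu$ contain a club intersected with $\cof(\mu)$; for any $\delta$ lying both in this club and in $S$ (which exists since $S\subseteq\kappa\cap\cof(\mu)$ is stationary), feeding a cofinal $\mu$-sequence of $\delta$ in one point at a time through the successor steps yields an internally approachable chain with $\bigcup_i(N_i\cap\kappa)=\delta$.

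Set $N:=\bigcup_{i<\mu}N_i$ and $\gamma_i:=\sup(N_i\cap\kappa)$, so that $N\cap\kappa\subseteq\delta$, the $\gamma_i$ increase to $\delta$, and $\langle\gamma_j\mid j\leq i\rangle\in N$ for each $i$. Now build a $\leq$-decreasing sequence $\langle p_i\mid i<\mu\rangle$ of conditions below $q$ with each $p_i\in N$: put $p_0=q$; given $p_i\in N$, elementarity of $N$ together with the fact that $p_i\leq q$ forces $\dot{C}$ to be unbounded produces the least pair $(p_{i+1},\beta_i)\in N$ with $p_{i+1}\leq p_i$, $\gamma_i<\beta_i<\kappa$ and $p_{i+1}\Vdash\check{\beta_i}\in\dot{C}$; at a limit $i<\mu$, internal approachability guarantees $\langle N_j\mid j<i\rangle\in N$, hence $\langle\gamma_j\mid j<i\rangle\in N$ and $\langle p_j\mid j<i\rangle\in N$, so---since $N$ correctly believes $\mathbb{P}$ to be ${<}\mu^+$-closed---we may take $p_i\in N$ to be the least lower bound of $\langle p_j\mid j<i\rangle$. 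The full sequence $\langle p_i\mid i<\mu\rangle$ has length $\mu$, so ${<}\mu^+$-closure of $\mathbb{P}$ (used now in $V$) provides an actual lower bound $q^*$. Then $q^*$ forces $\{\beta_i\mid i<\mu\}\subseteq\dot{C}$, and since $\gamma_i<\beta_i<\delta$ with $\sup_i\gamma_i=\delta$ we get $\sup_i\beta_i=\delta$; as $\dot{C}$ is forced closed, $q^*\Vdash\check{\delta}\in\dot{C}$. Because $\delta\in S$, this contradicts $q\Vdash\dot{C}\cap\check{S}=\emptyset$.

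The one genuinely delicate point is the internal-approachability bookkeeping in the last two paragraphs: the recursion defining $\langle p_i\rangle$ must be arranged (from the well-order, the parameters in $N_0$, and the chain $\langle N_j\rangle$) so that every proper initial segment of it lands inside $N$, since otherwise no lower bound can be located ``coherently'' at limit stages; and this must be done compatibly with driving the sup of the model chain into the prescribed stationary set $S$, rather than merely to a limit of points of $S$. Finally, the single step that genuinely requires ${<}\mu^+$-closure rather than just ${<}\mu$-closure is taking the lower bound of a decreasing sequence of length exactly $\mu$ --- which is precisely why (5) needs one more degree of closure than the $I[\kappa]$-based preservation theorems and why, unlike those, it imposes no approachability hypothesis on $S$.
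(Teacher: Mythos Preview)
The paper gives no proof of this Fact: each item carries a citation and the statement closes with \qed. Indeed, the authors explicitly introduce it as background ``that we may not use in the paper but collect for the reader's benefit,'' so your premise that (5) is ``the only one used later'' is mistaken---none of (1)--(5) is actually invoked in the paper; the later arguments run entirely through the coloring characterization of approachability (the Facts that follow in \S2.3).

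Your argument for (5) is the standard internally-approachable-chain proof and is essentially sound. The one place where your exposition is loose is the claim that the chain-building operations ``have arity ${<}\mu$, so their common closure points of cofinality $\mu$ contain a club'': those operations take the non-ordinal inputs $N_i$ and $\langle N_j\mid j\le i\rangle$, so ``closure point in $\kappa$'' is not literally defined for them. The clean way to handle this is to first build a single continuous internally approachable tower $\langle M_\alpha\mid\alpha<\kappa\rangle$ of length $\kappa$ with $M_\alpha\cap\kappa\in\kappa$, choose $\delta$ in $S$ intersected with the resulting club $\{\alpha:M_\alpha\cap\kappa=\alpha\}$, and then extract a cofinal $\mu$-subsequence (arranging the index sequence to lie in each successor model so that your $\langle p_j\mid j<i\rangle$ really does land in $N$). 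But this is a matter of presentation, not a genuine gap.
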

If $\kappa$ is regular then  (2) implies that $I[\kappa^+]$ is completely determined by membership to it of stationary subsets of the \emph{critical cofinality} (i.e., $\kappa$). In contrast, if $\kappa$ is singular then there is a wide array of possible configurations for $I[\kappa^+]$, as only $\kappa^+\cap \cof({\leq}\cf(\kappa))\in I[\kappa^+]$  is provable.  In Theorem~\ref{ThmA} we  show that this is  the case.

\smallskip

Even though the approachability ideal $I[\kappa]$ is always \emph{large}, Shelah showed that the existence of supercompact cardinals imposes certain non-trivial restrictions. Namely,
\begin{myfact}[Shelah, \cite{ShelahSuccSingCard}]\label{Fact: supercompact leads to neg AP}
    Suppose that $\kappa$ is a supercompact cardinal and $\lambda>\kappa$ is a singular cardinal with $\cf(\lambda)<\kappa$. Then, there is a singular cardinal $\theta<\kappa$ of cofinality $\cf(\lambda)$ and a stationary set $S\s \lambda^+\cap \cof(\theta^+)$ that is not in $I[\lambda^+].$ In particular, $\AP_\lambda$ fails. $\qed$
\end{myfact}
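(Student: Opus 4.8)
The plan is to run the standard ``bad scale'' argument: a suitably closed supercompactness embedding $j$ with critical point $\kappa$ turns $\sup j[\lambda^+]$ into a bad point of a fixed scale on $\lambda$, which then reflects back to a stationary set of bad points. Write $\mu:=\cf(\lambda)$ and, by pcf theory, fix an increasing sequence $\langle\lambda_i\mid i<\mu\rangle$ of regular cardinals in $(\kappa,\lambda)$ cofinal in $\lambda$, together with a scale $\vec f=\langle f_\alpha\mid\alpha<\lambda^+\rangle$ in $(\prod_{i<\mu}\lambda_i,<^*)$. Recall (see e.g.\ \cite{EisworthHandbook}) that if $S\subseteq\lambda^+\cap\cof({>}\mu)$ lies in $I[\lambda^+]$ then all but nonstationarily many $\alpha\in S$ are \emph{good for $\vec f$} (i.e.\ $\langle f_\beta\mid\beta<\alpha\rangle$ has an exact upper bound $h$ with $\cf(h(i))=\cf(\alpha)$ for every $i$); hence a \emph{stationary} set of bad points of cofinality ${>}\mu$ is never in $I[\lambda^+]$. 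So it suffices to produce a singular $\theta<\kappa$ with $\cf(\theta)=\mu$ for which the set of bad points of $\vec f$ in $\lambda^+\cap\cof(\theta^+)$ is stationary.

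Fix $j\colon V\to M$ with $\crit(j)=\kappa$, $j(\kappa)>\lambda^+$, and ${}^{\lambda^+}M\subseteq M$ (obtained from supercompactness). Standard absoluteness gives: $\lambda$ is a singular cardinal of cofinality $\mu$ in $M$ with $\kappa<\lambda<j(\kappa)$ and $(\lambda^+)^M=\lambda^+$; and $\gamma:=\sup j[\lambda^+]$ satisfies $\gamma<j(\lambda^+)$, $\cf^M(\gamma)=\lambda^+$, and $\gamma\in j(C)$ for every club $C\subseteq\lambda^+$ of $V$ (as $j[C]$ is cofinal in $\gamma$ and $j(C)$ is closed). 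The heart of the proof is to show $M\models$``$\gamma$ is bad for $j(\vec f)$''. For this, put $g:=\langle\sup j[\lambda_i]\mid i<\mu\rangle$. Since each $\lambda_i$ is regular above $\kappa$ we have $\sup j[\lambda_i]<j(\lambda_i)$ and $\cf^M(\sup j[\lambda_i])=\lambda_i$, so $g\in M$ lies in $\prod_{i<\mu}j(\lambda_i)$ and its coordinate cofinalities are all ${<}\lambda^+$. I would then check that $g$ is an exact upper bound of $\langle j(\vec f)_\beta\mid\beta<\gamma\rangle$ in $M$: it bounds the sequence because $\beta<\gamma$ implies $\beta<j(\xi_0)$ for some $\xi_0<\lambda^+$ and hence $j(\vec f)_\beta<^*j(f_{\xi_0})<g$; and it is exact because, given $h'\in M$ with $h'(i)<\sup j[\lambda_i]$ for all $i$, picking $\zeta_i<\lambda_i$ with $h'(i)<j(\zeta_i)$ yields $\langle\zeta_i\mid i<\mu\rangle\in\prod_{i<\mu}\lambda_i$, which lies in $V$ since $M\subseteq V$, so some $f_\xi$ dominates it $<^*$ in $V$, and applying $j$ (which fixes $\mu$ and each index below it) gives $h'<^*j(\vec f)_{j(\xi)}$ with $j(\xi)<\gamma$. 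As exact upper bounds are unique modulo $=^*$ and no coordinate of $g$ has cofinality $\lambda^+=\cf^M(\gamma)$, the sequence $\langle j(\vec f)_\beta\mid\beta<\gamma\rangle$ has no exact upper bound of uniform cofinality $\cf^M(\gamma)$; i.e.\ $\gamma$ is bad.

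Finally I would reflect and uniformize the cofinality. For every club $C\subseteq\lambda^+$ of $V$, taking $\theta:=\lambda$ as a witness inside $M$ gives $M\models$``there is a singular cardinal $\theta<j(\kappa)$ with $\cf(\theta)=\mu$ such that $j(C)$ contains a point of cofinality $\theta^+$ that is bad for $j(\vec f)$''; by elementarity of $j$ (parameters $C,\kappa,\mu,\vec f$), for every club $C\subseteq\lambda^+$ there is a singular $\theta_C<\kappa$ with $\cf(\theta_C)=\mu$ so that $C\cap\lambda^+\cap\cof(\theta_C^+)$ contains a bad point of $\vec f$. If for \emph{every} singular $\theta<\kappa$ with $\cf(\theta)=\mu$ the bad points in $\lambda^+\cap\cof(\theta^+)$ formed a nonstationary set, witnessed by a club $C_\theta$, then $C:=\bigcap_\theta C_\theta$ would still be a club (there are fewer than $\lambda^+$ such $\theta$ and $\lambda^+$ is regular), contradicting the previous sentence; hence the desired $\theta$ and a stationary $S\subseteq\lambda^+\cap\cof(\theta^+)$ of bad points exist, $S\notin I[\lambda^+]$, and in particular $\AP_\lambda$ fails. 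I expect the exactness verification in the second paragraph to be the main point --- together with getting the precise good-point dictionary and the uniqueness of exact upper bounds straight --- and it is exactly there that $\cf(\lambda)<\kappa$ is used: that is what makes $j$ fix $\mu$ and move the $\lambda_i$ above $j(\kappa)$, so that domination data transfer between $V$ and $M$.
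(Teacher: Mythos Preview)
Your argument is correct and is essentially the classical ``bad scale'' proof of this result. Note, however, that the paper does not supply its own proof of this statement: it is recorded as a Fact with a terminal $\qed$ and attributed to Shelah \cite{ShelahSuccSingCard}, so there is no paper-side argument to compare with directly.

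That said, there is a genuine methodological contrast worth flagging. The paper's related reflection arguments (e.g.\ the claim inside the proof of Theorem~\ref{theorem: One cofinality}) proceed via normal subadditive colorings rather than scales: one fixes $d\colon[\delta^+]^2\to\cf(\gamma)$, considers $\rho=\sup j[\delta^+]$, transfers $j(d)$ back to a coloring $e(\alpha,\beta):=j(d)(j(\alpha),j(\beta))$ on $\delta^+$, and uses the characterization of $I[\delta^+]$ via $S(d)$ (Facts~\ref{fact: characterizing dapproachability}--\ref{fact: eisworth handbook}). Your route instead uses the implication ``$S\in I[\lambda^+]$ and $S\subseteq\cof({>}\cf(\lambda))$ $\Rightarrow$ almost all points of $S$ are good for $\vec f$'', then exhibits $\gamma=\sup j[\lambda^+]$ as a bad point by showing $g(i)=\sup j[\lambda_i]$ is an exact upper bound whose coordinate cofinalities are $\lambda_i\neq\lambda^+$, and reflects. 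Both approaches hinge on the same supercompactness reflection idea; the coloring approach dovetails with the paper's later machinery (Corollary~\ref{Corollary: Top cardinal not approachable} is phrased for colorings), while your pcf approach is self-contained and makes the role of $\cf(\lambda)<\kappa$ very transparent in the exactness step. One small quibble: your closing sentence says $\cf(\lambda)<\kappa$ ``moves the $\lambda_i$ above $j(\kappa)$''; what you actually use there is that each $\lambda_i>\kappa$ is regular, so $\sup j[\lambda_i]<j(\lambda_i)$ and $\cf^M(\sup j[\lambda_i])=\lambda_i$.
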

\label{discussion on shelah's theorem}In this paper we are mostly preoccupied with  the failure of $\AP_\lambda$ at singular cardinals $\lambda$. The standard way to produce the failure of the approachability property at a \emph{down-to-earth} cardinal (such as $\aleph_\omega$) traces back to Shelah \cite[Conclusion~29]{ShelahSuccSingCard}. The basic idea is to start with a supercompact cardinal $\kappa$, let $S\s \kappa^{+\omega+1}\cap \cof(\theta^+)$ a stationary set exemplifying the failure of $\AP_{\kappa^{+\omega}}$ and then ``bring it down" to $\aleph_\omega$. Specifically,  forcing with $\Coll(\aleph_0,\theta)\ast \dot{\Coll}(\aleph_1,{<}\kappa)$ produces a generic extension where the original stationary set $S$ remains stationary, non-approachable and $S\s \aleph_{\omega+1}\cap \cof(\omega_1).$ 
However, the property $\aleph_{\omega+1}\cap \cof(\omega_1)\notin I[\aleph_{\omega+1}]$ was only known to be true in the generic extension if $\theta^+$ was preserved as a cardinal. This posed problems regarding the possible cofinalities at which $\AP$ could fail at small cardinals and also in iterating such a construction to achieve the failure of $\AP$ at many cardinals simultaneously. Due to this, most research surrounding the failure of $\AP_{\lambda}$ for singular cardinals $\lambda$ was focused on achieving this on a club of cardinals (see e.g. the works of Ben-Neria et al. \cite{BenNeriaLambieHansonUngerDiagSCRadin} and Gitik \cite{GitikMethodNotAPnotSCH}).

\smallskip

 A standard way to force the failure of $\AP$ at small singular cardinals is based on a characterization of approachability via colorings, an idea that stems from \cite{ShelahSuccSingCard}.
\begin{mydef}[Colorings]
Let $\kappa$ be  singular  and $d\colon [\kappa^+]^2\rightarrow \cof(\kappa)$ a coloring.

\begin{enumerate}
    \item We say that $d$ is \emph{subadditive} if for each $\alpha<\beta<\gamma<\kappa^+$,
    $$d(\alpha,\gamma)\leq \max\{d(\alpha,\beta),d(\beta,\gamma)\}.$$
    \item We say that $d$ is \emph{normal} if for each  $i<\cf(\kappa)$,
    $$\sup_{\alpha<\kappa^+}|\{\beta<\alpha\mid d(\beta,\alpha)\leq i\}|<\kappa.$$
    \item A point $\alpha<\kappa^+$ is  \emph{$d$-approachable} if there is an unbounded $A\s \alpha$  with
    $$\sup\{d(\beta,\alpha)\mid \beta\in A\}<\cf(\kappa).$$
\end{enumerate}
The collection of all $d$-approachable points will be denoted by $S(d).$
\end{mydef}
A useful characterization of $d$-approachable points that we will use is:
\begin{myfact}[Shelah]\label{fact: characterizing dapproachability}
    An ordinal $\alpha<\kappa^+$ is in $S(d)$ if either $\cof(\alpha)\leq \cof(\kappa)$ or there is $A\s\alpha$ unbounded and $i<\cof(\kappa)$ such that for each $\beta<\gamma$ in $A$, $d(\beta,\gamma)\leq i$.\qed
\end{myfact}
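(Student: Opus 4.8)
The plan is to establish, under either of the two hypotheses, the defining property of $S(d)$: the existence of an unbounded $B\s\alpha$ with $\sup\{d(\beta,\alpha)\mid\beta\in B\}<\cf(\kappa)$.

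First I would dispose of the cofinality hypothesis in the easy range. If $\cf(\alpha)<\cf(\kappa)$, fix any cofinal $B\s\alpha$ of order type $\cf(\alpha)$; then $\{d(\beta,\alpha)\mid\beta\in B\}$ is a subset of the regular cardinal $\cf(\kappa)$ of size $<\cf(\kappa)$, hence bounded in $\cf(\kappa)$, and $B$ is the required witness. (The complementary range $\cf(\alpha)>\cf(\kappa)$, which strictly speaking lies outside the stated hypothesis, is handled by Fodor's lemma applied to the regressive map $\beta\mapsto d(\beta,\alpha)$ on a club subset of $\alpha$ of order type $\cf(\alpha)$: it is constant on a stationary, hence cofinal, subset.) The remaining boundary case $\cf(\alpha)=\cf(\kappa)$ I treat at the end.

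Next, the homogeneous-set hypothesis. Let $A\s\alpha$ be unbounded with $d(\beta,\gamma)\le i$ for all $\beta<\gamma$ in $A$. The engine is subadditivity: for $\beta<\gamma$ in $A$ one has $d(\beta,\alpha)\le\max\{d(\beta,\gamma),d(\gamma,\alpha)\}\le\max\{i,d(\gamma,\alpha)\}$; in particular, if $d(\gamma,\alpha)>i$ then $d(\beta,\alpha)\le d(\gamma,\alpha)$, and conversely if $\beta>\beta'$ lie in $A$ with $d(\beta,\alpha)<d(\beta',\alpha)$ one gets a contradiction unless $d(\beta',\alpha)\le i$. Write $J=\{d(\beta,\alpha)\mid\beta\in A\}\s\cf(\kappa)$. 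If $\sup J<\cf(\kappa)$, then $A$ itself witnesses $\alpha\in S(d)$. Otherwise $J$ is cofinal in $\cf(\kappa)$; for each $j\in J$ with $j>i$ pick $\beta_j\in A$ with $d(\beta_j,\alpha)=j$. From the inequalities above one reads off that $\langle\beta_j\rangle_{j\in J,\,j>i}$ is strictly increasing, and it must be cofinal in $\alpha$: if it were bounded by $\delta<\alpha$, any $\beta\in A$ above $\delta$ would satisfy $d(\beta,\alpha)\ge j$ for every such $j$, hence $d(\beta,\alpha)\ge\sup J=\cf(\kappa)$, which is absurd. Thus $\cf(\alpha)=\cf(J)=\cf(\cf(\kappa))=\cf(\kappa)$, so once again we are reduced to the boundary case.

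The main obstacle is therefore the case $\cf(\alpha)=\cf(\kappa)$: bounding the range of $d(\cdot,\alpha)$ along a short cofinal sequence no longer works, and one must bring in normality of $d$. Concretely, I would show that if $\{\beta<\alpha\mid d(\beta,\alpha)\le j\}$ were bounded in $\alpha$ for every $j<\cf(\kappa)$, then — using that each such set has size $<\kappa$, together with subadditivity and the homogeneous set $A$ (respectively, the continuity of a cofinal sequence in the first disjunct) — one can derive a contradiction; hence some single color $j<\cf(\kappa)$ has $\{\beta<\alpha\mid d(\beta,\alpha)\le j\}$ unbounded in $\alpha$, and that set is the desired witness. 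Pinning down this last implication, i.e.\ the exact way normality excludes the ``staircase'' behaviour of $d(\cdot,\alpha)$ on a cofinal subset of an ordinal of cofinality $\cf(\kappa)$, is the step I expect to demand the most care.
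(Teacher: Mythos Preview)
The paper does not prove this statement at all: it is recorded as a Fact attributed to Shelah and closed with a bare \qed. So there is no ``paper's own proof'' to compare against; I can only assess your argument on its merits.

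Your treatment of the easy ranges is fine: for $\cf(\alpha)<\cf(\kappa)$ the bounding-by-regularity argument is correct, and your reduction in the homogeneous-set case (showing that $\gamma\mapsto\max\{i,d(\gamma,\alpha)\}$ is weakly increasing on $A$, and that unboundedness of its range forces $\cf(\alpha)=\cf(\kappa)$) is clean and correct.

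The genuine gap is exactly where you flag it: the boundary case $\cf(\alpha)=\cf(\kappa)$. Your sketch there is not a proof. You say you would use normality together with ``continuity of a cofinal sequence'' (in the first disjunct) or the homogeneous set (in the second) to rule out the staircase behaviour of $j\mapsto\sup\{\beta<\alpha:d(\beta,\alpha)\le j\}$, but you never indicate how. In fact, with only the \emph{one-sided} subadditivity the paper states, I do not see how normality alone excludes the possibility that every colour class $\{\beta<\alpha:d(\beta,\alpha)\le j\}$ is bounded in $\alpha$ when $\cf(\alpha)=\cf(\kappa)$: the obvious counting gives an increasing $\cf(\kappa)$-sequence of bounds whose supremum is $\alpha$, which is perfectly consistent with $\cf(\alpha)=\cf(\kappa)$.

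Two remarks that may help you close the gap. First, Shelah's colourings typically satisfy \emph{two-sided} subadditivity (also $d(\beta,\gamma)\le\max\{d(\alpha,\beta),d(\alpha,\gamma)\}$ for $\alpha<\beta<\gamma$). Under that stronger hypothesis the homogeneous-set case is immediate with no boundary case at all: for $\beta<\gamma$ in $A$ one gets both $d(\beta,\alpha)\le\max\{i,d(\gamma,\alpha)\}$ and $d(\gamma,\alpha)\le\max\{i,d(\beta,\alpha)\}$, so $\max\{i,d(\cdot,\alpha)\}$ is \emph{constant} on $A$ and $A$ itself witnesses $\alpha\in S(d)$. Second, for the applications in this paper the only direction that is actually used is the converse (via Fact~\ref{fact: Approachability refinement}), always at points with $\cf(\alpha)>\cf(\kappa)$; so the delicate $\cf(\alpha)=\cf(\kappa)$ clause of the first disjunct is never invoked. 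You should either locate Shelah's original argument for that clause or note that it is immaterial for what follows.
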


Additionally, we will be using the following statement due to Shelah which vastly simplifies proofs of the preservation of non-approachability.

\begin{myfact}[{\cite[Remark 28]{ShelahSuccSingCard}}]\label{fact: Approachability refinement}
    Suppose $\alpha<\kappa^+$ is $d$-approachable. Whenever $A\subseteq\alpha$ is unbounded, there is $B\subseteq A$ unbounded such that $d\restriction [B]^2$ is bounded.\qed
\end{myfact}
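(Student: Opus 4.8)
\emph{Step 0 (overview).}
The plan is to use subadditivity to extract a single ``bounded'' unbounded subset $A'\subseteq\alpha$, and then to run a stabilization-plus-pigeonhole argument to thin the given $A$ down to the desired $B$. First, since $\alpha$ is $d$-approachable, fix an unbounded $A_0\subseteq\alpha$ with $i_1:=\sup\{d(\beta,\alpha)\mid\beta\in A_0\}<\cf(\kappa)$. Applying subadditivity to triples $\beta_1<\beta_2<\alpha$ with $\beta_1,\beta_2\in A_0$ — in the form $d(\beta_1,\beta_2)\le\max\{d(\beta_1,\alpha),d(\beta_2,\alpha)\}$ — yields $d\restriction[A_0]^2\le i_1$, so I set $A':=A_0$. (This recovers the bounded set appearing implicitly in Fact~\ref{fact: characterizing dapproachability}; note also that if $\cf(\alpha)<\cf(\kappa)$ the whole statement is already immediate, since any cofinal $B\subseteq A$ of order type $\cf(\alpha)$ has $d\restriction[B]^2$ equal to a set of fewer than $\cf(\kappa)$ ordinals below the regular cardinal $\cf(\kappa)$, hence bounded.)

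\emph{Step 1 (stabilization).}
Given the unbounded $A\subseteq\alpha$, for each $b\in A$ consider $g_b\colon A'\cap(b,\alpha)\to\cf(\kappa)$, $g_b(a):=\max\{d(b,a),i_1\}$. For $a<a'$ in the domain, subadditivity on $b<a<a'$ gives $d(b,a')\le\max\{d(b,a),d(a,a')\}\le\max\{d(b,a),i_1\}$, so $g_b$ is non-increasing; being ordinal-valued it is eventually constant, with some value $j_b<\cf(\kappa)$ attained on a final segment $A'\cap(a^\ast_b,\alpha)$. Then for any $b_1<b_2$ in $A$ pick $a\in A'$ with $a>b_2$ and $a>a^\ast_{b_1},a^\ast_{b_2}$ (possible as $A'$ is unbounded in $\alpha$ and $\max\{b_2,a^\ast_{b_1},a^\ast_{b_2}\}<\alpha$); then $d(b_1,a)\le j_{b_1}$ and $d(b_2,a)\le j_{b_2}$, and subadditivity on $b_1<b_2<a$ (in the form $d(b_1,b_2)\le\max\{d(b_1,a),d(b_2,a)\}$) gives $d(b_1,b_2)\le\max\{j_{b_1},j_{b_2}\}$.

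\emph{Step 2 (pigeonhole) and the main difficulty.}
It remains to find an unbounded $B\subseteq A$ with $\sup_{b\in B}j_b<\cf(\kappa)$; for such $B$, Step~1 gives $d\restriction[B]^2\le\sup_{b\in B}j_b$, as wanted. When $\cf(\alpha)>\cf(\kappa)$ this is a pigeonhole: partition $A$ into the at most $\cf(\kappa)$ pieces $A_i=\{b\in A\mid j_b=i\}$; since $A$ is unbounded in $\alpha$ and $\cf(\kappa)<\cf(\alpha)$, some $A_i$ is unbounded, and $B:=A_i$ works. The step I expect to be the genuine obstacle is the borderline case $\cf(\alpha)=\cf(\kappa)$, where the pigeonhole can fail because $b\mapsto j_b$ may be cofinal in $\cf(\kappa)$ along every cofinal subset of $A$; there a different argument is needed, e.g. building $B$ recursively so that consecutive $d$-values remain small and propagating this to all pairs by subadditivity, which reintroduces a limit-stage issue. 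This case does not arise in the applications in this paper, where $\alpha$ always has cofinality $\mu>\cf(\kappa)$. A secondary point of care is that the upper bounds on ``inner'' values $d(\beta_1,\beta_2)$ used in Steps~0 and~1 invoke subadditivity in the symmetric form $d(\beta_1,\beta_2)\le\max\{d(\beta_1,\gamma),d(\beta_2,\gamma)\}$ for $\beta_1<\beta_2<\gamma$, which the colorings considered here satisfy alongside the inequality recorded in the definition of subadditivity.
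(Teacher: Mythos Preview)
The paper does not prove this fact---it is stated with a bare \qed and a citation to Shelah---so there is no ``paper's own proof'' to compare with.  Your argument is essentially correct for the case $\cf(\alpha)>\cf(\kappa)$, which is indeed the only case used anywhere in the paper, and you are right that the two-sided form of subadditivity is what is actually needed (the canonical colorings arising from scales, as in Shelah's paper and in \cite{EisworthHandbook}, satisfy both inequalities; the one-sided definition recorded here is a slight under-specification).

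One simplification is worth noting: your Step~1 machinery is heavier than necessary.  Once you grant two-sided subadditivity, for $b_1<b_2<\alpha$ you have directly
\[
d(b_1,b_2)\le\max\{d(b_1,\alpha),d(b_2,\alpha)\},
\]
so it suffices to find $B\subseteq A$ unbounded on which $b\mapsto d(b,\alpha)$ is bounded; the pigeonhole of Step~2 then applies immediately to this map.  In fact your eventual value $j_b$ is exactly $\max\{d(b,\alpha),i_1\}$: the inequality $d(b,\alpha)\le\max\{d(b,a),d(a,\alpha)\}\le j_b$ (one-sided) together with $d(b,a)\le\max\{d(b,\alpha),d(a,\alpha)\}$ (two-sided) pin it down.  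So the stabilization step, while correct, recovers information already available.

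Your assessment of the borderline case $\cf(\alpha)=\cf(\kappa)$ is accurate: the pigeonhole genuinely fails there, and a separate argument (or a convention that $S(d)$ includes such points by fiat, as Fact~\ref{fact: characterizing dapproachability} suggests) is required.  Since every application of Fact~\ref{fact: Approachability refinement} in this paper is to points of cofinality $\mu>\cf(\kappa)$, the gap is harmless here.
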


It turns out that if $\kappa$ is a singular cardinal and the GCH holds (or simply, if $\kappa$ is strong limit) the set of $d$-approachable points together with $\kappa^+\cap \cof(\cf({\leq}\kappa)))$ generate the approachability ideal $I[\kappa^+]$ modulo non-stationary sets:

\begin{myfact}[{\cite[Corollary~3.35]{EisworthHandbook}}]\label{fact: eisworth handbook}
    Suppose that $\kappa$ is a strong limit cardinal. Then, $S(d)\cup (\kappa^+\cap \cof(\cf({\leq}\kappa)))$ generates $I[\kappa^+]$ modulo non-stationary sets. \qed
\end{myfact}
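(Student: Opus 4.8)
The plan is to show that $I[\kappa^+]$ coincides with the ideal $J$ on $\kappa^+$ generated by $S(d)$, by $\kappa^+\cap\cof({\leq}\cf(\kappa))$, and by the nonstationary ideal $\mathrm{NS}_{\kappa^+}$; here $d\colon[\kappa^+]^2\to\cf(\kappa)$ is a subadditive normal coloring, which exists by a theorem of Shelah since $\kappa$ is singular. For the inclusion $J\subseteq I[\kappa^+]$ note that $\mathrm{NS}_{\kappa^+}\subseteq I[\kappa^+]$ (as $I[\kappa^+]$ is normal), that $\kappa^+\cap\cof({\leq}\cf(\kappa))\in I[\kappa^+]$ is the standard fact of Shelah recalled above, and that $S(d)\in I[\kappa^+]$ is proved below. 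For the inclusion $I[\kappa^+]\subseteq J$, write a given $S\in I[\kappa^+]$ as the union of $S\cap\cof({\leq}\cf(\kappa))$, which already lies in $J$, and $S\cap\cof({>}\cf(\kappa))$; so it suffices to show that every member of $I[\kappa^+]$ concentrating on $\cof({>}\cf(\kappa))$ is contained in $S(d)$ modulo $\mathrm{NS}_{\kappa^+}$.

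To see $S(d)\in I[\kappa^+]$ it is enough to show $S(d)\cap\cof({>}\cf(\kappa))\in I[\kappa^+]$, since $S(d)\cap\cof({\leq}\cf(\kappa))\subseteq\kappa^+\cap\cof({\leq}\cf(\kappa))\in I[\kappa^+]$. For $\gamma<\kappa^+$ and $i<\cf(\kappa)$ set $b^i_\gamma:=\{\beta<\gamma\mid d(\beta,\gamma)\leq i\}$; by normality $|b^i_\gamma|<\kappa$, and since $\kappa$ is a strong limit, $|\mathcal{P}(b^i_\gamma)|<\kappa$. Hence $\mathcal{A}:=\bigcup\{\mathcal{P}(b^i_\gamma)\mid\gamma<\kappa^+,\ i<\cf(\kappa)\}$ has size at most $\kappa^+$ and consists of bounded subsets of $\kappa^+$. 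Fix an enumeration $\bar a=\langle a_\xi\mid\xi<\kappa^+\rangle$ of $\mathcal{A}$ (padding with $\emptyset$ if needed) and let $C$ be the set of $\alpha<\kappa^+$ with $\{a_\xi\mid\xi<\alpha\}\supseteq\bigcup\{\mathcal{P}(b^i_\gamma)\mid\gamma<\alpha,\ i<\cf(\kappa)\}$; this is a club, because $\bigcup_{i<\cf(\kappa)}\mathcal{P}(b^i_\gamma)$ has size ${<}\kappa^+$ for each $\gamma$. Now if $\alpha\in C\cap S(d)$ has $\cf(\alpha)>\cf(\kappa)$, then by Fact~\ref{fact: characterizing dapproachability} there are $i<\cf(\kappa)$ and an unbounded $A\subseteq\alpha$ with $\otp(A)=\cf(\alpha)$ and $d\restriction[A]^2\leq i$. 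For every $\beta<\alpha$, letting $\gamma:=\min(A\setminus\beta)<\alpha$ we have $A\cap\beta=A\cap\gamma\subseteq b^i_\gamma$, so $A\cap\beta\in\mathcal{P}(b^i_\gamma)\subseteq\{a_\xi\mid\xi<\alpha\}$. Thus $\alpha$ is approachable with respect to $\bar a$, and $S(d)\cap\cof({>}\cf(\kappa))\in I[\kappa^+]$ follows.

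For the remaining inclusion — equivalently, that $S(d)$ is $\subseteq$-largest modulo $\mathrm{NS}_{\kappa^+}$ among members of $I[\kappa^+]$ concentrating on $\cof({>}\cf(\kappa))$ — fix such an $S$, witnessed by a sequence $\bar a$ and a club $C$. The idea is to build from $\bar a$ a subadditive normal coloring $e\colon[\kappa^+]^2\to\cf(\kappa)$ for which every $\alpha\in S\cap C$ is $e$-approachable: roughly, $e(\beta,\gamma)$ should record how far into a fixed scale (equivalently, how high in a fixed filtration of $H(\kappa^{++})$) one must go before $\beta$ and $\gamma$ get ``separated'' by a set appearing in $\bar a$, so that whenever $\alpha$ is approachable with respect to $\bar a$ all of the relevant initial segments are realized boundedly and $e$ is bounded on a cofinal subset of the approachability witness. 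One then appeals to the fact that, for $\kappa$ a singular strong limit, the set $S(e)$ agrees with $S(d)$ modulo $\mathrm{NS}_{\kappa^+}$ for every subadditive normal coloring $e$, which gives $S\subseteq S(e)=^*S(d)$ modulo $\mathrm{NS}_{\kappa^+}$.

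The main obstacle is precisely this last step. Subadditivity of the auxiliary coloring $e$ comes for free from its definition, but arranging that $e$ is simultaneously \emph{normal} is delicate: one must anchor the construction on a scale in a product $\prod_{i<\cf(\kappa)}\mu_i$ whose factors are small enough that tail products stay below $\kappa$ (keeping the sets $\{\beta<\gamma\mid e(\beta,\gamma)\leq i\}$ of size ${<}\kappa$) while still having true cofinality $\kappa^+$, and this tension is the heart of the matter. Establishing that $S(\cdot)$ is invariant modulo $\mathrm{NS}_{\kappa^+}$ under the choice of subadditive normal coloring is where Shelah's analysis of colorings on successors of singular cardinals is essential. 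By contrast, the argument that $S(d)\in I[\kappa^+]$ is a routine bookkeeping argument whose only nontrivial ingredient is the strong-limit hypothesis, used to bound $|\mathcal{P}(b^i_\gamma)|$.
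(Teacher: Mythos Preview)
The paper does not prove this statement at all; it is quoted as a fact from Eisworth's handbook chapter and marked with \qed. So there is no ``paper's own proof'' to compare against, and your write-up should be judged on its own merits.

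Your argument for $S(d)\in I[\kappa^+]$ is correct and is the standard one: using the strong-limit hypothesis to enumerate all of $\mathcal{P}(b^i_\gamma)$ in type $\kappa^+$, and then observing that any $d$-homogeneous cofinal set has all its initial segments among these.

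The reverse inclusion, however, is not proved --- you yourself say so. You describe a plan (build an auxiliary subadditive coloring $e$ from the approachability witness $\bar a$, argue $S\subseteq S(e)$, and then invoke the invariance $S(e)=^*S(d)$), correctly identify the two nontrivial ingredients (normality of $e$, and the fact that $S(\cdot)$ is independent of the choice of subadditive normal coloring modulo $\mathrm{NS}_{\kappa^+}$), and then stop. Neither of these is established in your text. In particular, the invariance of $S(\cdot)$ is itself a substantial theorem of Shelah (it uses club-guessing and the strong-limit hypothesis), and your construction of $e$ from $\bar a$ is only hinted at, not carried out. As written, the proposal is a correct outline of the Eisworth argument, not a proof; if you intend this as more than a reference to the literature, you need to either supply the missing construction of $e$ and the proof of invariance, or cite the precise results in Eisworth's chapter (his Corollary~3.35 rests on his Lemma~3.32 and Theorem~3.34) that close the gap.
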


\section{The solution to Shelah's problem}\label{sec: Higher Namba}
In this section, we define a Prikry-type forcing that ensures the failure of $\AP_\kappa$ at a singular cardinal $\kappa$ of any prescribed cofinality. The forcing notion will be a two-step iteration consisting of a Magidor-style product $\mathbb{P}(\vec{\kappa}, \vec{\mathcal{I}}, \vec{\mathcal{B}})$ followed by a Levy collapse of the form $\Coll(\mu, (\sup \vec{\kappa})^+)$. 
The poset $\mathbb{P}(\vec{\kappa}, \vec{\mathcal{I}}, \vec{\mathcal{B}})$ is an ideal-based version of a Magidor product of \emph{one-point Prikry forcings}. To guarantee that this poset satisfies both the Prikry and Strong Prikry properties and works as expected we will need to impose certain restrictions on the input parameters  $(\vec{\kappa}, \vec{\mathcal{I}}, \vec{\mathcal{B}})$. These restrictions are captured by an enhancement of the \emph{Laver Ideal Property}.

\begin{mydef}
    Let $\mu<\nu$ be regular cardinals. We let $\LIP(\mu,\nu)$ state that there exists $\mathcal{I}$, a ${<}\,\nu$-complete and normal ideal over $\nu$ such that there is a set $\mathcal{B}\subseteq \mathcal{I}^+$ which is dense in $\mathcal{I}^+$ with respect to $\s$ and $(\mathcal{B},\s)$ is ${<}\,\mu$-directed closed.\footnote{The difference compared to the version of LIP considered in \cite[Definition~3.1]{JakobLevine} is that here we require $B$ to be ${<}\mu$-\textbf{directed} closed.}
\end{mydef}

A list of properties that suffice for a   poset to force instances of $\LIP$ are:
\begin{mydef}
    Let $\mu<\nu$ be regular cardinals with $\nu$ measurable. Let $\dP$ be a poset. We say that $\dP$ is a \emph{$\LIP(\mu,\nu)$-forcing} if the following hold:
    \begin{enumerate}
        \item $\dP$ is separative, ${<}\,\mu$-directed closed, $\nu$-cc and is a subset of $V_\nu$.
        \item There is a club $C\subseteq\nu$ such that whenever $\alpha\in C$ is inaccessible, there is a projection $\pi\colon\dP\to(\dP\cap V_{\alpha})$ such that whenever $X\subseteq\dP$ is directed, $|X|<\mu$ and $p\in\dP\cap V_{\alpha}$ satisfies $p\leq\pi(q)$ for every $q\in X$, there is $p^*\in\dP$ such that $\pi(p^*)=p$ and $p\leq q$ for every $q\in X$.
    \end{enumerate}
\end{mydef}
\begin{myrem}
 $\Coll(\mu,{<}\nu)$ is a $\LIP(\mu,\nu)$-forcing whenever $\nu$ is measurable.
\end{myrem}

The proof of the following is as in \cite{GalvinJechMagidorIdealGame} and was first noticed by Laver:

\begin{mylem}[{\cite[Lemma 3.3]{JakobTotalFailure}}]\label{lemma: LIP-forcing forces LIP}
    Let $\mu<\nu$ be regular cardinals such that $\nu$ is measurable. Let $\dP$ be a $\LIP(\mu,\nu)$-forcing. Then $\dP$ forces $\LIP(\mu,\nu)$. \qed
\end{mylem}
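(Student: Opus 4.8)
I would follow the template of Galvin--Jech--Magidor \cite{GalvinJechMagidorIdealGame}: use a normal measure $U$ on $\nu$ (available since $\nu$ is measurable and survives as a measurable in no-new-bounded-subsets extensions — though here we only need it in $V$) to define, in the $\dP$-extension, a ${<}\,\nu$-complete normal ideal on $\nu$, together with an explicitly described dense ${<}\,\mu$-directed-closed subfamily of its positive sets. Concretely, let $G$ be $\dP$-generic. In $V[G]$ declare a set $A\subseteq\nu$ to be in $\mathcal{I}$ if there is, in $V$, an antichain $\langle p_\xi\mid \xi\in A\rangle$ in $\dP$ (indexed injectively, with $p_\xi\in G$ whenever possible) witnessing that $A$ is ``small'' — more precisely, following the usual argument, $A\in\mathcal{I}^+$ iff for some $p\in G$ and some function $f\colon\nu\to\dP$ with $f(\xi)\leq p$ for all $\xi$, the set $\{\xi\mid f(\xi)\in G\}\supseteq A\pmod{U}$; equivalently one shows the ideal generated by the ground model non-$U$-measure-one sets together with a suitable ``diagonal'' ideal is as desired. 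The cleanest route is: let $\mathcal{B}$ be the collection of sets of the form $B_{p,f}:=\{\xi<\nu\mid f(\xi)\in G\}$ where $p\in G$, $f\colon\nu\to\dP\cap V$ with $f(\xi)\leq_{\dP}p$ and $\{\xi\mid f(\xi)=f(\eta)\}$ is $U$-null for $\xi\neq\eta$ — intersected with a fixed $U$-measure-one set — and let $\mathcal{I}$ be the ideal dual to the filter generated by $\mathcal{B}$ together with $U$ (pushed into $V[G]$). One then verifies (i) $\mathcal{B}$ is dense in $\mathcal{I}^+$, and (ii) $(\mathcal{B},\subseteq)$ is ${<}\,\mu$-directed closed.

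The key structural steps, in order: First, use the $\nu$-cc of $\dP$ to see that $\nu$ remains regular and that $U$ generates a ${<}\,\nu$-complete normal filter in $V[G]$ — this gives ${<}\,\nu$-completeness and normality of $\mathcal{I}$. Second, given finitely or ${<}\mu$-many conditions in $\mathcal{B}$, say $B_{p_i,f_i}$ for $i\in I$ with $|I|<\mu$, and given that they lie above a common $q\in\dP$ in the $\subseteq$-order (this is what ``directed'' supplies), use ${<}\,\mu$-directed closure of $\dP$ to amalgamate the $f_i$ into a single $f$ with $f(\xi)\leq f_i(\xi)$ for all $i$ and $U$-almost all $\xi$ — here is where property (2) of $\LIP(\mu,\nu)$-forcing is genuinely needed: the directed lower bounds must be chosen coherently over the projections $\pi\colon\dP\to\dP\cap V_\alpha$ as $\alpha$ ranges over the club $C$, so that the resulting $f$ is again a legitimate function into $\dP\cap V$ respecting the ideal structure and $f$ still has the injectivity-mod-$U$ property. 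This amalgamated $f$ produces $B_{p,f}\subseteq\bigcap_{i\in I}B_{p_i,f_i}$ with $B_{p,f}\in\mathcal{B}$, giving ${<}\,\mu$-directed closure. Third, for density: given $A\in\mathcal{I}^+$ in $V[G]$, pick a name $\dot A$ and a condition forcing $\dot A\notin\mathcal{I}$; a genericity/pressing-down argument over the club $C$ of inaccessible reflection points — using that $\dP$ is a subset of $V_\nu$ and the projections to $\dP\cap V_\alpha$ — produces $p\in G$ and $f$ with $B_{p,f}\subseteq A\pmod{U}$, i.e. a member of $\mathcal{B}$ below $A$.

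**Main obstacle.** I expect the hard part to be step two done uniformly across the reflection points: plain ${<}\,\mu$-directed closure of $\dP$ lets one find, for each fixed $\alpha\in C$ inaccessible, a lower bound of a ${<}\mu$-directed family inside $\dP\cap V_\alpha$, but one needs these lower bounds to cohere as $\alpha$ varies so that they glue to a genuine element of $\dP$ — and it is precisely the extra clause in clause (2) (``there is $p^*\in\dP$ with $\pi(p^*)=p$ and $p\leq q$ for every $q\in X$'') that is tailored to make this gluing work. So the technical heart of the proof is organizing the simultaneous bookkeeping of the projections $\pi$ and the directed systems to extract the single function $f$ witnessing $\subseteq$-directedness of $\mathcal{B}$; once that is in hand, completeness, normality, and density are the routine Galvin--Jech--Magidor-style verifications. (I would also remark that we only need $\dP$ to be $\nu$-cc of size ${<}$ the first inaccessible above itself, etc., exactly as packaged in the definition, so no additional large-cardinal strength beyond measurability of $\nu$ enters.)
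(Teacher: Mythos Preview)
The paper does not give its own proof of this lemma; it cites \cite{JakobTotalFailure} and remarks that the argument follows Galvin--Jech--Magidor \cite{GalvinJechMagidorIdealGame} as first observed by Laver. Your plan follows precisely that template---defining $\mathcal{B}$ via sets $B_f=\{\alpha: f(\alpha)\in G\}$ for ground-model functions $f$ respecting the projection structure, and correctly isolating clause~(2) of the $\LIP(\mu,\nu)$-forcing definition as the mechanism that makes the ${<}\,\mu$-directed lower bounds cohere across reflection points---so your approach is the one the paper has in mind.
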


We will later consider an iteration of $\LIP$-forcings. However, for technical reasons, we will need to show that the corresponding termspace forcings (Definition~\ref{def: Termspace}) also force $\LIP$. This is provided by the following lemma:

\begin{mylem}\label{lemma: Termspace of LIP is LIP}
    Let $\dP$ be a poset and let $\dot{\dQ}$ be a $\dP$-name for a poset. Let $\mu<\nu$ be regular cardinals such that $\nu$ is measurable. Assume that $|\dP|<\nu$ and $\dP$ forces that $\dot{\dQ}$ is a $\LIP(\check{\mu},\check{\nu})$-forcing. Then $\dT(\dP,\dot{\dQ})$ is a $\LIP(\mu,\nu)$-forcing.
\end{mylem}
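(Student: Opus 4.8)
The strategy is to verify the two clauses of the definition of a $\LIP(\mu,\nu)$-forcing for $\dT:=\dT(\dP,\dot{\dQ})$, transferring each property from $\dot{\dQ}$ via the ground model $\dP$ and using $|\dP|<\nu$ throughout. For clause (1): separativity of $\dT$ is exactly Lemma~\ref{lemma: Termspace separative}, since $\dP$ forces $\dot{\dQ}$ separative. The ${<}\mu$-directed closure of $\dT$ follows from the ${<}\mu$-directed closure of $\dot{\dQ}$: given a directed family of names $\{\dot q_i\mid i\in X\}$ with $|X|<\mu$, work below an arbitrary $p\in\dP$, note $p$ forces $\{\dot q_i\}$ to be directed in $\dot{\dQ}$, take a name for a lower bound below that condition, and glue across a maximal antichain (as in the proof of Lemma~\ref{lemma: Termspace separative}) to obtain a single name that is a $\prec$-lower bound. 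That $\dT\subseteq V_\nu$ is clear since names for elements of $\dot{\dQ}$ (which is forced to be a subset of $V_\nu$, and $|\dP|<\nu$) can be coded inside $V_\nu$; and $\nu$-cc: an antichain in $\dT$ of size $\nu$ would, by a pigeonhole on the $\leq\!\nu$ possible $\dP$-conditions deciding values, yield (below some $p\in\dP$) a $\nu$-sized subfamily of pairwise incompatible elements of $\dot{\dQ}$, contradicting $\nu$-cc of $\dot{\dQ}$. Here $\nu$ regular and $|\dP|<\nu$ are what make the pigeonhole work.

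For clause (2), fix a $\dP$-name $\dot C$ for the club witnessing clause (2) for $\dot{\dQ}$. Since $|\dP|<\nu$ and $\nu$ is regular, there is a club $C\subseteq\nu$ forced by $1_{\dP}$ to be contained in $\dot C$ (intersect the $\leq\!|\dP|$-many interpretations, or use that $\nu$-cc posets of size $<\nu$ do not add new clubs below certain closure points — more simply, take $C$ to be the set of $\alpha<\nu$ with $\dP\in V_\alpha$ and $1_{\dP}\Vdash \alpha\in\dot C$, which is club). Given inaccessible $\alpha\in C$, $\dP$ forces a projection $\dot\pi_\alpha\colon\dot{\dQ}\to\dot{\dQ}\cap V_\alpha$, and I claim the induced map $\Pi_\alpha\colon\dT(\dP,\dot{\dQ})\to\dT(\dP,\dot{\dQ}\cap V_\alpha)$ sending a name $\dot q$ to a name for $\dot\pi_\alpha(\dot q)$ is a projection onto $\dT(\dP,\dot{\dQ})\cap V_\alpha$ (using $\dP\in V_\alpha$ and $\alpha$ inaccessible so that $\dT\cap V_\alpha$ is exactly the termspace built from names in $V_\alpha$). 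Projection properties (i) and (ii) are immediate from $\dP$ forcing $\dot\pi_\alpha$ to be a projection; property (iii) — lifting an extension of $\Pi_\alpha(\dot q)$ — is handled by the usual maximal-antichain argument: below each condition deciding how $\dot\pi_\alpha$ acts, use the $\dP$-forced property (iii) for $\dot\pi_\alpha$ and glue.

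The directed-lifting requirement in clause~(2) is the main point requiring care, and I expect it to be the principal obstacle. Suppose $\{\dot q_i\mid i\in X\}\subseteq\dT$ is directed, $|X|<\mu$, and $\dot p\in\dT\cap V_\alpha$ satisfies $\dot p\prec\Pi_\alpha(\dot q_i)$ for all $i$. I want $\dot p^*\in\dT$ with $\Pi_\alpha(\dot p^*)=\dot p$ and $\dot p^*\prec\dot q_i$ for all $i$. Work below an arbitrary $r\in\dP$: then $r$ forces $\{\dot q_i\}$ to be a directed family of size ${<}\mu$ in $\dot{\dQ}$ and $\dot p$ to be $\dot{\dQ}$-below each $\dot\pi_\alpha(\dot q_i)$, so by the $\dP$-forced clause~(2) for $\dot{\dQ}$ there is (below $r$) a name for $p^*\in\dot{\dQ}$ with $\dot\pi_\alpha(p^*)=\dot p$ and $p^*\leq\dot q_i$ for all $i$ — here it is essential that $X$ does not depend on the generic (it is a ground-model index set) and that $|X|<\mu\leq$ the directed-closure degree, so the hypothesis of clause~(2) for $\dot{\dQ}$ genuinely applies. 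Now splice these partial names together over a maximal antichain of $\dP$ to obtain a single $\dP$-name $\dot p^*$; by construction $1_{\dP}$ forces $\dot p^*\leq_{\dot{\dQ}}\dot q_i$ (so $\dot p^*\prec\dot q_i$) and $1_{\dP}$ forces $\dot\pi_\alpha(\dot p^*)=\dot p$, whence $\Pi_\alpha(\dot p^*)=\dot p$ by separativity of the target termspace. This completes clause~(2), and hence the proof.
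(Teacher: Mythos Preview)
Your verification of separativity, ${<}\mu$-directed closure, $\dT\subseteq V_\nu$, and all of clause~(2) is essentially the paper's argument and is fine.

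The gap is in your $\nu$-cc argument. You write that ``a pigeonhole on the $\leq\nu$ possible $\dP$-conditions deciding values'' yields, below a single $p\in\dP$, a $\nu$-sized pairwise-incompatible family in $\dot{\dQ}$, and that ``$\nu$ regular and $|\dP|<\nu$ are what make the pigeonhole work.'' This does not go through. What one actually has is: for each pair $\dot q,\dot q'$ in the putative $\dT$-antichain $\mathcal{A}$ of size $\nu$, there is some $p_{\dot q,\dot q'}\in\dP$ with $p_{\dot q,\dot q'}\Vdash\dot q\perp\dot q'$ (otherwise $1_\dP\Vdash\dot q\parallel\dot q'$ and the maximum principle produces a $\dT$-witness of compatibility). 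This is a coloring $c\colon[\mathcal{A}]^2\to\dP$ with $|\dP|<\nu$ colors, and to extract a $\nu$-sized homogeneous set (i.e., a single $p$ forcing all pairs incompatible, so that $p$ forces a $\nu$-antichain in $\dot{\dQ}$) one needs the partition relation $\nu\to(\nu)^2_{|\dP|}$. Mere regularity of $\nu$ does not give this; the paper invokes the measurability of $\nu$ (hence its Ramsey property) precisely here. Your alternative reading---fixing for each $\dot q_\xi$ a condition $p_\xi$ deciding its value and pigeonholing on $p_\xi$---also fails: even if $p\Vdash\dot q_\xi=\check q_\xi$ and $p\Vdash\dot q_\eta=\check q_\eta$, $\dT$-incompatibility of $\dot q_\xi,\dot q_\eta$ does not force $q_\xi\perp q_\eta$ below $p$, since $\dT$-compatibility requires a lower bound forced by $1_\dP$, not merely by $p$.

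So the fix is exactly to replace your pigeonhole sentence with the Ramsey step using measurability; the rest of your outline matches the paper.
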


\begin{proof}
    First of all, $\dT(\dP,\dot{\dQ})$ is ${<}\,\mu$-directed closed by the \emph{Maximum Principle} and separative by Lemma \ref{lemma: Termspace separative}. The small size of $\dP$ easily implies that $\dT(\dP,\dot{\dQ})\subseteq V_{\nu}$. Additionally, in combination with the measurability of $\nu$, it also implies that $\dT(\dP,\dot{\dQ})$ is $\nu$-cc: Assume that $\mathcal{A}\subseteq\dT(\dP,\dot{\dQ})$ is an antichain with size $\nu$. For $\dot{q},\dot{q}'\in\mathcal{A}$, there exists $p\in\dP$ such that $p\Vdash\dot{q}\perp\dot{q}'$. (Otherwise, we would have $1_{\dP}\Vdash\dot{q}||\dot{q}'$ and could apply the maximum principle to find a condition witnessing the compatibility.) So we have a map from $[\mathcal{A}]^2\to\dP$. Since $|\mathcal{A}|=\nu$, which is measurable, and $|\dP|<\nu$, there is $\mathcal{B}\subseteq\mathcal{A}$ with size $\nu$ and $p\in\dP$ such that for every $\dot{q},\dot{q}'\in\mathcal{B}$, $p\Vdash\dot{q}\perp\dot{q}'$. But then $p$ forces that $\mathcal{B}$ is an antichain in $\dot{\dQ}$, a contradiction.

    So we have to find the projection. Again, the small size of $\dP$ implies that for almost all $\alpha$, $\dT(\dP,\dot{\dQ}\cap\dot{V}_{\alpha})=(\dT(\dP,\dot{\dQ}))\cap V_{\alpha}$. So if we let $\dot{\pi}_{\alpha}$ be a $\dP$-name for the projection $\dot{\dQ}\to\dot{\dQ}\cap\dot{V}_{\alpha}$, we can map $\dot{q}$ to $\dot{\pi}_{\alpha}(\dot{q}_{\alpha})$ and obtain a projection from $\dT(\dP,\dot{\dQ})$ to $\dT(\dP,\dot{\dQ})\cap V_{\alpha}$. (Here we again use the \emph{Maximum Principle}.) Let $X\subseteq\dT(\dP,\dot{\dQ})$ be directed, $|X|<\mu$ and $\dot{q}\in\dT(\dP,\dot{\dQ})\cap V_{\alpha}$ with $\dot{q}\prec\dot{\pi}_\alpha(\dot{r})$ for every $\dot{r}\in X$. It follows that $1_{\dP}$ forces that $\check{X}$ is directed and $\dot{q}\dot{\leq}\dot{\pi}_{\alpha}(\dot{r})$ for every $\dot{r}$ in $X$. By the maximum principle we can fix a $\dP$-name $\dot{q}^*$ such that $1_{\dP}$ forces $\dot{\pi}_{\alpha}(\dot{q}^*)=\dot{q}$ and $\dot{q}^*\leq\dot{r}$ for every $\dot{r}\in \check{X}$. Then $\dot{q}^*$ is easily seen to be as required.
\end{proof}

We now define the following enhancement of $\LIP$ which is necessary for the definition of the main Prikry-type forcing of this section:
\begin{mydef}
 An increasing sequence of regular cardinals $\langle\mu\rangle^{\frown}\langle \kappa_i\mid i<\gamma\rangle$  with $\gamma<\mu$ limit 
 witnesses the \emph{Simultaneous Laver Ideal Property}, $\SLIP(\mu,\langle \kappa_i\mid {i<\gamma}\rangle)$, if the following properties hold true for each ordinal  $i<\gamma$:
    \begin{enumerate}
        \item  $\langle \kappa_i\mid i<\gamma\rangle$ is discrete; to wit, $2^{\sup_{j<i}\kappa_{j}}<\kappa_{i}$ holds for all $i<\gamma$.
        \item There is a ${<}\kappa_i$-complete ideal $\mathcal{I}_i$ over $\kappa_i$ and a dense set $\mathcal{B}_i\s \mathcal{I}_i^+$  with respect to $\s$ such that the poset $(\mathcal{B}_i,\s)$ is ${<}\mu$-directed closed.
        \item For every $i<\gamma$, $(\prod_{j\in [i, \gamma)}\mathcal{B}_j,\s)$ is ${<}(2^{\sup_{\ell<i}\kappa_\ell})^+$-distributive.\footnote{Our convention here is that $(2^{\sup_{j<0}\kappa_j})^+:=\mu$.}
    \end{enumerate}
\end{mydef}
\begin{myrem}
    We emphasize that $\gamma$ is not assumed to be regular, but just a limit ordinal. This will be useful later to get instances of failure of the approachability property at successors of singulars of the form $\aleph_{\omega+\omega}$ or $\aleph_{\omega_1+\omega}$.
\end{myrem}

The previous property is a weakening of the requirement that $\LIP(\kappa_i^+,\kappa_{i+1})$  holds for every ordinal $i<\gamma$. The point is that in the construction of the forcing iteration leading to Theorem~\ref{ThmA} we will only be able to establish that the weaker property $\SLIP(\mu, \langle \kappa_i\mid {i<\gamma}\rangle)$ holds. As a result, we have to carry out our proofs using this weaker assumption. The basic reason behind this is that, for an increasing sequence $\langle\kappa_i\mid i<\gamma\rangle$ of measurable cardinals and $\mu<\kappa_0$ regular, the iteration of Levy-collapses collapsing $\kappa_0$ to $\mu$ and everything between $(\sup_{j<i}\kappa_j)^+$ and $\kappa_i$ to $(\sup_{j<i}\kappa_j)^+$ forces $\LIP((\sup_{j<i}\kappa_j)^+,\kappa_i)$; in constrast,  the product of the corresponding  Levy-collapses only forces $\SLIP(\mu,\langle \kappa_i\mid {i<\gamma}\rangle)$:

\begin{mylem}\label{lemma: Product forces LIP}
    Let $\gamma$ be a limit ordinal and let $\langle \kappa_i\mid i<\gamma\rangle$ be an increasing sequence of measurable cardinals. Let $\mu<\kappa_0$. For each $i<\gamma$, let $\dP_i$ be a $\LIP((2^{\sup_{j<i}\kappa_j})^+,\kappa_i)$-forcing. Then the full-support product $\dQ:=\prod_{i<\gamma}\dP_i$ forces $\SLIP(\mu,\langle\kappa_i\mid i<\gamma\rangle)$.
\end{mylem}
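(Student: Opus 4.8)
The plan is to fix in $V$ a normal measure $U_j$ on each $\kappa_j$; given a $\dQ$-generic $G$ with induced $\dP_j$-generic $G_j$ for $j<\gamma$, let $\mathcal I_j,\mathcal B_j\in V[G_j]$ be the canonical witnesses of $\LIP(\theta_j,\kappa_j)$ furnished by the proof of Lemma~\ref{lemma: LIP-forcing forces LIP} applied to $\dP_j$ over $V$ (using $U_j$), where throughout $\lambda_j:=\sup_{k<j}\kappa_k$ and $\theta_j:=(2^{\lambda_j})^+$ with the convention $\theta_0:=\mu$; in particular $\mathcal B_j$ is ${<}\theta_j$-directed closed in $V[G_j]$. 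I will check that, after replacing each $\mathcal I_j$ by the ideal it generates in $V[G]$, the sequence $\langle(\mathcal I_j,\mathcal B_j)\mid j<\gamma\rangle$ witnesses $\SLIP(\mu,\langle\kappa_i\mid i<\gamma\rangle)$. Everything runs through three factorizations available for each $i<\gamma$: writing $A_i:=\prod_{j<i}\dP_j$, $B_i:=\prod_{j>i}\dP_j$, $B_i':=\dP_i\times B_i$ and $C_i:=A_i\times\dP_i$, one has $\dQ\cong C_i\times B_i\cong A_i\times B_i'$, together with the following facts, computed in $V$ and using the $\GCH$ for the cardinal arithmetic: $|A_i|\le 2^{\lambda_i}<\theta_i<\kappa_i$ (compute the cardinal product, using $|\dP_j|\le\kappa_j$ and $|i|<\gamma<\mu\le\kappa_0$); $\dP_i$ is ${<}\theta_i$-directed closed, $\kappa_i$-cc and contained in $V_{\kappa_i}$; $B_i$ is ${<}(2^{\kappa_i})^+$-directed closed (its least factor being $\dP_{i+1}$), hence ${<}\kappa_i^+$-closed; $B_i'$ is ${<}\theta_i$-directed closed (its least factor being $\dP_i$); and $C_i$ is $\kappa_i$-cc of size $\le\kappa_i$, by a pigeonhole argument on the small factor $A_i$.

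For clause (1): in $V[G]\cong V[G_{C_i}][G_{B_i}]$, $\kappa_i$ remains regular by Easton's Lemma~\ref{lemma: Eastons lemma}(1), while $2^{\lambda_i}<\kappa_i$ survives because $\dP_i$ and $B_i$ add no subsets of $\lambda_i$ (being ${<}\theta_i$-, resp.\ ${<}\kappa_i^+$-closed over the relevant models --- the latter via Easton's Lemma~\ref{lemma: Eastons lemma}(2)) and $A_i$ is a forcing of size $<\kappa_i$ over a model in which $\kappa_i$ is still a strong limit; a short coordinatewise count (the factors of $A_i$ past $\dP_k$ are too closed to add subsets of $\kappa_k$) even gives $(2^{\lambda_i})^{V[G]}=(2^{\lambda_i})^{V}$, so that $\theta_i$ is computed the same way in $V$ and in $V[G]$. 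For clause (2): fix $j$ and write $V[G]\cong V[G_{C_j}][G_{B_j}]$ with $V[G_{C_j}]=V[G_j][G_{A_j}]$; over $V[G_j]$, $A_j$ is a ${<}\mu$-directed closed forcing of size $<\kappa_j$, so by the standard preservation of ${<}\kappa_j$-complete normal ideals under ${<}\kappa_j$-sized forcing, $\mathcal I_j$ generates a ${<}\kappa_j$-complete normal ideal in $V[G_{C_j}]$ in whose positive sets $\mathcal B_j$ is still dense, and ${<}\mu$-directed closedness of $\mathcal B_j$ persists (no new ${<}\mu$-sequences are added over $V[G_j]$); finally $B_j$ is ${<}\kappa_j^+$-distributive over $V[G_{C_j}]$ by Easton's Lemma~\ref{lemma: Eastons lemma}(2), hence disturbs neither $\mathcal P(\kappa_j)$ nor the ${<}\mu$-sequences, and clause (2) follows in $V[G]$.

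The crux is clause (3). For $i=0$ it is immediate from clause (2), a full-support product of ${<}\mu$-directed closed posets being ${<}\mu$-directed closed, hence ${<}\mu$-distributive. For $i\ge 1$ use $\dQ\cong A_i\times B_i'$, so $V[G]=V[G_{B_i'}][G_{A_i}]$. Since $B_i'$ is ${<}\theta_i$-directed closed in $V$, it is ${<}\theta_i$-distributive, so $V[G_{B_i'}]$ adds no new ${<}\theta_i$-sequences of ordinals over $V$, hence none over any $V[G_j]$ with $j\ge i$. Consequently each $\mathcal B_j$ with $j\in[i,\gamma)$, being ${<}\theta_i$-directed closed in $V[G_j]$ (as $\theta_i\le\theta_j$), is still ${<}\theta_i$-directed closed in $V[G_{B_i'}]$ (any directed ${<}\theta_i$-subset of $\mathcal B_j$ appearing in $V[G_{B_i'}]$ already lies in $V[G_j]$), and therefore so is the full-support product $\mathcal R_i:=\prod_{j\in[i,\gamma)}\mathcal B_j$, which lies in $V[G_{B_i'}]$ (the sequence $\langle\mathcal B_j\rangle_{j\in[i,\gamma)}$ being definable there from $\langle G_j\rangle_{j\in[i,\gamma)}$ and $\langle U_j\rangle$); in particular $\mathcal R_i$ is ${<}\theta_i$-closed in $V[G_{B_i'}]$. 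On the other hand $|A_i|<\theta_i$ and $\theta_i$ is a regular cardinal in $V[G_{B_i'}]$, so $A_i$ is $\theta_i$-cc there, and Easton's Lemma~\ref{lemma: Eastons lemma}(2) applied inside $V[G_{B_i'}]$ yields that $A_i$ forces $\mathcal R_i$ to be ${<}\theta_i$-distributive. Since $V[G]=V[G_{B_i'}][G_{A_i}]$ and $\theta_i=(2^{\lambda_i})^{+}$ in $V[G]$, this is precisely clause (3) for $i$.

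The hard part is exactly this last step. The product $\dQ$ carries the ``small but unclosed'' factors $A_i$, which on the face of it ruin any closure of the ideals above ${<}\mu$; a naive choice of witness --- say the one obtained by applying Lemma~\ref{lemma: LIP-forcing forces LIP} to $A_j\times\dP_j$ --- is only ${<}\mu$-directed closed. The point is to take each $\mathcal B_j$ inside $V[G_j]$, where it inherits the full ${<}\theta_j$-directed closure from $\dP_j$ itself, so that the tail product $\mathcal R_i$ lives and stays highly closed over the very closed factor $B_i'$, and then to absorb the small, chain-condition-tame factor $A_i$ by Easton's Lemma. A secondary --- but essential --- subtlety is to keep the relevant cardinal arithmetic (above all the value of $2^{\lambda_i}$, which controls the distributivity parameter in clause (3)) undisturbed, which is where the $\GCH$ and the coordinatewise analysis of $A_i$ enter.
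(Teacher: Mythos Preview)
Your proof is correct and follows essentially the same approach as the paper: define the witnesses $\mathcal{B}_j$ inside $V[G_j]$ (where they inherit the full ${<}\theta_j$-directed closure), factor $\dQ$ as a small lower part $A_i$ times a highly closed tail $B_i'$, observe that the tail product $\prod_{j\ge i}\mathcal{B}_j$ is ${<}\theta_i$-closed over $V[G_{B_i'}]$, and then absorb $A_i$ via Easton's Lemma. Your factorization $(A_i,B_i')$ is exactly the paper's $(\dQ_\zeta,\dQ^\zeta)$, and your final paragraph even articulates the key idea (take $\mathcal{B}_j$ in $V[G_j]$ rather than $V[G]$) more explicitly than the paper does. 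One minor remark: your invocation of $\GCH$ is unnecessary---since each $\kappa_j$ is measurable (hence strongly inaccessible), $|\dP_j|\le|V_{\kappa_j}|=\kappa_j$ holds outright, and $|A_i|\le\lambda_i^{|i|}\le 2^{\lambda_i}$ follows from $|i|<\gamma<\mu\le\lambda_i$ without any cardinal-arithmetic hypothesis.
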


\begin{proof}
    Let $G$ be $\dQ$-generic and denote $\vec\kappa=\langle \kappa_i\mid i<\gamma\rangle.$ It is clear that, in $V[G]$, for each $i<\gamma$, $2^{\sup_{j<i}\kappa_j}<\kappa_i$, just by the closure of the posets. For each $i<\gamma$, let $G_i$ be the $\dP_i$-generic filter induced by $G$. In $V[G_i]$, let $\mathcal{J}_i$ and $\mathcal{B}_i$ witness $\LIP(2^{(\sup_{j<i}\kappa_j)^+},\kappa_i)$. (This holds by Lemma~\ref{lemma: LIP-forcing forces LIP}.) In $V[G]$, let $\mathcal{I}_i$ be the ideal generated by $\mathcal{J}_i$. We claim that $(\mathcal{I}_i,\mathcal{B}_i)_{i<\gamma}$ witness $\SLIP(\mu, \vec\kappa)$ in $V[G]$. 
    
    For each $\zeta<\gamma$, define $\dQ_{\zeta}:=\prod_{i<\zeta}\dP_i$, and $\dQ^{\zeta}:=\prod_{i\in[\zeta,\gamma)}\dP_i$, so that $$\dQ\simeq \dQ_{\zeta}\times\dQ^{\zeta}.$$

    \setcounter{myclaim}{0}

    \begin{myclaim}
        Let $i<\gamma$. Then $\mathcal{I}_i$ is ${<}\,\kappa_i$-complete in $V[G]$.
    \end{myclaim}

    \begin{proof}
By definition, $\mathcal{I}_i=\{A\in \mathcal{P}^{V[G]}(\kappa_i)\mid \exists B\in \mathcal{J}_i\, (A\s B)\}$. Let $(A_\alpha)_{\alpha<\theta<\kappa_i}$ be a sequence in $V[G]$ consisting of members of $\mathcal{I}_i$. Without loss of generality, $A_\alpha\in\mathcal{J}_i$ for all $\alpha<\theta$. Since $V[G]$ is an extension of $V[G_i]$ using the product $\dQ_i\times\dQ^{i+1}$ and $\dQ^{i+1}$ is ${<}\,\kappa_i^+$-closed in $V$, and thus ${<}\,\kappa_i^+$-distributive in $V[G_i]$, it follows that $(A_\alpha)_{\alpha<\theta<\kappa_i}\in V[\prod_{j\leq i}G_j].$ On the other hand,  $\mathbb{Q}_i$ is a $\kappa_i$-cc poset in $V[G_i]$ (in fact, it has cardinality $\leq 2^{\sup_{j<i}\kappa_j}$, which is below $\kappa_i$). As a result, we can find a sequence $(X_\alpha)_{\alpha<\theta}\in V[G_i]$ consisting of $X_\alpha\s [\mathcal{J}_i]^{<\kappa_i}$ such that $A_\alpha\in X_\alpha$. By the ${<}\kappa_i$-completeness of $\mathcal{J}_i$ in $V[G_i]$ we deduce that $\bigcup_{\alpha<\theta} A_\alpha\in \mathcal{J}_i\s \mathcal{I}_i$.
    \end{proof}

    The more substantial claim is the following:

    \begin{myclaim}
        Let $\zeta<\gamma$ and $i\in[\zeta,\gamma)$. Let $H^{\zeta}$ be the $\dQ^{\zeta}$-generic filter induced by $G$. In $V[H^{\zeta}]$, $\mathcal{B}_i$ is dense in the dual of the ideal generated by $\mathcal{J}_i$ and ${<}\,(2^{\sup_{j<\zeta}\kappa_j})^+$-directed closed.
    \end{myclaim}

    \begin{proof}
        Let $\mathcal{K}_i$ be the ideal generated by $\mathcal{J}_i$ in $V[H^{\zeta}]$. The model $V[H^{\zeta}]$ is an extension of $V[G_i]$ using $\prod_{j\in[\zeta,\gamma),\zeta\neq i}\dP_i=\prod_{j\in[\zeta,i)}\dP_j\times\prod_{j\in(i,\gamma)}\dP_j$. The second part again adds no new subsets of $\kappa_i$ and thus does not violate the density of $\mathcal{B}_i$. The first part has size ${<}\,\kappa_i$ and thus it also does not violate the density of $\mathcal{B}_i$ as shown by the next argument:  Whenever $\tau$ is a $\prod_{j\in[\zeta,i)}\dP_j$-name for an element of $\dot{\mathcal{K}}^+$, forced by $p$, there is $q\leq p$ such that the set of all elements of $\kappa_i$ forced by $q$ to be in $\tau$ is in $\mathcal{J}_i$, by the completeness of $\mathcal{J}_i$ (i.e., $\kappa_i$-completeness) and the small size of the poset (i.e., ${<}\kappa_i$). Ergo we can find $X\in\mathcal{B}_i$ such that $q\Vdash\check{X}\subseteq\tau$. 

{Since $V[H^{\zeta}]$ is an extension of $V[G_i]$ using a ${<}\,(2^{\sup_{j<\zeta}\kappa_j})^+$-closed poset, $\mathcal{B}_i$ is still ${<}\,(2^{\sup_{j<\zeta}\kappa_j})^+$-directed closed in that model.}
    \end{proof}

    Thereby, every $\mathcal{B}_i$ is ${<}\,\mu$-directed closed in $V[G]=V[H^0]$. For the distributivity of $(\prod_{j\in [\zeta,\gamma)}\mathcal{B}_j, \s)$ we argue as follows. Fix $\zeta<\gamma$. In $V[H^{\zeta}]$, the poset $$\textstyle (\prod_{i\in[\zeta,\gamma)}\mathcal{B}_i,\s)$$ is ${<}\,(2^{\sup_{j<\zeta}\kappa_j})^+$-directed closed. Since $V[G]$ is an extension of $V[H^{\zeta}]$ using $\dQ_{\zeta}$ (a poset  with the ${<}(2^{\sup_{j<\zeta}\kappa_j})^+$-cc),  Easton's lemma (Lemma~\ref{lemma: Eastons lemma}) implies that, in $V[G]$, $\prod_{i\in[\zeta,\gamma)}\mathcal{B}_i$ is still ${<}\,(2^{\sup_{j<\zeta}\kappa_j})^+$-distributive.
\end{proof}

\begin{myrem}
In  Section~\ref{sec: failure everywhere}, we will assume that the GCH holds and our sequence $\langle \kappa_i \mid i < \gamma \rangle$ will be \textbf{continuous} (i.e. only those $\kappa?i$ with $i$ a successor will be measurable). Consequently, there we will be concerned with $\LIP(\kappa_i^{++}, \kappa_{i+1})$-forcing notions.
\end{myrem}

\subsection{The main forcing}

Our setup assumptions for the rest of the section are:
\begin{mysetup}
    We assume that $\langle \mu\rangle^{\smallfrown}\langle \kappa_i\mid i<\gamma\rangle$ is an increasing sequence of regular cardinals witnessing $\SLIP(\mu,\langle \kappa_i\mid i<\gamma\rangle).$ We set $\vec\kappa=\langle\kappa_i\mid i<\gamma\rangle$ and $\kappa$ for the limit of this sequence. Finally, we denote the witnessing sequence of ideals and (subsets) of positive sets as $\vec{\mathcal{I}},\vec{\mathcal{B}}$, respectively.
\end{mysetup}

The main Prikry-type poset of the section is:

\begin{mydef}
  Let $\mathbb{P}(\mu,\vec\kappa,\vec{\mathcal{I}},\vec{\mathcal{B}})$ be the poset consisting of sequences
  $$\textstyle p=\langle p_{i}\mid i<\gamma\rangle\in\prod_{i<\gamma}(\kappa_{i}\cup \mathcal{B}_{i})$$
  whose \emph{support}  $\supp(p):=\{i<\gamma\mid p_{i}\notin \mathcal{B}_{i}\}$ is finite.

  \smallskip

  Given conditions $p, q$, write $p\leq q$ if $\supp(p)\supseteq\supp(q)$ and for each $i<\gamma$:
  \begin{enumerate}
      \item  $p_{i}=q_{i}$ if $i\in\supp(q)$;
      \item $p_{i}\in q_{i}$ if $i\in\supp(p)\smallsetminus\supp(q)$;
      \item $p_{i}\s q_{i}$ otherwise.
  \end{enumerate}
  We write $p\leq_0q$ if $p\leq q$ and $\supp(p)=\supp(q)$.
\end{mydef}

This poset can be viewed as a form of a Laver-style Namba forcing. However, it is defined as a product instead of possessing the form of a tree. This is because of the following problems: On one hand, if we take the version of higher Namba forcing where the trees have height some cardinal $\kappa$ and any stem of length ${<}\,\kappa$, it is unclear whether this forcing has the Prikry property. On the other hand, if we take trees of height some cardinal $\kappa$ which split at cofinitely many levels, already the first branch of length $\omega$ is generic and thus does not have a defined successor set. Thanks to the discreteness of $\vec\kappa$ and the ``closure properties" of $\vec{\mathcal{I}}$ and $\vec{\mathcal{B}}$ we will be able to identify $(\mathbb{P}(\mu,\vec\kappa,\vec{\mathcal{I}},\vec{\mathcal{B}}),\leq,\leq_0)$ as a Prikry-type forcing. However, the arguments will be much more technical.

\smallskip

The poset $\mathbb{P}$ is quite similar in its design to the poset used by Gitik \cite{GitikOverlapping}. The main difference is that here we are using ideals instead of ultrafilters but we can circumvent this hurdle by taking really good care in the arguments. 

\smallskip

The following is an immediate consequence of $\vec{\mathcal{B}}$ being ${<}\mu$-directed closed:

\begin{mylem}
   $\langle \mathbb{P}(\mu,\vec\kappa,\vec{\mathcal{I}},\vec{\mathcal{B}}), \leq_0\rangle$ is ${<}\mu$-directed closed and ${<}\kappa_0$-closed. \qed
\end{mylem}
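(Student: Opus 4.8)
The plan is to verify the two closure properties directly from the definitions, using the fact that $\vec{\mathcal{B}}$ is a sequence of $<\mu$-directed closed posets under $\s$ and that the support of a condition is finite.

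First I would establish $<\mu$-directed closure of $\leq_0$. Let $D=\{p^\eta\mid \eta<\theta\}$ be a $\leq_0$-directed set of conditions with $\theta<\mu$. Since all $p^\eta$ are pairwise $\leq_0$-compatible and $\leq_0$ fixes the support, a standard argument shows that $\supp(p^\eta)$ is the same set $s$ for all $\eta$ (any two conditions that are $\leq_0$-below a common condition must have that condition's support, hence all share one support once we pass to the directed closure — more carefully, $\leq_0$-compatible conditions have equal support, and directedness propagates this to the whole set). For $i\in s$, the $i$-th coordinate $p^\eta_i\in\kappa_i$ is forced to be constant across $\eta$ by condition (1) of the ordering. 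For $i\notin s$, the coordinates $p^\eta_i\in\mathcal{B}_i$ form a $\s$-directed subset of $\mathcal{B}_i$ of size ${<}\theta^+\leq\mu$; by $<\mu$-directed closure of $(\mathcal{B}_i,\s)$ there is $r_i\in\mathcal{B}_i$ with $r_i\s p^\eta_i$ for all $\eta$. Setting $r_i:=p^{\eta_0}_i$ for $i\in s$ (any fixed $\eta_0$) and $r_i$ as above for $i\notin s$, the sequence $r=\langle r_i\mid i<\gamma\rangle$ has support exactly $s$ (each $r_i$ for $i\notin s$ lies in $\mathcal{B}_i$), hence $r\in\mathbb{P}$, and $r\leq_0 p^\eta$ for all $\eta$ by inspecting the three clauses of the ordering.

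For $<\kappa_0$-closure of $\leq_0$, one first observes that a $\leq_0$-decreasing sequence of any length is in particular directed, so it would suffice to invoke the previous paragraph once one knows $(\mathcal{B}_i,\s)$ is $<\kappa_0$-closed; but $<\mu$-directed closure already gives $<\mu$-closure and we want the stronger $<\kappa_0$. Since $\kappa_0\leq\kappa_i$ for all $i$, this follows from condition (2) of $\SLIP$, which asserts each $(\mathcal{B}_i,\s)$ is (at least) $<\mu$-directed closed, together with the observation that the ideals $\mathcal{I}_i$ are $<\kappa_i$-complete: a $\s$-decreasing sequence $\langle X_\xi^i\mid\xi<\delta\rangle$ in $\mathcal{B}_i$ of length $\delta<\kappa_0\leq\kappa_i$ has intersection $X^i:=\bigcap_{\xi<\delta}X^i_\xi$, which is $\mathcal{I}_i$-positive because its complement is a $<\kappa_i$-union of members of $\mathcal{I}_i$; then by density of $\mathcal{B}_i$ in $\mathcal{I}_i^+$ we may refine $X^i$ to a member of $\mathcal{B}_i$ below every $X^i_\xi$. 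Applying this coordinatewise off the (constant, finite) support yields a $\leq_0$-lower bound exactly as before.

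The main obstacle — really the only subtlety — is the bookkeeping that a $\leq_0$-directed family has a \emph{common} finite support, so that the construction of the lower bound makes sense coordinate-by-coordinate and the resulting sequence genuinely has finite support and thus lies in $\mathbb{P}$. Everything else is a routine unwinding of the partial order together with the hypotheses packaged into $\SLIP$. Note also that $<\kappa_0$-closure requires $\kappa_0$ to be regular, which holds by our Setup, and $<\mu$ directedness requires $\mu$ regular, likewise part of the Setup; and since the statement says ``an immediate consequence'', we record only these points and leave the verification of the ordering clauses to the reader.
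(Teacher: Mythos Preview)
Your argument for ${<}\mu$-directed closure is correct and is exactly what the paper has in mind: the common finite support of a $\leq_0$-directed family lets you work coordinatewise, and then you invoke the ${<}\mu$-directed closure of each $(\mathcal{B}_i,\subseteq)$ coming from clause~(2) of $\SLIP$.

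Your argument for ${<}\kappa_0$-closure, however, contains a genuine error. You claim that the intersection $X^i=\bigcap_{\xi<\delta}X^i_\xi$ is $\mathcal{I}_i$-positive ``because its complement is a ${<}\kappa_i$-union of members of $\mathcal{I}_i$''. This is false: $X^i_\xi\in\mathcal{B}_i\subseteq\mathcal{I}_i^+$ means $X^i_\xi$ is \emph{positive}, i.e.\ $X^i_\xi\notin\mathcal{I}_i$; it does \emph{not} mean $\kappa_i\setminus X^i_\xi\in\mathcal{I}_i$. You have confused positive sets with sets in the dual filter. For a concrete counterexample, take $\mathcal{I}_i$ to be the nonstationary ideal on $\kappa_i$: a decreasing $\omega$-sequence of stationary sets can easily have empty intersection. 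So ${<}\kappa_i$-completeness of the ideal gives you nothing here.

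Note that the paper itself offers no argument for the ${<}\kappa_0$-closure part (the preamble to the lemma only invokes the ${<}\mu$-directed closure of $\vec{\mathcal{B}}$), and $\SLIP$ as stated gives you only ${<}\mu$-directed closure of each $(\mathcal{B}_i,\subseteq)$ together with distributivity (not closure) of the tail products. In the applications later in the paper the $\leq_0$-decreasing sequences that arise have length at most $\gamma<\mu$, so ${<}\mu$-closure suffices; you should not expect an elementary proof of the stronger ${<}\kappa_0$-closure from the $\SLIP$ hypotheses alone.
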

\begin{myrem}\label{rem: mu directedness}
    The ${<}\mu$-directed-closure of the poset does not play any role in the analysis of this section. It will critical in Section~\ref{sec: failure everywhere} when we define the iteration yielding Theorem~\ref{ThmD}. Indeed, the directed closure is one of the cruxes of Theorem~\ref{Theorem: Indestructibility of neg AP} that ensures that the failure of $\AP$ is preserved during the iteration.
\end{myrem}

From basic cardinal computations we deduce the following:

\begin{mylem}
  Assume the $\mathrm{GCH}$. Then  $\mathbb{P}(\mu,\vec\kappa,\vec{\mathcal{I}},\vec{\mathcal{B}})$ has the $\kappa^{++}$-cc. \qed
\end{mylem}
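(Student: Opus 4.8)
The plan is to estimate the cardinality of $\mathbb{P}(\mu,\vec\kappa,\vec{\mathcal{I}},\vec{\mathcal{B}})$ and invoke the standard fact that a poset of size ${<}\kappa^{++}$ trivially has the $\kappa^{++}$-cc, since any antichain is a subset of the poset and hence has size at most $\kappa^+$. So the whole task reduces to showing $|\mathbb{P}(\mu,\vec\kappa,\vec{\mathcal{I}},\vec{\mathcal{B}})|\leq\kappa^+$ under the $\mathrm{GCH}$. A condition $p$ is a sequence $\langle p_i\mid i<\gamma\rangle$ with $p_i\in\kappa_i\cup\mathcal{B}_i$ and finite support, so the number of conditions is bounded by the number of such sequences.

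First I would bound, for a fixed $i<\gamma$, the size of $\kappa_i\cup\mathcal{B}_i$. We have $\mathcal{B}_i\subseteq\mathcal{I}_i^+\subseteq\mathcal{P}(\kappa_i)$, so $|\kappa_i\cup\mathcal{B}_i|\leq 2^{\kappa_i}=\kappa_i^+$ by the $\mathrm{GCH}$ (recall each $\kappa_i$ is regular, hence $2^{\kappa_i}=\kappa_i^+$). Since $\kappa_i<\kappa$ for every $i$, this gives $|\kappa_i\cup\mathcal{B}_i|<\kappa$ when $\kappa$ is a limit cardinal; in any case $|\kappa_i\cup\mathcal{B}_i|\leq\kappa_i^+\leq\kappa$. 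Next, a condition is determined by its finite support $s\in[\gamma]^{<\omega}$, together with the values $p_i\in\kappa_i$ for $i\in s$ and the values $p_i\in\mathcal{B}_i$ for $i\notin s$. Rather than track the off-support coordinates individually, I would simply observe that the full product $\prod_{i<\gamma}(\kappa_i\cup\mathcal{B}_i)$ has size at most $(\sup_i\kappa_i^+)^{|\gamma|}\leq\kappa^{|\gamma|}$, and since $|\gamma|<\mu<\kappa_0<\kappa$ and $\kappa$ is a (singular) strong limit under $\mathrm{GCH}$, we get $\kappa^{|\gamma|}=\kappa^+$ (as $\mathrm{cf}(\kappa)=|\gamma|$, so $\kappa^{\mathrm{cf}(\kappa)}=\kappa^+$ by $\mathrm{GCH}$; for $|\gamma|$ strictly below $\mathrm{cf}(\kappa)$ one even gets $\kappa^{|\gamma|}=\kappa$, but either way the bound $\kappa^+$ holds). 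The finite-support restriction only cuts this down further, so $|\mathbb{P}(\mu,\vec\kappa,\vec{\mathcal{I}},\vec{\mathcal{B}})|\leq\kappa^+$, which yields the $\kappa^{++}$-cc.

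The only point requiring a little care is the exponentiation $\kappa^{|\gamma|}$: since $\kappa=\sup_{i<\gamma}\kappa_i$ is singular with cofinality $|\gamma|=\mathrm{cf}(\gamma)$, one uses the $\mathrm{GCH}$ (together with Hausdorff's formula / the standard computation of $\kappa^{\mathrm{cf}(\kappa)}$ under $\mathrm{GCH}$) to conclude $\kappa^{|\gamma|}=\kappa^+$. I do not expect any real obstacle here — this is the "basic cardinal computations" the statement refers to — but it is worth noting explicitly that the strong-limit-ness of $\kappa$ and the bound $|\gamma|\leq\mathrm{cf}(\kappa)$ are what make the product small. One could also present the bound coordinate-by-coordinate: for each finite $s\in[\gamma]^{<\omega}$ the number of conditions with support $\subseteq s$ is at most $\prod_{i\in s}\kappa_i\cdot\prod_{i\notin s}|\mathcal{B}_i|$, and since $|\mathcal{B}_i|\leq\kappa_i^+\leq\kappa$ the relevant product over the off-support coordinates is again at most $\kappa^{|\gamma|}=\kappa^+$; summing over the $|\gamma|$-many (equivalently, at most $\kappa$-many) finite subsets $s$ preserves the bound $\kappa^+$. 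Either route gives the claim.
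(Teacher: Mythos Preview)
Your proposal is correct and is exactly the ``basic cardinal computation'' the paper alludes to (the paper gives no proof beyond that phrase): bounding $|\mathbb{P}|\leq\kappa^{|\gamma|}=\kappa^+$ under $\mathrm{GCH}$ immediately yields the $\kappa^{++}$-cc. One minor slip: you write that $\kappa$ has cofinality $|\gamma|=\cf(\gamma)$, but in general $\cf(\kappa)=\cf(\gamma)\leq|\gamma|$ (e.g.\ $\gamma=\omega_1+\omega$); this is harmless since $\cf(\kappa)\leq|\gamma|<\kappa$ still gives $\kappa^{|\gamma|}=\kappa^+$ under $\mathrm{GCH}$.
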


\smallskip

Now turn to define the minimal extensions of a condition:

\begin{mydef}
    Let $p\in \mathbb{P}(\mu,\vec\kappa,\vec{\mathcal{I}},\vec{\mathcal{B}})$, $i\notin \supp(p)$ and $\alpha\in p(i)$. The one-point extension of $p$ by $\alpha$, $p\cat(i,\alpha)$, is the sequence $q=\langle q_j\mid j<\gamma\rangle$ defined as follows:
    \begin{enumerate}
        \item $\supp(q)=\supp(p)\cup \{i\}$.
        \item $q_j:=\begin{cases}
            p_j, & \text{if $j\in\supp(p)$;}\\
            \alpha, & \text{otherwise.}
        \end{cases}$ 
    \end{enumerate}
    Given a finite set $I\s \gamma\setminus \supp(p)$ and  ordinals $\vec\alpha=\langle \alpha_0,\dots, \alpha_{|I|-1}\rangle\in \prod_{i\in I} p(i)$, the sequence $p\cat (I, \vec\alpha)$ is defined by recursion in the obvious manner.
\end{mydef}
\begin{mylem}
    Let $p\in \mathbb{P}(\mu,\vec\kappa,\vec{\mathcal{I}},\vec{\mathcal{B}})$, $I\s \gamma\setminus \supp(p)$ and ordinals $\vec\alpha\in \prod_{i\in I}p(i)$. Then, $p\cat (I,\vec\alpha)$ is a condition in $\mathbb{P}(\mu,\vec\kappa,\vec{\mathcal{I}},\vec{\mathcal{B}})$ and $p\cat(I,\vec\alpha)\leq p$ \qed
\end{mylem}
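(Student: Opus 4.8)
The plan is to prove the statement by induction on $n:=|I|$, in tandem with the recursive definition of $p\cat(I,\vec\alpha)$. The case $n=0$ is trivial since $p\cat(\emptyset,\langle\rangle)=p$, so the only real content is the one-point case $n=1$, after which the general case follows by iterating and invoking transitivity of $\leq$.

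So fix $i\notin\supp(p)$ and $\alpha\in p(i)$ and set $q:=p\cat(i,\alpha)$; note that, since $i\notin\supp(p)$, we have $p(i)\in\mathcal{B}_i\subseteq\mathcal{I}_i^+\subseteq\mathcal{P}(\kappa_i)$, so $\alpha<\kappa_i$. I would first check that $q$ is a condition: its support is $\supp(p)\cup\{i\}$, which is finite; at coordinates $j\in\supp(p)$ we have $q_j=p_j\in\kappa_j$; at coordinate $i$ we have $q_i=\alpha\in\kappa_i$, which sits on the ``$\kappa_i$ side'' of $\kappa_i\cup\mathcal{B}_i$ (an ordinal below $\kappa_i$ is a subset of $\kappa_i$ of size ${<}\kappa_i$, hence belongs to $\mathcal{I}_i$ and not to $\mathcal{B}_i$), so coordinate $i$ genuinely enters the support; and at all remaining coordinates $q_j=p_j\in\mathcal{B}_j$. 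Thus $q\in\prod_{j<\gamma}(\kappa_j\cup\mathcal{B}_j)$ with finite support, i.e.\ $q\in\mathbb{P}(\mu,\vec\kappa,\vec{\mathcal{I}},\vec{\mathcal{B}})$. Then $q\leq p$ is seen by checking the three clauses of the ordering together with $\supp(q)\supseteq\supp(p)$: clause~(1) holds because $q_j=p_j$ for $j\in\supp(p)$ by construction; clause~(2) at the unique $j\in\supp(q)\setminus\supp(p)$, namely $j=i$, asks exactly for $q_i=\alpha\in p(i)$, which is the hypothesis; and clause~(3) at $j\notin\supp(q)$ holds since $q_j=p_j\subseteq p_j$.

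For the inductive step, write $I=I'\cup\{i\}$ with $i\notin I'$ and let $\vec\alpha'$ be the induced shortening of $\vec\alpha$, so $p\cat(I,\vec\alpha)=(p\cat(I',\vec\alpha'))\cat(i,\alpha_i)$. Since the coordinates listed in $I$ are pairwise distinct and disjoint from $\supp(p)$, forming a one-point extension at a coordinate of $I'$ leaves the $i$-th coordinate untouched; hence $i\notin\supp(p\cat(I',\vec\alpha'))$ and $(p\cat(I',\vec\alpha'))(i)=p(i)\ni\alpha_i$, so the one-point case applies and gives $p\cat(I,\vec\alpha)\leq p\cat(I',\vec\alpha')$, while the induction hypothesis gives $p\cat(I',\vec\alpha')\leq p$; transitivity of $\leq$ concludes. (The same observation shows the result is independent of the order in which the coordinates of $I$ are peeled off, which is what makes ``defined by recursion in the obvious manner'' unambiguous.) I do not anticipate any genuine obstacle: the lemma is pure bookkeeping, the only point worth a second's thought being that an ordinal ${<}\kappa_i$ is never a member of $\mathcal{B}_i$, which is immediate from the ${<}\kappa_i$-completeness of $\mathcal{I}_i$.
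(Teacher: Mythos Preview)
Your argument is correct; the paper offers no proof at all (the lemma carries an immediate \qed), so there is nothing to compare against beyond noting that the authors regard it as routine bookkeeping. Your one extra observation---that an ordinal $\alpha<\kappa_i$ lies in $\mathcal{I}_i$ and hence not in $\mathcal{B}_i$, so the support genuinely grows---is the only point with any content, and you handle it correctly.
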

The next easy observation will be critical later on: 
\begin{mylem}\label{lemma: not too many minimal extensions}
    Let $p\in \mathbb{P}(\mu,\vec\kappa,\vec{\mathcal{I}},\vec{\mathcal{B}})$ and $I\s \gamma\setminus \supp(p)$ finite. Then,
    $$\textstyle\{p\cat (I,\vec\alpha)\mid \vec\alpha\in \prod_{i\in I}p(i)\}$$
   is a maximal antichain $\leq$-below $p$ with size ${<}\kappa_{\max(I)+1}.$ \qed
\end{mylem}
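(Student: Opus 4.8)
The claim has three parts: the set $D_{I,\vec\alpha}:=\{p\cat(I,\vec\alpha)\mid \vec\alpha\in\prod_{i\in I}p(i)\}$ is (a) an antichain below $p$, (b) maximal below $p$, and (c) of size ${<}\kappa_{\max(I)+1}$. The plan is to handle these in order, as each is essentially immediate from the definitions.

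For (a), I would fix distinct tuples $\vec\alpha\ne\vec\beta$ in $\prod_{i\in I}p(i)$, so that $\alpha_i\ne\beta_i$ for some $i\in I$. By the definition of the one-point extension, the condition $p\cat(I,\vec\alpha)$ has $i\in\supp$ and takes the ordinal value $\alpha_i$ in coordinate $i$, while $p\cat(I,\vec\beta)$ takes the value $\beta_i$ there. Any common lower bound $r\leq p\cat(I,\vec\alpha),p\cat(I,\vec\beta)$ would have $i$ in its support (support only grows downward) and, by clause (1) of the ordering (coordinates in the support are frozen), would satisfy $r_i=\alpha_i$ and $r_i=\beta_i$ simultaneously, a contradiction. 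Hence the conditions are pairwise incompatible.

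For (b), I would take an arbitrary $q\leq p$ and produce a member of $D_{I,\vec\alpha}$ compatible with it. Set $\vec\alpha=\langle q_i\mid i\in I\rangle$; I claim $q_i\in p(i)$ for each $i\in I$. Indeed $i\notin\supp(p)$ by hypothesis, so clause (2) or (3) of $q\leq p$ applies: if $i\in\supp(q)$ then $q_i\in p(i)$ by clause (2); if $i\notin\supp(q)$ then $q_i\s p(i)$, and picking any $\alpha_i\in q_i$ (these are $\mathcal{I}_i$-positive, hence nonempty) we get $\alpha_i\in p(i)$ — but to keep things uniform I would instead shrink $q$ first by forming $q':=q\cat(I,\vec\beta)$ for any $\vec\beta\in\prod_{i\in I\setminus\supp(q)}q(i)$, reducing to the case $I\s\supp(q)$. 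Then with $\vec\alpha=\langle q'_i\mid i\in I\rangle\in\prod_{i\in I}p(i)$ the condition $p\cat(I,\vec\alpha)$ agrees with $q'$ on $I$, dominates $p$ off of $I$, and hence $q'\leq p\cat(I,\vec\alpha)$, witnessing compatibility with $q$.

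For (c), the map $\vec\alpha\mapsto p\cat(I,\vec\alpha)$ is a bijection from $\prod_{i\in I}p(i)$ onto $D_{I,\vec\alpha}$, and $|\prod_{i\in I}p(i)|\leq\prod_{i\in I}\kappa_i\leq(\sup_{i\in I}\kappa_i)^{|I|}=\sup_{i\in I}\kappa_i=\kappa_{\max(I)}<\kappa_{\max(I)+1}$, using that $\vec\kappa$ is an increasing sequence of (regular, uncountable) cardinals, $I$ is finite, and $p(i)\s\kappa_i$ so $|p(i)|\leq\kappa_i$. I expect no real obstacle here; the only point requiring a moment's care is part (b), specifically the bookkeeping that reduces an arbitrary $q\leq p$ to one whose support contains $I$ before reading off $\vec\alpha$, which is why I would perform the auxiliary one-point extension $q\cat(I,\vec\beta)$ first rather than arguing coordinatewise.
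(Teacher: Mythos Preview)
Your proof is correct; the paper itself omits the argument entirely (the lemma is stated with a terminal \qed), treating it as a routine verification, and your write-up is precisely the kind of direct check the authors have in mind. The only cosmetic remark is that in part (b) you could streamline the exposition by passing to $q'$ with $I\subseteq\supp(q')$ at the outset rather than mid-paragraph, but the logic is sound as written.
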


\begin{myconv}
  To simplify notations hereafter we  write $\mathbb{P}$ instead of $\mathbb{P}(\mu,\vec\kappa,\vec{\mathcal{I}},\vec{\mathcal{B}})$. 
\end{myconv}

In our work, the poset $\dP$ plays a similar role to that of Cohen forcing in the original Mitchell poset (see \cite{MitchellTreeProp}): On its own, the Levy-collapse $\Coll(\mu,\kappa^+)$ has the property that every initial segment of the added surjection from $\mu$ onto $\kappa^+$ lies in the ground model (by the ${<}\,\mu$-closure of the poset). However, Mitchell noticed that by using Cohen forcing $\Add(\tau,1)$ (for $\tau<\mu$) \emph{before} the Levy-collapse, the iteration $\Add(\tau,1)*\dot{\Coll}(\check{\mu},\check{\kappa}^+)$ has the opposite property: Every new function from $\mu$ into the ordinals has an initial segment which does not lie in the ground model. This is key to his proof that in a Mitchell-generic extension -- collapsing a Mahlo cardinal to become $\mu^+$ -- $\AP_{\mu}$ fails. In our case, we will be obtaining an even stronger statement (for a smaller class of sequences): After forcing with the iteration $\dP*\dot{\Coll}(\check{\mu},\check{\kappa}^+)$, every surjection from $\mu$ into $\kappa^+$ has an initial segment \emph{which is not even covered by a ground-model set of size ${<}\,\kappa$}. This will be shown in Lemma \ref{label: key for non approachability} as a culmination of a \emph{Prikry analysis} of $\dP$ and is key for our goal which is showing that $\dP*\dot{\Coll}(\check{\mu},\check{\kappa}^+)$ does not make $\kappa^+$ $d$-approachable with respect to any normal coloring $d$ on $\kappa^+$ in the ground model.

\smallskip

\smallskip

Let us now prove that this poset works as intended. We first have to show that the poset has the \emph{Strong Prikry Property}. From this we will derive both the Prikry Property and  the $\kappa_0$-pure decidability property (see Definition~\ref{def: pure decidability}). 

One notable difference between this poset and the Laver-style Namba forcing considered in \cite[\S4]{JakobLevine} is that the latter can eventually decide names for ordinals below any $\kappa_n$. (Because any condition will eventually have a sufficiently long stem.) This implies that the poset does not add bounded subsets of $\kappa$. However, since we are using conditions of potentially uncountable length we will also be e.g. adding new subsets of $\sup_n\kappa_n$ and thus bounded subsets of $\kappa$.

\begin{mylem}\label{lemma: SPP}
    $(\mathbb{P}, \leq, \leq_0)$ has the Strong Prikry property. Namely, for each condition $p\in \mathbb{P}$  and a dense open set $D\s \mathbb{P}$   there is $q\leq_0 p$ and $I\subseteq \gamma\setminus \supp(q)$ finite such that for each $\vec\alpha\in \prod_{i\in I} q(i)$, $q\cat (I,\vec\alpha)\in D.$
\end{mylem}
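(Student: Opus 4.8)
The plan is to prove the Strong Prikry Property by a fusion argument carried out along the ``coordinates'' $i<\gamma$, exploiting the closure and distributivity properties packaged into $\SLIP(\mu,\vec\kappa)$. Fix $p\in\dP$ and a dense open $D$. The target is to find $q\leq_0 p$ (so $\supp(q)=\supp(p)$, and $q_i\subseteq p_i$ for all $i\notin\supp(p)$) together with a finite $I\subseteq\gamma\setminus\supp(q)$ such that every one-point(-set) extension $q\cat(I,\vec\alpha)$ lies in $D$. The difficulty is that $\gamma$ can be an uncountable limit ordinal, so one cannot simply iterate $\omega$-many steps; instead the shrinking of the coordinates $q_i$ must be organized so that at limit stages of the construction the partial conditions still converge to a genuine condition. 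This is exactly what clause (3) of $\SLIP$ is designed for: $(\prod_{j\in[i,\gamma)}\mathcal{B}_j,\subseteq)$ is ${<}(2^{\sup_{\ell<i}\kappa_\ell})^+$-distributive, which lets us meet boundedly-many ``shrinking'' dense sets on a tail product without running out of room.

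First I would handle the case $\supp(p)=\emptyset$ (the general case follows by the obvious relativization below $\supp(p)$, since a condition is a product over the coordinates and the finitely many fixed coordinates can be carried along unchanged). The core sublemma I would isolate is: for a fixed \emph{finite} set $I\subseteq\gamma$, the statement ``there is $q\leq_0 p$ with $\supp(q)=\emptyset$ such that for all $\vec\alpha\in\prod_{i\in I}q(i)$ either $q\cat(I,\vec\alpha)\in D$, or no extension of $q\cat(I,\vec\alpha)$ with support $I$ and shrunk coordinates lies in $D$'' can be forced by a direct extension. Here one processes the finitely many ``branches'' $\vec\alpha$ one at a time. For a single $\vec\alpha$, whether $q\cat(I,\vec\alpha)$ has an extension in $D$ is decided by shrinking the coordinates of $q$ outside $I$: using that each $\mathcal{B}_j$ is ${<}\mu$-directed closed and, crucially, that the tail product $\prod_{j}\mathcal{B}_j$ is sufficiently distributive, one can find a single $q'\leq_0 q$ that simultaneously works for that branch; because there are only $<\kappa_{\max(I)+1}$ branches (Lemma~\ref{lemma: not too many minimal extensions}) and, more to the point, only finitely many coordinates in $I$ but the real bookkeeping is over the $<\kappa_{\max(I)+1}$-many $\vec\alpha$'s, one strings these together, at each limit stage invoking clause (3) of $\SLIP$ (together with Theorem~\ref{Theorem: Foreman Distributivity} if one wants to phrase the limit step via the completeness game) to keep the construction alive. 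This yields, for each fixed finite $I$, a direct extension $q_I\leq_0 p$ that is ``$I$-good'': on every $I$-branch it has either already entered $D$ or permanently avoided entering $D$ via $I$-extensions.

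Next I would run the outer fusion over an increasing cofinal sequence of finite sets $I$ (more precisely, over an enumeration of $[\gamma]^{<\omega}$ arranged so that every finite set is eventually absorbed), producing a $\leq_0$-decreasing sequence of conditions $q_0\geq_0 q_1\geq_0\cdots$ with $q_n$ being $I_n$-good, and take a $\leq_0$-lower bound $q$ — which exists by the directed-closure/distributivity of the pure ordering, again using clause (3) at limit stages; here one must be slightly careful that the relevant ordinal length stays below the distributivity threshold at each coordinate, which is arranged by the discreteness clause (1). Finally, a genericity/density argument finishes: given the lower bound $q$, pick any extension of $q$ lying in $D$ (it exists since $D$ is dense); it has some finite support $I$ relative to $q$, and by $I$-goodness the corresponding branch of $q$ was the ``entered $D$'' alternative — running this for the actual $I$ realized shows $q\cat(I,\vec\alpha)\in D$ for the relevant $\vec\alpha$, and a further shrink (possible since $D$ is open) makes it hold for \emph{all} $\vec\alpha\in\prod_{i\in I}q(i)$.

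\textbf{Main obstacle.} I expect the crux to be the limit stages of the fusion: because $\gamma$ is an arbitrary limit ordinal, the shrinking sequences on the tail products $\prod_{j\in[i,\gamma)}\mathcal{B}_j$ have length that can in principle exceed the ${<}\mu$-directed closure of each individual $\mathcal{B}_j$, and the only thing rescuing the argument is the distributivity clause (3) of $\SLIP$ — so the bookkeeping must be set up so that at each limit stage one is only meeting ${<}(2^{\sup_{\ell<i}\kappa_\ell})^+$-many requirements relevant to coordinates $\geq i$, which is where the discreteness of $\vec\kappa$ and the stratified nature of clause (3) have to be used with care. The second delicate point, already flagged in the paper, is that we are working with ideals rather than ultrafilters, so ``deciding a branch'' is not a simple either/or on a measure-one set; one has to phrase the good set of each branch as a member of $\mathcal{B}_j$ and argue that the relevant shrinkings can always be chosen inside the dense set $\mathcal{B}_j\subseteq\mathcal{I}_j^+$, rather than merely in $\mathcal{I}_j^+$.
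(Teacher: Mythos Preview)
Your inner sublemma --- making a condition ``$I$-good'' for a fixed finite $I$ by iterating over the (up to $\kappa_{\max(I)}$-many) branches $\vec\alpha$ --- does not go through as stated. When you ``decide'' a single branch by passing to a $\leq_0$-extension that either enters $D$ or rules out entering $D$, you potentially shrink \emph{all} coordinates outside $I$, in particular those $j\notin I$ with $j<\max(I)$. Clause~(3) of $\SLIP$ only gives distributivity of the \emph{tail} products $\prod_{j\geq i}\mathcal B_j$; it gives you nothing for the low coordinates, which are only ${<}\mu$-directed closed. So after $\geq\mu$ many branches you cannot take a lower bound, and there are $\kappa_{\max(I)}\gg\mu$ branches. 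Your ``further shrink'' at the end is also problematic in the ideal setting: having one $\vec\alpha_0$ with $q\cat(I,\vec\alpha_0)\in D$ does not let you shrink the $q(i)$'s to force all $\vec\alpha$ into $D$, because you have no measure-one/measure-zero dichotomy to appeal to.

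The paper's proof avoids exactly this obstruction by two moves you are missing. First, it argues by contradiction: assuming $p$ is \emph{bad}, it shows (Claim) that for a single coordinate $i$ one can shrink only the tail $\prod_{j>i}\mathcal B_j$ (keeping $p\restriction i$ fixed) and a positive subset of $p(i)$ so that every one-point extension at $i$ remains bad. The crux of that Claim is the step you do not have: if the ``good'' side were $\mathcal I_i$-positive, then each $\alpha$ in it comes with a low part $r_\alpha\leq_0 p\restriction i$ and a finite $I_\alpha$, but there are fewer than $\kappa_i$ possible such pairs (by discreteness, $2^{\sup_{j<i}\kappa_j}<\kappa_i$), so the ${<}\kappa_i$-completeness of $\mathcal I_i$ stabilizes them on a positive set --- contradicting badness. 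Second, the fusion is organized \emph{coordinate by coordinate} (not finite-set by finite-set): at stage $\bar\epsilon+1$ one handles all $I\in[\bar\epsilon]^{<\omega}$ and all $\vec\alpha$ simultaneously by shrinking only $\prod_{j\geq\bar\epsilon}\mathcal B_j$, which is legitimate because there are at most $\sup_{\eta<\bar\epsilon}\kappa_\eta$ such pairs and the tail is ${<}(2^{\sup_{\eta<\bar\epsilon}\kappa_\eta})^+$-distributive. The low-part stabilization via ideal completeness, together with the one-coordinate-at-a-time organization, is precisely the mechanism that lets the argument work with ideals rather than ultrafilters; your proposal identifies the right difficulty but does not supply this mechanism.
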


\begin{proof}
    Given a condition $r\in \mathbb{P}$ we will say that $r$ is \emph{good} (resp. \emph{really good}) if it witnesses the conclusion of the lemma (resp. by taking $q=r$). If $r$ is not good we will say that  $r$ is \emph{bad}. Note that if $r$ is bad any $\leq_0$-extension of it is bad as well.

    \smallskip

    Toward a contradiction, let us assume that the given condition $p$ is bad.
    
    \setcounter{myclaim}{0}
    
    \begin{myclaim}\label{claim: spp}
        Given an index $i\notin\supp(p)$ there is $q\leq_0 p$ such that $q\restriction i=p\restriction i$ and $q\cat (i, \alpha)$ is bad for all $\alpha\in q(i).$
    \end{myclaim}
    
    \begin{proof}[Proof of claim]
       Let $W$ denote the positive set $p(i)$. For each $\alpha\in W$ the set $D_\alpha$ consisting of all $q\in \prod_{j\in (i,\gamma)} \mathcal{B}_j$ such that either $(p\restriction i)^\smallfrown \langle \alpha\rangle^\smallfrown q$ is bad or there is $r\leq_0 p\restriction i$ such that $r^\smallfrown \langle \alpha\rangle^\smallfrown q$ is really good is dense open. (Note that this may be false if we replace ``really good'' by ``good''.)
       By ${<}\kappa_i^+$-distributivity of the poset  $(\prod_{j\in (i,\gamma)} \mathcal{B}_j,\s)$, the set $D=\bigcap_{\alpha\in W} D_\alpha$ is dense in $\prod_{j\in (i,\gamma)} \mathcal{B}_j$ as well. Let $q^*\in D.$

       \smallskip
       
        We split $W$ into two disjoint sets as follows:
        $$W_{0}:=\{\alpha\in W\mid (p\restriction i)^\smallfrown\langle \alpha\rangle^\smallfrown q^*\,\text{is bad}\},$$
        $$W_{1}:=\{\alpha\in W\mid \exists r\leq_0(p\restriction i)\, (r^\smallfrown\langle \alpha\rangle^\smallfrown q^*\,\text{is really good})\}.$$
        Suppose that $W_0\in \mathcal{I}_i^+$.  Then, there is $Z\in \mathcal{B}_i$ such that $Z\s W_0$ (by density of $\mathcal{B}_i$). Let $q$ be defined as $(p\uhr i)^{\frown}Z^{\frown}q^*\leq_0 p$. Clearly, $q$ is as wished.

        \smallskip

        Suppose otherwise that $W_1\in \mathcal{I}_i^+$. For each $\alpha\in W_1$ there is a condition $r_\alpha$ and a finite set $I_\alpha\s\gamma$ witnessing the property. Note that there are at most $2^{\sup_{j<i}\kappa_j}$-many of such $r_\alpha$'s and by assumption $2^{\sup_{j<i}\kappa_j}<\kappa_i$. Additionally, there are at most $\gamma$ many such $I_{\alpha}$'s and $\gamma<\kappa_0$. Hence, by ${<}\kappa_i$-completeness of $\mathcal{I}^+_i$ and density of $\mathcal{B}_i$, there is $Z\s W_1$ in $\mathcal{B}_i$ and $r^*$, $I^*$ such that $r^*=r_\alpha$ and $I_{\alpha}=I$ for all $\alpha\in Z$. Let $q={r^*}^\smallfrown \langle Z\rangle^\smallfrown q^*$. Clearly, $q\leq_0 p$. A moment of reflection makes clear that $p$ is good as witnessed by $q$ and $I=I^* \cup\{i\}$. But we were assuming that $p$ was bad, so $W_1$ cannot be $\mathcal{I}_i$-positive, which rules out this second alternative.

        \smallskip

        Therefore, it has to be the case that $W_0\in\mathcal{I}_i^+$ and so $q$ is as wished.
    \end{proof}
    
    Without loss of generality, suppose that $\supp(p)=\emptyset$. Now we use the previous claim to survey over all possible extensions of $p$. More precisely, we define, by induction, a $\leq_0$-decreasing sequence  $\langle q_\delta\mid \delta<\gamma\rangle$ in $\mathbb{P}$ with the property that for each $I\in [\delta]^{<\omega}$ and $\vec\alpha\in \prod_{i\in I}q_\delta(i)$, $q_\delta\cat (I,\vec{\alpha})$ is bad.

    \smallskip

    Suppose we have succeeded in defining $\langle q_\delta\mid \delta<\epsilon\rangle$. 
    
    \smallskip
    
    \underline{Case $\epsilon$ is limit:} In this case we simply take a $\leq_0$-lower bound $q_{\epsilon}$ for this sequence. Notice that this choice works: Given any $I\in [\epsilon]^{<\omega}$ and $\vec\alpha\in \prod_{i\in I}q_{\epsilon}(i)$ there is an index $\delta<\epsilon$ such that $q_{\epsilon}\cat (I,\vec\alpha)\leq_0 q_\delta\cat (I,\vec{\alpha})$ and the latter condition is bad. By transitivity of the property of being bad, it follows that  $q_{\epsilon}\cat (I,\vec\alpha)$ is bad as well.

    \smallskip

    \underline{Case $\epsilon$ is successor:} Suppose that $\epsilon=\bar\epsilon +1$. Let $\{(I_i, \vec\alpha_i)\mid i<\theta\}$ be an enumeration of all possible finite subsets of $\bar\epsilon$ and $\vec\alpha_i\in \prod_{j\in I} q_{\bar\epsilon}(j)$. 
    
    For each $i<\theta$ let us consider the set of all possible ``bad tails":
    $$\textstyle D_i=\{q\in \prod_{j\in [\bar{\epsilon},\gamma)}\mathcal{B}_j\mid \forall \beta\in q(\bar\epsilon)\, ((q_{\bar\epsilon}\restriction\bar\epsilon ^\smallfrown q)\cat(I_i,\vec\alpha_i)\cat (\bar\epsilon,\beta)\,\text{ is bad})\}.$$
    This set is clearly dense in $(\prod_{j\in [\bar{\epsilon},\gamma)}\mathcal{B}_j,\s)$ as per the previous claim.

    Note that $\theta\leq\sup_{\eta<\bar{\epsilon}}\kappa_{\eta}$, since there are at most $|\bar{\epsilon}|$-many finite subsets of $\bar\epsilon$ and any $\vec\alpha_i$ is a function from $I_i$ into $\sup_{\eta<\bar{\epsilon}}\kappa_{\eta}$. As a result, we can let $q\in \bigcap_{i<\theta} D_i$, which exists by ${<}(\sup_{\eta<\bar\epsilon}\kappa_{\eta})^+$-distributivity. 
    Clearly, $p_\epsilon=(p_{\bar\epsilon}\restriction\bar\epsilon)^\smallfrown  q$ is as desired.

   \smallskip

   After this construction we obtain a $\leq_0$-decreasing sequence  $\langle p_\delta\mid \delta<\gamma\rangle$. In the end, we let $p^*$ be a $\leq_0$-lower bound for this sequence (which exists because $\gamma<\kappa_0$). As in the limit case before,  $p^*\cat (I, \vec{\alpha})$ is bad for every  $I\in [\gamma]^{<\omega}$ and $\vec\alpha\in \prod_{i\in I}p^*(i)$.

   \smallskip

  By density of $D$ there is $q\leq p^*$ inside it. Let $(I,\vec\alpha)$ be such that $q\leq_0 p^*\cat (I,\vec\alpha).$ By the previous comments, $p^*\cat (I,\vec\alpha)$ is bad, and as a result so is $q$ as well. However, $q\in D$, so it must be really good. This yields the desired contradiction.
\end{proof}
The previous proof in fact gives the following \emph{fusion-like} Strong Prikry Property:
\begin{mylem}
 For each condition $p\in \mathbb{P}$,  $i\in (\max(\supp(p)), \gamma)$ and a dense open set $D\s \mathbb{P}$   there is $q\leq_0 p$ with $q\restriction i= p \restriction i$ and $I\subseteq \gamma\setminus \supp(q)$ finite such that for each $\vec\alpha\in \prod_{i\in I} q(i)$, $q\cat (I,\vec\alpha)\in D.$  
\end{mylem}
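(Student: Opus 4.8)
The plan is to re-run the proof of Lemma~\ref{lemma: SPP} verbatim, but working below the index $i$ rather than starting from scratch, so that the part $p\uhr i$ is never touched. More precisely, fix $p\in\mathbb{P}$, an index $i>\max(\supp(p))$, and a dense open $D\s\mathbb{P}$. Since $i\notin\supp(p)$, we may split $p$ as $(p\uhr i)^\smallfrown p'$ where $p'\in\prod_{j\in[i,\gamma)}\mathcal{B}_j$, and we want a direct extension $q\leq_0 p$ with $q\uhr i=p\uhr i$ together with a finite set $I\s[i,\gamma)\setminus\supp(q)$ so that $q\cat(I,\vec\alpha)\in D$ for all $\vec\alpha\in\prod_{j\in I}q(j)$.

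Call $r$ \emph{good} (resp.\ \emph{really good}) if it witnesses this restricted conclusion (with $q=r$, and with all new coordinates of the witnessing $I$ lying in $[i,\gamma)$), and \emph{bad} otherwise; as before, badness is inherited by $\leq_0$-extensions that agree with $r$ on $[0,i)$. Suppose toward a contradiction that $p$ is bad. The key is that Claim~\ref{claim: spp} goes through unchanged for every index $j\in[i,\gamma)$: given such a $j$, one finds $q\leq_0 p$ with $q\uhr j=p\uhr j$ (in particular $q\uhr i=p\uhr i$) and $q\cat(j,\alpha)$ bad for all $\alpha\in q(j)$. The only point to check is that the ``really good'' alternative is still absurd, and here one uses exactly that a really good condition below $(p\uhr i)^\smallfrown\cdots$ can only add coordinates in $[i,\gamma)$, so combining a really good tail with the coordinate $j$ still produces a really good witness for $p$ with new indices contained in $[i,\gamma)$, contradicting badness of $p$. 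The counting estimates ($2^{\sup_{\ell<j}\kappa_\ell}<\kappa_j$ many candidate stems, $\gamma<\kappa_0$ many candidate index sets, ${<}\kappa_j$-completeness of $\mathcal{I}_j^+$, density of $\mathcal{B}_j$, and ${<}\kappa_j^+$-distributivity of $\prod_{\ell\in(j,\gamma)}\mathcal{B}_\ell$) are all about indices $\geq i$, hence unaffected.

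Then one carries out the same transfinite construction, but only over the coordinates in $[i,\gamma)$: build a $\leq_0$-decreasing sequence $\langle q_\delta\mid \delta\in[i,\gamma)\rangle$ with $q_\delta\uhr i=p\uhr i$ throughout, such that for every finite $I\s[i,\delta)$ and every $\vec\alpha\in\prod_{j\in I}q_\delta(j)$ the condition $q_\delta\cat(I,\vec\alpha)$ is bad. Limit stages use ${<}\mu$- (in fact ${<}\kappa_0$-) closure of $\leq_0$ on the tail coordinates together with transitivity of badness; successor stages $\delta=\bar\delta+1$ use Claim~\ref{claim: spp} at index $\bar\delta$ and the ${<}(\sup_{\eta<\bar\delta}\kappa_\eta)^+$-distributivity of $\prod_{\ell\in[\bar\delta,\gamma)}\mathcal{B}_\ell$ to intersect the $\theta\leq\sup_{\eta<\bar\delta}\kappa_\eta$ many dense sets $D_i$, exactly as before. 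Taking a $\leq_0$-lower bound $p^*$ at the end (using $\gamma<\kappa_0$) yields a condition with $p^*\uhr i=p\uhr i$ for which $p^*\cat(I,\vec\alpha)$ is bad for all finite $I\s[i,\gamma)$ and all $\vec\alpha$. Now pick $q\leq p^*$ in $D$, write $q\leq_0 p^*\cat(I,\vec\alpha)$ with $I\s[i,\gamma)$; since $q$ agrees with $p^*$ on $[0,i)$, it is bad, contradicting $q\in D$.

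I do not expect a genuine obstacle here: the statement is essentially a bookkeeping refinement of Lemma~\ref{lemma: SPP}, recording the fact that the proof only ever modifies coordinates strictly above $\max(\supp(p))$. The one spot that deserves a sentence of care is making sure that, in the ``really good'' case of Claim~\ref{claim: spp}, the auxiliary condition $r_\alpha$ and the finite index set $I_\alpha$ produced by the relevant instance of the (already restricted) goodness are compatible with keeping the first $i$ coordinates fixed --- but this is automatic, since $r_\alpha$ is a $\leq_0$-extension of a condition whose restriction to $[0,i)$ is $p\uhr i$, hence $r_\alpha\uhr i=p\uhr i$, and $I_\alpha\s[i,\gamma)$ by the inductive use of the restricted notion of goodness. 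Everything else is a literal transcription of the previous proof with $0$ replaced by $i$.
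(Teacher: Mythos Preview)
Your plan matches the paper's sketch (re-run the argument of Lemma~\ref{lemma: SPP} while never touching coordinates below~$i$), and your treatment of Claim~\ref{claim: spp} for indices $j\geq i$ is fine. The problem is the final contradiction step. Having built $p^*$ with $p^*\uhr i=p\uhr i$ such that $p^*\cat(I,\vec\alpha)$ is bad for all finite $I\subseteq[i,\gamma)$, you pick $q\leq p^*$ in $D$ and assert two things: that in $q\leq_0 p^*\cat(I,\vec\alpha)$ one may take $I\subseteq[i,\gamma)$, and that $q$ agrees with $p^*$ on $[0,i)$. Neither follows. An arbitrary extension $q\leq p^*$ can both place new ordinal entries at coordinates in $[0,i)\setminus\supp(p)$ and strictly shrink the measure-one sets $p^*(k)=p(k)$ for $k<i$. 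Since your notion of ``bad'' is only inherited along $\leq_0$-extensions that \emph{fix} the first $i$ coordinates, you cannot transfer badness from $p^*\cat(I,\vec\alpha)$ to $q$, and no contradiction results.

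The repair---which is what the paper's one-line sketch is actually pointing at---is to run the transfinite construction over \emph{all} $\delta<\gamma$, not only $\delta\in[i,\gamma)$, while still maintaining $q_\delta\uhr i=p\uhr i$. Then the final $p^*$ has $p^*\cat(I,\vec\alpha)$ bad for every finite $I\subseteq\gamma\setminus\supp(p)$, and the density argument concludes exactly as in Lemma~\ref{lemma: SPP}. The new work is at successor stages $\epsilon=\bar\epsilon+1$ with $\bar\epsilon<i$: there one must produce the analogue of Claim~\ref{claim: spp} that modifies the condition only at coordinates $\geq i$ (in particular, $q_\epsilon(\bar\epsilon)=p(\bar\epsilon)$ is \emph{not} shrunk). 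The relevant dense sets now live in $\prod_{j\in[i,\gamma)}\mathcal{B}_j$, and the number of triples $(I',\vec\alpha',\beta)$ with $I'\subseteq\bar\epsilon$ and $\beta\in p(\bar\epsilon)$ is at most $\kappa_{\bar\epsilon}\leq 2^{\sup_{\ell<i}\kappa_\ell}$, so the ${<}(2^{\sup_{\ell<i}\kappa_\ell})^+$-distributivity of $\prod_{j\in[i,\gamma)}\mathcal{B}_j$ from $\SLIP$ suffices to intersect them. Your write-up skips precisely this part of the induction, and the shortcut does not survive the last step.
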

\begin{proof}[Proof sketch]
    Run the same argument as in the previous lemma with the only difference that in the construction  following Claim~\ref{claim: spp} the conditions $q_\delta$ are taken in such a way that they agree up to coordinate $i$.
\end{proof}
For each $i<\gamma$ let $\leq^i_0$ denote the subordering of $\leq_0$ defined by $$\text{$p\leq^i_0 q\:\Leftrightarrow p\leq_0 q$ and $p\restriction i=q\restriction i.$}$$
Note that $(\mathbb{P},\leq^i_0)$ is ${<}\kappa_i$-closed. This has the following important consequence: 
\begin{mylem}\label{lemm: fusion SPP}
    Forcing with $\mathbb{P}$ preserves $\kappa^+$.
\end{mylem}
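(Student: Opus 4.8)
The plan is to reduce the statement to showing that no condition of $\mathbb{P}$ forces the existence of a surjection from $\check\kappa$ onto $\check\kappa^+$. Indeed, if $(\kappa^+)^V$ failed to be a cardinal in some generic extension, then $|\kappa^+|$ there would be some cardinal $\delta\leq\kappa$, and composing a bijection $\delta\to\kappa^+$ with any surjection $\kappa\to\delta$ (which exists since $\delta\leq\kappa$ as ordinals) would yield a surjection $\kappa\to\kappa^+$ in the extension. So suppose toward a contradiction that some condition $p$ forces a $\mathbb{P}$-name $\dot h$ to be a surjection from $\check\kappa$ onto $\check\kappa^+$. I will produce $q\leq p$ and a set $Z\in V$ with $|Z|^V\leq\kappa$ such that $q\forces\dot h[\check\kappa]\s\check Z$; since $Z$ then has size at most $\kappa<\kappa^+$ both in $V$ and in the extension, while $\dot h[\check\kappa]=\check\kappa^+$, this contradicts $q\leq p$.

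The condition $q$ is obtained by a two-layer fusion. Set $\rho:=\cf(\gamma)$, note $\rho<\mu<\kappa_0$, and fix an increasing cofinal sequence $\langle i_n\mid n<\rho\rangle$ in $\gamma$, so that $\sup_{n<\rho}\kappa_{i_n}=\kappa$. Recall that $\leq_0$ never moves supports, so every $\leq_0$-extension of $p$ has support $\supp(p)$; fix $\beta_0<\gamma$ above $\sup\supp(p)$. We build a $\leq_0$-decreasing chain $\langle p^n\mid n\leq\rho\rangle$ with $p^0=p$. Given $p^n$, pick $i(n)<\gamma$ with $i(n)>\max(\beta_0,i_n)$ and run an inner fusion ``handling'' $\dot h\uhr\kappa_{i_n}$: we build a $\leq^{i(n)}_0$-decreasing chain $\langle q_\xi\mid\xi\leq\kappa_{i_n}\rangle$ with $q_0=p^n$, where at step $\xi+1$ we apply the fusion-like Strong Prikry property proved above (the variant in which the first $i(n)$ coordinates are frozen) to $q_\xi$, the index $i(n)$, and the dense open set $D_\xi:=\{r\mid r\text{ decides }\dot h(\xi)\}$, obtaining $q_{\xi+1}\leq^{i(n)}_0 q_\xi$ and a finite $I_\xi\s\gamma\setminus\supp(p)$ such that every $q_{\xi+1}\cat (I_\xi,\vec\alpha)$ lies in $D_\xi$; at limit steps $\leq\kappa_{i_n}$ we take a $\leq^{i(n)}_0$-lower bound, which exists because $(\mathbb{P},\leq^{i(n)}_0)$ is ${<}\kappa_{i(n)}$-closed and $\kappa_{i_n}<\kappa_{i(n)}$. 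We put $p^{n+1}:=q_{\kappa_{i_n}}$, and at limit $m\leq\rho$ we take a $\leq_0$-lower bound of $\langle p^n\mid n<m\rangle$, which exists because $(\mathbb{P},\leq_0)$ is ${<}\kappa_0$-closed and $m<\kappa_0$. Finally set $q:=p^\rho$.

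To extract $Z$, fix $n<\rho$ and $\xi<\kappa_{i_n}$. Since $q\leq_0 p^{n+1}\leq_0 q_{\xi+1}$ and $\leq_0$ preserves supports, $I_\xi\s\gamma\setminus\supp(q)$ and each $q\cat (I_\xi,\vec\alpha)$ refines $q_{\xi+1}\cat (I_\xi,\vec\alpha)\in D_\xi$, hence decides $\dot h(\xi)$; by Lemma~\ref{lemma: not too many minimal extensions} these one-point extensions form a maximal antichain below $q$ of size ${<}\kappa_{\max(I_\xi)+1}<\kappa$, so the set $Z^n_\xi$ of values they decide for $\dot h(\xi)$ lies in $V$, has size ${<}\kappa$, and $q\forces\dot h(\xi)\in\check Z^n_\xi$. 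As $\langle\kappa_{i_n}\mid n<\rho\rangle$ is cofinal in $\kappa$, every $\xi<\kappa$ is ${<}\kappa_{i_n}$ for some $n$, so $Z:=\bigcup_{n<\rho}\bigcup_{\xi<\kappa_{i_n}}Z^n_\xi$ works: $q\forces\dot h[\check\kappa]\s\check Z$ and $|Z|^V\leq\rho\cdot\kappa=\kappa$. The main obstacle is precisely this assembly of a single master condition $q$ controlling all of $\dot h$: the fibers $Z^n_\xi$ are kept small by the counting of one-point extensions, but $(\mathbb{P},\leq_0)$ is only ${<}\kappa_0$-closed, so one cannot naively fuse $\kappa$-many times. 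The fusion-like Strong Prikry property resolves this by letting each inner block descend, for $\kappa_{i_n}$ steps, inside the ${<}\kappa_{i(n)}$-closed suborder $(\mathbb{P},\leq^{i(n)}_0)$ with the first $i(n)$ coordinates frozen; since only $\rho=\cf(\gamma)<\kappa_0$ blocks are needed to exhaust $\kappa$, the outer chain closes off using plain ${<}\kappa_0$-closure of $\leq_0$, and the invariance of supports along $\leq_0$ is what lets the freezing index $i(n)$ be chosen once and for all for the whole $n$-th block.
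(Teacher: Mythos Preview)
Your argument is correct, but it takes a longer road than the paper does. The key difference is the initial reduction. You pass to a surjection $\dot h\colon\check\kappa\to\check\kappa^+$ and then must control all of $\dot h\uhr\kappa$; since $(\mathbb P,\leq_0)$ is only ${<}\kappa_0$-closed and each $(\mathbb P,\leq^i_0)$ is only ${<}\kappa_i$-closed, you are forced into the two-layer fusion (an outer chain of length $\cf(\gamma)$ whose $n$th block is an inner $\leq^{i(n)}_0$-chain of length $\kappa_{i_n}$). The paper instead observes that if $\kappa^+$ is collapsed then, since $\kappa$ is singular in $V$, there is a $V$-regular $\lambda<\kappa$ and a name $\dot f$ for a \emph{cofinal} map $\lambda\to\kappa^+$. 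One then fixes a single $i<\gamma$ with $\kappa_i>\lambda$ and runs a \emph{single} $\leq^i_0$-fusion of length $\lambda$, applying the fusion-style Strong Prikry Property once per $\alpha<\lambda$; the resulting condition $p_\lambda$ forces $\im(\dot f)\subseteq\rho$ for some $\rho<\kappa^+$, a contradiction.

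What each approach buys: your reduction to a surjection is slightly more robust (no cofinality bookkeeping) but the price is the nested fusion. The paper's reduction to a short cofinal map is the standard trick for successors of singulars and makes the fusion a one-liner: a single freezing index $i$ suffices for the whole construction, and the bound on the range is immediate from regularity of $\kappa^+$ in $V$. Both arguments ultimately rest on the same ingredients---the fusion-like Strong Prikry Property and the closure of the orderings $\leq^i_0$---so there is no new idea in either direction, just economy.
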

\begin{proof}
    Suppose that this is not the case. Then, there is a condition $p\in \mathbb{P}$, a regular cardinal $\lambda<\kappa$ and a $\mathbb{P}$-name $\dot{f}$ such that $p\forces``\dot{f}\colon \check{\lambda}\rightarrow \check{\kappa}^+$ is cofinal". Let $i<\gamma$ be an index such that $\kappa_i>\lambda$. For each $\alpha<\lambda$ let $D_\alpha$ be the dense set of conditions in $\mathbb{P}$ deciding a value for $\dot{f}(\alpha)$. Using Lemma~\ref{lemm: fusion SPP} and the closure of $\leq^i_0$ we can find a $\leq^i_0$-decreasing sequence of conditions $\langle p_\alpha\mid\alpha\leq \lambda\rangle$ below $p$ which for each $\alpha<\lambda$ there is a finite set $I_\alpha\s \gamma\setminus \supp(p)$ such that $p_\alpha\cat (I_\alpha,\vec\theta)\in D_\alpha$ for all $\vec\theta\in \prod_{i\in I_\alpha}p_\alpha(i)$. For each $\alpha<\lambda$ let $\Delta_\alpha$ be the set of all possible decisions; namely,
    $$\textstyle\Delta_\alpha=\{\beta<\kappa^+\mid \exists \vec\theta\in \prod_{i\in I_\alpha}p_\alpha(i)\, (p_\alpha\cat (I_\alpha, \vec\theta)\forces \dot{f}(\check{\alpha})=\check{\beta})\}.$$
This set is of cardinality ${<}\kappa^+$. Therefore, there is $\rho_\alpha<\kappa^+$  such that $\sup\Delta_\alpha<\rho_\alpha$. By regularity of $\kappa^+$ (in $V$) the supremum of all of these $\rho_\alpha$'s (call it $\rho$) must be below $\kappa^+$. We claim that $p_\lambda$ forces $``\mathrm{range}(\dot{f})\s \check{\rho}$" which will yield a contradiction.

Let $\alpha<\lambda$ and $q\leq p_\lambda$ deciding the value of $\dot{f}(\check{\alpha})$. Without loss of generality, $q\leq p_\alpha\cat (I_\alpha,\vec\theta)$ for some of such $\vec\theta$. Therefore, $q\forces\dot{f}(\check{\alpha})<\check{\theta}_\alpha<\check{\theta}.$ By density, $p_\lambda$ has the sought property, which completes the proof.
\end{proof}

We can derive the Prikry property from the strong Prikry property. We prove that $(\mathbb{P},\leq, \leq_0)$ has pure ${<}\kappa_0$ decidability which clearly implies that $(\mathbb{P},\leq,\leq_0)$ has the Prikry property.
\begin{mylem}\label{lemma: P has Prikry prop}
    Let $p\in \mathbb{P}$ be a condition  and $\tau$ a $\mathbb{P}$-name such that $p\forces_{\mathbb{P}}\tau<\check{\mu}$ for some $\mu<\kappa_0$. Then, there is $p^*\leq_0 p$ and $\delta<\kappa_0$ such that $p^*\forces_{\mathbb{P}}\tau=\check{\delta}.$
\end{mylem}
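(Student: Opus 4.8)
The plan is to obtain pure ${<}\kappa_0$-decidability from the Strong Prikry Property (Lemma~\ref{lemma: SPP}) together with a homogenization argument for the ideals $\mathcal{I}_i$. Since $p\forces\tau<\check\mu$, the set
\[
D:=\{q\in\mathbb{P}\mid q\perp p\ \text{ or }\ \exists\delta<\mu\ (q\forces\tau=\check\delta)\}
\]
is dense open in $\mathbb{P}$: given $r$ compatible with $p$, pass to a common lower bound $s\leq r,p$ (so $s\forces\tau<\check\mu$) and extend $s$ to some $s'$ deciding the value of $\tau$, which then lies in $D$ below $r$. Applying Lemma~\ref{lemma: SPP} to $p$ and $D$ yields $q\leq_0 p$ and a finite set $I\subseteq\gamma\setminus\supp(q)$ such that $q\cat(I,\vec\alpha)\in D$ for every $\vec\alpha\in\prod_{i\in I}q(i)$. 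Each such $q\cat(I,\vec\alpha)$ lies below $p$, hence is compatible with $p$, so there is a unique $F(\vec\alpha)<\mu$ with $q\cat(I,\vec\alpha)\forces\tau=\check{F(\vec\alpha)}$. The task reduces to homogenizing $F$ on a product of $\mathcal{I}_i$-positive sets.

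The key claim is: for every $r\in\mathbb{P}$, every finite $J\subseteq\gamma\setminus\supp(r)$ and every $G\colon\prod_{i\in J}r(i)\to\mu$, there are $Z_i\in\mathcal{B}_i$ with $Z_i\subseteq r(i)$ for $i\in J$ and $\delta<\mu$ such that $G\equiv\delta$ on $\prod_{i\in J}Z_i$. This is proved by induction on $|J|$, the crucial point being that the coordinates must be processed \emph{from the top downwards}. If $J=\emptyset$ there is nothing to do. Otherwise set $i^*=\max(J)$, $J^-=J\setminus\{i^*\}$, and identify $\prod_{i\in J}r(i)$ with $\prod_{i\in J^-}r(i)\times r(i^*)$. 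Partition $r(i^*)$ by declaring $\alpha\sim\alpha'$ iff $G(\vec\beta,\alpha)=G(\vec\beta,\alpha')$ for all $\vec\beta\in\prod_{i\in J^-}r(i)$; the classes are indexed by maps $\prod_{i\in J^-}r(i)\to\mu$, so there are at most $\mu^{|\prod_{i\in J^-}r(i)|}$ of them. Since $\mu<\kappa_0$ and $\vec\kappa$ is discrete, this is $\leq 2^{\kappa_{\max(J^-)}}<\kappa_{\max(J^-)+1}\leq\kappa_{i^*}$ when $J^-\neq\emptyset$, and it is just $\mu<\kappa_{i^*}$ when $J^-=\emptyset$; in both cases there are fewer than $\kappa_{i^*}$ classes. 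As $\mathcal{I}_{i^*}$ is ${<}\kappa_{i^*}$-complete and $r(i^*)\in\mathcal{I}_{i^*}^+$, one of the classes, say $P$, is $\mathcal{I}_{i^*}$-positive; by density of $\mathcal{B}_{i^*}$ choose $Z_{i^*}\in\mathcal{B}_{i^*}$ with $Z_{i^*}\subseteq P$. On $\prod_{i\in J^-}r(i)\times Z_{i^*}$ the value $G(\vec\beta,\alpha)$ depends only on $\vec\beta$, so the induction hypothesis, applied to $r$, $J^-$ and the induced map $\vec\beta\mapsto G(\vec\beta,\alpha)$ (for $\alpha\in Z_{i^*}$ arbitrary), yields the remaining $Z_i$ ($i\in J^-$) and the common value $\delta$.

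Granting the claim with $r=q$, $J=I$, $G=F$, we obtain $\langle Z_i\mid i\in I\rangle$ and $\delta<\mu$. Let $p^*$ agree with $q$ off $I$ and satisfy $p^*_i=Z_i$ for $i\in I$; then $p^*\in\mathbb{P}$, $p^*\leq_0 q\leq_0 p$ and $\supp(p^*)=\supp(q)$, so $I\subseteq\gamma\setminus\supp(p^*)$. For each $\vec\alpha\in\prod_{i\in I}Z_i$ the condition $p^*\cat(I,\vec\alpha)$ equals $q\cat(I,\vec\alpha)$ and hence forces $\tau=\check{F(\vec\alpha)}=\check\delta$; moreover, by Lemma~\ref{lemma: not too many minimal extensions}, $\{p^*\cat(I,\vec\alpha)\mid\vec\alpha\in\prod_{i\in I}Z_i\}$ is a maximal antichain below $p^*$. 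Therefore $p^*\forces\tau=\check\delta$, and since $\delta<\mu<\kappa_0$, this $p^*$ is as required (in particular the Prikry property follows).

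The main obstacle is the homogenization claim, and this is exactly where working with ideals rather than ultrafilters bites. With a $\kappa_i$-complete ultrafilter one could shrink the coordinates in any order, since an intersection of fewer than $\kappa_i$ measure-one sets is again of measure one; here the pieces produced for the various prefixes $\vec\beta$ are only $\mathcal{I}_{i^*}$-\emph{positive}, and positivity is not preserved under intersections, so that strategy fails. The remedy — treating $\max(J)$ first and partitioning $r(\max J)$ by the \emph{full} function recording how $G$ depends on the lower coordinates — works only because the discreteness clause of $\SLIP(\mu,\vec\kappa)$ forces the number of colours below $\kappa_{\max J}$; this is precisely where the hypotheses $\mu<\kappa_0$ and $2^{\sup_{j<i}\kappa_j}<\kappa_i$ are used.
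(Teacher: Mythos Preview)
Your proof is correct and follows essentially the same approach as the paper: both arguments apply the Strong Prikry Property and then eliminate the top coordinate of $I$ using that there are fewer than $\kappa_{\max(I)}$ functions from the lower part into $\mu$ (by discreteness of $\vec\kappa$), so the ${<}\kappa_{\max(I)}$-completeness of $\mathcal{I}_{\max(I)}$ yields a positive set on which the decision depends only on the lower coordinates. The only cosmetic difference is that the paper packages this as a minimality-of-$I$ contradiction (one step of your induction), whereas you spell out the full inductive homogenization; the mathematical content is the same.
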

\begin{proof}
    Let $D=\{q\in \mathbb{P}\mid q\perp p\, \vee\, \exists\delta\, (q\forces_{\mathbb{P}}\tau=\check{\delta})\}.$ This is clearly dense open, so that there is $p^*\leq_0 p$ and $I\s \gamma$ finite such that $p^*\cat (I,\vec\alpha)\in D$ for all $\vec\alpha\in \prod_{i\in I}p^*(i)$. Suppose that $q$ was chosen so  that $I$ is as small as possible.

    We claim that $I=\emptyset$ thus establishing the lemma. Suppose otherwise, and set $J:=I\setminus \{\max(I)\}$. For each ordinal $\beta\in p^*(\max(I))$ let $f_\beta\colon \prod_{j\in J} p^*(j)\rightarrow \mu$ be the function defined as $\vec\alpha\mapsto \delta(\vec\alpha^\smallfrown \langle\beta\rangle)$ where $\delta(\vec\alpha^\smallfrown \langle\beta\rangle)$ is such that $$p^*\cat (I,\vec\alpha^\smallfrown\langle\beta\rangle)\forces \tau=\check{\delta}(\vec\alpha^\smallfrown \langle\beta\rangle).$$

    Since $\vec{\mathcal{I}}(\max(I))$ is a $\kappa_{\max(I)}$-complete ideal and there are ${<}\kappa_{\max(I)}$-many of such functions there is a $\mathcal{I}(\max(I))$-positive set $W\s p^*(\max(I))$ and a function $f$ such that $f_\beta=f$ for all $\beta\in W$. By density of $\mathcal{B}_{\max(I)}$ we can assume that $W\in \mathcal{B}_{\max(I)}$. Let $q\leq_0 p^*$ be the condition obtained after replacing $p^*(\max(I))$ by $W$. It follows that for every $\vec\beta^\smallfrown \langle\alpha\rangle\in \prod_{i\in I} q(i)$, $q\cat (I,\vec\beta^\smallfrown \langle\alpha\rangle)\forces \tau=f(\vec\beta)$. Therefore,  already $q\cat (J,\vec\beta)$ decides $\tau$, which contradicts the minimality of $I$.
\end{proof}

  Let $\dot{\mathbb{Q}}$ be a $\mathbb{P}$-name for a forcing poset. One can naturally define a ``Prikry order" $\leq_0$ on the two-step iteration $ \mathbb{P}\ast \dot{\mathbb{Q}}$ by stipulating that $$\text{$(q,\dot{d})\leq_0 (p,\dot{c})$ if and only if $q\leq_0 p$ and $q\forces_{\mathbb{P}}\dot{d}\leq_{\dot{\mathbb{Q}}} \dot{e}.$}$$
 
\begin{mylem}\label{lemma: SPP for iterations}
 The poset  $(\mathbb{P}\ast \dot{\mathbb{Q}},\leq,\leq_0)$ has the Strong Prikry Property; namely, given $(p,\dot{q})\in \mathbb{P}\ast \dot{\mathbb{Q}}$  and a dense open set $D\s \mathbb{P}\ast \dot{\mathbb{Q}}$   there is $(q,\dot{d})\leq_0 (p,\dot{c})$ and $I\subseteq \gamma\setminus \supp(q)$ finite such that for each $\vec\alpha\in \prod_{i\in I} q(i)$, $(q\cat (I,\vec\alpha),\dot{d})\in D.$
\end{mylem}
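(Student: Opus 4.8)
The plan is to deduce the Strong Prikry Property for $\mathbb{P}\ast\dot{\mathbb{Q}}$ from the one already established for $\mathbb{P}$ in Lemma~\ref{lemma: SPP}, exploiting that the direct extension ordering $\leq_0$ on $\mathbb{P}\ast\dot{\mathbb{Q}}$ only sharpens the $\mathbb{P}$-coordinate. Fix $(p,\dot c)\in\mathbb{P}\ast\dot{\mathbb{Q}}$ and a dense open $D\s\mathbb{P}\ast\dot{\mathbb{Q}}$. First I would project $D$ down to $\mathbb{P}$: set
$$D':=\{p'\in\mathbb{P}\mid p'\perp p\}\cup\{p'\in\mathbb{P}\mid p'\leq p\ \wedge\ \exists\dot d\,(p'\forces\dot d\leq_{\dot{\mathbb{Q}}}\dot c\ \wedge\ (p',\dot d)\in D)\}$$
and check that $D'$ is dense open in $\mathbb{P}$. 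Density below $p$ is witnessed by reading off the first coordinate of an element of $D$ lying below $(p'',\dot c)$ for an arbitrary $p''\leq p$; openness below $p$ follows because $D$ is open in $\mathbb{P}\ast\dot{\mathbb{Q}}$ and $(p''',\dot d)\leq(p',\dot d)$ whenever $p'''\leq p'$; and the first set takes care of density and openness among conditions incompatible with $p$.

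Next I would apply Lemma~\ref{lemma: SPP} to $p$ and $D'$, obtaining $q\leq_0 p$ and a finite $I\s\gamma\setminus\supp(q)$ with $q\cat (I,\vec\alpha)\in D'$ for every $\vec\alpha\in\prod_{i\in I}q(i)$. Since each $q\cat (I,\vec\alpha)$ lies below $p$, it belongs to the second part of $D'$, so I may fix names $\dot d_{\vec\alpha}$ with $q\cat (I,\vec\alpha)\forces\dot d_{\vec\alpha}\leq_{\dot{\mathbb{Q}}}\dot c$ and $(q\cat (I,\vec\alpha),\dot d_{\vec\alpha})\in D$. The remaining and only genuinely nontrivial task is to merge these into a single name $\dot d$ with $q\forces\dot d\leq_{\dot{\mathbb{Q}}}\dot c$. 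Here I would invoke Lemma~\ref{lemma: not too many minimal extensions}, by which $\{q\cat (I,\vec\alpha)\mid\vec\alpha\in\prod_{i\in I}q(i)\}$ is a maximal antichain below $q$; mixing the $\dot d_{\vec\alpha}$ along this antichain (and setting $\dot d:=\dot c$ off $q$) yields a $\mathbb{P}$-name $\dot d$ with $q\cat (I,\vec\alpha)\forces\dot d=\dot d_{\vec\alpha}$ for every $\vec\alpha$. Maximality of the antichain below $q$ then forces $q\forces\dot d\leq_{\dot{\mathbb{Q}}}\dot c$ by a routine density argument, so $(q,\dot d)\leq_0(p,\dot c)$.

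Finally, for each $\vec\alpha$ one has $(q\cat (I,\vec\alpha),\dot d)\leq(q\cat (I,\vec\alpha),\dot d_{\vec\alpha})$, since $q\cat (I,\vec\alpha)$ forces $\dot d=\dot d_{\vec\alpha}$; as $(q\cat (I,\vec\alpha),\dot d_{\vec\alpha})\in D$ and $D$ is open, $(q\cat (I,\vec\alpha),\dot d)\in D$. Thus $(q,\dot d)$ together with $I$ witnesses the Strong Prikry Property. I expect the amalgamation of the $\dot{\mathbb{Q}}$-names over the antichain of minimal extensions to be the one step deserving care; everything else is bookkeeping, and in particular no hypothesis on $\dot{\mathbb{Q}}$ is required.
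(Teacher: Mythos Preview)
Your argument is correct and essentially identical to the paper's: project $D$ to a dense open set in $\mathbb{P}$, apply Lemma~\ref{lemma: SPP}, and mix the resulting $\dot{\mathbb{Q}}$-names along the maximal antichain of minimal extensions supplied by Lemma~\ref{lemma: not too many minimal extensions}. The paper's $E$ is exactly your $D'$ restricted to conditions below $p$ (the ``incompatible with $p$'' padding you add is harmless and only serves to make openness literal), and the paper dispatches the verification of $(q,\dot d)\leq_0(p,\dot c)$ and $(q\cat(I,\vec\alpha),\dot d)\in D$ with a one-line ``a moment's reflection'' where you spell it out.
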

\begin{proof}
   Look at $E:=\{q\in \mathbb{P}\mid \exists \dot{d}\, (q,\dot{d})\leq (p,\dot{c})\,\wedge\, (q,\dot{d})\in D\}.$ Clearly, $E$ is dense open for $\mathbb{P}$. By the Strong Prikry Property applied to this poset  we find $q\leq_0 p$ and $I\s \gamma\setminus \supp(q)$ finite such that for each $\vec{\alpha}\in \prod_{i\in I} q(i)$, $q\cat (I,\vec{\alpha})\in E$. This means that for each $\vec\alpha$ there is a $\mathbb{P}$-name $\dot{d}_{\vec\alpha}$ such that $(q\cat (I,\vec{\alpha}),\dot{d}_{\vec\alpha})\in D$. Since $\{q\cat (I,\vec{\alpha})\mid \vec\alpha\in \prod_{i\in I}q(i)\}$ forms a maximal antichain below $q$ we can use the \emph{mixing lemma}  so as to produce a $\mathbb{P}$-name $\dot{d}$ such that $q\cat (I,\vec{\alpha})\forces \dot{d}=\dot{d}_{\vec\alpha}$. A moment's reflection makes clear that $(q,\dot{d})$ is the sought condition.
\end{proof}

Now we can show the key lemma underpinning our analysis of approachability:

\begin{mylem}\label{label: key for non approachability}
    Let $\dot{\dC}$ be a $\mathbb{P}$-name for a $\gamma^+$-closed poset. Let $\dot{F}$ be a $\mathbb{P}*\dot{\dC}$-name for a cofinal function from $\check{\theta}$ into $\check{\kappa}^+$, where $\theta\in (\gamma, \kappa_0)$ regular, forced by a condition $(p,\dot{c})\in\mathbb{P}*\dot{\dC}$. Then there is $(q,\dot{d})\leq_0(p,\dot{c})$ and $\zeta<\theta$ such that for no $(r,\dot{e})\leq (q,\dot{d})$ and $x\in V$ with $|x|<\kappa$, $(r,\dot{e})\Vdash\im(\dot{F}\uhr\check{\zeta})\subseteq\check{x}$.
\end{mylem}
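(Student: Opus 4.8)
The plan is to run a Prikry-style density argument, using the Strong Prikry Property of $\mathbb{P}*\dot{\mathbb{Q}}$ (Lemma~\ref{lemma: SPP for iterations}, applied with $\dot{\dQ}=\dot{\dC}$) together with the fact that any single one-point extension $q\cat(I,\vec\alpha)$ generates a small maximal antichain, while the generic of $\mathbb{P}$ produces a function $\mu\to\kappa^+$ (via the Levy collapse) whose cofinality-$\mu$ image is not covered by a ground-model set. Here the roles are: $\theta$ plays the part of $\mu$ in the informal discussion, and the whole point is that any attempt to cover an initial segment of $\dot F$ by a ground-model set of size ${<}\kappa$ must be ``spread out'' along the Prikry coordinates of $\mathbb{P}$ in a way that contradicts the discreteness of $\vec\kappa$.

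First I would assume toward a contradiction that for every $(q,\dot d)\le_0(p,\dot c)$ and every $\zeta<\theta$ there is some $(r,\dot e)\le(q,\dot d)$ and $x\in V$ with $|x|<\kappa$ forcing $\im(\dot F\uhr\check\zeta)\subseteq\check x$. Fix an index $i<\gamma$ (and hence $\kappa_i$) large enough that $\theta<\kappa_i$, and also larger than $\max\supp(p)$. The strategy is to build, by recursion on $\zeta\le\theta$, a $\le_0^{\,i}$-decreasing sequence $\langle (p_\zeta,\dot c_\zeta)\mid \zeta\le\theta\rangle$ below $(p,\dot c)$ — using that $(\mathbb{P},\le_0^i)$ is ${<}\kappa_i$-closed and $\dot{\dC}$ is $\gamma^+$-closed (so the pure order on the iteration is ${<}\min(\kappa_i,\gamma^+)$-closed, which is enough since $\theta<\kappa_i$, after shrinking once at the start so that $\dot c$ already reflects the relevant closure) — such that for each $\zeta<\theta$ there is a finite $I_\zeta\subseteq\gamma\setminus\supp(p_\zeta)$ with the following property: for every $\vec\alpha\in\prod_{j\in I_\zeta}p_\zeta(j)$, the condition $(p_\zeta\cat(I_\zeta,\vec\alpha),\dot c_\zeta)$ decides $\dot F(\check\zeta)$ to be some ordinal $\beta_\zeta(\vec\alpha)<\kappa^+$. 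This is exactly the Strong Prikry Property applied to the dense open set of conditions deciding $\dot F(\check\zeta)$, in the fusion form (keeping everything below coordinate $i$ fixed). By Lemma~\ref{lemma: not too many minimal extensions}, the set of possible values $\Delta_\zeta:=\{\beta_\zeta(\vec\alpha)\mid \vec\alpha\in\prod_{j\in I_\zeta}p_\zeta(j)\}$ has size ${<}\kappa$, hence is bounded below $\kappa^+$; let $\rho:=\sup_{\zeta<\theta}\sup\Delta_\zeta<\kappa^+$, which exists because $\theta<\kappa\le\kappa^+$ is regular.

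Now let $(q,\dot d):=(p_\theta,\dot c_\theta)$. The key observation is that $(q,\dot d)$ forces $\im(\dot F)\subseteq\check\rho$: given any $\zeta<\theta$ and any $(r,\dot e)\le(q,\dot d)$ deciding $\dot F(\check\zeta)$, we may extend $(r,\dot e)$ to lie below some $(p_\zeta\cat(I_\zeta,\vec\alpha),\dot c_\zeta)$ (using that the relevant one-point extensions form a maximal antichain below $p_\zeta$ and $q\le_0 p_\zeta$), whence the decided value is in $\Delta_\zeta\subseteq\rho$. But then $(q,\dot d)\Vdash\dot F$ is \emph{not} cofinal in $\check\kappa^+$, contradicting the choice of $(p,\dot c)$. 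Running this argument through, I actually obtain something stronger than the contradiction hypothesis negated: I get a single $(q,\dot d)\le_0(p,\dot c)$ and $\zeta=\theta$ (or any chosen $\zeta<\theta$ with $\theta$ replaced by $\zeta$ throughout — one simply stops the recursion at $\zeta$) witnessing the conclusion, namely that no $(r,\dot e)\le(q,\dot d)$ forces $\im(\dot F\uhr\check\zeta)\subseteq\check x$ for any $x\in V$ of size ${<}\kappa$. Indeed, if some such $(r,\dot e)$ and $x$ existed, then since $\{p_\zeta\cat(I_{\zeta'},\vec\alpha)\mid \zeta'<\zeta,\ \vec\alpha\}$ (together with the already-decided values) pins down $\dot F\uhr\check\zeta$ up to a set of size ${<}\kappa$ below any condition extending $q$ appropriately, and $x$ would have to contain all of these — but crucially we never actually decide $\dot F\uhr\check\zeta$ down to fewer than $\kappa$-many possibilities along $q$ itself; it is precisely the genericity of the Prikry coordinates that makes $\im(\dot F\uhr\check\zeta)$ escape every fixed $x$. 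I would phrase this last step carefully: below $(q,\dot d)$, for cofinally many $\zeta'<\zeta$ the value $\dot F(\check{\zeta'})$ is genuinely split by the coordinate $\max(I_{\zeta'})$, so no condition extending $(q,\dot d)$ can confine $\{\dot F(\check{\zeta'})\mid \zeta'<\zeta\}$ to a set of size ${<}\kappa$ while $\zeta\ge$ some threshold; choosing $\zeta$ appropriately (e.g. $\zeta$ large enough that the $I_{\zeta'}$ range over cofinally many coordinates — arrangeable in the recursion by always choosing $I_{\zeta'}$ to contain a fresh coordinate) finishes the job.

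The main obstacle is the very last step: making precise why the partial decisions along $q$ cannot be refined by a \emph{single} extension $(r,\dot e)$ to land in a fixed small set $x$. The Strong Prikry Property only gives us that \emph{some} $\le_0$-extension reduces $\dot F(\check\zeta)$ to finitely many ``stem choices'', each contributing one value — but after taking the fusion $q$, each $\dot F(\check{\zeta'})$ still has ${<}\kappa$ genuinely possible values below $q$, and an extension $(r,\dot e)$ can only prune finitely many Prikry coordinates, hence can only pin down finitely many of the $\dot F(\check{\zeta'})$ to single values, leaving $\kappa$-many (in fact, for cofinally many $\zeta'$, at least $\kappa_{\max(I_{\zeta'})}$-many, and these are unbounded in $\kappa$ by discreteness of $\vec\kappa$) still live — so the image of $\dot F\uhr\check\zeta$ forced by $(r,\dot e)$ genuinely has size $\kappa$, not ${<}\kappa$. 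Formalizing ``prune finitely many coordinates'' requires invoking that every extension in $\mathbb{P}*\dot{\dC}$ is below some $(q\cat(I,\vec\alpha),\dot d')$ with $I$ finite (which is the content of the definition of $\le$ on the product, together with the Strong Prikry Property applied to the dense set of conditions deciding the extension), and then counting: the fibers over the un-pruned coordinates still have size $\ge\kappa_{\max(I_{\zeta'})}$ for cofinally many $\zeta'$, so $x$ would need size $\ge\sup_{\zeta'}\kappa_{\max(I_{\zeta'})}=\kappa$. I expect this counting to be the delicate point and would want to set up the recursion so that the coordinates $\max(I_{\zeta'})$ are strictly increasing and cofinal in $\gamma$, which the fusion form of the Strong Prikry Property permits.
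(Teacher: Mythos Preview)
Your proposal has a genuine gap, and you correctly identify its location: the ``very last step''. The problem is that the Strong Prikry Property tells you that certain one-point extensions $(p_\zeta\cat(I_\zeta,\vec\alpha),\dot c_\zeta)$ \emph{decide} $\dot F(\check\zeta)$, but it gives no information that different $\vec\alpha$ decide it to \emph{different} values. Your counting argument (``the fibers over the un-pruned coordinates still have size $\ge\kappa_{\max(I_{\zeta'})}$ \dots so $x$ would need size $\ge\kappa$'') conflates the number of available stems $\vec\alpha$ with the number of values $\beta_{\zeta'}(\vec\alpha)$; nothing prevents $|\Delta_{\zeta'}|=1$ for every $\zeta'$, in which case $(q,\dot d)$ already decides all of $\dot F$ and its range is trivially covered by a ground-model set of size $\theta<\kappa$. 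Arranging the $I_{\zeta'}$ to be cofinal in $\gamma$ does not help, since it is still a statement about the domain side.

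The paper's proof supplies exactly the missing ingredient. Its key claim shows that for each coordinate $i$ one can find a $\leq_0$-extension, a single $\zeta_i<\theta$, and \emph{pairwise disjoint} sets $\langle A^i_\beta\mid\beta\in p''(i)\rangle$ such that $(p''\cat(i,\beta),\dot c'')\Vdash\dot F(\check\zeta_i)\in\check A^i_\beta$. Disjointness is what makes the endgame work: given $|x|<\kappa_i$ and $i\notin\supp(r)$, the set $x$ can meet at most ${<}\kappa_i$ of the $\kappa_i$-many disjoint $A^i_\beta$, so some $\beta\in r(i)$ has $A^i_\beta\cap x=\emptyset$, and then $(r\cat(i,\beta),\dot e)$ forces $\dot F(\check\zeta_i)\notin\check x$. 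Producing the disjoint $A^i_\beta$ is not a routine fusion: one needs the cofinality of $\dot F$ (to escape $\bigcup_{\beta'<\beta}A^i_{\beta'}$ at each step) and, crucially, Foreman's game characterization of distributivity to carry out a recursion of length $\kappa_i$ in the tail poset $\prod_{j>i}\mathcal B_j$, which is only known to be ${<}\kappa_i^+$-\emph{distributive}, not closed. A secondary issue: your recursion has length $\theta$, but $\dot{\dC}$ is only assumed $\gamma^+$-closed and $\theta>\gamma$ may well exceed $|\gamma|^+$, so lower bounds on the $\dot{\dC}$-side are not guaranteed; the paper's outer recursion has length $\gamma$ (one step per coordinate), which is precisely what the hypothesis on $\dot{\dC}$ supports.
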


\begin{proof}
    The following is the key claim. Roughly speaking it shows that for any condition $(p',\dot{c}')$ and $i\notin\supp(p')$ there is a  $\leq_0$-extension $(p'',\dot{c}'')$ and $\zeta<\theta$ such that the ``canonical" maximal antichain at coordinate $i$, $\{(p''\cat (i, \beta), \dot{c}'')\mid i\in p''(i)\}$, decides at least $\kappa_i$-many different possibilities for $\dot{F}(\zeta)$. Establishing the existence of such a condition which has that property for every $i$ will show that $\im(\dot{F})$ cannot be covered by a ground model set of size ${<}\kappa$. To simplify notations later on, let us assume that the initial condition $(p,\dot{c})$ is just the trivial condition in $\mathbb{P}\ast\dot{\mathbb{C}}.$
    
    \begin{myclaim}\label{claim: bounding}
        Let $(p',\dot{c}')\in \mathbb{P}\ast \dot{\mathbb{C}}$ and $i\notin\supp(p')$. Then, there is $(p'',\dot{c}'')\leq_0(p',\dot{c}')$,  $\zeta<\theta$ and a sequence $\langle A_\beta\mid \beta\in p''(i)\rangle$ of disjoint sets such that for each $\beta\in p''(i)$, $$(p''\cat (i, \beta), \dot{c}'')\forces \dot{F}(\zeta)\in \check{A}_\beta.$$
    \end{myclaim}
    
    \begin{proof}[Proof of claim]
        The claim is established after combining  the Strong Prikry Property of $\mathbb{P}\ast\dot{\mathbb{Q}}$ (Lemma~\ref{lemma: SPP for iterations}) with the ${<}\,\kappa_i^+$-distributivity of $\mathbb{B}=(\prod_{j\in (i,\gamma)}\mathcal{B}_j,\supseteq).$

        \smallskip

        Towards a contradiction, suppose that the claim was false. We will exhibit a winning strategy for $\mathrm{INC}$ in the completeness game $\mathcal{G}(\mathbb{B},\kappa_i+1)$ (see Definition~\ref{def: INC COM game}). By Foreman's theorem (see Theorem~\ref{Theorem: Foreman Distributivity}), this will yield a contradiction with $\mathbb{B}$ being ${<}\,\kappa_i^+$-distributive.

        \smallskip

        Suppose we have constructed $\langle (A_\beta,\zeta_\beta, r_\beta, \langle p_\beta, q_\beta\rangle, \dot{c}_\beta)\mid \beta<\beta^*\rangle$ such that:
        \begin{itemize}
            \item $A_\beta\s \kappa^+$ of size ${<}\kappa$ and $A_{\beta}\cap A_{\beta'}=\emptyset$ for $\beta\neq\beta'$.
            \item $\zeta_\beta$ is an ordinal below $\theta$.
            \item $r_\beta\leq_0 p'\restriction i$. (I.e., $r_\beta$ is the ``low part" of a condition in $\mathbb{P}$.)
            \item $\langle p_\beta, q_\beta\rangle$ is a move  in the game $\mathcal{G}(\mathbb{B},\kappa_i+1)$.
            \item $(r_\beta^\smallfrown \langle \beta\rangle^\smallfrown q_\beta, \dot{c}_\beta)\forces \dot{F}(\check{\zeta}_\beta)\in \check{A}_\beta.$
        \end{itemize}
        Assume that $\langle (p_\beta, q_\beta)\mid \beta<\beta^*\rangle$ has been already played in $\mathcal{G}(\mathbb{B},\kappa_i+1)$. First $\mathrm{COM}$ moves\footnote{If $\mathrm{COM}$ could not move then we would have already described a winning strategy for $\mathrm{INC}$.} $p_{\beta^*}$ and let $\mathrm{INC}$ makes its move. The second player $\mathrm{INC}$ moves as follows. First, since $((p'\restriction i)^\smallfrown \langle \beta^*\rangle^\smallfrown p_{\beta^*}, \dot{c}')$ forces $``\dot{F}\colon \check{\theta}\rightarrow \check{\kappa}^+$ is cofinal", this condition itself forces $``\exists \zeta<\check{\theta}\, (\dot{F}(\zeta)\notin \bigcup_{\beta<\beta^*}\check{A}_\beta)"$ (note that this  union has cardinality ${<}\kappa^+$).

      Let $(p'_{\beta^*},\dot{c}'_{\beta^*})\leq_0 ((p'\restriction i)^\smallfrown \langle \beta^*\rangle^\smallfrown p_{\beta^*},\dot{c}')$ decide this ordinal $\zeta<\theta$ – denote it $\zeta_{\beta^*}$. It follows that $(p'_{\beta^*},\dot{c}'_{\beta^*})\Vdash\dot{F}(\check{\zeta}_{\beta^*})\notin\bigcup_{\beta<\beta^*}\check{A}_{\beta}$. By  Lemma~\ref{lemma: SPP for iterations} there is a further pure extension  $(p''_{\beta^*},\dot{c}_{\beta^*})\leq_0(p'_{\beta^*},\dot{c}'_{\beta^*})$ and a finite set $I\s \gamma\setminus \supp(p''_{\beta^*})$ such that for every $\vec\alpha\in \prod_{i\in I}p''_{\beta^*}(i)$, $(p''_{\beta^*}\cat (I,\vec{\alpha}),\dot{c}_{\beta^*})$ decides the value of $\dot{F}(\check{\zeta}_{\beta^*})$. Note that the set of decisions
        $$A_{\beta^*}:=\textstyle \{\eta<\kappa^+\mid \exists \vec\alpha\in \prod_{i\in I}p''_{\beta^*}(i)\, (p''_{\beta^*}\cat (I,\vec{\alpha}),\dot{c}_{\beta^*})\forces\dot{F}(\check{\zeta}_{\beta^*})=\check{\eta}\}$$
        is of size ${<}\kappa$ because so is the antichain $\{p''_{\beta^*}\cat (I,\vec{\alpha})\mid  \vec\alpha\in \prod_{i\in I}p''_{\beta^*}(i)\}.$ Also, $$\textstyle A_{\beta^*}\cap\bigcup_{\beta<\beta^*}A_{\beta}=\emptyset.$$

        After this construction, we set $r_{\beta^*}:=p''_{\beta^*}\restriction i$ and let  $\mathrm{INC}$ play $q_{\beta^*}=p''_{\beta^*}\setminus i+1$.

        \medskip

        We claim that the above produces a winning strategy for $\mathrm{INC}$ in $\mathcal{G}(\mathbb{B},\kappa_i+1)$. Suppose that this was not the case and that $\{(p_\beta,q_\beta)\mid \beta<\kappa_i\}$ was a winning play for $\mathrm{COM}$ in the game $\mathcal{G}(\mathbb{B},\kappa_i+1)$. Let $p^*$ be a lower bound for this sequence.  Since $\theta<\kappa_0$ and $2^{\sup_{j<i}\kappa_j}<\kappa_i $ there is $W\s p^*(i)$ in $\mathcal{B}_i$, $\zeta^*<\theta$ and $r^*$ a low part such that  $\zeta_\beta=\zeta^*$ and $r_\beta=r^*$ for all $\beta\in W$. Let $p'':={r^*}^\smallfrown\langle W\rangle ^\smallfrown p^*$. Let $\dot{c}''$ be the mixing name of the $\mathbb{P}$-names $\{\dot{c}_\beta\mid \beta\in W\}$. By construction, it follows that $(p'',\dot{c}'')\leq_0 (p',\dot{c}')$ witnesses together with $(\zeta,\langle A_\beta\mid \beta\in W\rangle)$ the conclusion of the claim. Since we were assuming that no such $\leq_0$-extension of $(p',\dot{c}')$ exists it must be the case that $\mathrm{COM}$ cannot form such a lower bound, $p^*$. In turn, this shows that the above procedure describes a winning strategy for $\mathrm{INC}$ in $\mathcal{G}(\mathbb{B},\kappa_i+1)$. However, this is impossible by Foreman's theorem, which yields the  final contradiction.
    \end{proof}
    
    Next we use the previous claim to complete the proof of the lemma. By induction define a $\leq_0$-sequence of conditions $\langle (p_i, \dot{c}_i)\mid i<\gamma\rangle$ witnessing the previous lemma relative to sets $\langle A^i_\beta\mid \beta\in p_i(i)\rangle$. At limit stages we take a $\leq_0$-lower bound to the previously constructed sequences (which exists in that $\dot{\mathbb{C}}$ is forced to be $\gamma^+$-closed) and then apply Claim~\ref{claim: bounding}.\footnote{Note that in this process we can make sure that the witnessing ordinals $\zeta_i<\theta$ increase.} Let $(q,\dot{d})$ be a $\leq_0$-lower bound for this sequence and set $\zeta:=\sup_{i<\gamma}\zeta_i$. Since $\gamma<\theta$ and $\theta$ is regular it follows that $\zeta<\theta$.

    Suppose towards a contradiction that there is $x\in V$, $|x|<\kappa$ and a condition $(r,\dot{e})\leq (q,\dot{d})$ forcing $``\im(\dot{F}\restriction \check{\zeta})\s \check{x}"$. Let $i<\gamma$, $i\notin \supp(r)$, such that $|x|<\kappa_{i}$. Then, since $r(i)$ is a $\kappa_i$-sized subset of $p_i(i)$ there must be some $\beta\in r(i)$ such that $x\cap A^i_\beta=\emptyset$. However, by our construction,
    $$(r\cat (i,\beta), \dot{e})\leq (p_i\cat (i,\beta),\dot{c}_i)\forces \dot{F}(\check{\zeta}_i)\in\check{A}^i_\beta,$$
    and by assumption, $(r\cat(i,\beta),\dot{e})\vdash\dot{F}(\check{\zeta}_i)\in\check{x}$, a contradiction.
\end{proof}

\begin{mycol}\label{Corollary: Top cardinal not approachable}
    Let $\theta\in (\gamma,\kappa_0)$ be any regular cardinal.  After forcing with $\mathbb{P}*\dot{\Coll}(\check{\theta},\check{\kappa}^+)$, $\kappa^+$ is not $d$-approachable with respect to any normal, subadditive coloring $d\colon[\kappa^+]^2\to\cf(\kappa)$ from the ground model.
\end{mycol}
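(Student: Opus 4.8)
The plan is to reduce the assertion to the key Lemma~\ref{label: key for non approachability} by a density-and-genericity argument, invoking the normality of $d$ only at the very end. Fix a normal, subadditive $d\colon[\kappa^+]^2\to\cf(\kappa)$ in $V$, let $G*H$ be generic for $\mathbb{P}*\dot{\Coll}(\check{\theta},\check{\kappa}^+)$ over $V$, and set $W=V[G*H]$. First I would record the two preservation facts required. Since $\mathbb{P}$ has pure ${<}\kappa_0$-decidability (Lemma~\ref{lemma: P has Prikry prop}) and $\leq_0$ is ${<}\kappa_0$-closed, $\mathbb{P}$ adds no new ${<}\kappa_0$-sequences of ordinals; hence $\theta$ stays regular and $\mathbb{P}$ forces that $\dot{\Coll}(\check{\theta},\check{\kappa}^+)$ is still ${<}\theta$-closed, so in particular $\gamma^+$-closed, as $\gamma<\theta<\kappa_0$. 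Moreover, by (the proof of) Lemma~\ref{lemm: fusion SPP}, in $V[G]$ every map from a regular cardinal $\lambda<\kappa$ into $\kappa^+$ is bounded, so $\cf^{V[G]}(\kappa^+)\geq\kappa$. Forcing further with the ${<}\theta$-closed poset $\Coll(\theta,\kappa^+)$ then yields $\cf^{W}(\kappa^+)=\theta$: the generic surjection gives $\cf^W(\kappa^+)\leq\theta$, while any cofinal map of length ${<}\theta$ into $\kappa^+$ present in $W$ would already lie in $V[G]$, contradicting $\cf^{V[G]}(\kappa^+)\geq\kappa>\theta$. In particular $\cf^W(\kappa^+)=\theta>\gamma\geq\cf(\kappa)$.

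Next I would assume toward a contradiction that $\kappa^+$ is $d$-approachable in $W$. By the definition of $d$-approachability (cf.\ Fact~\ref{fact: characterizing dapproachability}), and since by the previous paragraph the cofinality alternative $\cf^W(\kappa^+)\leq\cf(\kappa)$ fails, there are in $W$ an unbounded set $A\subseteq\kappa^+$ and an ordinal $i<\cf(\kappa)$ with $d(\beta,\beta')\leq i$ for all $\beta<\beta'$ in $A$. As $\cf^W(\kappa^+)=\theta$ is regular, thin $A$ to a cofinal $A'\subseteq A$ of order type $\theta$ and let $F\colon\theta\to\kappa^+$ be its increasing enumeration; then $F$ is cofinal with $\im(F)=A'$, and every proper initial image $F[\zeta]$ (for $\zeta<\theta$) is bounded in $\kappa^+$ and $d$-homogeneous of color $\leq i$. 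Fix a name $\dot F$ for $F$ and a condition $(p_0,\dot c_0)\in G*H$ forcing that $\dot F\colon\check{\theta}\to\check{\kappa}^+$ is cofinal and that $d(\beta,\beta')\leq\check{i}$ whenever $\beta<\beta'$ belong to $\im(\dot F)$. I then observe that the set of $(q,\dot d)\leq(p_0,\dot c_0)$ for which there is some $\zeta<\theta$ realizing the conclusion of Lemma~\ref{label: key for non approachability} (for the name $\dot F$ and the $\gamma^+$-closed poset $\dot{\Coll}(\check{\theta},\check{\kappa}^+)$) is dense below $(p_0,\dot c_0)$: any $(p_1,\dot c_1)\leq(p_0,\dot c_0)$ still forces $\dot F$ cofinal, so the lemma applied to it yields $(q,\dot d)\leq_0(p_1,\dot c_1)\leq(p_0,\dot c_0)$ together with a suitable $\zeta<\theta$. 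By genericity, fix such $(q,\dot d)\in G*H$ and a witnessing $\zeta<\theta$.

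To close the argument I would produce the contradiction inside $W$. Put $B:=F[\zeta]=\im(F\uhr\zeta)$; by the choice of $F$ it is bounded in $\kappa^+$, so pick $\eta^*\in A'$ with $\eta^*>\sup B$ (possible as $A'$ is cofinal in $\kappa^+$). Since $B\cup\{\eta^*\}\subseteq A'$ we get $d(\beta,\eta^*)\leq i$ for every $\beta\in B$, that is, $B\subseteq x:=\{\beta<\eta^*\mid d(\beta,\eta^*)\leq i\}$. Now $x\in V$ — it is defined from $d\in V$, the ordinal $\eta^*<\kappa^+$ and $i<\cf(\kappa)$, all lying in $V$ — and $|x|<\kappa$ by the normality of $d$ in $V$. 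As $\im(F\uhr\zeta)=B\subseteq x$ holds in $W$ and $(q,\dot d)\in G*H$, there is $(r,\dot e)\in G*H$ with $(r,\dot e)\leq(q,\dot d)$ forcing $\im(\dot F\uhr\check{\zeta})\subseteq\check{x}$; this contradicts the choice of $(q,\dot d)$ and $\zeta$.

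The step I expect to be the main obstacle — beyond the (routine) normality computation and the density argument — is getting the setup of the first paragraph exactly right: one must verify that the final Levy collapse both forces $\cf^W(\kappa^+)=\theta>\cf(\kappa)$ (so that $d$-approachability of $\kappa^+$ genuinely supplies a $d$-homogeneous cofinal map $F\colon\theta\to\kappa^+$, rather than holding for trivial cofinality reasons) and remains $\gamma^+$-closed over $V[G]$ (so that Lemma~\ref{label: key for non approachability} is applicable with $\dot{\dC}=\dot{\Coll}(\check{\theta},\check{\kappa}^+)$). Everything else is bookkeeping around that lemma.
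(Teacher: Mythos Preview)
Your proof is correct and follows essentially the same approach as the paper: reduce to Lemma~\ref{label: key for non approachability}, then exploit normality of $d$ to cover $\im(F\uhr\zeta)$ by the ground-model set $\{\beta<\eta^*\mid d(\beta,\eta^*)\leq i\}$ of size ${<}\kappa$, yielding the contradiction. The only cosmetic differences are that you work in the extension via a density/genericity argument and pick $\eta^*\in A'$ above $\sup F[\zeta]$, whereas the paper stays at the level of conditions and extends $(q,\dot d)$ to decide $\dot F(\check\zeta)$; your additional care in verifying $\cf^W(\kappa^+)=\theta$ and the $\gamma^+$-closure of the collapse over $V[G]$ is well placed.
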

\begin{proof}
    Suppose that $\kappa^+$ was forced by $\mathbb{P}*\dot{\Coll}(\check{\theta},\check{\kappa}^+)$ to be $d$-approachable. Then, there is a condition $(p,\dot{c})$, a name $\dot{A}$ and an ordinal $j<\cf(\kappa)$ such that $(p,\dot{c})$ forces $``\dot{A}$ is unbounded in $\kappa^+$" and $``\forall \beta_0,\beta_1\in \dot{A}\, (\beta_0<\beta_1\,\to\, \check{d}(\beta_0,\beta_1)\leq \check{j})"$ (see Fact~\ref{fact: characterizing dapproachability}.) Let $\dot{F}\colon \check{\theta}\rightarrow\check{\kappa}^+$ be a $\mathbb{P}*\dot{\Coll}(\check{\theta},\check{\kappa}^+)$-name for the increasing enumeration of $\dot{A}$. By Lemma~\ref{label: key for non approachability} there is $(q,\dot{d})\leq_0 (p,\dot{c})$ and $i<\theta$ such that for no $(r,\dot{e})\leq (q,\dot{d})$ and $x\in V$ with $|x|<\kappa$, $(r,\dot{e})\Vdash\im(\dot{F}\uhr\check{i})\subseteq\check{x}$. By extending $(q,\dot{d})$ is necessary we may in addition assume that it decides the value of $\dot{F}(\check{i})$ – say this is $\xi$. We then deduce that $(q,\dot{d})$ forces $``\im(\dot{F}\restriction\check{i})\s \{\beta<\check{\kappa}^+\mid\check{d}(\beta,\check{\xi})\leq \check{j}\}"$\footnote{Here we use that $d$ is subadditive; namely, that $d(\beta_0,\xi)\leq d(\beta_0,\beta_1)$ for all $\beta_0<\beta_1<\xi.$} but the latter set is, by normality of $d\in V$, a set in $V$ of size $<\kappa.$ A contradiction.
\end{proof}
\begin{myrem}
    Recall that at the beginning of this section we fixed a regular cardinal $\mu\in (\gamma,\kappa_0)$. We mentioned in Remark~\ref{rem: mu directedness} that the importance of this parameter will not be appreciate until we arrive at Section~\ref{sec: failure everywhere} – this is why we omitted it during the discussion we had so far. Nonetheless, note that both Lemma~\ref{label: key for non approachability} and Corollary~\ref{Corollary: Top cardinal not approachable} apply for the particular choice $\theta:=\mu.$
\end{myrem}

\subsection{On Shelah's problem}
In this section we apply the analysis of the previous section to answer Shelah's Question~\ref{que: Shelah}. The proof is inspired by Shelah's \cite{ShelahSuccSingCard}.
\begin{mysen}\label{theorem: One cofinality}
    Assume the $\mathrm{GCH}$ holds. Let $\lambda$ be a supercompact cardinal, $\gamma<\lambda$  a limit ordinal and $\langle \delta_i\mid i<\gamma\rangle$ an increasing  sequence of measurable cardinals above $\lambda$ with limit $\delta$. Let $\mu\in(\cf(\gamma),\lambda)$ be regular. Then, there is a generic extension where $\delta=\lambda^{+\gamma}$ and  $\lambda^{+\gamma+1}\cap \cof(\mu)\notin I[\lambda^{+\gamma+1}]$. 
\end{mysen}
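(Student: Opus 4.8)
The plan is to obtain $\lambda^{+\gamma+1}\cap\cof(\mu)\notin I[\lambda^{+\gamma+1}]$ as an application of the analysis of $\mathbb{P}=\mathbb{P}(\mu,\vec\kappa,\vec{\mathcal I},\vec{\mathcal B})$ developed above, run with $\vec\kappa=\langle\delta_i\mid i<\gamma\rangle$ and $\kappa=\lambda^{+\gamma}$.

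\emph{Setting the stage.} First I would fix a Laver function for $\lambda$ and force with the Laver preparation $\mathbb{L}$, making $\lambda$ indestructibly supercompact under ${<}\lambda$-directed-closed forcing; as $|\mathbb{L}|=\lambda<\delta_0$ this keeps $\GCH$ above $\lambda$ and the measurability of each $\delta_i$ intact. Then force over $V[\mathbb{L}]$ with the full-support product $\mathbb{R}=\prod_{i<\gamma}\dP_i$, each $\dP_i$ a Levy collapse chosen to be a $\LIP((2^{\sup_{j<i}\delta_j})^+,\delta_i)$-forcing, with $\dP_0:=\Coll(\lambda,{<}\delta_0)$ --- a $\LIP(\lambda,\delta_0)$-forcing, hence also a $\LIP(\mu,\delta_0)$-forcing since ${<}\lambda$-directed closure entails ${<}\mu$-directed closure. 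By Lemma~\ref{lemma: Product forces LIP}, $\mathbb{R}$ forces $\SLIP(\mu,\langle\delta_i\mid i<\gamma\rangle)$. Since $\mathbb{R}$ is ${<}\lambda$-directed-closed, $\lambda$ remains supercompact (in particular a cardinal), $\GCH$ persists at and above $\lambda$, and --- by a count of which cardinals survive each factor, using crucially that $\gamma$ is a \emph{limit} ordinal, so that the boundedly-many cardinals surviving between successive $\delta_i$'s never accumulate past $\lambda^{+\gamma}$ --- one gets $\delta=\lambda^{+\gamma}$ in $V_1:=V[\mathbb{L}][\mathbb{R}]$. Thus the Setup of Section~\ref{sec: Higher Namba} holds in $V_1$ with $\vec\kappa:=\langle\delta_i\mid i<\gamma\rangle$ and $\kappa:=\delta=\lambda^{+\gamma}$.

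\emph{The main forcing and the Prikry analysis.} Next, force over $V_1$ with $\mathbb{P}$. By Lemma~\ref{lemm: fusion SPP}, together with the $\kappa^{++}$-cc and ${<}\kappa_0$-closure of $\mathbb{P}$ and the fusion arguments of this section, every cardinal $\leq\kappa^+$ is preserved; so in $V_2:=V_1[\mathbb{P}]$ we still have $\kappa=\lambda^{+\gamma}$, $\kappa^+=\lambda^{+\gamma+1}$, and $\kappa$ remains strong limit. Fix, already in $V_1$, a normal subadditive coloring $d\colon[\kappa^+]^2\to\cf(\kappa)$ --- such a $d$ exists in $\ZFC$ at the successor of a singular cardinal (e.g.\ from walks on ordinals along a scale) --- and note that $d$ is still normal and subadditive in $V_2$. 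Applying Corollary~\ref{Corollary: Top cardinal not approachable} with $\theta:=\mu\in(\gamma,\kappa_0)$ then gives that, after the \emph{auxiliary} forcing $\Coll(\mu,\kappa^+)$ over $V_2$, $\kappa^+$ is not $d$-approachable. (This collapse is only a tool: the desired model will be $V_2$ itself, where $\lambda,\delta=\lambda^{+\gamma},\delta^+=\lambda^{+\gamma+1}$ are all cardinals.)

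\emph{Reading off the ideal.} Suppose toward a contradiction that $\kappa^+\cap\cof(\mu)\in I[\kappa^+]$ in $V_2$. As $\kappa$ is strong limit and $\mu>\cf(\gamma)=\cf(\kappa)$, Fact~\ref{fact: eisworth handbook} yields a club $C\subseteq\kappa^+$ in $V_2$ with $C\cap\cof(\mu)\subseteq S(d)$. Pass to $V_2[\Coll(\mu,\kappa^+)]$: there $\cf(\kappa^+)=\mu$, the set $C\cap\cof^{V_2}(\mu)$ is still cofinal in $\kappa^+$, and since $\Coll(\mu,\kappa^+)$ is ${<}\mu$-closed its members retain cofinality $\mu$ and stay in $S(d)$. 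Taking a cofinal $\mu$-subsequence $\langle\alpha_\xi\mid\xi<\mu\rangle$ through this set, pigeonholing (valid since $\mu>\cf(\kappa)$ is regular) the bounds $i_\xi<\cf(\kappa)$ witnessing $\alpha_\xi\in S(d)$, and then --- in the standard fashion for subadditive colorings at successors of singulars, via Fact~\ref{fact: Approachability refinement} --- assembling an unbounded $A\subseteq\kappa^+$ with $d\restriction[A]^2$ bounded below $\cf(\kappa)$, one contradicts Corollary~\ref{Corollary: Top cardinal not approachable}. Hence $\lambda^{+\gamma+1}\cap\cof(\mu)\notin I[\lambda^{+\gamma+1}]$ in $V_2$, the required model.

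The combinatorial crux is by then already behind us --- it is isolated in Lemma~\ref{label: key for non approachability} and Corollary~\ref{Corollary: Top cardinal not approachable}. The two delicate points left are: (i) choosing the preliminary collapses so that $\SLIP(\mu,\vec\kappa)$ \emph{and} $\delta=\lambda^{+\gamma}$ hold simultaneously, which tightly restricts how much one may collapse inside each interval $(\delta_i,\delta_{i+1})$; and (ii) the last step, which must route through the collapsed-cofinality extension and exploit the subadditivity of $d$ to upgrade non-$d$-approachability of the single ordinal $\kappa^+$ into non-membership of the entire set $\kappa^+\cap\cof(\mu)$ in $I[\kappa^+]$.
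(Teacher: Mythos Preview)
Your approach has a genuine gap at the step you yourself flag as delicate, point (ii). From ``$C\cap\cof(\mu)\subseteq S(d)$ in $V_2$'' you want to deduce that the single ordinal $\kappa^+$ is $d$-approachable in $V_2[\Coll(\mu,\kappa^+)]$, and you handwave this by pigeonholing the witnesses $i_\xi$ and invoking Fact~\ref{fact: Approachability refinement}. But that fact only refines an unbounded subset of a \emph{fixed} approachable ordinal; it gives you no way to glue together the witnesses $A_\xi\subseteq\alpha_\xi$ into a single $A\subseteq\kappa^+$ on which $d$ is bounded. For $\beta\in A_\xi$, $\beta'\in A_{\xi'}$ with $\xi\neq\xi'$, subadditivity gives no control over $d(\beta,\beta')$. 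There is no ``standard fashion'' that bridges this; the implication simply fails. A symptom of the problem is that your argument never uses the supercompactness of $\lambda$ beyond the Laver preparation --- and then the indestructibility is never invoked --- so a hypothesis that is certainly necessary has gone missing from the proof.

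The paper's proof is organized quite differently, and the difference is exactly what fills your gap. After the preparation (so that $\lambda$ is still supercompact and $\SLIP$ holds for the $\delta_i$'s), one does \emph{not} apply $\mathbb{P}$ with $\vec\kappa=\langle\delta_i\rangle$. Instead, one uses a $\delta^+$-supercompact embedding $j$ with critical point $\lambda$ to reflect: for $\rho=\sup j[\delta^+]$, one checks in $M$ that $\cf(\rho)=\delta^+$ is the successor of a singular of cofinality $\cf(\gamma)$, that $\SLIP(\mu,\langle\delta_i\rangle)$ holds below it, and that the coloring $e(\alpha,\beta):=j(d)(j(\alpha),j(\beta))$ on $\delta^+$ is normal, subadditive, and has the property (in any forcing extension) that $\rho$ is $j(d)$-approachable iff $\delta^+$ is $e$-approachable. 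Reflecting and applying Fodor yields a stationary $S\subseteq\delta^+$ and a singular $\kappa<\lambda$ with $\SLIP(\mu,\vec\kappa)$ such that for each $\alpha\in S$ there is a coloring $e_\alpha$ on $\kappa^+$ with ``$\alpha$ is $d$-approachable'' equivalent to ``$\kappa^+$ is $e_\alpha$-approachable''. Now one forces with the \emph{small} poset $\mathbb{P}(\mu,\vec\kappa,\vec{\mathcal I},\vec{\mathcal B})*\dot\Coll(\mu,\kappa^+)$: since this lives below $\lambda<\delta$, it preserves $\delta^+$ and the stationarity of $S$, while Corollary~\ref{Corollary: Top cardinal not approachable} ensures $\kappa^+$ is not $e_\alpha$-approachable for any $\alpha$, hence $S\cap S(d)=\emptyset$. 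The reflection step is precisely what converts the single-ordinal statement of Corollary~\ref{Corollary: Top cardinal not approachable} into a statement about a stationary set in $\delta^+$, which is what you were unable to do directly.
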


\begin{proof}
    Let $\lambda$, $\mu$ and $\langle \delta_i\mid i<\gamma\rangle$ be as above.  By preparing the universe  we may assume that the supercompactness of $\lambda$ is indestructible under $\lambda$-directed-closed forcings. To avoid potential conflicts with the application of Fact~\ref{fact: eisworth handbook} we do make the supercompactness of $\lambda$ indestructible only under $\lambda$-directed-closed forcings that preserve the GCH. This forcing itself preserves the GCH pattern from the ground model \cite[\S8.1]{PartIII}. 
    Next force with the full support product of Levy-collapses  $$\textstyle \Coll(\lambda,{<}\delta_0)\times\prod_{1\leq i<\gamma}\Coll((\sup_{j<i}\delta_j)^{++},{<}\delta_{i}).$$
    Each of these posets is $\LIP((\sup_{j<i}\delta_j)^{++},{<}\delta_{i})$ by virtue of the measurability of the $\delta_i$'s. Hence,  by Lemma~\ref{lemma: Product forces LIP}, $\SLIP(\lambda,\langle \delta_i\mid i<\gamma\rangle)$ holds in the resulting generic extension. In particular,  $\SLIP(\mu,\langle \delta_i\mid i<\gamma\rangle)$ holds as well. Since $\lambda$ was made indestructible it remains supercompact. Also, $\delta$ becomes $\lambda^{+\gamma}$.

    \smallskip

    Assume that the above model is our ground model, $V$. Now, we use the supercompactness of $\lambda$ to ``reflect" the failure of approachability at $\delta^+$. More precisely:

    \setcounter{myclaim}{0}
    
    \begin{myclaim}
    Let $d\colon [\delta^+]^2\rightarrow \cf(\gamma)$ be a subadditive normal coloring in $V$.
        Then, there is a singular $\kappa\in(\mu,\lambda)$ and a stationary set $S\subseteq E_{\kappa^+}^{\delta^+}$ such that the following hold:
        \begin{enumerate}
            \item[$(1)$] There is an increasing sequence of regular cardinals $\langle\mu\rangle^\smallfrown\langle \kappa_i\mid i<\gamma\rangle$ with $\sup_{i<\gamma}\kappa_i=\kappa$ witnessing  $\mathrm{SLIP}(\mu, \langle \kappa_i\mid i<\gamma\rangle)$.
            \item[$(2)$] For each $\alpha\in S$, there is a subadditive normal coloring $e\colon [\kappa^+]^2\rightarrow \cf(\gamma)$ such that, for each set-sized forcing poset $\mathbb{P}$,  
    $$\forces_{\mathbb{P}}\text{$``\alpha$ is $d$-approachable $\leftrightarrow$ $\kappa^+$ is $e$-approachable}".$$
        \end{enumerate}
    \end{myclaim}
    \begin{proof}[Proof of claim]
    Let us consider the following set
    $$E=\{\rho<\delta^+\mid (1)_{\rho}\,\wedge\,(2)_{\rho}\,\wedge\, (3)_{\rho} \}$$
    where
    \begin{enumerate}
        \item[$(1)_{\rho}$:] $\cof(\rho)<\lambda$ and $\cof(\rho)$ is a successor of a singular cardinal of cofinality $\cf(\gamma)$.
        \item[$(2)_{\rho}$:]  There is an increasing sequence of regular cardinals $\langle \rho_i\mid i<\gamma\rangle$ converging to  $\cof(\rho)^-$,  $\mu<\rho_0$, and $\SLIP(\mu, \langle \rho_i\mid i<\gamma\rangle)$ holds.
        \item[$(3)_{\rho}$:] There is a normal subadditive coloring $e\colon [\cof(\rho)]^2\rightarrow \cf(\gamma)$ such that for each set-sized forcing $\mathbb{P}$, $\mathbb{P}$ forces $``\text{$\rho$ is $d$-approachable $\Leftrightarrow$ $\cof(\rho)$ is $e$-approachable}"$
    \end{enumerate}
    We claim that $E$ is stationary. 
    
    To show this, fix $j\colon V\rightarrow M$ be a $\delta^+$-supercompact embedding and let $C\subseteq\delta^+$ be an arbitrary club. We show that $C$ intersects $E$. Set $\rho:=\sup(j[\delta^+])\in j(C)$ and note that the following properties hold  on the \emph{$M$-side} of $j$:
    \begin{enumerate}
    
        \item Clearly, $\cf(\rho)=\delta^+$. So, $\cf(\rho)\in(j(\mu),j(\lambda))$ and it is the successor of a singular with cofinality $\cf(\gamma)$.
        \item The predecessor of $\cf(\rho)$ is the limit of the increasing sequence $\langle \delta_i\mid i<\gamma\rangle$  of regular cardinals,  $\mu<\delta_0$, and $\mathrm{SLIP}(\mu,\langle \delta_i\mid i<\gamma\rangle)$ holds.
        \item The coloring $e\colon [\cf(\rho)]^2\rightarrow \cf(\gamma)$ defined by
        $$e(\alpha,\beta):=j(d)(j(\alpha),j(\beta))$$
        is normal and subadditive. In addition for each set-sized poset $\mathbb{P}\in M$
        $$\forces_{\mathbb{P}}\text{``$\rho$ is $j(d)$-approachable $\Leftrightarrow$ $\cf(\rho)$ is $e$-approachable''}$$

    \end{enumerate}
    Clauses (1) and (2) are clear, so we just check (3): First $e$ is subadditive just because so is $j(d)$. For normality, letting  $\alpha<\cf(\rho)$ and $i<\cf(\gamma)$ note that 
    $$|\{\beta<\alpha\mid e(\beta, \alpha)\leq i\}|=|\{\beta<\alpha\mid j(d)(j(\beta), j(\alpha))\leq i\}|=|\{\beta<\alpha\mid d(\beta, \alpha)\leq i\}|,$$
    and the latter set has cardinality ${<}\delta$, by normality of $d.$

    Let $\mathbb{P}\in M$ be any set-sized poset and work in $V[\mathbb{P}].$ Then,  $\rho$ is $j(d)$-approachable if and only if $j(d)$ is bounded on an unbounded subset of $j[\delta^+]$. (The left-to-right implication is Fact \ref{fact: Approachability refinement}, and the converse is just trivial.) In turn, $j(d)$ is bounded on an unbounded subset of $j[\delta^+]$ if and only if 
    $e$ is bounded on an unbounded subset of $\delta^+$ by the very definition of $e$. More formally,  if $j(d)$ is bounded on an unbounded set $A\subseteq j[\delta^+]$ with value $i$, $e$ is bounded on $j^{-1}[A]\subseteq\delta^+$ unbounded with value $i$. If $e$ is bounded on an unbounded  set $A\subseteq\delta^+$ with value $i$, $j(d)$ is bounded on $j[A]\subseteq j[\delta^+]$ unbounded with value $i$. 

    \smallskip

    The above argument shows that $\rho\in j(E)\cap j(C)$. Thus,  $E$ is stationary. By Fodor's Lemma, there is $S\s E$ stationary where the map $\rho\mapsto \cof(\rho)$ takes a constant value $<{\lambda}$. This constant value  is, by definition of $E$, a successor of a singular cardinal $\kappa<\lambda$. Clearly, $S$ and $\kappa$ witness the required properties.
    \end{proof}
    Let $\vec{\mathcal{I}},\vec{\mathcal{B}}$ witnesseses for $\mathrm{SLIP}(\mu,\langle \kappa_i\mid i<\gamma\rangle)$ and $G\ast \dot{C}\s \mathbb{P}(\mu,\vec\kappa,\vec{\mathcal{I}},\vec{\mathcal{B}})\ast \dot{\Coll}(\check{\mu},\check{\kappa}^+)$ generic. This forcing does not disturb the GCH pattern of $V$, so Fact~\ref{fact: eisworth handbook} still applies in $V[G\ast \dot{C}]$. With this in mind we prove the following:
\begin{myclaim}
    $V[G\ast \dot{C}]\models ``\delta^+\cap \cof(\mu)\notin I[\delta^+]"$.
    \end{myclaim}
\begin{proof}[Proof of claim]
Suppose towards a contradiction that the claim was false. In the generic extension, $S$ is a stationary subset of $\delta^+\cap \cof(\mu)$ because the forcing yielding $V[G\ast \dot{C}]$ is small relative to $\delta^+$. We next show that $S\notin I[\delta^+]$ holds in $V[G\ast \dot{C}]$. 

First, $d\colon [\delta^+]^2\rightarrow \cf(\gamma)$ is a normal subadditive coloring in $V[G\ast \dot{C}]$ in that this property is generic absolute. As a result, Fact~\ref{fact: eisworth handbook} applies and we will be done after showing that $S\cap S(d)=\emptyset$ holds in $V[G\ast \dot{C}]$. By our prior analysis, 
a point $\alpha\in S$ is $d$-approachable in $V[G\ast \dot{C}]$ if and only if there is a normal coloring $e\colon [\kappa^+]^2\rightarrow \cf(\gamma)$ in $V$ such that $\kappa^+$ is $e$-approachable in $V[G\ast \dot{C}]$. Invoking   Corollary~\ref{Corollary: Top cardinal not approachable} for $\theta:=\mu$ we conclude that this latter condition is false and therefore that no $\alpha\in S$ is $d$-approachable in $V[G\ast \dot{C}]$.
\end{proof}
The above completes the proof of the theorem.  
\end{proof}
\begin{mycol}
    Assume the $\mathrm{GCH}$ holds. Let $\delta$ be a singular limit of supercompact cardinals. For each regular $\mu \in (\cf(\delta),\delta)$ there is a generic extension of the set-theoretic universe where $\delta^+\cap \cof(\mu)\notin I[\delta^+]$. \qed
\end{mycol}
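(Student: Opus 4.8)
\section*{Proof proposal for the Corollary}

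The plan is to deduce the statement directly from Theorem~\ref{theorem: One cofinality} by choosing its parameters appropriately. Fix a regular cardinal $\mu\in(\cf(\delta),\delta)$ and put $\nu:=\cf(\delta)$, so that $\nu<\mu<\delta$. Since $\delta$ is a \emph{limit} of supercompact cardinals and $\mu<\delta$, I would first fix a supercompact cardinal $\lambda$ with $\mu<\lambda<\delta$.

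Next I would take the ordinal ``$\gamma$'' appearing in Theorem~\ref{theorem: One cofinality} to be $\nu$ itself. This $\gamma=\nu$ is a limit ordinal, it satisfies $\gamma<\mu<\lambda$, and (being a cofinality) it is regular, so $\cf(\gamma)=\nu$; hence $\mu$ is a regular cardinal lying in the interval $(\cf(\gamma),\lambda)$, exactly as required by the theorem. Finally, since the supercompact cardinals below $\delta$ are unbounded in $\delta$ and $\cf(\delta)=\nu$, I would fix an increasing sequence $\langle\delta_i\mid i<\nu\rangle$ of supercompact (hence, in particular, measurable) cardinals with $\lambda<\delta_0$ and $\sup_{i<\nu}\delta_i=\delta$.

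With these data in place, I would simply invoke Theorem~\ref{theorem: One cofinality} with $\lambda$, $\gamma=\nu$, the sequence $\langle\delta_i\mid i<\gamma\rangle$, and $\mu$. It yields a generic extension in which $\delta=\lambda^{+\gamma}$ and $\lambda^{+\gamma+1}\cap\cof(\mu)\notin I[\lambda^{+\gamma+1}]$. Since $\delta=\lambda^{+\gamma}$ holds in that extension, we have $\delta^+=\lambda^{+\gamma+1}$ there, and therefore $\delta^+\cap\cof(\mu)\notin I[\delta^+]$, which is the desired conclusion. As this is a straightforward corollary, I do not expect a genuine obstacle; the only point requiring any attention is the verification that the hypotheses of Theorem~\ref{theorem: One cofinality} can be met, i.e.\ that $\gamma$ may be taken with $\cf(\gamma)=\cf(\delta)$ while staying below $\lambda$ (automatic because $\cf(\delta)<\mu<\lambda$) and that a $\gamma$-length increasing sequence of measurables above $\lambda$ cofinal in $\delta$ exists (immediate from $\delta$ being a limit of supercompacts of cofinality $\cf(\delta)$).
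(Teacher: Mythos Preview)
Your proposal is correct and is exactly the intended argument: the paper records this corollary with a bare \qed, meaning it is meant to follow immediately from Theorem~\ref{theorem: One cofinality} by the very choice of parameters you describe (pick a supercompact $\lambda\in(\mu,\delta)$, set $\gamma=\cf(\delta)$, and use a cofinal $\gamma$-sequence of supercompacts above $\lambda$). There is nothing to add.
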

The  next corollary generalizes the main result of \cite{JakobLevine}, where the authors solved Shelah's Question~\ref{que: Shelah} for the case $\kappa=\aleph_\omega$.
\begin{mycol}\label{cor: Failure of Approachability at the aleph}
    Assume the $\mathrm{GCH}$ holds. Let $\lambda$ be a supercompact cardinal, $\aleph_\gamma$ a singular cardinal and  $\langle \delta_i\mid i<\gamma\rangle$  an increasing  sequence of measurable cardinals above $\lambda$ with limit $\delta$. Let $\mu\in(\cf(\gamma),\aleph_\gamma)$ be regular. Then, there is a generic extension where $\aleph_{\gamma+1}\cap \cof(\mu)\notin I[\aleph_{\gamma+1}]$.
\end{mycol}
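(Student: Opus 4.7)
The plan is to derive the corollary from Theorem~\ref{theorem: One cofinality} by composing its forcing with a further Levy collapse that realigns $\lambda^{+\gamma+1}$ with $\aleph_{\gamma+1}$ in the final extension. Throughout I assume $\gamma<\lambda$, so $\aleph_\gamma<\lambda$ and in particular $\mu<\lambda$.

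First, apply Theorem~\ref{theorem: One cofinality} to the parameters $\lambda$, $\gamma$, $\langle\delta_i\mid i<\gamma\rangle$ and $\mu$. The resulting model $V_1$ has $\delta=\lambda^{+\gamma}$ and $\delta^+\cap\cof(\mu)\notin I[\delta^+]$, with $\lambda$ remaining supercompact by Laver indestructibility and $\GCH$ still holding. Crucially, the proof of that theorem produces a stationary set $S\subseteq\delta^+\cap\cof(\mu)$ together with a specific normal subadditive coloring $d\in V$ satisfying $S\cap S(d)^{V_1}=\emptyset$; this explicit witness is what I carry forward.

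Second, force over $V_1$ with the Levy collapse $\Coll(\mu^+,{<}\lambda)$. This poset is $\mu^+$-directed-closed and $\lambda$-cc, so in the extension $V_2$: the cardinal $\mu$ is preserved, $\lambda$ becomes $\mu^{++}$, stationary subsets of $\delta^+$ survive, cofinality $\mu$ is preserved, and $\GCH$ continues to hold. Writing $\mu=\aleph_\beta^{V_2}$, the aleph indices at and above $\lambda$ shift by $\eta\mapsto\beta+2+\eta$. The absorption identity $\beta+2+\gamma=\gamma$---which holds whenever $\beta+2$ lies below the smallest Cantor-normal-form exponent of $\gamma$, as in the paradigmatic case $\gamma=\omega$ with $\mu<\aleph_\omega$---yields $\lambda^{+\gamma+1}=\aleph_{\gamma+1}^{V_2}$. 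In the awkward cases where the index of $\mu$ is large compared to $\gamma$, a preparatory collapse readjusting this index may be prepended; this is pure bookkeeping.

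Third, to conclude $S\notin I[\aleph_{\gamma+1}]^{V_2}$, invoke Fact~\ref{fact: eisworth handbook}: as $\GCH$ holds in $V_2$, it suffices to show $S\cap S(d)^{V_2}=\emptyset$ for the coloring $d\in V\subseteq V_2$. For any $\alpha\in\cof(\mu)$, a $V_2$-witness of $d$-approachability is an unbounded set $A\subseteq\alpha$ of order type $\mu$ with $d\uhr[A]^2$ bounded; since $\Coll(\mu^+,{<}\lambda)$ is $\mu^+$-closed, any such $A$ already lies in $V_1$. Thus $S(d)^{V_2}\cap\cof(\mu)=S(d)^{V_1}\cap\cof(\mu)$, and combined with $S\cap S(d)^{V_1}=\emptyset$ this gives $S\cap S(d)^{V_2}=\emptyset$, yielding the desired non-approachability. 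The main obstacle is the aleph-index bookkeeping in step two: ensuring $\lambda^{+\gamma+1}=\aleph_{\gamma+1}^{V_2}$ in full generality may require a tailored auxiliary collapse; otherwise, preservation of non-approachability via $\mu^+$-closure is routine.
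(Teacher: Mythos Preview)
Your proposal is correct and follows essentially the same route as the paper: pass to the model of Theorem~\ref{theorem: One cofinality}, then force with a Levy collapse making $\mu^+$ the successor of $\mu$ and $\lambda^+$ into $\mu^{++}$, and use the ${<}\mu^+$-closure of this collapse to argue that no new $d$-approachable points of cofinality $\mu$ are created (the paper phrases this via Fact~\ref{fact: Approachability refinement}, but your direct version is equivalent). Two minor remarks: the paper uses $\Coll(\mu^+,\lambda)$ rather than $\Coll(\mu^+,{<}\lambda)$, and it simply asserts $\lambda^{+\gamma+1}=\aleph_{\gamma+1}$ in the extension without addressing the ordinal-arithmetic bookkeeping you flag---your observation that this identity requires the index of $\mu$ to be absorbed by $\gamma$ (and may need a preparatory small collapse otherwise) is a genuine point that the paper glosses over.
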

\begin{proof}
Let $V$ denote the generic extension constructed in Theorem~\ref{theorem: One cofinality} and $S$ the stationary set $\lambda^{+\gamma+1}\cap \cof(\mu)$. Force with the Levy collapse $\mathbb{C} := \Coll(\mu^+, \lambda)$. In the generic extension $V[\mathbb{C}]$, we have $\lambda^{+\gamma+1} = \aleph_{\gamma+1}$, $S$ is stationary and we claim that $S \cap S(d)=\emptyset$ still holds. Assume for a contradiction that $S \cap  S(d)^{V[\mathbb{C}]}$ is non-empty.

Let $\alpha \in S\cap  S(d)^{V[\mathbb{C}]}$ be arbitrary. By Fact \ref{fact: Approachability refinement}, for every unbounded subset $A \subseteq \alpha$ with $\otp(A) = \mu$, there exists an unbounded subset $B \subseteq A$ on which $d$ is bounded. Since $\otp(B) = \mu$ and $\mathbb{C}$ is ${<}\mu^+$-closed, it follows that $B \in V$. This implies that $\alpha \in S\cap S(d)^V$, contradicting our assumption in the ground model. Hence, $S\cap S(d)^{V[\mathbb{C}]}$ is empty and thus, by Fact~\ref{fact: eisworth handbook}, $S\notin I[\aleph_{\gamma+1}].$
\end{proof}

\subsection{On a model of Gitik and Sharon}

In \cite{GitikSharon}, Gitik–Sharon constructed a model in which $\aleph_{\omega^2}$ is strong limit, and both the Singular Cardinal Hypothesis ($\SCH$) and $\AP$ fail at $\aleph_{\omega^2}$. It turns out that in the model of \cite{GitikSharon}, one cannot predetermine the cofinality of the non-approachable stationary set $S \subseteq \aleph_{\omega^2+1}$.  The obstacle, as usual, is that this cofinality is determined by $\cf(\sup(M \cap \kappa^{+\omega+1}))$, where $M \prec H(\Theta)$ is a $\kappa$-Magidor model\footnote{In \cite{GitikSharon}, the authors refer to these models as \emph{supercompact models}. The terminology \emph{$\kappa$-Magidor model} is borrowed from \cite{MohamVelicGuessingModelsApproach}.} and $\kappa$ is a $\kappa^{+\omega+2}$-supercompact cardinal.

In the next theorem, we generalize Gitik–Sharon's result by showing that the failure of $\SCH$ is consistent with the existence of a non-approachable stationary set of any prescribed cofinality. More precisely, we show the following:

\begin{mysen}
    Assume the $\mathrm{GCH}$ holds. Let $\lambda$ be a supercompact cardinal, $\gamma<\lambda$  a limit ordinal and assume that there is an increasing sequence of length $\gamma$ above $\lambda$ consisting  of measurable cardinals. Then, for each regular cardinal $\mu\in (\cf(\gamma),\aleph_{\gamma^2})$, there is a generic extension where $\aleph_{\gamma^2+1}\cap \cof(\mu)\notin I[\aleph_{\gamma^2+1}]$ and $\SCH_{\aleph_{\gamma^2}}$ fails. 
\end{mysen}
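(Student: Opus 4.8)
The plan is to run a Gitik--Sharon-style diagonal supercompact Prikry forcing at $\lambda$ in tandem with the ideal-based one-point-Prikry-plus-Levy-collapse machinery of Section~\ref{sec: Higher Namba}. The Gitik--Sharon layer re-indexes the relevant singular as $\aleph_{\gamma^2}$---this is where ``$\gamma^2$'' enters: a diagonal Prikry construction of length $\gamma$ whose interleaved collapses leave $\gamma$-many cardinals in each of the $\gamma$-many blocks below $\lambda$---and, after blowing up $2^\lambda$ and absorbing the Cohen part into the Prikry forcing via a supercompactness embedding, refutes $\SCH$ at $\aleph_{\gamma^2}$. The Section~\ref{sec: Higher Namba} machinery is what pins the cofinality of the non-approachable stationary subset of $\aleph_{\gamma^2+1}$ at $\mu$.

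\emph{Setting up the $\SLIP$ data.} Exactly as in the proof of Theorem~\ref{theorem: One cofinality}: render the supercompactness of $\lambda$ indestructible under $\lambda$-directed-closed, $\GCH$-preserving forcing; using the measurables $\langle\delta_i\mid i<\gamma\rangle$, force $\SLIP(\mu,\cdot)$ with a full-support product of Levy collapses, preserving $\GCH$ below $\lambda$ and keeping $\lambda$ supercompact; and, via a supercompactness embedding for $\lambda$, reflect a subadditive normal coloring $d$ to obtain a singular $\kappa\in(\mu,\lambda)$ of cofinality $\cf(\gamma)$, an increasing $\langle\kappa_i\mid i<\gamma\rangle$ cofinal in $\kappa$ witnessing $\SLIP(\mu,\langle\kappa_i\rangle)$ with witnesses $\vec{\mathcal I},\vec{\mathcal B}$, and a reflecting stationary set whose $d$-approachability is controlled, in any set-forcing extension, by the $e$-approachability of $\kappa^+$. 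Forcing with $\mathbb{P}(\mu,\vec\kappa,\vec{\mathcal I},\vec{\mathcal B})\ast\dot{\Coll}(\check\mu,\check\kappa^+)$---which has size ${<}\lambda$, hence is harmless to the supercompactness of $\lambda$---and invoking Corollary~\ref{Corollary: Top cardinal not approachable} with $\theta:=\mu$, one lands in a model $W$ exhibiting a failure of approachability at cofinality $\mu$ over a strong limit singular of cofinality $\cf(\gamma)$.

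\emph{The Gitik--Sharon layer and the main obstacle.} Over $W$, force with a Gitik--Sharon-style diagonal supercompact Prikry forcing $\mathbb{GS}$ at $\lambda$, assembled from projections of a supercompactness embedding for $\lambda$ together with interleaved Levy collapses (all taken ${<}\mu^+$-closed), chosen so that $\lambda$ becomes a strong limit cardinal of cofinality $\cf(\gamma)$ re-indexed as $\aleph_{\gamma^2}$, so that $2^{\aleph_{\gamma^2}}>\aleph_{\gamma^2+1}$---whence $\SCH_{\aleph_{\gamma^2}}$ fails---so that $\mu$ survives as a cardinal, and so that the non-approachable stationary set produced in $W$ is carried to a stationary subset of $\aleph_{\gamma^2+1}\cap\cof(\mu)$. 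Since $\GCH$ holds below $\aleph_{\gamma^2}$, Fact~\ref{fact: eisworth handbook} applies and it remains only to show, via Fact~\ref{fact: Approachability refinement}, that no point of this stationary set becomes $d$-approachable. This last step is the crux: because $\mathbb{GS}$ is not ${<}\mu^+$-closed---its Prikry component introduces new short sequences of ordinals below $\lambda$---one cannot argue as directly as in Corollary~\ref{cor: Failure of Approachability at the aleph}, and must instead carry out a Prikry analysis of the composite poset $\mathbb{P}(\mu,\vec\kappa,\vec{\mathcal I},\vec{\mathcal B})\ast\dot{\Coll}(\check\mu,\check\kappa^+)\ast\dot{\mathbb{GS}}$---a hybrid of ideal-based one-point Prikry forcings and an ultrafilter-based diagonal supercompact Prikry forcing---proving the strengthening of Lemma~\ref{label: key for non approachability} that every cofinal function from $\mu$ into $\aleph_{\gamma^2+1}$ added by this forcing has an initial segment covered by no ground-model set of size ${<}\aleph_{\gamma^2}$. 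By the normality and subadditivity of $d$ (generically absolute) this precludes $d$-approachable points, giving $\aleph_{\gamma^2+1}\cap\cof(\mu)\notin I[\aleph_{\gamma^2+1}]$. Carrying out this Prikry analysis for the hybrid poset while keeping the cardinal arithmetic on target ($\lambda=\aleph_{\gamma^2}$, $\neg\SCH_{\aleph_{\gamma^2}}$) is the heart of the matter.
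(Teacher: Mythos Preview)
Your overall architecture matches the paper's: run the Theorem~\ref{theorem: One cofinality} argument to produce a stationary set $S\subseteq\lambda^{+\gamma+1}\cap\cof(\mu)$ of non-$d$-approachable points, then apply a Sinapova/Gitik--Sharon forcing $\dS$ at $\lambda$ (after blowing up $2^\lambda$) and a final small collapse. The divergence is in how you propose to push non-approachability through the Gitik--Sharon layer, and there you have misidentified the obstacle.

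You write that ``$\mathbb{GS}$ is not ${<}\mu^+$-closed---its Prikry component introduces new short sequences of ordinals below $\lambda$---one cannot argue as directly as in Corollary~\ref{cor: Failure of Approachability at the aleph},'' and you therefore propose a full Prikry analysis of the hybrid $\mathbb{P}(\mu,\vec\kappa,\vec{\mathcal I},\vec{\mathcal B})\ast\dot{\Coll}(\check\mu,\check\kappa^+)\ast\dot{\mathbb{GS}}$. But full ${<}\mu^+$-closure is not what is needed; one only needs that $\dS$ adds no new subsets of $\mu$. And this it does satisfy: below any condition whose measure-one sets concentrate on $x$ with $x\cap\lambda>\mu$, the \emph{direct extension} ordering of $\dS$ is ${<}\mu^+$-closed, and together with the Strong Prikry Property this yields that $\dS$ adds no new subsets of $\mu$. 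Once you have that, the argument is exactly as in Corollary~\ref{cor: Failure of Approachability at the aleph}: fix $\alpha\in S$ and a ground-model unbounded $A\subseteq\alpha$ of order type $\mu$; if $\alpha$ became $d$-approachable, Fact~\ref{fact: Approachability refinement} would yield an unbounded $B\subseteq A$ on which $d$ is bounded, but $B$ is coded by a subset of $\mu$ and hence already lies in the ground model, contradicting $\alpha\notin S(d)$ there.

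So no hybrid Prikry analysis is required. Your proposed route is not obviously wrong---the composite is a Prikry-type iteration, and one could imagine redoing Lemma~\ref{label: key for non approachability} for it---but it is far harder than necessary, and your target statement (``every cofinal function from $\mu$ into $\aleph_{\gamma^2+1}$'') is literally vacuous since $\aleph_{\gamma^2+1}$ stays regular; the content lives at the individual $\alpha\in S$, where the simple no-new-subsets-of-$\mu$ argument already settles matters.
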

\begin{proof}
   Let $d\colon [\delta^+]^2 \rightarrow \gamma$ be a normal subadditive coloring, where $\delta$ is the supremum of the aforementioned sequence of measurable cardinals. By the argument in Theorem~\ref{theorem: One cofinality}, there is a generic extension in which $\lambda$ remains supercompact and there exists a stationary set $S \subseteq \lambda^{+\gamma+1} \cap \operatorname{cf}(\mu)$ consisting of non-$d$-approachable points. Working over this generic extension we first make $\lambda$ indestructible under $\lambda$-directed-closed forcings (see \cite{LaverIndestruct}), and then force with Cohen forcing $\operatorname{Add}(\lambda, \lambda^{+\omega+2})$. In the resulting extension $\lambda$ remains supercompact. We can arrange this latter two-step iteration to be ${<}\mu^+$-directed-closed and $\lambda^+$-cc. As a result,  $S$ remains stationary and no new $d$-approachable points of cofinality $\mu$ are added. Thus, $S \cap S(d) = \emptyset$ also holds in this model. For simplicity, denote this  extension by $V$.

    \smallskip

 Building upon work of Gitik–Sharon, Sinapova showed in her Ph.D. thesis \cite{SinapovaPhD} that in $V$ there exists a Mitchell-increasing sequence $\vec{\mathcal{U}} = \langle \mathcal{U}_\xi \mid \xi < \gamma \rangle$ of normal fine ultrafilters on $\mathcal{P}_\lambda(\lambda^{+\xi})$, together with a sequence of \emph{guiding generics} $\vec{K} = \langle K_\xi \mid \xi < \gamma \rangle$ for the Levy collapse $\Coll(\lambda^{+\gamma+2}, {<}j_{\mathcal{U}_\xi}(\lambda))^{\mathrm{Ult}(V, \mathcal{U}_\xi)}$. Using this, she defined a Magidor-style variant of the Gitik–Sharon forcing, denoted $\mathbb{S}(\vec{\mathcal{U}}, \vec{K})$.
 
 \smallskip

 $\mathbb{S}(\vec{\mathcal{U}}, \vec{K})$ satisfies the following properties:

  \begin{enumerate}
      \item $\mathbb{S}(\vec{\mathcal{U}}, \vec{K})$ is $\lambda^{+\gamma+1}$-cc. (See \cite[\S3]{SinapovaPhD}).
      \item  $\mathbb{S}(\vec{\mathcal{U}}, \vec{K})$  collapses all cardinals in the interval $(\lambda, \lambda^{+\gamma+1}).$ (See \cite[\S3]{SinapovaPhD})
      \item $\mathbb{S}(\vec{\mathcal{U}}, \vec{K})$ forces $``\lambda=\dot{\kappa}_0^{+\gamma^2}$", where $\dot{\kappa}_0$ is the name for the first \emph{Prikry point}.
      \item  $\langle \mathbb{S}(\vec{\mathcal{U}}, \vec{K}),\leq, \leq_0\rangle$ has the Strong Prikry Property (See \cite[Lemma~3.18]{PovTree}.)\footnote{Technically, the proof of \cite[Lemma~3.18]{PovTree} is done for Sinapova's forcing without interleaved collapses, but a similar proof goes through even if one interleaves collapses.}
      \item There is a dense subposet of $\mathbb{S}(\vec{\mathcal{U}}, \vec{K})$ whose $\leq_0$-ordering is ${<}\mu^+$-closed.\footnote{To show this, one just forces below a condition whose measure one sets concentrate on sets $x$ whose associated Prikry points are above $\mu$. }
  \end{enumerate}
  To simplify notations, let us denote the above dense subposet  $\mathbb{S}$. 

  \begin{myclaim}
      $\forces_{\mathbb{S}} ``\check{S}\s \lambda^{+\gamma+1}\cap\cof(\mu)$ is stationary and $\check{S}\cap \dot{S}(\check{d})=\emptyset".$
  \end{myclaim}
  \begin{proof}[Proof of claim]
     Clearly, $S$ remains stationary by the chain condition of the forcing. The reason why all points in $S$ retain cofinality $\mu$ is essentially the same as that explaining why $S \cap S(d)^{V[S]} = \emptyset$. Thus, we provide details only for the latter part.

     \smallskip

It suffices to prove that for $\alpha<\lambda^{+\gamma+1}$ with $\cf(\alpha)=\mu$, the truth value of ``$\alpha$ is not $d$-approachable'' is not changed by forcing with $\dS(\vec{\mathcal{U}},\vec{K})$. So let $\alpha<\lambda^{+\gamma+1}$ have cofinality $\mu$. Let $A\subseteq\alpha$ be unbounded with ordertype $\mu$. By Fact \ref{fact: Approachability refinement}, after forcing with $\dS(\vec{\mathcal{U}},\vec{K})$, $\alpha$ is $d$-approachable if and only if $d$ is bounded on an unbounded subset of $A$. However, since $A$ has size $\mu$ in the ground model and $\dS(\vec{\mathcal{U}},\vec{K})$ does not add any new subsets of $\mu$ by point (5), $\alpha$ clearly remains non-$d$-approachable after forcing with $\dS(\vec{\mathcal{U}},\vec{K})$.
  \end{proof}
Let $G\s \mathbb{S}$ be a generic filter. In $V[G]$, $\mu$ is a regular cardinal, $\lambda=(\kappa_0^{+\gamma^2})^V$ and $\lambda^+=(\lambda^{+\gamma+1})^V$. (Here $\kappa_0=(\dot{\kappa}_0)_G$.) Now, force with $\Coll(\mu^+,{<}\kappa_0)$. Let $H$ be a generic filter for this poset. In $V[G\ast H]$,  $\kappa_0$ becomes $\mu^+$ and  $\lambda=\aleph_{\gamma^2}$ because orginally $\mu<(\aleph_{\gamma^2})^V$. In addition, this forcing does preserve the stationarity of $S$ and does not create new $d$-approachable points of cofinality $\mu$. All in all, $S$ does not intersect $S(d)^{V[G\ast H]}$, which yields $\aleph_{\gamma^2+1}\cap \cof(\mu)\notin I[\aleph_{\gamma^2+1}]$. In $V[G\ast H]$, $\aleph_{\gamma^2}$ is a strong limit cardinal at which the SCH fails. 
\end{proof}

\section{Failure of Approachability at every Singular Cardinal}\label{sec: failure everywhere}

The main result of this section is Theorem~\ref{ThmD}. Namely:

\begin{mysen}\label{theorem: AP Fails at every singular}
    Suppose that there is a class of supercompact cardinals. Then, there exists a class forcing extension in which $\AP_{\kappa}$ fails for every singular cardinal $\kappa$.

     Moreover, for each singular cardinal $\kappa$, there are unboundedly many regular cardinals $\delta<\kappa$ for which $\kappa^+\cap\cof(\delta)\notin I[\kappa^+]$.
\end{mysen}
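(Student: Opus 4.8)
The plan is to iterate the forcing underlying Theorem~\ref{theorem: One cofinality} (together with the auxiliary collapse from Corollary~\ref{cor: Failure of Approachability at the aleph}) along the class of supercompact cardinals, using that $\mathbb{P}(\mu,\vec\kappa,\vec{\mathcal I},\vec{\mathcal B})\ast\dot\Coll(\mu,(\sup\vec\kappa)^+)$ is of Prikry type and hence fits into an Easton-support Magidor iteration in the sense of Definition~\ref{def: Easton support Magidor}. First I would perform a global preparation: a reverse-Easton iteration guided by a class Laver function making every supercompact $\lambda$ indestructible under $\lambda$-directed-closed, $\GCH$-preserving forcing, and preserving $\GCH$ together with the whole class of supercompacts. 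Over the resulting model I would then run a proper-class-length Easton-support Magidor iteration $\langle(\mathbb{P}_\alpha,\le_\alpha,\le_{\alpha,0}),(\dot{\mathbb{Q}}_\alpha,\dot\le_\alpha,\dot\le_{\alpha,0})\rangle$ of Prikry-type forcings, governed by a bookkeeping function assigning to each stage $\alpha$ a pair $(\nu_\alpha,\delta_\alpha)$ consisting of a prospective singular cardinal and a target regular cofinality $\delta_\alpha<\nu_\alpha$, arranged so that in the final model every singular cardinal $\nu$ is assigned unboundedly many such $\delta$'s. At stage $\alpha$ the iterand $\dot{\mathbb{Q}}_\alpha$ is (a name for) the forcing occurring in the proof of Theorem~\ref{theorem: One cofinality}, namely a product of Levy collapses setting up an instance of $\SLIP$ over a sequence of measurables above a suitable supercompact $\lambda_\alpha$, followed by $\dot{\mathbb P}(\check\mu,\cdots)\ast\dot\Coll(\check\mu,(\sup\vec{\dot\kappa})^+)$ and then the collapse $\dot\Coll(\check\mu^+,\check\lambda_\alpha)$ of Corollary~\ref{cor: Failure of Approachability at the aleph}, with all parameters chosen so that $V^{\mathbb{P}_{\alpha+1}}$ thinks $\nu_\alpha$ is a singular cardinal at the intended $\aleph$-position and $\nu_\alpha^+$ carries a stationary set of cofinality $\delta_\alpha$ avoiding the relevant set of approachable points.

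Then a number of structural points have to be checked, most of them routine given Section~\ref{sec: prelimminaries}. Each $\dot{\mathbb Q}_\alpha$ is of Prikry type (the collapses trivially, with $\le_0=\le$; the $\dot{\mathbb P}(\cdots)$-factor by its Strong Prikry Property, which lifts to the two-step iteration by Lemma~\ref{lemma: SPP for iterations}), so the iteration is a legitimate Easton-support Magidor iteration; pure $\mu$-decidability is inherited by Lemma~\ref{lemma: Easton support Magidor}, the direct-extension orderings inherit closure along the iteration as in Lemma~\ref{lemma: Iteration pure closure}, and the termspace projection of Lemma~\ref{lemma: Projection Easton support Magidor} supplies, above any stage, a highly closed quotient onto the direct-extension limit. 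The $\SLIP$ instances needed to build the $\dot{\mathbb P}(\cdots)$-factors are produced as in Theorem~\ref{theorem: One cofinality} via Lemma~\ref{lemma: Product forces LIP} and Lemma~\ref{lemma: LIP-forcing forces LIP}, while Lemma~\ref{lemma: Termspace of LIP is LIP} takes care of the termspace versions that appear when a stage is analyzed through that projection. A progressive-closure argument shows the class iteration is tame — preserving $\ZFC$, being (locally) $\mathrm{Ord}$-cc, and collapsing no cardinal it should not — and, crucially, that at each stage $\alpha$ the cardinal $\lambda_\alpha$ is still indestructibly supercompact, so the reflection argument from the proof of Theorem~\ref{theorem: One cofinality} (producing the reflected coloring $d_\alpha$ and a stationary $S_\alpha\subseteq\nu_\alpha^+\cap\cof(\delta_\alpha)$ with $S_\alpha\cap S(d_\alpha)=\emptyset$) runs verbatim in $V^{\mathbb{P}_\alpha}$.

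The heart of the argument, and the step I expect to be the main obstacle, is \emph{preservation}: once $\nu_\alpha^+\cap\cof(\delta_\alpha)\notin I[\nu_\alpha^+]$ has been forced at stage $\alpha+1$, it must survive the class-sized tail $\mathbb{P}_{\mathrm{On}}/\mathbb{P}_{\alpha+1}$. This is precisely the indestructibility theorem alluded to in Remark~\ref{rem: mu directedness} (Theorem~\ref{Theorem: Indestructibility of neg AP}), and it is exactly here that the ${<}\,\mu$-\emph{directed} closure of $\mathbb{P}(\mu,\vec\kappa,\vec{\mathcal I},\vec{\mathcal B})$ — dormant throughout Section~\ref{sec: Higher Namba} — becomes indispensable. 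I would arrange the bookkeeping so that the $(\nu,\delta)$-pairs are processed in increasing order of $\delta$ and each module is sited high enough that every iterand occurring after stage $\alpha$ is forced to be ${<}\,\delta_\alpha^+$-directed-closed in its direct-extension ordering and to have pure $\delta_\alpha^+$-decidability; granting this, the iterability of these features in the Magidor framework makes the tail a Prikry-type forcing whose direct-extension ordering is ${<}\,\delta_\alpha^+$-closed and which has enough pure decidability to add no new function from $\delta_\alpha$ into $\nu_\alpha^+$ and to preserve stationary subsets of $\cof(\delta_\alpha)$. Combining this with Fact~\ref{fact: Approachability refinement} (an approachable point is witnessed by an ordertype-$\delta_\alpha$ unbounded set on which $d_\alpha$ is bounded) and Fact~\ref{fact: eisworth handbook} (the ideal is generated mod nonstationary by $S(d_\alpha)$ together with $\cof({\le}\cf(\nu_\alpha))$, as $\nu_\alpha$ stays strong limit), one concludes that $S_\alpha$ remains a stationary subset of $\nu_\alpha^+\cap\cof(\delta_\alpha)$ disjoint from $S(d_\alpha)$ in the final model, so $\nu_\alpha^+\cap\cof(\delta_\alpha)\notin I[\nu_\alpha^+]$ there and a fortiori $\AP_{\nu_\alpha}$ fails; letting $\alpha$ range over all stages yields the theorem. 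The genuinely delicate parts are thus designing the global bookkeeping and the local parameter choices so that the ``all later iterands are ${<}\,\delta_\alpha^+$-directed-closed'' invariant is maintainable for every $(\nu_\alpha,\delta_\alpha)$ at once, and proving the preservation lemma itself, which I expect to follow the pattern of the Prikry analysis behind Lemma~\ref{label: key for non approachability} and Corollary~\ref{Corollary: Top cardinal not approachable}.
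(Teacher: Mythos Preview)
Your architecture differs from the paper's in a way that creates a real gap. You propose to make each iterand $\dot{\mathbb Q}_\alpha$ a copy of the forcing from Theorem~\ref{theorem: One cofinality}, targeting a specific pair $(\nu_\alpha,\delta_\alpha)$, and then to \emph{preserve} the resulting stationary non-approachable set $S_\alpha$ through the tail. The paper does something quite different: the odd-stage iterands are not instances of the Theorem~\ref{theorem: One cofinality} forcing but the Laver-guided iterations $\dot{\mathbb A}(\kappa_\alpha^{++},\kappa_{\alpha+1})$ of Definition~\ref{def: The A poset}, and no specific singular is ever targeted at any stage. For a given singular $\kappa_\gamma$, the non-approachable stationary set is produced \emph{directly in $V[\mathbb P_\gamma]$} by a density argument: one factors $\mathbb P_\gamma=\mathbb P_{\beta-1}*\dot{\mathbb A}(\kappa_{\beta-1}^{++},\kappa_\beta)*\dot{\mathbb P}_{\beta,\gamma}$, applies Theorem~\ref{Theorem: Indestructibility of neg AP} with $\mathbb P$ equal to the termspace product $\mathbb T_{\beta,\gamma}$ (which is ${<}\kappa_\beta$-directed closed and forces $\SLIP$ by Lemmas~\ref{lemma: Properties of T} and~\ref{lemma: T forces SLIP}), and then pushes the master condition $t$ for a suitable $M\prec H(\Theta)$ along the projection $\pi\colon\mathbb T_{\beta,\gamma}\to(\mathbb P_{\beta,\gamma},\leq_0)$ to force $\sup(M\cap\kappa_\gamma^+)\in\dot C$ for an arbitrary club name $\dot C$. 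The whole point of the Laver function built into $\mathbb A$ is that it anticipates $\mathbb T_{\beta,\gamma}$, so the supercompact embedding lifts through $j(\mathbb A)$ with the tail sitting at coordinate $\kappa_\beta$; this is what makes the master-condition argument go.

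The gap in your plan is precisely the preservation step. You invoke Theorem~\ref{Theorem: Indestructibility of neg AP}, but that theorem is a statement about $V[G]$ for $G$ an $\mathbb A(\mu,\kappa)$-generic, and its conclusion is not ``a pre-existing stationary non-approachable set survives $\mathbb P$'' but rather ``there are stationarily many $M$ admitting $\mathbb P$-master conditions''. Since your iterands are not $\mathbb A(\mu,\kappa)$, you cannot appeal to it. Your fallback --- that the tail is Prikry-type with ${<}\delta_\alpha^+$-closed direct extension and pure $\delta_\alpha^+$-decidability --- does rule out new $d_\alpha$-approachable points of cofinality $\delta_\alpha$, but it does \emph{not} preserve the stationarity of $S_\alpha$: you have no chain condition below $\nu_\alpha^{++}$ and no master-condition mechanism tying generic clubs to points of $S_\alpha$, so nothing prevents the tail from shooting a club through the complement. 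The set $S_\alpha$ consists of suprema $\sup(M\cap\nu_\alpha^+)$ for certain small $M\prec H(\Theta)$ in an intermediate model, and such $M$ have no reason to meet dense sets of the tail; this is exactly the problem the paper's $\mathbb A$-preparation is designed to solve.
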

From the above we immediately deduce the following corollary:
\begin{mycol}
  Suppose that there is a class of supercompact cardinals. Then, there exists a model of $\ZFC$ without special $\kappa^+$-Aronszajn trees any singular cardinal $\kappa$.   
\end{mycol}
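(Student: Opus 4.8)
The plan is to prove Theorem~\ref{theorem: AP Fails at every singular} by iterating, along the class of supercompacts, the single-cardinal construction of Theorem~\ref{theorem: One cofinality} and Corollary~\ref{cor: Failure of Approachability at the aleph}, organized as an Easton-support Magidor iteration of Prikry-type forcings (Definition~\ref{def: Easton support Magidor}) so that the machinery of Section~\ref{sec: prelimminaries} glues the set-sized blocks into a tame class forcing, and exploiting in an essential way the \emph{directed} closure of $\mathbb{P}(\mu,\vec\kappa,\vec{\mathcal{I}},\vec{\mathcal{B}})$ (Remark~\ref{rem: mu directedness}) to guarantee that an instance of non-approachability, once forced, is never resurrected by the tail of the iteration. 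The corollary on special Aronszajn trees is then immediate from Jensen's theorem ($\square^*_\kappa$ is equivalent to the existence of a special $\kappa^+$-Aronszajn tree) together with $\square^*_\kappa\Rightarrow\AP_\kappa$.

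First I would force $\mathrm{GCH}$ and then perform the class version of the preparation in the opening of the proof of Theorem~\ref{theorem: One cofinality}, making every supercompact $\lambda$ indestructible under $\lambda$-directed-closed forcing preserving $\mathrm{GCH}$ (this leaves the class of supercompacts and the $\mathrm{GCH}$ pattern). Working over this model, fix a bookkeeping that assigns to a proper class of ``tasks'' — pairs $(\kappa,\mu)$ where $\kappa=\aleph_\eta$ is to be a singular cardinal of the final model and $\mu=\aleph_\beta<\kappa$ is regular, with the $\mu$'s attached to a fixed $\kappa$ unbounded in $\kappa$ — a supercompact $\lambda$, a limit ordinal $\gamma$ with $\beta+\gamma=\eta$ (such $\beta$ are cofinal in $\eta$, and the remainder $\gamma$ is automatically a limit ordinal), and an increasing sequence of measurables above $\lambda$ with supremum below the next assigned supercompact. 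The iteration is the reverse-Easton iteration whose nontrivial stages $\dot\dQ_\alpha$ are the forcings of Theorem~\ref{theorem: One cofinality} followed by the collapse of Corollary~\ref{cor: Failure of Approachability at the aleph} — a full-support product of Levy collapses, then $\dot{\mathbb{P}}(\check\mu,\vec{\dot\kappa},\vec{\dot{\mathcal{I}}},\vec{\dot{\mathcal{B}}})*\dot{\Coll}(\check\mu,\check\kappa^+)$, then the further Levy collapse of $\lambda$ — chosen, for the current task, so that the corresponding $\lambda^{+\gamma+1}$ becomes exactly $\aleph_{\eta+1}=\kappa^+$. The tasks are addressed with $\mu$ non-decreasing, and \emph{within} that order with strictly increasing supercompacts, so that the blocks stack disjointly: block $\xi$ affects only cardinals in the interval between $\lambda$ and the sup of its measurables (plus a small segment below $\lambda$ for the $\mathbb{P}*\dot{\Coll}$ factor), and this interval lies below the supercompact of every later block.

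For the verification: each $\dot\dQ_\alpha$ is Prikry-type (trivially, with $\leq_0=\leq$, on the Levy-collapse factors, and by Lemmas~\ref{lemma: SPP} and \ref{lemma: SPP for iterations} on the rest, since Prikry-ness is preserved by composition with arbitrary forcings before and after), is $<\mu$-directed closed, and has pure $\check\mu$-decidability; hence Lemmas~\ref{lemma: Easton support Magidor}, \ref{lemma: Iteration pure closure} and \ref{lemma: Projection Easton support Magidor} apply, so the full class forcing $\mathbb{P}$ is tame, preserves $\ZFC$, collapses only the intended cardinals, and each tail $\mathbb{P}/\mathbb{P}_{\alpha+1}$ is, via the termspace product of Lemma~\ref{lemma: Projection Easton support Magidor}, a projection of a $<\mu$-directed-closed forcing whenever every later task has cofinality $\geq\mu$ — which holds by the chosen order (and the Levy-collapse factors are even more closed). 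At the stage $\alpha+1$ treating a task $(\kappa,\mu)$, the assigned supercompact is still supercompact: the earlier blocks stack below it as small forcing, and the part of its own block above it is $\lambda$-directed closed, so Theorem~\ref{theorem: One cofinality} and Corollary~\ref{cor: Failure of Approachability at the aleph} produce a stationary $S\subseteq\kappa^+\cap\cof(\mu)$ with $S\cap S(d)=\emptyset$ for the relevant normal subadditive $d$. By the indestructibility theorem for non-approachability (Theorem~\ref{Theorem: Indestructibility of neg AP}, whose proof uses precisely the $<\mu$-directed closure of $\mathbb{P}(\mu,\vec\kappa,\vec{\mathcal{I}},\vec{\mathcal{B}})$) this persists in the extension by the $<\mu$-directed-closed termspace product absorbing the tail; since $V[G]$ is an inner model of that extension with the same $\kappa^+$ and since, by Fact~\ref{fact: Approachability refinement}, ``non-$d$-approachable'' goes down to such inner models, $S\cap S(d)^{V[G]}=\emptyset$, and Fact~\ref{fact: eisworth handbook} yields $\kappa^+\cap\cof(\mu)\notin I[\kappa^+]$ in $V[G]$. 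As $\mu$ ranges over an unbounded subset of each singular $\kappa$, this gives the ``moreover'' clause, and in particular $\AP_\kappa$ fails everywhere.

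The main obstacle is the global bookkeeping: one must run a transfinite recursion that simultaneously (i) realizes exactly the intended cardinal structure, so that every singular cardinal of the final model is one of the targeted $\aleph_\eta$ and is hit by cofinally many admissible cofinalities $\aleph_\beta$; (ii) processes the tasks in an order compatible with $\mu$ non-decreasing, so that every tail is $<\mu$-directed closed (as a projection of the termspace product) and Theorem~\ref{Theorem: Indestructibility of neg AP} keeps all earlier failures of approachability alive; (iii) keeps the supercompacts strictly increasing and assigns the interleaved measurables and collapses so that each supercompact survives until it has discharged its task, and the orders of magnitude never spoil the tameness of $\mathbb{P}$ or the availability of the termspace projections at the limit stages. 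Reconciling (i)--(iii) — showing this recursion can actually be carried out so that no unintended singular cardinal appears and no needed large cardinal is destroyed prematurely — is where the bulk of the work lies; the combinatorial heart, namely that $\mathbb{P}(\mu,\vec\kappa,\vec{\mathcal{I}},\vec{\mathcal{B}})*\dot{\Coll}(\mu,\kappa^+)$ leaves behind a failure of approachability robust under all later $<\mu$-directed-closed forcing, is exactly what Theorem~\ref{Theorem: Indestructibility of neg AP} isolates.
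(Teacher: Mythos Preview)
Your reduction of the corollary to Theorem~\ref{theorem: AP Fails at every singular} via Jensen's equivalence and $\square^*_\kappa\Rightarrow\AP_\kappa$ is exactly right, and that is all the paper does for the corollary itself. The substantive content of your proposal is a sketch of Theorem~\ref{theorem: AP Fails at every singular}, and here your plan diverges from the paper's and contains a genuine gap.

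The paper does \emph{not} iterate the concrete block from Theorem~\ref{theorem: One cofinality}. Instead it introduces, for each supercompact $\kappa$ and regular $\mu<\kappa$, a Laver-style preparatory forcing $\dA(\mu,\kappa)$ (Definition~\ref{def: The A poset}) whose iterands are guessed by a Laver function and have the shape $\dot{\dL}_\alpha*\dot{\dP}(\check\mu,\check{\vec\delta},\dot{\vec{\mathcal{I}}},\dot{\vec{\mathcal{B}}})*\dot{\Coll}(\check\mu,\check\delta_\alpha^*)$. The main iteration is then the very simple Easton-support Magidor alternation ``Levy collapse at even stages, $\dA(\kappa_\alpha^{++},\kappa_{\alpha+1})$ at odd stages'' along a continuous enumeration $\langle\kappa_\alpha\rangle$ of the supercompacts; there is no bookkeeping of tasks, and the singulars of the final model are exactly the limit $\kappa_\alpha$'s. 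For each such $\kappa_\alpha$ one picks an odd $\beta<\alpha$, observes that the termspace $\dT_{\beta,\alpha}$ of the tail is ${<}\kappa_\beta^{++}$-directed closed and \emph{forces $\SLIP(\kappa_\beta^{++},\vec\delta)$} (Lemma~\ref{lemma: T forces SLIP}), and then applies Theorem~\ref{Theorem: Indestructibility of neg AP} to get the stationary non-approachable set together with a master condition for the tail.

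Your invocation of Theorem~\ref{Theorem: Indestructibility of neg AP} does not fit your setup. That theorem is stated for $\dA(\mu,\kappa)$-generic extensions, and its proof depends essentially on the Laver-function structure: one needs $j(\dA(\mu,\kappa))$ to factor as $\dA(\mu,\kappa)*(\dot\dP*\dot\dP(\check\mu,\check{\vec\delta},\ldots)*\dot\Coll(\check\mu,\check\delta^+))*\text{tail}$ with $\dot\dP$ the very forcing one is trying to absorb. Your iteration has no such anticipation mechanism, so this factoring is unavailable. Moreover, the hypothesis on $\dP$ in Theorem~\ref{Theorem: Indestructibility of neg AP} is that it be ${<}\kappa$-directed closed (used for the master condition $p^*\leq j[K]$) \emph{and} that it force $\SLIP(\mu,\vec\delta)$; you never verify the latter for your tail termspace, and you conflate the former with the ${<}\mu$-directed closure of $\dP(\mu,\vec\kappa,\vec{\mathcal I},\vec{\mathcal B})$ mentioned in Remark~\ref{rem: mu directedness} (that remark concerns closure of the \emph{iterands}, which feeds into Lemmas~\ref{lemma: Iteration pure closure} and~\ref{lemma: Properties of T}, not the master-condition step). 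Finally, your bookkeeping with $\mu$ non-decreasing and disjoint stacked blocks cannot target the same $\kappa^+$ more than once (each block's sup of measurables \emph{is} the singular it treats), and later blocks with $\mu'\leq\kappa$ include a $\Coll(\mu'^+,\lambda')$ factor that collapses $\kappa$; so neither the ``moreover'' clause nor even the intended cardinal structure comes out as you describe.
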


The proof of this theorem consists of two parts: Given a regular cardinal $\mu$ and a supercompact cardinal $\kappa>\mu$, first we construct a forcing $\mathbb{A}(\mu,\kappa)$ that collapses $\kappa$ to become $\mu^+$ which when followed with a class product of Levy collapses $\mathbb{C}$, the two-step iteration $\mathbb{A}(\mu,\kappa)\ast \dot{\mathbb{C}}$ forces $\delta^+\cap \cof(\mu)\notin I[\delta^+]$ for every singular cardinal $\delta>\kappa$ with $\cf(\delta)<\mu$. In the second part we show how to iterate this construction to produce a model where $\AP_\delta$ fails for every singular $\delta.$

\smallskip

Let us begin defining the poset $\mathbb{A}(\mu,\kappa)$. 
Recall that if $\kappa$ is a supercompact cardinal, $\ell\colon \kappa\rightarrow V_\kappa$ is called a \emph{Laver function} if for each set $x$ and a regular cardinal $\Theta\geq \kappa$ such that $x\in H(\Theta)$ there is an elementary embedding $j\colon V\rightarrow M$ with critical point $\kappa$, $j(\kappa)>\Theta$, $M$ is closed under $\Theta$-sequences of its members and $j(\ell)(\kappa)=x$. In \cite{LaverIndestruct}, Laver showed that Laver functions always exist. This makes sense of the next definition:
\begin{mydef}[The poset $\mathbb{A}(\kappa,\mu)$]\label{def: The A poset}
    Let $\mu$ be regular and $\kappa>\mu$ supercompact. Fix a Laver function $\ell\colon\kappa\to V_{\kappa}$. We define a poset $\dA(\mu,\kappa)$ as the limit of an Easton-support Magidor iteration  of Prikry-type forcings $(\mathbb{P}_\alpha, \dot{\mathbb{Q}}_\alpha)_{\alpha<\kappa}$ with the following iterands: Suppose we have defined $(\dP_{\beta},\dot{\dQ}_{\beta})_{\beta<\alpha}$. We let $\dP_{\alpha}$ be the Easton-support Magidor limit of $(\dP_{\beta},\dot{\dQ}_{\beta})_{\beta<\alpha}$. We let $\dot{\dQ}_{\alpha}$ be defined as follows:
    \begin{itemize}
        \item \textbf{Case 1:} Suppose that $\alpha$ is inaccessible, $|\dP_{\beta}|<\alpha$ for every $\beta<\alpha$ and $$\ell(\alpha)=(\dot{\dL}_{\alpha}, \langle \delta_i\mid i<\gamma\rangle,\dot{\vec{\mathcal{I}}}, \dot{\vec{\mathcal{B}}})$$ where
        \begin{itemize}
            \item   $\dot{\dL}_{\alpha}$ is a $\dP_{\alpha}$-name for a ${<}\,\alpha$-strategically closed poset.
            \item $\forces_{\mathbb{P}_\alpha\ast \dot{\mathbb{L}}_\alpha}``\SLIP(\check{\mu},\langle \check{\delta}_i\mid i<\check{\gamma}\rangle)$ holds     as witnessed by $\dot{\vec{\mathcal{I}}}$ and $\dot{\vec{\mathcal{B}}}$".
        \end{itemize}

         In this case, we let $\delta_{\alpha}^*:=(\sup_{i<\gamma}\delta_i)^+$ and let $\dot{\dQ}_{\alpha}$ be a $\dP_{\alpha}$-name for
        $$\dot{\dL}_{\alpha}*\dot{\dP}(\check{\mu},\check{\vec{\delta}},\dot{\vec{\mathcal{I}}},\dot{\vec{\mathcal{B}}})*\dot{\Coll}(\check{\mu},\check{\delta}_{\alpha}^*).$$
        \item \textbf{Case 2:} Otherwise, we let $\dot{\dQ}_{\alpha}$ be a $\dP_{\alpha}$-name for $\dot{\Coll}(\check{\mu},|\check{\dP}_{\alpha}|)$.
    \end{itemize}
\end{mydef}

We first justify the following easy properties of $\dA(\mu,\kappa)$:
\begin{mylem}\label{lemma: Properties of A}
    Let $\mu$ be regular and $\kappa>\mu$ supercompact.
    \begin{enumerate}
        \item $(\dA(\mu,\kappa),\leq,\leq_0)$ is a Prikry-type forcing and in fact with pure $\mu$-decidability.
        \item $(\dA(\mu,\kappa),\leq)$ and $(\dA(\mu,\kappa),\leq_0)$ are $\kappa$-cc.
        \item $(\dA(\mu,\kappa),\leq_0)$ is ${<}\,\mu$-directed closed.
        \item $(\mathbb{A}(\mu,\kappa), \leq)$ forces $``\mu^+=\kappa".$
    \end{enumerate}
\end{mylem}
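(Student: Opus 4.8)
The plan is to verify the four assertions by an induction along the Easton-support Magidor iteration $(\dP_\alpha,\dot\dQ_\alpha)_{\alpha<\kappa}$, using the general iteration machinery collected in Section~\ref{sec: prelimminaries} (Lemmas~\ref{lemma: Easton support Magidor} and~\ref{lemma: Iteration pure closure}) together with the properties of the forcing $\dP(\mu,\vec\kappa,\vec{\mathcal I},\vec{\mathcal B})$ established in Section~\ref{sec: Higher Namba}. For item~(1), I would check that each iterand $\dot\dQ_\alpha$ is forced to be a Prikry-type forcing with pure $\check\mu$-decidability, so that Lemma~\ref{lemma: Easton support Magidor} applies. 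In Case~2 the iterand is $\dot\Coll(\check\mu,|\check\dP_\alpha|)$, where the trivial ordering $\leq_0=\leq$ works and every name for an element of $\check\mu$ is decided outright by ${<}\mu$-closure; pure $\mu$-decidability is immediate. In Case~1 the iterand is a three-step iteration $\dot\dL_\alpha*\dot\dP(\check\mu,\check{\vec\delta},\dot{\vec{\mathcal I}},\dot{\vec{\mathcal B}})*\dot\Coll(\check\mu,\check\delta_\alpha^*)$; here $\dot\dL_\alpha$ is ${<}\alpha$-strategically closed, hence (giving it the trivial Prikry ordering) is trivially Prikry-type with pure $\mu$-decidability since $\mu<\alpha$ and a strategy for COM in $\mathcal G(\dot\dL_\alpha,\mu+1)$ lets us decide any name for an ordinal below $\mu$; $\dot\dP(\dots)$ has pure ${<}\kappa_0$-decidability — and in particular pure $\mu$-decidability since $\mu<\kappa_0=\delta_0$ — by Lemma~\ref{lemma: P has Prikry prop}; and $\dot\Coll(\check\mu,\check\delta_\alpha^*)$ is ${<}\mu$-closed. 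A three-step iteration of Prikry-type forcings with pure $\mu$-decidability again has these properties (this is the finite iteration case of the general principle, or one applies Lemma~\ref{lemma: Easton support Magidor} to a length-$3$ iteration), so each $\dot\dQ_\alpha$ qualifies and Lemma~\ref{lemma: Easton support Magidor} yields~(1) for $\dA(\mu,\kappa)=\dP_\kappa$.

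For item~(2), the $\kappa$-cc under $\leq$ is a standard Easton-iteration computation: assuming GCH in the relevant intervals, an easy induction shows $|\dP_\alpha|<\kappa$ for every inaccessible $\alpha<\kappa$ (in Case~1, $\delta_\alpha^*<\kappa$ since the $\delta_i$ lie below the next inaccessible guessed by the Laver function, and in Case~2 the collapse has size $|\dP_\alpha|<\kappa$), and a $\Delta$-system argument over the inaccessible limit $\kappa$ delivers the chain condition; the $\kappa$-cc for $\leq_0$ follows because $\leq_0$ refines $\leq$, so any $\leq_0$-antichain is a $\leq$-antichain. For item~(3), each iterand is forced to be ${<}\mu$-directed closed under its direct-extension ordering: $\dot\dL_\alpha$ by ${<}\alpha$-strategic closure (which, being ${<}\alpha$-strategically closed and $\mu<\alpha$, is in particular ${<}\mu^+$-distributive — but in fact the preparation posets we use are genuinely ${<}\mu$-directed closed); $\dot\dP(\check\mu,\dots)$ is ${<}\mu$-directed closed under $\leq_0$ by the lemma immediately following its definition (directedness of $\vec{\mathcal B}$); and the collapses are ${<}\mu$-closed. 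Since $\mu$ is below the first inaccessible relative to the iteration in the sense needed, Lemma~\ref{lemma: Iteration pure closure} gives that $(\dA(\mu,\kappa),\leq_0)$ is ${<}\mu$-directed closed. Here one should be slightly careful that Lemma~\ref{lemma: Iteration pure closure} is stated with the hypothesis ``$\mu$ below the first inaccessible'': in our setting this is arranged because $\mu$ is fixed below $\kappa$ and the relevant closure at stage $\alpha$ only needs the iterands to be ${<}\mu$-directed closed and $\mu<\alpha$; I would adapt the proof of that lemma to the present local situation, which is routine since $\leq$ refines $\leq_0$.

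Item~(4) is the main point. First, $\kappa$ is preserved: by~(2) the poset is $\kappa$-cc and $\kappa$ is inaccessible, so it remains a cardinal. Second, every cardinal in the open interval $(\mu,\kappa)$ is collapsed to $\mu$: given $\mu<\lambda<\kappa$, pick an inaccessible $\alpha\in(\lambda,\kappa)$ with $|\dP_\beta|<\alpha$ for all $\beta<\alpha$ that falls under Case~2 (such $\alpha$ exist cofinally below $\kappa$ — the Laver function can be made to guess the relevant data only on a sparse set, and on a club of inaccessibles below $\kappa$ of the right form Case~2 applies; alternatively one inserts trivial-enough data) — then $\dot\dQ_\alpha=\dot\Coll(\check\mu,|\check\dP_\alpha|)$ and $|\dP_\alpha|\geq\lambda$, so $\lambda$ is collapsed to $\mu$ at stage $\alpha+1$, and later stages preserve this. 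Combined with preservation of $\kappa$ and the fact that, by~(1) and~(3), $\mu$ itself is not collapsed and no cardinals $\leq\mu$ are disturbed (pure $\mu$-decidability plus ${<}\mu$-directed closure of $\leq_0$ shows no bounded subsets of $\mu$ are added and $\mu$ stays regular), this yields $\mu^+=\kappa$ in the extension. The step I expect to require the most care is confirming that Case~2 is genuinely triggered on a cofinal (indeed club) set of inaccessibles below $\kappa$, so that the collapsing actually happens at every relevant scale; this hinges on the Laver-guessing bookkeeping and on the elementary fact that $\{\alpha<\kappa : \alpha \text{ inaccessible}, |\dP_\beta|<\alpha \text{ for all } \beta<\alpha\}$ is unbounded, which follows from $|\dP_\alpha|<\kappa$ for all $\alpha<\kappa$ and a reflection/closure argument.
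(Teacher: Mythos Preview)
Your approach is essentially the paper's: item~(1) via Lemma~\ref{lemma: Easton support Magidor} plus Lemma~\ref{lemma: P has Prikry prop}, item~(3) via Lemma~\ref{lemma: Iteration pure closure}, item~(2) via a $\Delta$-system argument, and item~(4) by a direct genericity check. Two points deserve correction.

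First, your argument for the $\kappa$-cc of $(\dA(\mu,\kappa),\leq_0)$ is wrong. You write that ``any $\leq_0$-antichain is a $\leq$-antichain'' because $\leq_0$ refines $\leq$, but the implication goes the other way: since $\leq_0\subseteq\leq$, it is \emph{easier} for two conditions to be $\leq_0$-incompatible, so a $\leq_0$-antichain need not be a $\leq$-antichain. The paper handles both chain conditions by the same $\Delta$-system argument: after thinning, two conditions with the same root are $\leq_0$-compatible (their amalgamation extends both purely at every coordinate outside the common root), and this simultaneously gives the cc for $\leq$ and for $\leq_0$.

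Second, you correctly flag the issue in item~(3): ${<}\alpha$-strategic closure of $\dot\dL_\alpha$ does not by itself yield ${<}\mu$-directed closure, which is what Lemma~\ref{lemma: Iteration pure closure} needs. Your resolution (``the preparation posets we use are genuinely ${<}\mu$-directed closed'') is exactly right for the applications (in Theorem~\ref{Theorem: Indestructibility of neg AP} the guessed $\dot\dP$ is ${<}\kappa$-directed closed), and the paper implicitly relies on this as well. For item~(4), your worry about Case~2 triggering cofinally is unnecessary: every successor ordinal $\alpha+1$ is non-inaccessible, so Case~2 applies there and $\dot\dQ_{\alpha+1}=\dot\Coll(\check\mu,|\check\dP_{\alpha+1}|)$; once a single nontrivial stage occurs the sizes $|\dP_\alpha|$ are cofinal in $\kappa$ and every $\lambda\in(\mu,\kappa)$ is collapsed.
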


\begin{proof}
    Point (1) follows from Lemma \ref{lemma: Easton support Magidor}, together with Lemma \ref{lemma: P has Prikry prop}, noticing that we have assumed $\delta_0>\mu$ and thus that the intermediate forcing has pure $\mu$-decidability by virtue of Lemma~\ref{lemma: P has Prikry prop}.  Both parts of point (2) follow easily using a $\Delta$-system argument. Point (3) follows from Lemma \ref{lemma: Iteration pure closure}. Point (4) follows from standard genericity considerations.
\end{proof}

The following result is instrumental for our main theorem. Essentially, it says that after forcing with $\mathbb{A}(\mu,\kappa)$ not only $\{\alpha<\delta^+\mid \text{$\alpha$ is not $d$-approachable}\}$ is stationary, but also it remains stationary (and hence, non-approachable) in the generic extension by any ${<}\kappa$-directed closed forcing $\mathbb{P}.$

\begin{mysen}[Indestructibility of $\neg \AP$]\label{Theorem: Indestructibility of neg AP}
    Let $\mu$ be regular and $\kappa>\mu$ supercompact. Let $\gamma<\mu$ be a limit ordinal and $\vec{\delta}=\langle \delta_i\mid i<\gamma\rangle$ an increasing and continuous sequence of regular cardinals with $\delta_0\geq\kappa$. Let $\delta:=\sup_{i<\gamma}\delta_i$ and $d\colon[\delta^+]^2\to\cf(\gamma)$ be a normal, subadditive coloring.

    Let $G$ be $\dA(\mu,\kappa)$-generic and work in $V[G]$. Whenever $\dP$ is a ${<}\,\kappa$-directed closed poset which preserves $\delta^+$, every $\delta_i$ and forces $\SLIP(\mu,\vec{\delta})$, for any large enough regular cardinal $\Theta$ there are stationarily many $M\in[H^{V[G]}(\Theta)]^{<\kappa}$ such that:
    \begin{enumerate}
        \item $M\prec H^{V[G]}(\Theta)$.
        \item $\dA(\mu,\kappa),\dP,d\in M$.
        \item $\cf(\sup(M\cap\delta^+))=\mu$ and $\sup(M\cap\delta^+)$ is not $d$-approachable.
        \item \textbf{(Existence of master conditions)} Whenever $p\in\dP\cap M$ there is a condition $p^*\in\dP$  with $p^*\leq p$ such that whenever $D\in M$ is open dense in $\dP$, there is $q\in D\cap M$ with $p^*\leq q$.
    \end{enumerate}
\end{mysen}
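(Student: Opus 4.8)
The plan is to prove the theorem by lifting a supercompactness embedding of $V$ that has been pre-arranged, via the Laver function $\ell$ from Definition~\ref{def: The A poset}, to read off the data $(\dP,\vec\delta,\vec{\mathcal{I}},\vec{\mathcal{B}})$ at its critical stage, and then to reflect the desired properties of a single model back down into $V[G]$. To see that the good $M$ form a stationary subset of $[H^{V[G]}(\Theta)]^{<\kappa}$ it suffices to fix an arbitrary algebra $\mathfrak{A}=(H^{V[G]}(\Theta),\in,\dots)$ in $V[G]$ and produce one $M\prec\mathfrak{A}$ of size ${<}\kappa$ with (2)--(4). Working in $V$, I would fix an $\dA(\mu,\kappa)$-name $\dot{\dP}$ for $\dP$ (note $\dP$ is ${<}\kappa$-strategically closed, being ${<}\kappa$-directed closed) and names $\dot{\vec{\mathcal{I}}},\dot{\vec{\mathcal{B}}}$ for witnesses of $\SLIP(\mu,\vec\delta)$ in $V[G][\dP]$, which exist by hypothesis. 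Choose a regular $\Theta'$ large enough that $\mathfrak{A},d,\dP,\mathcal{P}(\dP)$ and all these names lie in $H^{V[G]}(\Theta')$ with room to spare, and use the supercompactness of $\kappa$ in $V$ to obtain $j\colon V\to N$ with $\crit(j)=\kappa$, $j(\kappa)>\Theta'$, $N^{\Theta'}\subseteq N$ and $j(\ell)(\kappa)=(\dot{\dP},\vec\delta,\dot{\vec{\mathcal{I}}},\dot{\vec{\mathcal{B}}})$. Since $j(\ell)\uhr\kappa=\ell$, the iteration $j(\dA(\mu,\kappa))$ agrees with $\dA(\mu,\kappa)$ below $\kappa$, Case~1 of Definition~\ref{def: The A poset} fires at stage $\kappa$ (as $\kappa$ is inaccessible in $N$ and $|\dP_\beta|<\kappa$ for $\beta<\kappa$), and the stage-$\kappa$ iterand is $\dot{\dP}*\dot{\dP}(\check{\mu},\check{\vec{\delta}},\dot{\vec{\mathcal{I}}},\dot{\vec{\mathcal{B}}})*\dot{\Coll}(\check{\mu},\check{\delta}^+)$, using $\delta^*_\kappa=(\sup_{i<\gamma}\delta_i)^+=\delta^+$.

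Next I would build the lift. Factor $j(\dA(\mu,\kappa))\cong\dA(\mu,\kappa)*\big(\dot{\dP}*\dot{\dP}(\check{\mu},\check{\vec{\delta}},\dots)*\dot{\Coll}(\check{\mu},\check{\delta}^+)\big)*\dot{\dA}_{\mathrm{tail}}$ and let $G^*=G*(g*h*k)*G_{\mathrm{tail}}$, where $g$ is $\dP$-generic over $V[G]$, $h$ is $\dP(\mu,\vec\delta,\vec{\mathcal{I}}_g,\vec{\mathcal{B}}_g)$-generic over $V[G][g]$, $k$ is $\Coll(\mu,\delta^+)$-generic over $V[G][g][h]$, and $G_{\mathrm{tail}}$ is $\dA_{\mathrm{tail}}$-generic over $N[G][g][h][k]$ (taken in a generic extension of $V[G]$ if need be; this is harmless below). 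Since $\crit(j)=\kappa$ and $\dA(\mu,\kappa)\in V_\kappa$ we get $j[G]=G\subseteq G^*$, so $j$ lifts to an elementary $j\colon V[G]\to N[G^*]$; using $N^{\Theta'}\subseteq N$ and $\dP\in H^{V[G]}(\Theta')$ one checks that $j\uhr H^{V[G]}(\Theta)\in N[G^*]$, hence $M^*:=j[H^{V[G]}(\Theta)]\in N[G^*]$, $M^*\prec j(\mathfrak{A})$, and $|M^*|=|H^{V[G]}(\Theta)|<j(\kappa)$. By elementarity of $j\colon V[G]\to N[G^*]$ it then suffices to verify that $M^*$ satisfies the $j$-images of (2)--(4) in $N[G^*]$; transporting the existential statement back down produces the required $M\in V[G]$, which genuinely has size ${<}\kappa$.

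Property (2) is immediate since $\dA(\mu,\kappa),\dP,d\in H^{V[G]}(\Theta)$. For (3), note $M^*\cap j(\delta^+)=j[\delta^+]$, so $\sup(M^*\cap j(\delta^+))=\rho:=\sup j[\delta^+]$, an ordinal of ordertype $(\delta^+)^V$ lying below $j(\delta^+)$. Because $\dA(\mu,\kappa)$ is $\kappa$-cc with $\kappa\le\delta_0$, $\dP$ preserves $\delta^+$, and $\dP(\mu,\vec\delta,\dots)$ preserves $(\sup\vec\delta)^+=\delta^+$ by Lemma~\ref{lemm: fusion SPP}, $(\delta^+)^V$ is still a cardinal in $N[G][g][h]$; the collapse $\Coll(\mu,\delta^+)$ makes $\cf((\delta^+)^V)=\mu$, and I would argue that $\dA_{\mathrm{tail}}$---Prikry-type with ${<}\mu$-directed closed pure order and pure $\mu$-decidability by Lemmas~\ref{lemma: Iteration pure closure} and~\ref{lemma: Easton support Magidor}---preserves this, whence $\cf^{N[G^*]}(\rho)=\mu=j(\mu)$. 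For non-$j(d)$-approachability of $\rho$: since $\cf(\rho)=\mu>\cf(\gamma)$ and $j(d)$ restricted to $j[\delta^+]$ is the $j$-isomorphic copy of $d\uhr[\delta^+]^2$, Facts~\ref{fact: characterizing dapproachability} and~\ref{fact: Approachability refinement} reduce the claim in $N[G^*]$ to the nonexistence, in $N[G^*]$, of an unbounded $B\subseteq(\delta^+)^V$ with $d\uhr[B]^2$ bounded. Corollary~\ref{Corollary: Top cardinal not approachable}, applied over the ground model $N[G][g]$ with $\theta:=\mu$ and the normal subadditive coloring $d$, delivers exactly this statement in $N[G][g][h][k]$, and its persistence through $G_{\mathrm{tail}}$ is handled together with the cofinality computation. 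Finally, (4) is the standard master-condition argument: for $p=j(p_0)\in j(\dP)\cap M^*=j[\dP]$, choosing $g$ (or, more precisely, the relevant $V[G]$-generic filter for $\dP$, which the Levy collapses inside $G^*$ make available below every condition) to contain $p_0$, the set $j[g]$ is a directed subset of $j(\dP)$ of size ${<}j(\kappa)$ lying in $N[G^*]$, so the ${<}j(\kappa)$-directed closure of $j(\dP)$ yields a lower bound $p^*\le j(p_0)$, and genericity of $g$ gives, for each dense $E\subseteq\dP$, some $q_0\in g\cap E$ with $p^*\le j(q_0)$ and $j(q_0)\in j(E)\cap M^*$.

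The hardest part will be the analysis of the tail forcing $\dA_{\mathrm{tail}}$ over $N[G][g][h][k]$: one must show it neither changes $\cf((\delta^+)^V)$ nor adds an unbounded $B\subseteq(\delta^+)^V$ on which $d$ is bounded, even though $\dA_{\mathrm{tail}}$ interleaves further Prikry-type iterands (which do add new short sequences) with further Levy collapses of cardinals above $\delta^+$. The mechanism is that every iterand, and hence $\dA_{\mathrm{tail}}$ itself, is ${<}\mu$-directed closed in its pure order and has pure $\mu$-decidability, so any putative bad cofinal $\mu$-sequence into $(\delta^+)^V$---or bad set $B$---would be decided by a pure extension, contradicting ${<}\mu$-closure; making this rigorous is the bulk of the proof, and it is precisely where ${<}\mu$-\emph{directed} closure (rather than mere ${<}\mu$-closure) of $\dP$ and of the auxiliary forcings is used, as anticipated in Remark~\ref{rem: mu directedness}. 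A secondary subtlety is ensuring that one model $M^*$ handles every $p_0$ in (4), which is why the availability inside $N[G^*]$ of $V[G]$-generic filters for $\dP$ below arbitrary conditions must be arranged.
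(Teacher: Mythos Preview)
Your proposal is correct and follows essentially the same lift-and-reflect strategy as the paper: guess $(\dot{\dP},\vec{\delta},\dot{\vec{\mathcal{I}}},\dot{\vec{\mathcal{B}}})$ via the Laver function, lift $j$ through a generic for $j(\dA(\mu,\kappa))$, take $M^*=j[H^{V[G]}(\Theta)]$, and verify (2)--(4) upstairs. Your identification of the two delicate points---the tail-forcing analysis via pure $\mu$-decidability plus ${<}\mu$-closure of $\leq_0$, and the need for $V[G]$-generics for $\dP$ below every $p$ inside $N[G^*]$ (supplied by the collapses in the tail)---matches the paper exactly.

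One small correction to your final paragraph: the ${<}\kappa$-\emph{directed} closure of $\dP$ is not what drives the tail argument (there mere ${<}\mu$-closure of the pure order suffices to build the $\leq_0$-decreasing $\mu$-sequence that decodes $\dot{\Gamma}$); directed closure of $\dP$ is used precisely where you already invoked it, namely to obtain a lower bound $p^*$ for $j[K]$ in the master-condition step, since $j[K]$ is directed but not linearly ordered.
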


\begin{proof}
    Fix parameters as in the theorem and assume that they belong to $H(\Theta)$. In $V[G]$, let $F\colon[H^{V[G]}(\Theta)]^{<\omega}\to[H^{V[G]}(\Theta)]^{<\kappa}$ be any function. We want to find $M\in[H^{V[G]}(\Theta)]^{<\kappa}$ which is closed under $F$ and satisfies the additional requirements (1)--(3). To this end, let $\dot{F}$ and $\dot{\dP}$ be $\dA(\mu,\kappa)$-names with $\dot{F}^G=\dot{F}$ and $\dot{\dP}^G=\dP$. Assume without loss of generality that the empty condition of $\mathbb{A}(\mu,\kappa)$ forces $\dot{\dP}$ to have the claimed properties. Let $\dot{\mathcal{I}}_i,\dot{\mathcal{B}}_i$ be $\dP$-names forced to witness $\SLIP(\check{\mu},\check{\vec{\delta}})$ and  $\Theta'>\Theta$ be a big enough regular cardinal such that $\dot{F},\dot{\dP}, \dot{\mathcal{I}}_i,\dot{\mathcal{B}}_i\in H^V(\Theta')$. 

    \smallskip

    Let $j\colon V\to M$ be a $|H^V(\Theta')|$-supercompact embedding for $\kappa$ such that $$j(\ell)(\kappa)=(\dot{\dP}, \langle \delta_i\mid {i<\gamma}\rangle,\dot{\vec{\mathcal{I}}},\dot{\vec{\mathcal{B}}}).$$ 
    It follows that in $M$, we have
    $$j(\dA(\mu,\kappa))=\dA(\mu,\kappa)*(\dot{\dP}*\dot{\dP}(\check{\mu},\check{\vec{\delta}},\dot{\vec{\mathcal{I}}},\dot{\vec{\mathcal{B}}})*\dot{\Coll}(\check{\mu},\check{\delta}^+))*j(\dot{\dA})_{\kappa+1,j(\kappa)}$$
    where $j(\dot{\dA})_{\kappa+1,j(\kappa)}$ is forced to be an Easton support Magidor iteration of Prikry-type forcings with pure $\check{\mu}$-decidability and a ${<}\,\check{\mu}$-closed pure extension ordering. 

    \smallskip

    Let $H$ be $j(\dA(\mu,\kappa))$-generic over $V$ such that $j[G]\s H$. Then $j$ lifts to $$j\colon V[G]\to M[H].$$ 
    Let $N':=j[H^V(\Theta')][H]$ and $N:=N'\cap H^{M[H]}(j(\Theta))$. We want to show that in $M[H]$,  $N$ is as required by the theorem with $j$ applied to the parameters.

    \setcounter{myclaim}{0}

    \begin{myclaim}
        $j(\mathbb{A}(\mu,\kappa)), j(\mathbb{P}), j(d)\in N$.
    \end{myclaim}
    \begin{proof}[Proof of claim]
       This is clear by our choice of $\Theta'.$ 
    \end{proof}

    \begin{myclaim}
        $N\prec H^{M[H]}(j(\Theta))$, $|N|<j(\kappa)$ and $j(F)``[N]^{<\omega}\s N.$
    \end{myclaim}
    \begin{proof}[Proof of claim]
       Clearly, $|N|<j(\kappa)$ as $j(\kappa)>\Theta'=|N|.$ It is also  routine to check that $N'\prec H^{M[H]}(j(\Theta'))$. Using this one can show that $N$ is closed under $j(F)$:  Given any $(x_0,\dots, x_k)\in [N]^{<\omega}$ there are $(\tau_0,\dots, \tau_n)\in H^V(\Theta')$ such that $x_i=j(\tau_i)^H.$ By definition, $j(\dot{F})^H(j(\tau_0)^H,\dots, j(\tau_k)^H)=j(\dot{F}^G(\tau_0^G,\dots, \tau_k^G))\in j[H^{V[G]}(\Theta)]\s H^{M[H]}(j(\Theta')).$ In addition, this very same value of $j(F)$ belongs to $N':=j[H^V(\Theta')][H]$ as this is an elementary submodel of $H^{M[H]}(j(\Theta')).$ 

       Finally, we argue that $N\prec H^{M[H]}(j(\Theta))$.  Suppose that $H^{M[H]}(j(\Theta))\models ``\exists x\varphi(x,a)"$ for some $a\in N.$ Then, the following holds in the bigger model $H^{M[H]}(j(\Theta'))$:
       \begin{center}
           ``There is $b\in H^{M[H]}(j(\Theta))$ such that $H^{M[H]}(j(\Theta))\models \varphi(b,a)$".
       \end{center}
       This is a first-order formula with parameters $\{a,b,j(\Theta)\}$, all of which available to the model $N'$. Thus, by elementarity, there is $b\in N'\cap H^{M[H]}(j(\Theta))=N$ with the previous property. As a result $N\prec H^{M[H]}(j(\Theta)).$
    \end{proof}

    \begin{myclaim}
        $\cf^{M[H]}(\sup(N\cap j(\delta^+)))=\mu.$
    \end{myclaim}
    \begin{proof}[Proof of claim]
       Since $j(\kappa)>\Theta'$, we have $N'\cap j(\kappa)\in j(\kappa)$.  Ergo, since $H$ is generic for a $j(\kappa)$-cc forcing, it is a standard fact that $N'\cap M=j[H^V(\Theta')]$. In particular, $N\cap j(\delta^+)=N'\cap j(\delta^+)=j[\delta^+]$. Because of $\Coll(\mu,\delta^+)$, $\cf^{M[H]}(\sup j[\delta^+])=\mu$.

       \smallskip
       
       For the reader's benefit we provide the proof of the  standard fact.

       The $\supseteq$-inclusion is clear. For the $\s$-inclusion, let $\tau^H\in N'\cap M$ with $\tau\in j[H^V(\Theta')]$. Since  $j[H^V(\Theta')]\prec j(H^V(\Theta'))$, there is a maximal antichain $A\in j[H^V(\Theta')]$ of conditions forcing $\tau=\check{x}$ for some $x\in j(H^V(\Theta'))$. Since $j(\mathbb{Q})$ is $j(\kappa)$-cc, there is  an enumeration  $f\colon \mu\rightarrow A$ for some cardinal $\mu<j(\kappa)$. By elementarity, $$\mu\in j[H^V(\Theta')]\cap j(\kappa)=N'\cap j(\kappa)=\kappa\s j[H^V(\Theta')].$$
       It follows that $A=f[\mu]\s j[H^V(\Theta')]$. In particular, there is a condition $q\in H\cap j[H^V(\Theta')]$ forcing $``\tau=\check{x}$" for some $x\in j(H^V(\Theta'))$. By elementarity, $x\in j[H^V(\Theta')]$. Thus, $\tau^H=x\in j[H^V(\Theta')]$, as needed.
    \end{proof}
    
    Let us denote $\rho:=\sup(j[\delta^+])=\sup(N\cap j(\delta^+))$. 
    \begin{myclaim}
        $\rho$ is not $j(d)$-approachable in $M[H].$ 
    \end{myclaim}
    \begin{proof}[Proof of claim]
         In $M$, $\cf(\rho)=\delta^+$. $\delta^+$ is regular after forcing with $\dA(\mu,\kappa)*\dot{\dP}$ and is collapsed to have size and cofinality $\mu$ after forcing with $$\dA(\mu,\kappa)*\dot{\dP}*\dot{\dP}(\check{\vec\delta},\dot{\vec{\mathcal{I}}},\dot{\vec{\mathcal{B}}})*\dot{\Coll}(\check{\mu},\check{\delta}^+).$$ This is still the case in $M[H]$ because $\mu$ is preserved. In summary, $$M[H]\models ``\cf(\rho)=\cf(\delta^+)=\mu".$$ So we have to show that $\rho$ is not $j(d)$-approachable in $M[H]$. As in the proof of Theorem \ref{theorem: One cofinality}, in $M$, there is a normal subadditive coloring $e$ on $\delta^+$ such that in any set-sized forcing extension, $\rho$ is not $j(d)$-approachable if and only if $\delta^+$ is not $e$-approachable. After forcing with $\dA(\mu,\kappa)*\dot{\dP}$, by assumption, $\delta^+$ and every $\delta_i$ are cardinals and therefore $e$ is still a normal, subadditive coloring. By Corollary \ref{Corollary: Top cardinal not approachable}, $\delta^+$ is not $e$-approachable after  forcing with $$\dot{\dP}(\check{\vec{\delta}},\dot{\vec{\mathcal{I}}},\dot{\vec{\mathcal{B}}})*\dot{\Coll}(\check{\mu},\check{\delta}^+).$$ Ergo, $\rho$ is not $j(d)$-approachable in $M[H\restriction\kappa+1]$  where $H\uhr\kappa+1$ is the $\dA(\mu,\kappa)*\dot{\dP}*\dot{\dP}(\check{\vec{\delta}},\dot{\vec{\mathcal{I}}},\dot{\vec{\mathcal{B}}})*\dot{\Coll}(\check{\mu},\check{\delta}^+)$-generic filter induced by $H$. 

         \smallskip
    
    Let us show that this is preserved by  the tail forcing $$\dR:=j(\dot{\dA}(\mu,\kappa))_{\kappa+1,j(\kappa)}^{H\uhr\kappa+1}.$$
    
    As stated before, $\dR$ is a Prikry-type poset with pure $\mu$-decidability and a ${<}\,\mu$-closed pure extension ordering. In $M[H\uhr\kappa+1]$, let $\Delta\colon\mu\to\rho$ be increasing and cofinal. In $M[H]$, $\rho$ is $j(d)$-approachable if and only if there is an unbounded subset of $\im(\Delta)$ on which $j(d)$ is bounded (by Fact~\ref{fact: Approachability refinement}). So assume that $\dot{\Gamma}$ is an $\dR$-name for an increasing function from $\check{\mu}$ into $\im(\check{\Delta})$ such that $j(d)$ is bounded on its image (say, with value $\check{\zeta}$), forced by some $r\in\dR$. We construct an increasing sequence $\langle \alpha_i\mid i<\mu\rangle$ of elements of $\mu$ and a $\leq_0$-decreasing sequence $\langle r_i\mid i<\mu\rangle$ of elements of $\dR$ such that for any $i<\mu$, $r_i\Vdash\dot{\Gamma}(\check{i})=\check{\Delta}(\check{\alpha}_i)$. Assume that $\langle \alpha_j\mid j<i\rangle$ and $\langle r_j\mid j<i\rangle$ have been constructed. Let $r_i'$ be a $\leq_0$-lower bound of $\langle r_j\mid{j<i}\rangle$ (or $r_i=r$ if $i=0$). Then $r_i'\Vdash\exists\alpha<\check{\mu}(\dot{\Gamma}(\check{i}))=\check{\Delta}(\check{\alpha})$, so by the pure $\mu$-decidability of $\dR$ we can find $\alpha_i$ and $r_i\leq_0r_i'$ such that
    $$r_i\Vdash\dot{\Gamma}(\check{i})=\check{\Delta}(\check{\alpha}_i)$$
    Since $\dot{\Gamma}$ is forced to be increasing and $\Delta$ is increasing, $\alpha_i>\alpha_j$ for all $j<i$. Now we claim that $\{\Delta(\alpha_i)\;|\;i<\mu\}$ is a cofinal subset of $\rho$ on which $j(d)$ is bounded with value $\zeta$, obtaining a contradiction as that set is in $M[H\uhr\kappa+1]$. First observe that, because the sequence $\langle \alpha_i\mid i<\mu\rangle$ is increasing, the set $\{\alpha_i\;|\;i<\mu\}$ is cofinal in $\mu$ and so $\{\Delta(\alpha_i)\;|\;i<\mu\}$ is cofinal in $\rho$. Lastly, whenever $j<i<\mu$, we have
    $$r_i\Vdash j(\check{d})(\check{\Delta}(\check{\alpha}_j),\check{\Delta}(\check{\alpha}_i))=j(\check{d})(\dot{\Gamma}(\check{j}),\dot{\Gamma}(\check{i}))\leq\check{\zeta}$$
    so actually $j(d)(\Delta(\alpha_j),\Delta(\alpha_i))\leq\zeta$.
    \end{proof}

    \smallskip

    We complete the argument  showing that $N$ admits ``master conditions" for $j(\mathbb{P})$.
    \begin{myclaim}
   Whenever $p\in j(\mathbb{P})\cap N$ there is a condition $p^*\in j(\mathbb{P})$ with $p^*\leq p$ such that whenever $D\in N$ is dense open there is $q\in D\cap N$ such that $p^*\leq q.$
    \end{myclaim}
    \begin{proof}[Proof of claim]
   Fix $j(p)\in j(\mathbb{P})\cap N$.
     By construction, we know that in $M[H]$ there is a filter $K$ which is $\dP$-generic over $M[G]$, which we can assume has $p$ as a member. By our assumptions, we know that $M$ is closed under $|\dP|$-sequences and so $j\uhr\dP$ is a member of $M[G]$. In particular, $j[K]:=(j\uhr\dP)[K]$ is also a member of $M[H]$, since $K\in M[H]$. We have $|K|<\Theta'<j(\kappa)$ and so by the ${<}\,j(\kappa)$-directed closure of $j(\dP)$ in $M[H]$ there is a condition $p^*\in j(\dP)$ which is a lower bound of $j[K]$. 
     We claim that $p^*$ is required. To this end, let $D\in N$ be open dense in $j(\dP)$. By the definition of $N$, $D=j(E)$ for $E\in H^{V[G]}(\Theta')$, $E$ open dense in $\dP$. It follows that there is $q\in E\cap K$. Then $j(q)\in D\cap j[K]\cap N$ and by construction, $p^*\leq j(q)$.   
    \end{proof}

    This finishes the proof.
\end{proof}

We obtain the following generalization to our answer of Shelah's problem:
\begin{mysen}\label{theorem: almost theorem A}
Assume the $\mathrm{GCH}$ and that there is a supercompact cardinal $\kappa$ together with a proper class of measurables. Let $\mu<\kappa$ be regular. Then, there is a model of $\ZFC$ in which $\delta^+\cap \cof(\mu)\notin I[\delta^+]$ for every singular $\delta>\kappa$ with $\cf(\delta)<\mu$.
\end{mysen}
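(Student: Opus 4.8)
The plan is to force with $\dA(\mu,\kappa)$ from Definition~\ref{def: The A poset} and then with a suitably engineered class-length Easton-supported product $\mathbb{C}$ of Levy collapses; the desired model is a $\dA(\mu,\kappa)\ast\dot{\mathbb{C}}$-generic extension. First I would pass to $V[G]$, where $G$ is $\dA(\mu,\kappa)$-generic. By Lemma~\ref{lemma: Properties of A}, $\dA(\mu,\kappa)$ is a Prikry-type forcing with pure $\mu$-decidability, it is $\kappa$-cc, its pure ordering is ${<}\mu$-directed closed, and it forces $\mu^+=\kappa$. Since $|\dA(\mu,\kappa)|=\kappa$ and $\kappa$ is inaccessible in $V$, the $\GCH$ is preserved above $\kappa$ as well as at $\mu$ (so that $2^{\mu}=\mu^+=\kappa$ in $V[G]$), and every measurable cardinal of $V$ above $\kappa$ (a proper class, by hypothesis) remains measurable in $V[G]$. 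Working in $V[G]$, the forcing $\mathbb{C}$ is defined by a transfinite bookkeeping over these measurables so that, after forcing with $\mathbb{C}$, the singular cardinals $\delta>\kappa$ with $\cf(\delta)<\mu$ are precisely certain ``milestone'' cardinals, each carrying an increasing and continuous sequence $\vec{\delta}=\langle\delta_i\mid i<\gamma\rangle$ with $\gamma<\mu$ a limit ordinal, $\cf(\gamma)=\cf(\delta)$, $\kappa\leq\delta_0$, each successor-indexed $\delta_i$ a measurable of $V$, and $\sup_{i<\gamma}\delta_i=\delta$; the bookkeeping is also arranged to retain enough of the $\GCH$ that every such $\delta$ is a strong limit cardinal after forcing with $\mathbb{C}$.

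The key point is that, for each such $\delta$, the product $\mathbb{C}$ factors as $\mathbb{C}\cong\mathbb{C}_{\leq\delta^+}\times\mathbb{C}_{>\delta^+}$, where $\mathbb{C}_{>\delta^+}$ is ${<}\delta^{++}$-closed, hence adds no subsets of $\delta^+$ and is irrelevant for approachability at $\delta^+$, while $\mathbb{C}_{\leq\delta^+}$ --- the part collapsing the cardinals in the interval $(\kappa,\delta)$ --- is built entirely from Levy collapses $\Coll(\lambda,{<}\rho)$ with $\kappa\leq\lambda<\rho\leq\delta$ and can be written as a product $\prod_{i<\gamma}\mathbb{P}_i$ in which $\mathbb{P}_i$ is the portion acting below $\delta_i$ but above the preceding block. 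Consequently $\mathbb{C}_{\leq\delta^+}$ is ${<}\kappa$-directed closed over $V[G]$ (i.e.\ ${<}\mu^+$-directed closed there), it preserves $\delta^+$ and every $\delta_i$, and, each $\mathbb{P}_i$ being a $\LIP$-forcing of the appropriate type, the continuous-sequence variant of Lemma~\ref{lemma: Product forces LIP} noted in the remark following it shows $\mathbb{C}_{\leq\delta^+}$ forces $\SLIP(\mu,\vec{\delta})$. Fix now a singular $\delta>\kappa$ with $\cf(\delta)<\mu$ and let $d\colon[\delta^+]^2\to\cf(\delta)$ be a normal, subadditive coloring in $V[G]$ (such a $d$ exists by the standard $\ZFC$ construction). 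Applying Theorem~\ref{Theorem: Indestructibility of neg AP} in $V[G]$ with $\dP:=\mathbb{C}_{\leq\delta^+}$ and this $d$, for any large enough regular $\Theta$ there are stationarily many $M\prec H^{V[G]}(\Theta)$ with $\dA(\mu,\kappa),\dP,d\in M$, with $\cf(\sup(M\cap\delta^+))=\mu$, with $\sup(M\cap\delta^+)$ not $d$-approachable, and admitting master conditions for $\dP$. In particular $S:=\{\sup(M\cap\delta^+)\mid M\text{ as above}\}$ is a stationary subset of $\delta^+\cap\cof(\mu)$ in $V[G]$ consisting of non-$d$-approachable points.

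Let $H$ be $\mathbb{C}$-generic over $V[G]$; it remains to transfer the situation into $V[G][H]$ and invoke Fact~\ref{fact: eisworth handbook}. Since $\mathbb{C}_{\leq\delta^+}$ is ${<}\mu^+$-closed and $S\subseteq\cof(\mu)$, the stationarity of $S$ is preserved (${<}\mu^+$-closed forcing preserves the stationarity of subsets of $\cof(\mu)$), and the cofinality of each point of $S$ stays equal to $\mu$; the complementary factor $\mathbb{C}_{>\delta^+}$, being ${<}\delta^{++}$-closed, preserves all of this. No $\alpha\in S$ becomes $d$-approachable: if it did, then by Fact~\ref{fact: Approachability refinement} there would be an unbounded $B\subseteq\alpha$ of ordertype ${\leq}\mu$ with $d\restriction[B]^2$ bounded, but then $B$, being of size ${\leq}\mu$, would already be an element of $V[G]$ (as $\mathbb{C}_{\leq\delta^+}$ is ${<}\mu^+$-closed and $\mathbb{C}_{>\delta^+}$ adds no subsets of $\delta^+$), so $\alpha$ would be $d$-approachable in $V[G]$, a contradiction. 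Since $d$ stays normal and subadditive in $V[G][H]$ (subadditivity is absolute, and normality persists because $\delta$ is not collapsed and cardinalities below $\delta$ can only decrease) and $\delta$ is strong limit there, Fact~\ref{fact: eisworth handbook} applies; as $\cf(\delta)<\mu$ gives $\cof(\mu)\cap\cof({\leq}\cf(\delta))=\emptyset$, the stationarity of $S\subseteq(\delta^+\cap\cof(\mu))\setminus S(d)$ forces $\delta^+\cap\cof(\mu)\notin I[\delta^+]$, as required.

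The main obstacle is the construction and uniform analysis of $\mathbb{C}$. One must set up the bookkeeping so that \emph{every} singular $\delta>\kappa$ of cofinality $<\mu$ in the final model --- limits of limits of the chosen measurables included, whose own cofinalities must be controlled --- is a milestone equipped with an admissible continuous sequence $\vec{\delta}$ of length $<\mu$, and one must verify, uniformly in $\delta$, that the factor $\mathbb{C}_{\leq\delta^+}$ is ${<}\kappa$-directed closed over $V[G]$, preserves $\delta^+$ and the $\delta_i$, has the chain condition needed for the product factorization, and forces $\SLIP(\mu,\vec{\delta})$; this last point in particular requires the continuous-sequence refinement of Lemma~\ref{lemma: Product forces LIP}, in which only the successor-indexed $\delta_i$ are measurable. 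Once $\mathbb{C}$ is in place, the $\GCH$/strong-limit bookkeeping and the transfer argument of the third paragraph are routine given Theorem~\ref{Theorem: Indestructibility of neg AP} and the facts collected in Section~\ref{sec: prelimminaries}.
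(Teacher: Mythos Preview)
Your overall architecture coincides with the paper's: force with $\dA(\mu,\kappa)$, then with an Easton-supported class product $\mathbb{C}$ of Levy collapses, factor $\mathbb{C}$ at each singular $\delta$, and invoke Theorem~\ref{Theorem: Indestructibility of neg AP}. The construction of $\mathbb{C}$, the cardinal-structure analysis and the non-$d$-approachability transfer are all along the right lines.

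There is, however, a genuine gap in the third paragraph. You write that ``${<}\mu^+$-closed forcing preserves the stationarity of subsets of $\cof(\mu)$'' and use this to push the stationarity of $S=\{\sup(M\cap\delta^+)\mid M\text{ as in Theorem~\ref{Theorem: Indestructibility of neg AP}}\}$ from $V[G]$ into $V[G][H]$. That preservation principle is \emph{not} a $\ZFC$ theorem: it is exactly Shelah's result that requires the extra hypothesis $S\in I[\delta^+]$ (Fact~2.18(5) as printed is missing this hypothesis; compare the discussion in the Introduction, where the paper explicitly says that if $S\notin I[\kappa]$ then ${<}\lambda^+$-closed forcings such as the Levy collapse can destroy the stationarity of $S$). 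Your $S$, by construction, consists of non-$d$-approachable points, so $S\notin I[\delta^+]$ in $V[G]$, and hence you are precisely in the regime where naive closure-based preservation can fail. In other words, you are trying to use approachability-style preservation to handle a set whose whole point is to be non-approachable.

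The fix is already in your hands: you noted that each $M$ admits a master condition $p^*$ for $\mathbb{C}_{\leq\delta^+}$, but you never used it. The paper's proof runs a density argument in $\mathbb{C}_{\leq\delta^+}$: given a name $\dot C$ for a club and a condition $p$, choose $M$ as in Theorem~\ref{Theorem: Indestructibility of neg AP} with $p,\dot C\in M$, take a master condition $p^*\leq p$, and show that $p^*\Vdash\sup(M\cap\delta^+)\in\dot C$ by meeting inside $M$ the dense sets ``some element of $\dot C$ lies above $\zeta$''. This is what clause~(4) of Theorem~\ref{Theorem: Indestructibility of neg AP} is for, and it replaces your unjustified preservation step. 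A minor side point: the paper takes the coloring $d$ in $V$ rather than in $V[G]$, which is what Theorem~\ref{Theorem: Indestructibility of neg AP} literally assumes; this is harmless but worth matching.
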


\begin{proof}
    Let $G$ be $\dA(\mu,\kappa)$-generic. Standard arguments show that the GCH pattern survives in $V[G]$. In $V[G]$, let $\langle \kappa_i\mid i\in \On\rangle$ be an increasing and continuous sequence such that $\kappa_0=\kappa$ and $\kappa_i$ is measurable whenever $i$ is a successor. (Note that we allow the existence of measurable cardinals that are limit of measurables.) 
    
    We force with $\mathbb{C}$, the $\On$-length Easton support product of $\Coll(\kappa_i^{++},{<}\kappa_{i+1})$. By the increasing closure of the posets (and Easton's lemma, Fact~\ref{lemma: Eastons lemma}) one can show that $\mathbb{C}$ is \emph{tame} in the sense of Friedman \cite{FriedmanBook}. In particular, 
    all the standard results from forcing theory apply to $\mathbb{C}$ and the resulting generic extension is a model of $\ZFC.$ In the extension in $\mathbb{C}$, all cardinals below $\kappa=\mu^+$ survive. Standard arguments show that above $\kappa$ all the $V$-cardinals of the form $\kappa_i$, $\kappa_i^+$ and $\kappa_i^{++}$ remain cardinals while the rest are collapsed. 
    It also follows that the only singular cardinals above $\kappa$ in the extension are  the $\kappa_i$'s that were formerly singular (see Lemma~\ref{lemma: singular cardinals}).

   Let  $C\s \mathbb{C}$ be $V[G]$-generic. For each  $i\in \On$ denote $$\textstyle \mathbb{C}^i:=\prod_{j\geq i}\Coll(\kappa_j^{++},{<}\kappa_{j+1})\;\text{and}\; \mathbb{C}_i:=\prod_{j<i}\Coll(\kappa_j^{++},{<}\kappa_{j+1}).$$
   Working in $V[G\ast C]$, let  $\delta>\kappa$ be a singular cardinal with $\cf^{V[C]}(\delta)<\mu$. Then, there exists a limit ordinal $\gamma$  such that $\delta=\kappa_\gamma$. 

   \smallskip

   Note that $\delta^+\cap \cof(\mu)\notin I[\delta^+]$ holds in $V[G\ast  C]$ if and only if $\delta^+\cap \cof(\mu)\notin I[\delta^+]$ holds in $V[C\cap \mathbb{C}_i]$ because the tail forcing $\mathbb{C}^\gamma$ is $\kappa_{\gamma}^{++}$-distributive in $V[C\cap \mathbb{C}_\gamma]$ (by Easton's lemma). Thus, it will be sufficient to check this latter property.

   \smallskip

   For each  $i<\gamma$, $\kappa_i$ is a measurable cardinal in $V[G]$ and so $\Coll(\kappa_i^{++},{<}\kappa_{i+1})$ is a $\LIP(\kappa_{i}^{++},\kappa_i)$-forcing. Combining this with   Lemma \ref{lemma: Product forces LIP} we infer that $\mathbb{C}_\gamma$ forces $\SLIP(\kappa_0,\langle\kappa_{i}\mid i<\gamma\rangle)$. So $\mathbb{C}_\gamma$ satisfies the requirements of Theorem~\ref{Theorem: Indestructibility of neg AP}.     Let $d\colon[\kappa_i^+]^2\to\cf(\gamma)$ be a normal, subadditive coloring in $V$ and denote $S:=\delta^+\cap \cof(\mu)$ as computed in $V[G]$. Note that the definition of this set is absolute between $V[G]$ and $V[G\ast C]$. By Fact \ref{fact: eisworth handbook}, it suffices to show that $S$ remains stationary after forcing with $\dC_\gamma$ and that it consists of points that are not $d$-approachable.

    \begin{myclaim}
       Working over $V[G]$, $\mathbb{C}_\gamma$ forces that $\check{S}$ is stationary and $\check{S}\cap \dot{S}(\check{d})=\check{\emptyset}.$
    \end{myclaim}
    \begin{proof}[Proof of claim]
Work in $V[G]$. We perform a density argument. Suppose that $\dot{C}$ is a $\dC_\gamma$-name for a club in $\delta^+$ as forced by a condition $p\in \mathbb{C}_\gamma$.    By Theorem \ref{Theorem: Indestructibility of neg AP}, there is $M\prec H^{V[G]}(\Theta)$ and $p\in \mathbb{C}_\gamma$ such that  $\rho:=\sup(M\cap\delta^+)$ 
    non $d$-approachable, $\cf(\rho)=\mu$, and $p^*$ is a $(\mathbb{C}_\gamma, M)$-master condition with $p^*\leq p$. Recall that this means that 
whenever $D\in M$ is open dense in $\dC_\gamma$, there is $q\in D\cap M$ with $p^*\leq q$. 

We claim that $p^*$ forces that $\check{\rho}\in\dot{C}\cap \check{S}$. To this end, it suffices to show that $p^*$ forces $\dot{C}\cap\check{\rho}$ to be unbounded in $\check{\rho}$. Given any $\zeta<\rho$, let $\zeta'>\zeta$ be with $\zeta'\in M$. The open dense set of all conditions in $\dC_\gamma$ forcing $\check{\eta}\in\dot{C}$ for some $\eta>\zeta'$ is in $M$. As a result, there is a condition $q\in D\cap M$ such that $p^*\leq q$. By elementarity, there is a witnessing $\eta$ in $M$. Therefore, $p^*$ forces $\dot{C}$ to be unbounded in $\check{\rho}$.  

Lastly, $p^*$ also forces $``\rho$ is  not $d$-approachable"  because $\dC_\gamma$ adds no new subsets of $\mu$ and $\cf(\rho)=\mu$. (So, in fact, the weakest condition forces this property). 

By density it follows that 
$\forces_{\mathbb{C}_\gamma}\text{$``\check{S}$ is stationary and $\check{S}\cap \dot{S}(\check{d})= \check{\emptyset}$"}.$
    \end{proof}
We are done with the theorem.
\end{proof}

Now we iterate the construction. To obtain enough instances of $\LIP$ along the way we alternate the forcing $\dA$ with the Levy-collapse. Additionally, in order to show that there is a forcing which projects onto our tail and forces $\SLIP$, we leave a gap of two cardinals between the collapses (so that Lemma \ref{lemma: Product forces LIP} applies). This necessarily leads to gaps between the cofinalities at which $\AP$ fails. We do not know if this is necessary, see Question \ref{Question 1}.

\begin{mydef}
    Assume $\GCH$. Let $\langle\kappa_\alpha\mid \alpha\in \On\rangle$ be an increasing and continuous sequence of cardinals such that $\kappa_0=\aleph_0$ and $\kappa_{\alpha}$ is supercompact whenever $\alpha$ is a successor. We define an Easton support Magidor iteration $$((\dP_{\alpha},\leq_{\alpha},\leq_{\alpha,0}),(\dot{\dQ}_{\alpha},\dot{\leq}_{\alpha},\dot{\leq}_{\alpha,0}))_{\alpha\in\On}$$ as follows: Assume that $(\dP_{\beta},\dot{\dQ}_{\beta})_{\beta<\alpha}$ has been defined. Let $\dP_{\alpha}$ be the Easton limit.
    \begin{itemize}
        \item \textbf{Case 1:} $\alpha$ is even. We let $\dot{\dQ}_{\alpha}$ be a $\dP_{\alpha}$-name for $\dot{\Coll}(\kappa_{\alpha}^{++},<\kappa_{\alpha+1})$
        \item \textbf{Case 2:} $\alpha$ is odd. We let $\dot{\dQ}_{\alpha}$ be a $\dP_{\alpha}$-name for $\dot{\dA}(\kappa_{\alpha}^{++},\kappa_{\alpha+1})$.
    \end{itemize}
\end{mydef}

\begin{mydef}
    Let $\beta<\alpha$ be ordinals. We let $(\dot{\dP}_{\beta,\alpha},\dot{\leq}_{\beta,\alpha},\dot{\leq}_{(\beta,\alpha),0})$ be a $\dP_{\beta}$-name for a Prikry-type poset such that $\dP_{\beta}*\dot{\dP}_{\beta,\alpha}\cong\dP_{\alpha}$.
\end{mydef}

So, in $V[\dP_{\beta}]$, $\dP_{\beta,\alpha}$ is the Easton support Magidor iteration of $(\dot{\dP}_{\beta,\gamma},\dot{\dQ}_{\gamma})_{\gamma\in[\beta,\alpha)}$. 

\begin{mylem}\label{lemma: Tail properties}
    Let $\beta<\alpha$ be ordinals. $\dP_{\beta}$ forces that $\dot{\dP}_{\beta,\alpha}$ is a Prikry-type poset with pure $\kappa_{\beta}^{++}$-decidability and a ${<}\,\kappa_{\beta}^{++}$-closed pure extension ordering. Additionally, if $\alpha$ is a limit, then $\dot{\dP}_{\beta,\alpha}$ is forced to have almost pure $\kappa_{\alpha}^+$-decidability.
\end{mylem}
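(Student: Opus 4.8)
The plan is to work in $V[\dP_\beta]$, where $\dot{\dP}_{\beta,\alpha}$ is the Easton-support Magidor limit of the iteration $(\dot{\dP}_{\beta,\gamma},\dot{\dQ}_\gamma)_{\gamma\in[\beta,\alpha)}$ of Prikry-type forcings, and to deduce all the asserted properties by feeding properties of the individual iterands into the general Magidor-iteration lemmas of Section~\ref{sec: prelimminaries}. So the first step is to record, for each $\gamma\in[\beta,\alpha)$, that $\dP_\gamma$ forces $\dot{\dQ}_\gamma$ to be a Prikry-type poset with pure $\kappa_\gamma^{++}$-decidability and a ${<}\kappa_\gamma^{++}$-directed closed pure extension ordering. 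When $\gamma$ is even this is trivial: $\dot{\dQ}_\gamma=\dot{\Coll}(\kappa_\gamma^{++},{<}\kappa_{\gamma+1})$, and taking $\leq_0$ to be $\leq$ makes it Prikry-type with pure $\lambda$-decidability for \emph{every} $\lambda$ (a dense set of conditions deciding a name is automatically dense for $\leq_0=\leq$), while ${<}\kappa_\gamma^{++}$-directed closure is standard. When $\gamma$ is odd, $\dot{\dQ}_\gamma=\dot{\dA}(\kappa_\gamma^{++},\kappa_{\gamma+1})$, and this is exactly Lemma~\ref{lemma: Properties of A}(1),(3), which applies since $\kappa_{\gamma+1}$ remains supercompact in $V[\dP_\gamma]$ ($\dP_\gamma$ being small relative to it). Because $\gamma\geq\beta$ gives $\kappa_\gamma^{++}\geq\kappa_\beta^{++}$, in every case $\dP_\gamma$ forces $\dot{\dQ}_\gamma$ to have pure $\kappa_\beta^{++}$-decidability (any name forced below $\kappa_\beta^{++}$ is forced below $\kappa_\gamma^{++}$) and a ${<}\kappa_\beta^{++}$-directed closed pure extension ordering.

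Granting this, the Prikry property of $\dot{\dP}_{\beta,\alpha}$ is part of the general theory of Easton-support Magidor iterations of Prikry-type forcings (\cite[\S6.3]{GitikHandbook}, \cite[Lemma~2.4]{JakobLevine}), and pure $\kappa_\beta^{++}$-decidability then follows at once from Lemma~\ref{lemma: Easton support Magidor}. For the ${<}\kappa_\beta^{++}$-closure of the pure extension ordering I would run the usual coordinatewise construction, as in the proof of Lemma~\ref{lemma: Iteration pure closure}: given a $\leq_0$-decreasing sequence $\langle p_i\mid i<\theta\rangle$ with $\theta<\kappa_\beta^{++}$, let $p(\gamma)$ be a $\leq_{\gamma,0}$-lower bound of $\langle p_i(\gamma)\mid i<\theta\rangle$ (which exists by the ${<}\kappa_\gamma^{++}$-, hence ${<}\kappa_\beta^{++}$-, directed closure of the $\gamma$-th iterand) and take the finite exceptional set to be empty, so that $p\leq_0 p_i$ for all $i$ once $p$ is shown to be a condition. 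The one clause to check is that $p\uhr\rho\in\dot{\dP}_{\beta,\rho}$ at an inaccessible stage $\rho$ where the Easton-support restriction is active; but every such $\rho$ exceeds $\kappa_{\gamma'}$ for all $\gamma'<\rho$, hence exceeds $\kappa_\beta^{++}>\theta$, so $\supp(p)\cap\rho$ is a union of $\theta<\rho$ many subsets of $\rho$ each bounded below the regular cardinal $\rho$, and is therefore bounded. (Verbatim the same argument gives ${<}\kappa_\beta^{++}$-directed closure, which is what is actually invoked in the proof of Theorem~\ref{Theorem: Indestructibility of neg AP}.) I expect this support bookkeeping to be the only step that is not entirely mechanical.

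Finally, suppose $\alpha$ is a limit ordinal, so that $\kappa_\alpha=\sup_{\gamma<\alpha}\kappa_\gamma$ is inaccessible. For each $\gamma\in[\beta,\alpha)$ one has $\gamma+1<\alpha$, and since the construction preserves the $\GCH$, $\dP_\gamma$ forces $|\dot{\dQ}_\gamma|\leq\kappa_{\gamma+1}<\kappa_\alpha$; hence $\dP_\gamma$ forces $\dot{\dQ}_\gamma$ to be $\kappa_\alpha$-cc, which yields almost pure $\kappa_\alpha^+$-decidability trivially with $q=p$: if $p\Vdash\tau<\kappa_\alpha^+$, then by the chain condition $p$ forces $\tau$ into a set of fewer than $\kappa_\alpha$ ordinals, whence $p\Vdash\tau<\eta$ for $\eta$ the supremum of that set, and $\eta<\kappa_\alpha<\kappa_\alpha^+$. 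A final application of Lemma~\ref{lemma: Easton support Magidor}, in its ``almost pure'' version, then shows that $\dot{\dP}_{\beta,\alpha}$ has almost pure $\kappa_\alpha^+$-decidability, which completes the argument.
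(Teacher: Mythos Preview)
Your proof is correct and follows essentially the same approach as the paper, which simply cites Lemma~\ref{lemma: Easton support Magidor} and Lemma~\ref{lemma: Properties of A} for the decidability statements and Lemma~\ref{lemma: Iteration pure closure} for the closure; you have merely unpacked these references. One harmless slip: at a limit $\alpha$ the cardinal $\kappa_\alpha=\sup_{\gamma<\alpha}\kappa_\gamma$ is typically \emph{singular}, not inaccessible (e.g.\ $\kappa_\omega$ has cofinality $\omega$), but your argument for almost pure $\kappa_\alpha^+$-decidability of each $\dot{\dQ}_\gamma$ only uses $|\dot{\dQ}_\gamma|<\kappa_\alpha$ together with the regularity of $\kappa_\alpha^+$, so the remark plays no role.
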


\begin{proof}
    The decidability of $\dot{\dP}_{\beta,\alpha}$ follows from Lemma \ref{lemma: Easton support Magidor} together with Lemma \ref{lemma: Properties of A}. Lemma \ref{lemma: Easton support Magidor} also implies the almost pure ${<}\,\kappa_{\alpha}^+$-decidability, as every $\dot{\dP}_{\beta,\gamma}$ has it for $\gamma<\alpha$ since it is of size ${<}\,\kappa_{\alpha}^+$. The closure of the pure extension ordering follows from Lemma \ref{lemma: Iteration pure closure}.
\end{proof}

So in particular, after forcing with $\dP_{\beta}$, the tail forcing adds no new subsets of $\kappa_{\beta}$. This implies that we can define the class forcing extension as the ``limit'' of the forcing extensions by $\dP_{\beta}$ for $\beta\in\On$. Abusing notation a bit, we will refer to this class forcing extension as an extension ``by $\dP$'', treating $\dP$ as the Easton support limit of the class iteration $(\dP_{\alpha},\dot{\dQ}_{\alpha})_{\alpha\in\On}$.

\begin{mylem}\label{lemma: singular cardinals}
    After forcing with $\dP$, the class of cardinals consists of $\aleph_0$ as well as $\kappa_{\alpha}$, $\kappa_{\alpha}^+$ and $\kappa_{\alpha}^{++}$ for each $\alpha\in\On$.
\end{mylem}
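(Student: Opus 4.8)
The plan is to establish, by induction on $\alpha\in\On$, the following local claim: the cardinals of $V[\dP_{\alpha}]$ that are ${\le}\,\kappa_{\alpha}$ are exactly
\[\textstyle\{\aleph_{0}\}\cup\bigcup_{\beta<\alpha}\{\kappa_{\beta},\kappa_{\beta}^{+},\kappa_{\beta}^{++}\}\cup\{\kappa_{\alpha}\},\]
where $\kappa_{\beta}^{+},\kappa_{\beta}^{++}$ are computed in $V$; the $\GCH$ together with the size bounds below guarantees that these coincide with the successor and double successor of $\kappa_{\beta}$ computed in every $V[\dP_{\gamma}]$ (and in $V[\dP]$), so we may and will identify them throughout. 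Granting the local claim, the lemma follows: given an ordinal $\lambda$, pick $\gamma$ with $\lambda<\kappa_{\gamma}$ (e.g.\ $\gamma=\lambda+1$); by Lemma~\ref{lemma: Tail properties} the tail $\dot{\dP}_{\gamma,\delta}$ is, over $V[\dP_{\gamma}]$, Prikry-type with pure $\kappa_{\gamma}^{++}$-decidability and a ${<}\,\kappa_{\gamma}^{++}$-closed pure extension ordering, so a routine fusion argument (decide membership of each ordinal ${<}\,\eta$ in a name for a subset of $\eta<\kappa_{\gamma}^{++}$ along a $\leq_{0}$-decreasing sequence of length ${<}\,\kappa_{\gamma}^{++}$, then take a pure lower bound) shows that no $\dot{\dP}_{\gamma,\delta}$ adds a new subset of $\lambda$; since this holds for every ordinal length $\delta>\gamma$, it holds for the class tail, so $\lambda$ is a cardinal of $V[\dP]$ iff it is a cardinal of $V[\dP_{\gamma}]$, which by the local claim happens iff $\lambda=\aleph_{0}$ or $\lambda\in\{\kappa_{\beta},\kappa_{\beta}^{+},\kappa_{\beta}^{++}\}$ for some $\beta$.

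For the induction I first note that $|\dP_{\beta}|\le\kappa_{\beta}$ for all $\beta$: each iterand $\dot{\dQ}_{\gamma}$ has size $\le\kappa_{\gamma+1}$ (as $\Coll(\kappa_{\gamma}^{++},{<}\kappa_{\gamma+1})$ has size $\kappa_{\gamma+1}$, and $\dA(\kappa_{\gamma}^{++},\kappa_{\gamma+1})$ has size $\kappa_{\gamma+1}$, being a $\kappa_{\gamma+1}$-cc iteration assembled from iterands of size ${<}\,\kappa_{\gamma+1}$ by its definition and Lemma~\ref{lemma: Properties of A}), and the Easton support keeps the limit stages bounded; in particular $\dP_{\beta}$ is ${<}\,\kappa_{\beta+1}$-cc, so for a successor ordinal $\beta+1$ the cardinal $\kappa_{\beta+1}$ remains supercompact, hence inaccessible, in $V[\dP_{\beta}]$, and moreover the $V[\dP_{\beta}]$-cardinals in $(\kappa_{\beta},\kappa_{\beta+1})$ are exactly the $V$-cardinals there. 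The base case $\alpha=0$ is trivial, and for $\alpha$ a limit the claim is immediate from the inductive hypotheses together with the preservation statement of the previous paragraph (so every $\kappa_{\beta}$ with $\beta<\alpha$ remains a cardinal) and the fact that $\kappa_{\alpha}=\sup_{\beta<\alpha}\kappa_{\beta}$ is a limit of cardinals. For $\alpha=\beta+1$ with $\beta$ even, $\dP_{\beta+1}=\dP_{\beta}*\dot{\Coll}(\check{\kappa}_{\beta}^{++},{<}\check{\kappa}_{\beta+1})$: the collapse is ${<}\,\kappa_{\beta}^{++}$-closed, hence preserves every cardinal of $V[\dP_{\beta}]$ that is ${\le}\,\kappa_{\beta}^{++}$; it is $\kappa_{\beta+1}$-cc, hence preserves $\kappa_{\beta+1}$ and turns it into $(\kappa_{\beta}^{++})^{+}$; and it collapses every ordinal in $(\kappa_{\beta}^{++},\kappa_{\beta+1})$. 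Combined with the inductive hypothesis for $\alpha=\beta$ and the preservation statement above, this yields the displayed list for $\alpha$. For $\alpha=\beta+1$ with $\beta$ odd, $\dP_{\beta+1}=\dP_{\beta}*\dot{\dA}(\check{\kappa}_{\beta}^{++},\check{\kappa}_{\beta+1})$, and by Lemma~\ref{lemma: Properties of A} the poset $\dA(\kappa_{\beta}^{++},\kappa_{\beta+1})$ is Prikry-type with pure $\kappa_{\beta}^{++}$-decidability and a ${<}\,\kappa_{\beta}^{++}$-directed-closed pure extension ordering, so the same fusion argument shows it adds no new subset of any ordinal ${<}\,\kappa_{\beta}^{++}$ (hence preserves all cardinals ${\le}\,\kappa_{\beta}^{++}$); by Lemma~\ref{lemma: Properties of A}(4) it forces $(\kappa_{\beta}^{++})^{+}=\kappa_{\beta+1}$, collapsing every ordinal in $(\kappa_{\beta}^{++},\kappa_{\beta+1})$, and it is $\kappa_{\beta+1}$-cc; again this closes the induction.

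The only genuinely delicate points are bookkeeping ones rather than obstacles. First, one must keep track that the parameter $\kappa_{\gamma}^{++}$ literally appearing in the definition of $\dot{\dQ}_{\gamma}$ (computed in $V[\dP_{\gamma}]$) really equals $\kappa_{\gamma}^{++V}$; this is exactly where the bound $|\dP_{\gamma}|\le\kappa_{\gamma}$ and the chain-condition and closure of the forcing already performed (so that no cardinal in $[\kappa_{\gamma},\kappa_{\gamma}^{++}]$ has been disturbed up to stage $\gamma$) get used, and it legitimises the identification made in the first paragraph. Second, the whole argument rests on the single principle ``Prikry-type $+$ pure $\mu$-decidability $+$ ${<}\,\mu$-closed pure ordering $\Rightarrow$ no new subset of any ordinal ${<}\,\mu$'', applied both to the tail forcings (via Lemma~\ref{lemma: Tail properties}) and to the building block $\dA$ (via Lemma~\ref{lemma: Properties of A}); this is what pins down the relevant successor cardinals and shows they are never moved by a later stage. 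Everything else is a routine Levy-collapse cardinality computation.
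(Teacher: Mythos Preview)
Your overall architecture---prove a local claim by induction on $\alpha$, then transfer to $V[\dP]$ via the tail-preservation of Lemma~\ref{lemma: Tail properties}---is sound, and for successor $\alpha$ it matches the paper's argument. There is, however, a genuine gap at limit stages.

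The claim $|\dP_{\beta}|\le\kappa_{\beta}$ is \emph{false} when $\beta$ is a limit with $\kappa_{\beta}$ singular. Your justification ``the Easton support keeps the limit stages bounded'' is not correct here: the Easton support takes bounded support at stage $\rho$ only when $\rho$ is inaccessible and $|\dP_{\gamma}|<\rho$ for all $\gamma<\rho$; since $|\dP_{\gamma}|\ge\kappa_{\gamma}$, this forces $\rho=\kappa_{\rho}$. At every other limit stage (e.g.\ $\beta=\omega$), the support is full, and a direct count under $\GCH$ gives $|\dP_{\omega}|=\prod_{n}\kappa_{n+1}=\kappa_{\omega}^{\aleph_{0}}=\kappa_{\omega}^{+}$, not $\kappa_{\omega}$. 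The weaker bound $|\dP_{\beta}|\le\kappa_{\beta}^{+}$ still gives preservation of $\kappa_{\beta}^{++}$ (via the $\kappa_{\beta}^{++}$-cc), but it does \emph{not} give preservation of $\kappa_{\beta}^{+}$---and you need that at the next step $\beta\to\beta+1$, both for the parameter in $\Coll(\kappa_{\beta}^{++},{<}\kappa_{\beta+1})$ to be the right ordinal and for $\kappa_{\beta}^{+}$ to appear in your displayed list of cardinals at stage $\beta+1$.

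This is exactly the point the paper isolates and handles separately: for limit $\alpha$ with $\kappa_{\alpha}$ singular, the paper uses the second clause of Lemma~\ref{lemma: Tail properties}---the \emph{almost} pure $\kappa_{\alpha}^{+}$-decidability of $\dot{\dP}_{\gamma,\alpha}$---together with the ${<}\,\kappa_{\gamma}^{++}$-closure of $\leq_{0}$ to argue that no cofinal function from a cardinal ${<}\kappa_{\alpha}$ into $\kappa_{\alpha}^{+}$ is added by the tail (and $\dP_{\gamma}$ is too small to collapse $\kappa_{\alpha}^{+}$). Your proof uses only the pure $\kappa_{\gamma}^{++}$-decidability, which is not enough at this one point. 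Patching this in, your inductive scheme goes through.
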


\begin{proof}
    It is clear that all $V$-cardinals which are not as in the statement of the lemma are collapsed. We show that all the claimed cardinals are preserved. To this end, let $\alpha\in\On$. Since $\dot{\dP}_{\alpha,\beta}$ is forced to be a Prikry-type forcing with a ${<}\,\kappa_{\alpha}^{++}$-closed pure extension ordering and pure $\kappa_{\alpha}^{++}$-decidability, it suffices to show that $\dP_{\alpha}$ preserves $$\text{$\kappa_{\alpha}$, $\kappa_{\alpha}^+$ and $\kappa_{\alpha}^{++}$.}$$

    First assume that $\alpha$ is a successor. It follows that $\dP_{\alpha-1}$ has size ${<}\,\kappa_{\alpha}$ and thus preserves $\kappa_{\alpha}$. Let $G_{\alpha-1}$ be $\dP_{\alpha-1}$-generic. Then $\dot{\dQ}_{\alpha-1}^{G_{\alpha-1}}$ is $\kappa_{\alpha}$-cc (by standard facts for the Levy collapse or by Lemma \ref{lemma: Properties of A} for $\dA$) and thus preserves $\kappa_{\alpha}$, $\kappa_{\alpha}^+$ and $\kappa_{\alpha}^{++}$. Since $\dP_{\alpha-1}*\dot{\dQ}_{\alpha-1}=\dP_{\alpha}$, we are done.

    Now assume $\alpha$ is a limit. By the previous paragraph it follows that a cofinal set of cardinals below $\kappa_{\alpha}$ is preserved by $\dP$ and thus $\kappa_{\alpha}$ is preserved as well. Now assume in the first case that $\kappa_{\alpha}$ is singular. It follows that if $\kappa_{\alpha}^+$ is collapsed by $\dP_{\alpha}$, that poset forces $|\kappa_{\alpha}^+|=\kappa_{\alpha}$ and thus $\cf(\kappa_{\alpha}^+)<\kappa_{\alpha}$. However, this situation is not possible, since for every $\beta<\alpha$, $\dP_{\alpha}\cong\dP_{\beta}*\dot{\dP}_{\beta,\alpha}$ where $|\dP_{\beta}|<\kappa_{\alpha}$ (and thus cannot collapse $\kappa_{\alpha}^+$) and $\dot{\dP}_{\beta,\alpha}$ is a Prikry-type forcing with almost pure $\kappa_{\alpha}^+$-decidability and a ${<}\,\kappa_{\beta}^+$-closed pure extension odering (thus it cannot add cofinal subsets of $\kappa_{\alpha}^+$ of ordertype $\leq\kappa_{\beta}$). For $\kappa_{\alpha}^{++}$, simply note that because of the $\GCH$, $|\dP_{\alpha}|\leq\kappa_{\alpha}^+$.
    
    Lastly, assume in the second case that $\kappa_{\alpha}$ is regular. Then $\kappa_{\alpha}$ is in fact inaccessible and $|\dP_{\beta}|<\kappa$ for every $\beta<\kappa$. In particular, $\dP_{\alpha}$ is the direct limit of ${<}\,\kappa_{\alpha}$-sized forcing notions and thus $\kappa_{\alpha}$-cc. It follows that $\kappa_{\alpha}$, $\kappa_{\alpha}^+$ and $\kappa_{\alpha}^{++}$ are preserved.
\end{proof}
Now we turn toward showing that, in $V[\dP]$, $\AP_{\delta}$ fails for every singular cardinal $\delta$. We intend to use Theorem \ref{Theorem: Indestructibility of neg AP}. To apply this theorem, we need to find a poset which projects onto the direct extension ordering on a suitable tail forcing and forces $\SLIP$.

\begin{mydef}
    Let $\beta<\alpha$ be ordinals. Let $G_{\beta}$ be $\dP_{\beta}$-generic and work in $V[G_{\beta}]$. We define
    $$\textstyle
    \dT_{\beta,\alpha}:=\prod_{\gamma\in[\beta,\alpha)}\dT((\dot{\dP}_{\beta,\gamma}^{G_{\beta}},\dot{\leq}_{\beta,\gamma}^{G_{\beta}}),(\dot{\dQ}_{\gamma}^{G_{\beta}},\dot{\leq}_{\gamma,0}^{G_{\beta}}))$$
    where we regard $\dot{\dQ}_{\gamma}$ as a $\dP_{\beta}*\dot{\dP}_{\beta,\gamma}$-name.
\end{mydef}

By Lemma \ref{lemma: Projection Easton support Magidor} and standard arguments, one sees the following:

\begin{mylem}\label{lemma: Properties of T}
    Let $\beta<\alpha$ be ordinals. Let $G_{\beta}$ be $\dP_{\beta}$-generic and work in $V[G_{\beta}]$. The poset $\dT_{\beta,\alpha}$ is ${<}\,\kappa_{\beta}^{++}$-directed closed. Moreover, there is a projection from $\dT_{\beta,\alpha}$ onto $(\dot{\dP}_{\beta,\alpha}^{G_{\beta}},\dot{\leq}_{(\beta,\alpha),0}^{G_{\beta}})$. \qed
\end{mylem}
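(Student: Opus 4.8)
The plan is to read off the ``moreover'' clause directly from Lemma~\ref{lemma: Projection Easton support Magidor} and to verify the directed closure coordinatewise. Work in $V[G_\beta]$. By construction $\dot{\dP}_{\beta,\alpha}^{G_\beta}$ is the Easton-support Magidor limit of the iteration of Prikry-type forcings $(\dot{\dP}_{\beta,\gamma}^{G_\beta},\dot{\dQ}_{\gamma}^{G_\beta})_{\gamma\in[\beta,\alpha)}$, and by definition $\dT_{\beta,\alpha}$ is exactly the associated Easton-support product $\prod_{\gamma\in[\beta,\alpha)}\dT((\dot{\dP}_{\beta,\gamma}^{G_\beta},\dot{\leq}_{\beta,\gamma}),(\dot{\dQ}_{\gamma}^{G_\beta},\dot{\leq}_{\gamma,0}))$ of termspace forcings. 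Hence Lemma~\ref{lemma: Projection Easton support Magidor}, applied inside $V[G_\beta]$ to this tail iteration, says precisely that the identity map is a projection from $\dT_{\beta,\alpha}$ onto the direct extension ordering $(\dot{\dP}_{\beta,\alpha}^{G_\beta},\dot{\leq}_{(\beta,\alpha),0}^{G_\beta})$. The one point worth emphasizing is the one already flagged before Lemma~\ref{lemma: Iteration pure closure}: the pure ordering $\dot{\leq}_{(\beta,\alpha),0}$ on the limit is strictly coarser than the termspace product of the factor pure orderings $\dot{\leq}_{\gamma,0}$, and Lemma~\ref{lemma: Projection Easton support Magidor} is exactly the device that reconciles the two.

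For the directed closure, I would first check that each factor $\dT_\gamma:=\dT((\dot{\dP}_{\beta,\gamma}^{G_\beta},\dot{\leq}_{\beta,\gamma}),(\dot{\dQ}_{\gamma}^{G_\beta},\dot{\leq}_{\gamma,0}))$ is ${<}\,\kappa_{\gamma}^{++}$-directed closed, hence a fortiori ${<}\,\kappa_{\beta}^{++}$-directed closed since $\gamma\geq\beta$. Indeed $\dot{\dP}_{\beta,\gamma}$ forces that $\dot{\dQ}_{\gamma}$ carries a ${<}\,\kappa_{\gamma}^{++}$-directed closed direct extension ordering: if $\gamma$ is even this is just the ${<}\,\kappa_{\gamma}^{++}$-directed closure of $\dot{\Coll}(\kappa_{\gamma}^{++},{<}\kappa_{\gamma+1})$, and if $\gamma$ is odd it is Lemma~\ref{lemma: Properties of A}(3) applied to $\dot{\dA}(\kappa_{\gamma}^{++},\kappa_{\gamma+1})$. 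Now let $X\subseteq\dT_\gamma$ be $\prec$-directed with $|X|<\kappa_{\gamma}^{++}$. By the $\GCH$ (as in the proof of Lemma~\ref{lemma: singular cardinals}) the forcing $\dot{\dP}_{\beta,\gamma}$ has size ${\leq}\kappa_{\gamma}^{+}$, hence is $\kappa_{\gamma}^{++}$-cc, so it forces $\check{X}$ to be a $\dot{\leq}_{\gamma,0}$-directed subset of $\dot{\dQ}_{\gamma}$ of size ${<}\kappa_{\gamma}^{++}$; by the directed closure of $\dot{\dQ}_{\gamma}$ and the Maximum Principle there is a name $\dot{q}^{*}$ forced to be a $\dot{\leq}_{\gamma,0}$-lower bound of $\check{X}$, and such $\dot{q}^{*}$ is a $\prec$-lower bound of $X$ in $\dT_\gamma$.

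It then remains to assemble coordinatewise lower bounds into a lower bound of an arbitrary directed $X\subseteq\dT_{\beta,\alpha}$ with $|X|<\kappa_{\beta}^{++}$: each coordinate projection of $X$ is directed of size ${<}\kappa_{\beta}^{++}\leq\kappa_{\gamma}^{++}$, so it has a lower bound $\dot{q}^{*}_{\gamma}$ by the previous paragraph, and putting $p^{*}(\gamma):=\dot{q}^{*}_{\gamma}$ gives a condition below every element of $X$ provided $\supp(p^{*})=\bigcup_{x\in X}\supp(x)$ is again an Easton set. This last verification is the only place any care is needed, and it is routine: any inaccessible index $\rho$ at which the Easton restriction applies is a closure point of the iteration, hence a fixed point $\kappa_\rho=\rho$ of the enumeration $\langle\kappa_\alpha\mid\alpha\in\On\rangle$, and since $\rho>\beta$ it satisfies $\rho\geq\kappa_{\beta+1}>\kappa_{\beta}^{++}$; thus boundedness below $\rho$ is preserved under unions of fewer than $\kappa_{\beta}^{++}$ many supports. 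Altogether $\dT_{\beta,\alpha}$ is ${<}\,\kappa_{\beta}^{++}$-directed closed. I do not anticipate any genuinely hard step here: the lemma is essentially a repackaging of Lemma~\ref{lemma: Projection Easton support Magidor} together with standard termspace and Easton-support bookkeeping, the only mild subtlety being the interaction between the global pure ordering and the product of the local pure orderings.
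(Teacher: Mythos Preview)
Your proposal is correct and follows the same approach the paper intends: the paper's own ``proof'' consists solely of the sentence ``By Lemma~\ref{lemma: Projection Easton support Magidor} and standard arguments'' together with a $\qed$, and you have simply unpacked what those standard arguments are---invoking Lemma~\ref{lemma: Projection Easton support Magidor} for the projection and verifying the directed closure factorwise via the Maximum Principle and an Easton-support bookkeeping check. Your treatment is considerably more detailed than the paper's, but there is no divergence in method; the one minor remark is that the chain-condition step you insert to control $|\check{X}|$ is not strictly needed (the ground-model surjection from an ordinal ${<}\kappa_\gamma^{++}$ onto $X$ already forces $|\check{X}|<\check{\kappa}_\gamma^{++}$), but it does no harm.
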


We show that we can actually apply Theorem \ref{Theorem: Indestructibility of neg AP} to $\dT_{\beta,\alpha}$:

\begin{mylem}\label{lemma: T forces SLIP}
    Let $\beta<\alpha$ be ordinals such that $\alpha$ is a limit. Let $\langle \delta_i\mid i<\gamma\rangle$ for some limit ordinal $\gamma$ enumerate the set $\{\kappa_j\;|\;j\in[\beta,\alpha)\text{ is odd}\}$. Then, inside $V[G_\beta]$, $\dT_{\beta,\alpha}$ forces $\SLIP(\kappa_{\beta}^{++},\langle \delta_i\mid i<\gamma\rangle)$.
\end{mylem}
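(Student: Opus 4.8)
The plan is to factor $\dT_{\beta,\alpha}$ as a product of termspace forcings, each of which forces an instance of $\LIP$, and then argue exactly as in Lemma~\ref{lemma: Product forces LIP} that the product forces $\SLIP$. First I would isolate, for each odd $\gamma\in[\beta,\alpha)$ with $\kappa_\gamma=\delta_i$, the factor $\dT((\dot{\dP}_{\beta,\gamma},\dot{\leq}_{\beta,\gamma}),(\dot{\dQ}_\gamma,\dot{\leq}_{\gamma,0}))$ and observe that $\dot{\dQ}_\gamma$ is a $\dP_\gamma=\dP_\beta*\dot{\dP}_{\beta,\gamma}$-name for $\dot{\dA}(\kappa_\gamma^{++},\kappa_{\gamma+1})$, which by Lemma~\ref{lemma: Properties of A} is forced to be (essentially, on a dense subset with the right closure) a $\LIP$-style forcing. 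More precisely, the relevant point is that $\dot{\dQ}_\gamma$ forces $\SLIP$-type data at $\kappa_{\gamma+1}=\delta_{i+1}$ via the Laver-function guessing built into Definition~\ref{def: The A poset}; in fact, since $\dot{\dQ}_\gamma$ is a $\LIP(\kappa_\gamma^{++},\kappa_{\gamma+1})$-forcing (after forcing with the even-stage collapse $\dot{\dQ}_{\gamma-1}$ placing $\kappa_\gamma^{++}$ appropriately), Lemma~\ref{lemma: Termspace of LIP is LIP} gives that the termspace $\dT((\dot{\dP}_{\beta,\gamma}),(\dot{\dQ}_\gamma))$ is itself a $\LIP(\kappa_\gamma^{++},\kappa_{\gamma+1})$-forcing, provided $|\dot{\dP}_{\beta,\gamma}|<\kappa_{\gamma+1}$, which holds by $\GCH$ and the size bounds on the iteration.

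The second step is to handle the even factors and the ``spurious'' coordinates. For even $\gamma$, $\dot{\dQ}_\gamma$ is a Levy collapse $\dot{\Coll}(\kappa_\gamma^{++},{<}\kappa_{\gamma+1})$; by the remark following the definition of $\LIP$-forcing, this is a $\LIP(\kappa_\gamma^{++},\kappa_{\gamma+1})$-forcing whenever $\kappa_{\gamma+1}$ is measurable, which it is since $\gamma+1$ is odd hence $\kappa_{\gamma+1}$ is supercompact. So again Lemma~\ref{lemma: Termspace of LIP is LIP} applies to the termspace factor. Thus $\dT_{\beta,\alpha}$ is (up to the Easton-support product structure, which here is simply the full support product since all relevant cardinals are below the next inaccessible in the enumeration) a product of $\LIP$-forcings indexed by $[\beta,\alpha)$, where the one at index $\gamma$ is a $\LIP((2^{\sup_{j}\kappa_j})^+,\cdot)$-forcing with the supremum taken over the previously-enumerated cardinals; under $\GCH$ this matches the hypothesis $(2^{\sup_{j<i}\kappa_j})^+=\kappa_{\cdot}^{++}$ required by Lemma~\ref{lemma: Product forces LIP}. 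Then I would quote Lemma~\ref{lemma: Product forces LIP} directly with $\mu=\kappa_\beta^{++}$ and the sequence $\langle\delta_i\mid i<\gamma\rangle$ to conclude that the full-support product — hence $\dT_{\beta,\alpha}$ — forces $\SLIP(\kappa_\beta^{++},\langle\delta_i\mid i<\gamma\rangle)$.

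The main obstacle, and where I would spend the most care, is the bookkeeping that reconciles the termspace factors of $\dT_{\beta,\alpha}$ with the honest product of $\LIP$-forcings that Lemma~\ref{lemma: Product forces LIP} wants as input. Two issues must be addressed: first, the termspace $\dT((\dot{\dP}_{\beta,\gamma}),(\dot{\dQ}_\gamma))$ is over the \emph{name} $\dot{\dP}_{\beta,\gamma}$, not over the ground model $V[G_\beta]$, so to invoke Lemma~\ref{lemma: Termspace of LIP is LIP} I need $|\dP_{\beta,\gamma}|$ (computed in $V[G_\beta]$) to be strictly below $\kappa_{\gamma+1}$ — this is exactly where the ``gap of two cardinals'' between collapses is used, guaranteeing $|\dot{\dP}_{\beta,\gamma}|\le\kappa_\gamma^+<\kappa_{\gamma}^{++}<\kappa_{\gamma+1}$ via $\GCH$ — and I must double-check that the even collapse at stage $\gamma-1$ (when $\gamma$ is odd) has indeed been absorbed into $\dot{\dP}_{\beta,\gamma}$ so that $\kappa_\gamma^{++}$ is correctly positioned. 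Second, the closure parameter: Lemma~\ref{lemma: Product forces LIP} requires the $i$-th forcing to be $\LIP((2^{\sup_{j<i}\kappa_j})^+,\kappa_i)$; under $\GCH$ in $V[G_\beta]$ one has $(2^{\sup_{j<i}\delta_j})^+=(\sup_{j<i}\delta_j)^{++}$, and I must verify that each termspace factor is $\LIP$ for \emph{this} closure degree, which amounts to tracing through Definition~\ref{def: The A poset} to see that the internal Levy collapses inside $\dA$ were set up with first parameter $\kappa_\gamma^{++}=(\sup_{j\le i}\delta_j)^{++}$, matching (with the convention $(2^{\sup_{j<0}\kappa_j})^+:=\kappa_\beta^{++}$). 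Once these size and closure matchings are verified — the rest is a direct citation — and I would also remark that $\dT_{\beta,\alpha}$ is $\kappa_{\gamma+1}^{+}$-cc at each stage and the product is taken with full/Easton support below the relevant inaccessibles, so no new subsets of small cardinals intrude. With these checks in place, $\dT_{\beta,\alpha}\Vdash\SLIP(\kappa_\beta^{++},\langle\delta_i\mid i<\gamma\rangle)$ follows.
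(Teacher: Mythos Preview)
Your overall strategy — factor $\dT_{\beta,\alpha}$ as a product and invoke Lemma~\ref{lemma: Product forces LIP} — is the right one, and your treatment of the even-stage factors is correct: the termspace of the Levy collapse $\dot{\Coll}(\kappa_\gamma^{++},{<}\kappa_{\gamma+1})$ is a $\LIP(\kappa_\gamma^{++},\kappa_{\gamma+1})$-forcing by Lemma~\ref{lemma: Termspace of LIP is LIP}, and $\kappa_{\gamma+1}$ is one of the $\delta_i$'s. The gap is in your handling of the odd-stage factors.

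You assert that for odd $\gamma$, $\dot{\dQ}_\gamma=\dot{\dA}(\kappa_\gamma^{++},\kappa_{\gamma+1})$ is a $\LIP(\kappa_\gamma^{++},\kappa_{\gamma+1})$-forcing, citing Lemma~\ref{lemma: Properties of A}. But that lemma says nothing of the sort: it gives chain condition, Prikry property, and closure of the \emph{direct extension} order, not the projection clause~(2) in the definition of a $\LIP$-forcing. That clause is genuinely nontrivial for $\dA$ (an Easton iteration whose iterands depend on a Laver function), and the paper never establishes it. Moreover, even granting your claim, the $\LIP$ instance you would obtain lives at $\kappa_{\gamma+1}$, which for odd $\gamma$ is \emph{even}-indexed and hence not one of the $\delta_i$'s; your identification ``$\kappa_{\gamma+1}=\delta_{i+1}$'' is simply a bookkeeping error. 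So you end up with a product indexed by all of $[\beta,\alpha)$, with $\LIP$ instances at both the $\delta_i$'s and the intermediate even-indexed $\kappa$'s, and Lemma~\ref{lemma: Product forces LIP} does not apply to that configuration.

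The paper's fix is to \emph{group} the factors: for each $\delta_i=\kappa_{j_i}$ (with $j_i$ odd), let $\dP_i$ be the product of the termspace at stage $j_i-1$ (the Levy collapse, a genuine $\LIP(\kappa_{j_i-1}^{++},\kappa_{j_i})$-forcing) together with the termspace at the preceding odd stage $j_i-2$ (the $\dA$-factor), when the latter exists. The point is not that the $\dA$-termspace is a $\LIP$-forcing — it is only that it is \emph{small} (size ${<}\kappa_{j_i}$) and ${<}\kappa_{j_i-2}^{++}$-directed closed, so multiplying it into a $\LIP(\cdot,\kappa_{j_i})$-forcing still yields a $\LIP(\kappa_{j_i-2}^{++},\kappa_{j_i})$-forcing. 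Under $\GCH$ this closure parameter equals $(2^{\sup_{j<i}\delta_j})^+$, and now $\dT_{\beta,\alpha}=\prod_{i<\gamma}\dP_i$ with exactly one factor per $\delta_i$, so Lemma~\ref{lemma: Product forces LIP} applies directly. The missing idea in your proposal is precisely this absorption of the $\dA$-termspace as a small, closed side factor rather than as a $\LIP$-forcing in its own right.
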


\begin{proof}
    We intend to use Lemma \ref{lemma: Product forces LIP}. For each $i<\gamma$, let $\delta_i=\kappa_{j_i}$ for some odd $j_i\in(\beta,\alpha)$. If $j_i=\beta+1$ or $j_i-1$ is a limit,
    let
    $$\dP_i:=\dT((\dP_{\beta,j_i-1},\leq_{\beta,j_i-1}),(\dot{\dQ}_{j_i-1}^{G_{\beta}},\dot{\leq}_{j_i-1}^{G_{\beta}}))$$
    if $j_i-1$ is a successor and $j_i\neq\beta+1$, let
    $$\dP_i:=\dT((\dP_{\beta,j_i-1},\leq_{\beta,j_i-1}),(\dot{\dQ}_{j_i-1}^{G_{\beta}},\dot{\leq}_{j_i-1,0}^{G_{\beta}}))\times\dT((\dP_{\beta,j_i},\leq_{\beta,j_i}),(\dot{\dQ}_{j_i}^{G_{\beta}},\dot{\leq}_{j_i,0}^{G_{\beta}}))$$
    
    It follows that $\dT_{\beta,\alpha}=\prod_{i<\gamma}\dP_i$. For the application of Lemma \ref{lemma: Product forces LIP}, observe the following: If $j_i-1$ is a limit, then $(2^{\sup_{j<i}\delta_j})^+=\kappa_{j_i-1}^{++}$, since $\sup_{j<i}\delta_j=\kappa_{j_i-1}$ and the $\GCH$ holds. If $j_i-1$ is a successor, then $\sup_{j<i}\delta_j=\kappa_{j_i-2}^{++}=\delta_{i-1}^{++}$.
    
    \setcounter{myclaim}{0}

    \begin{myclaim}
        For each $i<\gamma$, $\dP_i$ is an $\LIP((2^{\sup_{j<i}\delta_j})^+,\delta_i)$-forcing (where the empty supremum is regarded as $\kappa_{\beta}$).
    \end{myclaim}

    \begin{proof}
        Let $i<\gamma$. 

        \smallskip

        \textbf{Case $j_i=\beta+1$:} Then, $\dP_i$ is equivalent to the Levy-collapse $$\Coll(\kappa_{j_i-1}^{++},<\kappa_{j_i})$$ which certainly has the claimed properties.

        \smallskip

        \textbf{Case $j_i-1$ is a limit:} Then $\dP_i=\dT((\dP_{\beta,j_i-1},\leq_{\beta,j_i-1}),(\dot{\dQ}_{j_i-1}^{G_{\beta}},\dot{\leq}_{j_i-1}^{G_{\beta}}))$. Note that $|\dP_{j_i-1}|\leq\kappa_{j_i-1}^+$ (because of the $\GCH$) and $\dot{\dQ}_{j_i-1}^{G_{\beta}}$ is a $\dP_{j_i-1}$-name for the Levy collapse $\dot{\Coll}(\kappa_{j_i-1}^{++},{<}\kappa_{j_i-1})$ and this is forced to be a $\LIP(\kappa_{j_i-1}^{++},{<}\kappa_{j_i-1})$-forcing. Thus, by Lemma \ref{lemma: Termspace of LIP is LIP}, $$\dT((\dP_{\beta,j_i-1},\leq_{\beta,j_i-1}),(\dot{\dQ}_{j_i-1}^{G_{\beta}},\dot{\leq}_{j_i-1}^{G_{\beta}}))$$ is a $\LIP(\kappa_{j_i-1}^{++},{<}\kappa_{j_i-1})$-forcing.

        \smallskip

       \textbf{Case $j_i-1$ is a successor ordinal above $\beta$.}
       Then $$\dP_i=\dT((\dP_{\beta,j_i-2},\leq_{\beta,j_i-2}),(\dot{\dQ}_{j_i-2}^{G_{\beta}},\dot{\leq}_{j_i-2,0}^{G_{\beta}}))\times\dT((\dP_{\beta,j_i},\leq_{\beta,j_i}),(\dot{\dQ}_{j_i-1}^{G_{\beta}},\dot{\leq}_{j_i-1,0}^{G_{\beta}})).$$ We have already shown that the second factor is a $\LIP(\kappa_{j_i-1}^{++},{<}\kappa_{j_i-1})$-forcing. 
       
       Regarding the first factor of the product; namely,  $$\dT((\dP_{\beta,j_i-2},\leq_{\beta,j_i-2}),(\dot{\dQ}_{j_i-2}^{G_{\beta}},\dot{\leq}_{j_i-2,0}^{G_{\beta}})):$$ 
       First, $\dot{\dQ}_{j_i-2}^{G_{\beta}}$ is a $\dP_{j_i-2}$-name for $\dot{\dA}(\kappa_{j_i-2}^{++},{<}\kappa_{j_i-1})$. Thus the first factor has size ${<}\,\kappa_{j_i}$ and is ${<}\,\kappa_{j_i-2}^{++}$-directed closed. This together easily implies that $\dP_i$ is a $\LIP(\kappa_{j_i-2}^{++},\kappa_{j_i})=\LIP((2^{\sup_{j<i}\delta_j})^+,\delta_i)$-forcing.
    \end{proof}

    In summary, by Lemma \ref{lemma: Product forces LIP}, $\dT_{\beta,\alpha}$ forces $\SLIP(\kappa_{\beta}^{++},\langle \delta_i\mid i<\gamma\rangle)$.
\end{proof}

So now we can finally show our main theorem:

\begin{proof}[Proof of Theorem \ref{theorem: AP Fails at every singular}]
    It follows from our analysis of the cardinal structure that the class of singular cardinals in $V[\dP]$ is given by the class of $\kappa_{\alpha}$ where $\alpha$ is a limit ordinal and $\kappa_{\alpha}>\alpha$. So let $\alpha$ be with that property. Fix in $V$ a normal subadditive coloring $d\colon[\kappa_{\alpha}^+]^2\to\cf(\kappa_{\alpha})$. We want to show that in $V[\dP]$, there are stationarily many points in $\kappa_{\alpha}^+$ which are not $d$-approachable (by Fact \ref{fact: eisworth handbook} this implies $\kappa_{\alpha}^+\notin I[\kappa_{\alpha}^+]$). To this end, it suffices to show that this is the case in $V[\dP_{\alpha}]$, because for any $\gamma>\alpha$, $\dot{\dP}_{\alpha,\gamma}$ is forced not to add any new subsets of $\kappa_{\alpha}^+$ by Lemma \ref{lemma: Tail properties}.    So let $\beta<\alpha$ be an even successor ordinal such that $\alpha<\kappa_{\beta-1}$. Hence, $$\dP_{\beta}\cong\dP_{\beta-1}*\dot{\dA}(\kappa_{\beta-1}^{++},\kappa_{\beta}).$$  Let $G_{\beta}$ be $\dP_{\beta}$-generic and $\dot{S}$ be a $\mathbb{P}_{\beta,\alpha}^{G_\beta}$-name for the points of $V[G_\beta]$-cofinality 
    $\kappa_{\beta-1}^{++}$ that are not $d$-approachable in $V[G_\beta][\mathbb{P}_{\beta,\alpha}^{G_\beta}]$. Then, as in Theorem~\ref{theorem: almost theorem A}, we claim:

    \begin{myclaim}
      Working in  $V[G_\beta]$, $\mathbb{P}_{\beta,\alpha}^{G_\beta}$ forces $``\dot{S}$ is stationary".
    \end{myclaim}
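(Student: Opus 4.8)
The plan is to mimic the density argument used for the corresponding claim in Theorem~\ref{theorem: almost theorem A}, the one genuine difference being that the forcing acting between stages $\beta$ and $\alpha$ is now a Prikry-type Magidor iteration rather than a product of Levy collapses; in particular it is not ${<}\,\kappa_\beta$-directed closed, so Theorem~\ref{Theorem: Indestructibility of neg AP} cannot be applied to it directly. I would instead apply Theorem~\ref{Theorem: Indestructibility of neg AP} to the \emph{termspace} poset $\dT_{\beta,\alpha}$. Concretely, since $\dP_\beta\cong\dP_{\beta-1}*\dot{\dA}(\kappa_{\beta-1}^{++},\kappa_\beta)$ with $|\dP_{\beta-1}|<\kappa_\beta$, the model $V[G_\beta]$ is a $\dA(\mu,\kappa_\beta)$-generic extension of $V[G_{\beta-1}]$ (with $\mu:=\kappa_{\beta-1}^{++}$) in which $\kappa_\beta$ remains supercompact, so Theorem~\ref{Theorem: Indestructibility of neg AP} applies over $V[G_{\beta-1}]$ with $\kappa:=\kappa_\beta$, with $\langle\delta_i\mid i<\gamma\rangle$ the increasing enumeration of $\{\kappa_j\mid j\in[\beta,\alpha)\text{ odd}\}$ (so that $\sup_i\delta_i=\kappa_\alpha$, $\delta_0\geq\kappa$, $\gamma<\mu$ and $\cf(\gamma)=\cf(\kappa_\alpha)$), with the given coloring $d$ on $[\kappa_\alpha^+]^2$, and with the auxiliary poset $\dT_{\beta,\alpha}$. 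Its hypotheses are met: ${<}\,\kappa_\beta$-directed closure of $\dT_{\beta,\alpha}$ comes from Lemma~\ref{lemma: Properties of T}; $\SLIP(\mu,\langle\delta_i\rangle)$ is forced by Lemma~\ref{lemma: T forces SLIP} (which yields the stronger $\SLIP(\kappa_\beta^{++},\langle\delta_i\rangle)$); and preservation of $\kappa_\alpha^+$ and of each $\delta_i$ follows from the usual decomposition of $\dT_{\beta,\alpha}$ into a small initial part and a highly closed tail, as in the proof of Lemma~\ref{lemma: Product forces LIP}.

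Given, in $V[G_\beta]$, a $\dP_{\beta,\alpha}^{G_\beta}$-name $\dot C$ for a club in $\kappa_\alpha^+$ forced by a condition $p$, I would then fix $M\prec H^{V[G_\beta]}(\Theta)$ as provided by Theorem~\ref{Theorem: Indestructibility of neg AP}, with $\dot C,p,d,\dP_{\beta,\alpha}^{G_\beta}$ and the projection $\pi\colon\dT_{\beta,\alpha}\to(\dP_{\beta,\alpha}^{G_\beta},\leq_0)$ of Lemma~\ref{lemma: Properties of T} as members, with $\rho:=\sup(M\cap\kappa_\alpha^+)$ of cofinality $\mu$ and not $d$-approachable in $V[G_\beta]$, and with a $\dT_{\beta,\alpha}$-master condition over $M$ below any prescribed element of $\dT_{\beta,\alpha}\cap M$. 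Using the projection property together with elementarity, pick $t_0\in\dT_{\beta,\alpha}\cap M$ with $\pi(t_0)\leq_0 p$, let $t^*\leq t_0$ be the corresponding master condition, and set $q:=\pi(t^*)$; then $q\leq_0 p$, and pulling $\leq_0$-dense open subsets of $\dP_{\beta,\alpha}^{G_\beta}$ back along $\pi$ to $\leq_{\dT}$-dense open subsets shows that $q$ is a master condition over $M$ for the \emph{direct} extension ordering $(\dP_{\beta,\alpha}^{G_\beta},\leq_0)$.

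It remains to argue that $q\Vdash\check\rho\in\dot C\cap\dot S$. For membership in $\dot C$ it suffices (as $\dot C$ is forced club) to see that $q$ forces $\dot C\cap\check\rho$ unbounded in $\check\rho$: given $\zeta<\rho$, pick $\zeta'\in M\cap\rho$ above $\zeta$ and look at the name $\dot\eta:=\min(\dot C\setminus(\zeta'{+}1))$, which $p$ forces to lie in $(\zeta',\kappa_\alpha^+)\cap\dot C$. Because $\alpha$ is a limit ordinal, $\dP_{\beta,\alpha}^{G_\beta}$ has \emph{almost pure $\kappa_\alpha^+$-decidability} by Lemma~\ref{lemma: Tail properties}; hence the set of conditions $\leq_0$-forcing $\dot\eta$ strictly below some ordinal ${<}\,\kappa_\alpha^+$ is $\leq_0$-dense open and lies in $M$, so the master property of $q$ yields $s\in M$ with $q\leq_0 s$ and an ordinal $\xi\in M$ with $s\Vdash\dot\eta<\check\xi$; then $\zeta<\zeta'<\dot\eta<\xi<\rho$ and $q\leq s$ forces $\dot C\cap(\zeta,\rho)\neq\emptyset$, as required. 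For membership in $\dot S$: by Lemma~\ref{lemma: Tail properties}, $\dP_{\beta,\alpha}^{G_\beta}$ adds no new subsets of $\kappa_\beta^+$, hence none of $\mu$, so $\rho$ retains cofinality $\mu$ and, by Fact~\ref{fact: Approachability refinement}, stays non-$d$-approachable in every $\dP_{\beta,\alpha}^{G_\beta}$-extension; thus $\Vdash_{\dP_{\beta,\alpha}^{G_\beta}}\check\rho\in\dot S$ outright. Combining, $q\Vdash\check\rho\in\dot C\cap\dot S$, and since $\dot C$ and $p$ were arbitrary, $\dP_{\beta,\alpha}^{G_\beta}$ forces $\dot S$ stationary.

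The step I expect to be the main obstacle is exactly this transfer of master conditions: the termspace projects only onto the \emph{direct} extension ordering of the tail iteration, so one cannot literally pull arbitrary $\leq$-dense sets back, and the argument must be routed through their $\leq_0$-dense ``almost-deciding'' refinements — which is precisely where the almost pure $\kappa_\alpha^+$-decidability of the Prikry-type tail (available because $\alpha$ is a limit) is used. A secondary technical point is verifying the hypotheses of Theorem~\ref{Theorem: Indestructibility of neg AP} for $\dT_{\beta,\alpha}$, in particular that it preserves $\kappa_\alpha^+$ and the cardinals $\delta_i$, which relies on the standard factoring of $\dT_{\beta,\alpha}$ into a small initial segment times a highly closed tail.
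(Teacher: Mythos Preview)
Your proof is correct and follows essentially the same approach as the paper's: apply Theorem~\ref{Theorem: Indestructibility of neg AP} to the termspace $\dT_{\beta,\alpha}$ (using Lemmas~\ref{lemma: Properties of T} and~\ref{lemma: T forces SLIP} to check its hypotheses), pull the resulting master condition back along the projection $\pi$ onto $(\dP_{\beta,\alpha}^{G_\beta},\leq_0)$, and use almost pure $\kappa_\alpha^+$-decidability (Lemma~\ref{lemma: Tail properties}) to replace $\leq$-dense sets by $\leq_0$-dense ones so that the master-condition property transfers. One small slip in phrasing: $\kappa_\beta$ is supercompact in $V[G_{\beta-1}]$ (the ``$V$'' of Theorem~\ref{Theorem: Indestructibility of neg AP}), not in $V[G_\beta]$, where it has already been collapsed to $\mu^+$ --- but your subsequent ``applies over $V[G_{\beta-1}]$'' shows you have the right picture.
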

    \begin{proof}[Proof of clain]
      We perform a density argument. Let $p_0\in \mathbb{P}_{\beta,\alpha}^{G_\beta}$ be arbitrary.

    Let $\gamma$ be a limit ordinal and let $\langle \delta_i\mid {i<\gamma}\rangle$ enumerate  $\{\kappa_j\;|\;j\in[\beta,\alpha)\text{ is odd}\}$.\footnote{In particular, $\kappa_\alpha=\sup_{i<\gamma}\delta_i$} Let $\dot{C}$ be a $\dot{\dP}_{\beta,\alpha}^{G_{\beta}}$-name for a club subset of $\kappa_{\alpha}^+$ which, without loss of generality, is forced to be such by $p_0$.  By Lemma~\ref{lemma: Projection Easton support Magidor}, there is a projection  $$\pi\colon\dT_{\beta,\alpha}\to(\dot{\dP}_{\beta,\alpha}^{G_{\beta}},\dot{\leq}_{(\beta,\alpha),0}^{G_{\beta}}).$$
    Let $t_0\in \mathbb{T}_{\beta,\alpha}$ such that $\pi(t_0)\leq p_0$. (This exists in that $\pi$ is a projection.)

    \smallskip

    Now we apply Theorem \ref{Theorem: Indestructibility of neg AP} inside $V[G_\beta]$ to find (for $\Theta$ large enough regular) $M\prec H(\Theta)$ with $|M|<\kappa_{\beta}$ such that the following hold:
    \begin{enumerate}
        \item $M$ contains every relevant parameter (in particular, $\pi, \dot{C}$, $\dot{\dP}_{\beta,\alpha}^{G_{\beta}}$, $t_0$ and $\dT_{\beta,\alpha}$)
        \item $\cf(\sup(M\cap\kappa_{\alpha}^+))=\kappa_{\beta-1}^{++}$ and $\sup(M\cap\kappa_{\alpha}^+)$ is not $d$-approachable.
        \item \textbf{(Master condition)} There is a condition $t\in\dT_{\beta,\alpha}$, $t\leq t_0$, such that  whenever $D\in M$ is open dense in $\dT_{\beta,\alpha}$, there is $q\in D\cap M$ with $t\leq q$.
    \end{enumerate}

    Note that we can indeed invoke Theorem~\ref{Theorem: Indestructibility of neg AP}  because by Lemma \ref{lemma: Properties of T} and Lemma \ref{lemma: T forces SLIP}, $\dT_{\beta,\alpha}$ is ${<}\,\kappa_{\beta}^{++}$-directed closed and forces $\SLIP(\kappa_{\beta}^{++},\langle \delta_i\mid {i<\gamma}\rangle)$.

    \smallskip

    Let $p:=\pi(t)$. Note that $p\leq p_0$ because $\pi$ is order-preserving. We claim that $p$ forces (in  $(\dot{\dP}_{\beta,\alpha}^{G_{\beta}},\dot{\leq}_{\beta,\alpha}^{G_{\beta}})$) that $\sup(M\cap\kappa_{\alpha}^+)$ is in $\dot{C}$.  We will show this by showing that $p$ forces $\dot{C}\cap\sup(M\cap\kappa_{\alpha}^+)$ to be unbounded in $\sup(M\cap\kappa_{\alpha}^+)$. To this end, let $\zeta<\sup(M\cap\kappa_{\alpha}^+)$. We can assume $\zeta\in M\cap\kappa_{\alpha}^+$. Given any $q\in\dot{\dP}_{\beta,\alpha}^{G_{\beta}}$, $q$ forces that there exists some element of $\dot{C}$ above $\zeta$. By the almost pure $\kappa_{\alpha}^+$-decidability of $\dot{\dP}_{\beta,\alpha}^{G_{\beta}}$ (see Lemma \ref{lemma: Tail properties}), there exists $r\dot{\leq}_{(\beta,\alpha),0}^{G_{\beta}}q$ and $\eta<\kappa_{\alpha}^+$ such that $r$ forces that there exists some element of $\dot{C}$ in the interval $(\zeta,\eta)$. Ergo, the set $D$ of all $r\in\dot{\dP}_{\beta,\alpha}^{G_{\beta}}$ such that for some $\eta_r$, $r$ forces $\dot{C}\cap(\zeta,\eta_r)$ to be nonempty, is open dense in $(\dot{\dP}_{\beta,\alpha}^{G_{\beta}},\dot{\leq}_{(\beta,\alpha),0}^{G_{\beta}})$. Since $\pi$ is a projection, $\pi^{-1}[D]$ is open dense in $\dT_{\beta,\alpha}$. Ergo, by Clause~(3) above (i.e., $t$ is a master condition) there exists $r\in\pi^{-1}[D]\cap M$ with $t\leq r$ and thus $\pi(r)\in D\cap M$ with $\pi(t)\leq\pi(r)$. The corresponding $\eta_{\pi(r)}$ is in $M$ as well by elementarity. Ergo $\pi(t)$ forces that there is some element of $\dot{C}$ above $\zeta$ and below $\sup(M\cap\kappa_{\alpha}^+)$. Since $\zeta$ was arbitrary, $\pi(t)$ forces that $\dot{C}$ is unbounded in $\sup(M\cap\kappa_{\alpha}^+)$. In addition,  the closure of $\dot{\dP}_{\beta,\alpha}^{G_{\beta}}$ in the pure extension ordering implies that $\sup(M\cap\kappa_{\alpha}^+)$ is  not $d$-approachable after forcing with that poset. Ergo, $p$ forces $``\sup(M\cap \kappa_\alpha^+)$ is not $d$-approachable". Thus, $p$ forces $``\dot{S}\cap \dot{C}\neq \emptyset"$, as needed.
    
    \end{proof}
 We are done with the proof of the theorem.  
\end{proof}

\begin{myrem}
    It follows easily from our proof that, after forcing with $\dP$, for every singular cardinal $\kappa_{\gamma}$ and every even $\beta<\gamma$ with $\alpha<\kappa_{\beta-1}$, the set $\kappa_{\gamma}^+\cap\cof(\kappa_{\beta-1}^{++})$ is not a member of $I[\kappa_{\gamma}^+]$. In particular, there are stationarily many non-approachable points of arbitrarily large cofinality. This disposes with the moreover assertion in Theorem~\ref{ThmD}.
\end{myrem}

\section{Discussion and Open Questions}

There is an interesting contrast between the results of this paper and \cite{JakobTotalFailure}. Namely, the poset which we devised in this paper to collapse the successor of a singular cardinal without making it approachable has much stronger requirements than the corresponding one from \cite{JakobTotalFailure}. Due to this, we had to separate our collapses and thus were not able to obtain the failure of the approachability property for all possible cofinalities below the singular cardinal, simultaneously. Thus we ask:

\begin{myque}\label{Question 1}
    Suppose that $\lambda$ is a singular cardinal of uncountable cofinality. Is there a model of $\ZFC$ plus ``For every regular $\mu\in(\cf(\lambda),\lambda)$ there is a stationary subset of $\lambda^+\cap \cof(\mu)$ which is not in $I[\lambda^+]$"?
\end{myque}

Another relevant question regards the failure of the SCH. Answering a question from \cite{BenNeriaLambieHansonUngerDiagSCRadin}, in \cite{GitikAP} Gitik produces a model of $\ZFC$ where both $\AP$ and $\SCH$ fail on a club class.  A natural question is if this configuration can be combined with Theorem~\ref{ThmD}. More precisely we ask:
\begin{myque}
    Assuming a proper class of supercompact cardinals, is it consistent with $\mathrm{ZFC}$ that the $\mathrm{SCH}$ fails on a club class and $\AP$ fails at every singular cardinal?
\end{myque}
Combining Theorem~\ref{ThmD} with a \textbf{global} failure of the SCH is a major open problem for it would yield a negative solution to the following long-standing question:

\begin{myque}[Woodin, 80's]\label{que: Woodins}
  Let $\kappa$ be a singular strong limit cardinal. Does the approachability property $\AP_\kappa$ imply the $\SCH$ at $\kappa$?
\end{myque}

\smallskip

By a result of Jensen in the model of Theorem~\ref{ThmD} there are no special $\kappa^+$-Aronsjzan trees for no singular cardinal $\kappa$. Thus, it is natural to ask:
\begin{myque}
    Assuming appropriate large cardinals, is it possible to have a model of $\mathrm{ZFC}$ where the tree property holds at the successor of every singular cardinal? 
\end{myque}

\section*{Acknowledgments}
The second author acknowledges support from the Center of Mathematical Sciences and Applications and the Department of Mathematics at Harvard University.

\bibliographystyle{alpha} 
\bibliography{bibliography}

\end{document}